\documentclass[a4paper, 11pt]{amsart}

\let\oldtocsection=\tocsection

\let\oldtocsubsection=\tocsubsection

\let\oldtocsubsubsection=\tocsubsubsection

\renewcommand{\tocsection}[2]{\hspace{0em}\oldtocsection{#1}{#2}}
\renewcommand{\tocsubsection}[2]{\hspace{2em}\oldtocsubsection{#1}{#2}}
\renewcommand{\tocsubsubsection}[2]{\hspace{2em}\oldtocsubsubsection{#1}{#2}}

\title{Day convolution for algebraic patterns}
\author{Thomas Blom, F\'elix Loubaton and Jaco Ruit}
\address{Max-Planck-Institut f\"ur Mathematik, Vivatsgasse 7, Bonn, Germany}
\email{blom@mpim-bonn.mpg.de}
\address{CNRS, Aix-Marseille Universit\'e, 163 Avenue de Luminy, Marseille, France}
\email{felix.loubaton@cnrs.fr}
\address{Max-Planck-Institut f\"ur Mathematik, Vivatsgasse 7, Bonn, Germany}
\email{ruit@mpim-bonn.mpg.de} 

\usepackage{multicol}
\usepackage{amssymb}
\usepackage{amsfonts}
\usepackage{amsthm}

\usepackage{kpfonts} 
\usepackage[mathscr]{eucal}

\usepackage[english]{babel}

\usepackage{tikz-cd}{}  
\usetikzlibrary{backgrounds}
\usetikzlibrary{patterns}
\usetikzlibrary{decorations.pathmorphing}
\usetikzlibrary{decorations.pathreplacing}
\usepackage{quiver}
\usepackage[shortlabels]{enumitem}
\usepackage[linktocpage]{hyperref}
\usepackage[noabbrev,capitalize]{cleveref}
\usepackage{aliascnt}
\usepackage{xstring}
\usepackage{xcolor}
\usepackage{bbm}
\usepackage{breakurl}
\usepackage{todonotes}
\usepackage[normalem]{ulem}
\usepackage{mathdots}
\usepackage{float}

\usepackage[left=3cm,right=3cm,top=3.2cm,bottom=3.2cm]{geometry} 
\newtheorem{thmintro}{Theorem}

\newtheorem{theorem}{Theorem}
\newcommand{\jnewtheorem}[2]{
	\newaliascnt{#1}{theorem}
	\newtheorem{#1}[#1]{#2}
	\aliascntresetthe{#1}
}
\numberwithin{theorem}{section}
\jnewtheorem{lemma}{Lemma}
\jnewtheorem{proposition}{Proposition}
\jnewtheorem{corollary}{Corollary}

\theoremstyle{definition}
\jnewtheorem{definition}{Definition}
\jnewtheorem{example}{Example}
\jnewtheorem{remark}{Remark}
\jnewtheorem{construction}{Construction}
\jnewtheorem{notation}{Notation}
\jnewtheorem{convention}{Convention}
\jnewtheorem{warning}{Warning}

\crefformat{item}{#2#1#3}
\crefformat{enumi}{#2#1#3}
\crefformat{equation}{#2(#1)#3}
\crefformat{subsecconventions}{#2Conventions#3}

\hypersetup{
	pdftitle={Day convolution for algebraic patterns},
	pdfauthor={Thomas Blom, F\'elix Loubaton and Jaco Ruit},
	bookmarksnumbered=true,     
	bookmarksopen=true,         
	bookmarksopenlevel=1,       
	colorlinks=true,   
	linkcolor = {cyan!60!black},
	urlcolor = {cyan!60!black},
	citecolor = {cyan!60!black},   
	pdfencoding=unicode,      
	pdfstartview=Fit,           
	pdfpagemode=UseOutlines,    
	pdfpagelayout=OneColumn,
	breaklinks
}

\newcommand{\op}{\mathrm{op}}
\newcommand{\map}{\mathrm{Hom}}
\newcommand{\Map}{\map}
\def\colim{\qopname\relax m{colim}}

\newcommand{\Fun}{\mathrm{Fun}}
\newcommand{\Cat}{\mathrm{Cat}}
\newcommand{\ev}{\mathrm{ev}}
\newcommand{\Algd}{\mathrm{Algad}}
\newcommand{\Cocart}{\mathrm{Cocart}}
\newcommand{\LFib}{\mathrm{LFib}}

\newcommand{\cS}{\mathscr{S}}
\renewcommand{\D}{\mathscr{D}}
\renewcommand{\P}{\mathscr{P}}
\newcommand{\cP}{\P}
\newcommand{\Q}{\mathscr{Q}}
\newcommand{\cQ}{\Q}

\renewcommand{\O}{\mathscr{O}}

\newcommand{\cO}{\O}
\newcommand{\cB}{\mathscr{B}}
\newcommand{\cC}{\C}
\newcommand{\cD}{\D}
\newcommand{\Seg}{\mathrm{Seg}}

\newcommand{\CSeg}{\mathrm{CSeg}}
\newcommand{\Hom}{\mathrm{Hom}}
\newcommand{\DblCat}{\mathrm{DblCat}}
\newcommand{\VirtDblCat}{\mathrm{VirtDblCat}}
\newcommand{\Fact}{\mathrm{Fact}}
\newcommand{\AlgPatt}{\mathrm{AlgPatt}}
\newcommand{\DblPatt}{\mathrm{DblPatt}}
\newcommand{\Set}{\mathrm{Set}}

\newcommand{\PSh}{\mathrm{PSh}}
\newcommand{\Sq}{\mathrm{Sq}}	
\newcommand{\id}{\mathrm{id}}
\newcommand{\CCAT}{\mathbb{C}\mathrm{at}}
\newcommand{\Span}{\mathrm{Span}}
\newcommand{\SSpan}{\mathbb{S}\mathrm{pan}}
\newcommand{\DCospan}{\mathbb{C}\mathrm{ospan}}
\newcommand{\Sp}{\mathrm{Sp}}
\newcommand{\el}{\mathrm{el}}
\newcommand{\elem}{\el}
\newcommand{\activ}{\act}
\newcommand{\inert}{\mathrm{int}}
\newcommand{\act}{\mathrm{act}}
\newcommand{\intarrow}{\rightarrowtail}
\newcommand{\actarrow}{\rightsquigarrow}
\newcommand{\dbl}{\mathrm{dbl}}
\newcommand{\alg}{\mathrm{alg}}
\newcommand{\Ar}{\mathrm{Ar}}
\newcommand{\F}{\mathbb{F}}
\newcommand{\Spc}{\mathscr{S}}
\newcommand{\Orb}{\mathrm{Orb}}

\newcommand{\Forest}{\Phi}
\newcommand{\Tree}{\Omega}
\newcommand{\C}{\mathscr{C}}

\usepackage{trimclip}
\makeatletter
\newcommand{\twoheaddownarrow}{\mathrel{\mathpalette\twoheaddownarrow@\relax}}
\newcommand{\twoheaddownarrow@}[2]{%
  \begingroup
  \sbox\z@{$\m@th#1\downarrow$}%
  \vphantom{\copy\z@}
  \ooalign{\copy\z@\cr\hidewidth\clipdownarrow@{#1}\hidewidth\cr}%
  \endgroup
}
\newcommand{\clipdownarrow@}[1]{%
  \raisebox{0.25\ht\z@}{\clipbox{-3pt 0pt -3pt {0.5\height}}{\copy\z@}}%
}
\makeatother

\makeatletter
\def\slashedarrowfill@#1#2#3#4#5{%
  $\m@th\thickmuskip0mu\medmuskip\thickmuskip\thinmuskip\thickmuskip
  \relax#5#1\mkern-7mu%
  \cleaders\hbox{$#5\mkern-2mu#2\mkern-2mu$}\hfill
  \mathclap{#3}\mathclap{#2}%
  \cleaders\hbox{$#5\mkern-2mu#2\mkern-2mu$}\hfill
  \mkern-5mu#4$%
}
\def\rightslashedarrowfill@{%
  \slashedarrowfill@\relbar\relbar\mapstochar\rightarrow}
\newcommand\xslashedrightarrow[2][]{%
  \ext@arrow 0055{\rightslashedarrowfill@}{#1}{#2}}
\makeatother

\begin{document}

\begin{abstract}
We characterize the exponentiable objects for a wide range of structures prevalent in $\infty$-categorical algebra, extending the construction of Day convolution to more general structures than $\infty$-operads. More precisely, we give a criterion that is both necessary and sufficient for many of these structures encountered in practice, such as (equivariant) $\infty$-operads and virtual double $\infty$-categories. We work within the framework of algebraic patterns of Chu--Haugseng that describe these structures in terms of weak Segal fibrations. As part of the proof, we give a new description of weak Segal fibrations in terms of generalized Segal spaces on certain ``tree'' categories. We also define the ``underlying graph'' of a weak Segal fibration, extending the notion of the underlying $\infty$-category for $\infty$-operads, and explicitly describe the underlying graph of exponential objects in weak Segal fibrations.
\end{abstract}

\maketitle


\setcounter{tocdepth}{1}

\tableofcontents

\section{Introduction}

One of the cornerstones of modern higher algebra is the Day convolution symmetric monoidal structure on functor categories.
Given a symmetric monoidal category $(\cC,\otimes)$, Day \cite{Day1970ClosedCategoriesFunctors} constructed a \emph{convolution} monoidal structure $\circledast$ on $\Fun(\cC,\mathrm{Set})$ given by the coend formula
\[(F \circledast G)(c) = \int^{(c_1,c_2) \in \cC \times \cC} F(c_1) \times G(c_2) \times \Hom(c_1 \otimes c_2, c).\]
Glasman \cite{Glasman2016DayConvolution} considered a higher-categorical analogue of Day's convolution monoidal structure, which was subsequently generalized to the setting of $\infty$-operads by Lurie \cite[\S 2.2.6]{HA} and Hinich \cite[\S 2.8]{Hinich2020YonedaLemmaEnriched}.

Given two $\infty$-operads $\cP$ and $\mathscr{Q}$, their \emph{Day convolution} is, if it exists, an $\infty$-operad $[\cP,\mathscr{Q}]$ characterized by the universal property
\[\Hom_{\mathrm{Op}_\infty}(\cO \times \cP, \mathscr{Q}) \simeq \Hom_{\mathrm{Op}_\infty}(\cO, [\cP,\mathscr{Q}]);\]
that is, it is an \emph{internal hom-object} or \emph{exponential object} in the $\infty$-category $\mathrm{Op}_\infty$ of $\infty$-operads.
Given the importance of Day convolution in higher algebra, it is natural to wonder whether such internal hom-objects also exist for other operad-like structures, such as equivariant $\infty$-operads and virtual double $\infty$-categories.
In this paper, we characterize the \emph{exponentiable objects} for many such operad-like structures; that is, the objects $\cP$ for which the functor $- \times \cP$ admits a right adjoint.

Our main result (\cref{thmA}) is such a characterization in the context of \emph{algebraic patterns}.
The theory of algebraic patterns, introduced by Chu--Haugseng \cite{ChuHaugseng2021HomotopycoherentAlgebraSegal}, provides a very general framework for talking about operad-like structures. An algebraic pattern is an $\infty$-category $\cO$ equipped with a factorization system $(\cO^\inert, \cO^\activ)$ of \emph{inert} and \emph{active} morphisms and a full subcategory $\cO^\elem \subset \cO^\inert$ of \emph{elementary} objects. Lurie's definition of an $\infty$-operad was generalized to any algebraic pattern in \cite{ChuHaugseng2021HomotopycoherentAlgebraSegal} by considering  certain functors over $\O$ called \emph{weak Segal fibrations}.

Depending on the choice of algebraic pattern $\cO$, weak Segal fibrations can describe, among others, the following operad-like structures:
\begin{itemize}
    \item (generalized) $\infty$-operads, as introduced by Lurie \cite{HA},
    \item non-symmetric $\infty$-operads,
    \item equivariant $\infty$-operads \cite{NardinShah},
    \item virtual double $\infty$-categories (cf.\ \cite{GepnerHaugseng2015EnrichedCategoriesNonsymmetric}, where they are called generalized non-symmetric $\infty$-operads).
\end{itemize}

Throughout this paper, and the rest of this introduction, we have decided to change the name \emph{weak Segal fibration} to \emph{algebrad}.
The reason is that in this paper, we will give several equivalent definitions of weak Segal fibrations/algebrads.
In these other contexts they are not necessarily a type of fibration (see e.g.\ \cref{thmD} below), so the original terminology seemed less suitable to us.
We chose the term \emph{algebrad} since it reflects that this notion is a generalization of an \emph{operad}.

\subsection{A Conduché criterion for weak Segal fibrations}

In \cref{thmA}, we describe a criterion for detecting exponentiable objects in the $\infty$-category $\Algd(\cO)$ of \emph{$\cO$-algebrads} (i.e.\ weak Segal fibrations over $\cO$). It is similar to the \emph{Conduché criterion} for detecting exponentiable objects $p \colon \cC \to \cB$ in $\Cat_{\infty/\cB}$, which we now briefly recall.
Given a morphism $f$ in a category $\cC$ and any factorization $p(f) \simeq g \circ h$ of $p(f)$ in $\cB$, viewed as a functor $[2] \to \cB$, we may form the $\infty$-category
\[\mathrm{Fact}(f \mid g \circ h) \coloneqq \Fun_{/\cB}([2],\cC) \times_{\Fun_{/\cB}([1],\cC)} \{f\}\]
of factorizations of $f$ lying over the factorization $p(f) = g \circ h$. The Conduché criterion (see \cite[Proposition B.3.2]{HA} or \cite[Lemma 2.2.8]{AyalaFrancis2020FibrationsInftyCategories}) now states that $p \colon \cC \to \cB$ is exponentiable in $\Cat_{\infty/\cB}$ if and only if for any such $f$, $g$ and $h$, the $\infty$-category $\mathrm{Fact}(f \mid g \circ h)$ is weakly contractible.

Our main result is that an object $\cP \to \cO$ in $\Algd(\cO)$ is exponentiable if the Conduché criterion holds for a specific class of factorizations in $\cO$.

\begin{thmintro}\label{thmA}
    Let $\cO$ be an algebraic pattern and $\cP \to \cO$ an algebrad. Then $\cP$ is exponentiable, as an object in $\Algd(\cO)$, if the following  condition is satisfied:
    \begin{enumerate}[(CC)]
        \item\label{CondCrit} for any composable pair of active morphisms
            \[\begin{tikzcd}[cramped]
            	x & y & e
            	\arrow["h", squiggly, from=1-1, to=1-2]
            	\arrow["g", squiggly, from=1-2, to=1-3]
            \end{tikzcd}\]
            in $\cO$ such that $e$ is an elementary object, and any lift $f$ of $g \circ h$, the $\infty$-category $\mathrm{Fact}(f \mid g \circ h)$ is weakly contractible. 
    \end{enumerate}
\end{thmintro}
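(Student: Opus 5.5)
We prove Theorem A by reducing it to the classical Conduché criterion in $\Cat_{\infty/\cO}$, or more exactly in a slice over a category built from $\cO$ whose factorization categories we can control. The approach has three steps.

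\textbf{Step 1: recast algebrads as Segal objects on a tree category.} This is the reformulation the abstract promises: a new description of weak Segal fibrations as generalized Segal spaces on certain ``tree'' categories. I would define an auxiliary $\infty$-category $T(\cO)$ whose objects are chains of active morphisms $x_0 \actarrow x_1 \actarrow \cdots \actarrow x_n = e$ ending at an elementary object, with maps generated by inerts and face/degeneracy operations. I would then show that $\Algd(\cO)$ is equivalent to a full subcategory of $\PSh(T(\cO))$, or of $\Cat_{\infty/T(\cO)}$, cut out by Segal (locality) conditions with respect to a set $S$ of maps. In this way $\Algd(\cO)$ becomes an accessible localization $L$ of an ambient topos-like $\infty$-category $\mathcal{E}$ in which every object is exponentiable, since $\mathcal{E}$ is cartesian closed.

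\textbf{Step 2: the standard reflective-subcategory argument.} Let $\cP$ be local. If $\cP^{X}$, formed in $\mathcal{E}$, is $S$-local for every local $X$, then $\cP^{X}$ is the exponential in the localization. Equivalently, the functor $-\times\cP$ must preserve the class of $L$-equivalences, and it suffices to check this on the generators $s\colon A\to B$ in $S$: each map $s\times\cP$ must again be an $L$-equivalence. Since $- \times \cP$ commutes with colimits in $\mathcal{E}$, one can reduce, after pulling back along the representables mapping to $\cP$, to a fiberwise statement. Concretely, the spine inclusions of chains are pulled back along $\cP\to\cO$. Over a chain $x \actarrow y \actarrow e$, the fiber of such a pullback is assembled from the factorization categories $\mathrm{Fact}(f\mid g\circ h)$.

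\textbf{Step 3: apply (CC) to conclude.} Weak contractibility of $\mathrm{Fact}(f\mid g\circ h)$ for active $g$, $h$ with elementary target is exactly what makes the colimit computing the relevant pullback agree with the Segal gluing. Hence $s\times\cP$ is an $L$-equivalence. The inert parts are handled for free, because inerts lift cocartesianly in an algebrad and the inert--active factorization is unique. Only active factorizations landing in elementaries matter, since the Segal conditions are checked at elementary objects.

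\textbf{Main obstacle.} I expect the hardest step to be Step 1 together with its interface with Step 3. One must choose the tree category so that
\begin{itemize}
\item its Segal objects are exactly algebrads, and
\item the generating local equivalences are simple enough that pulling them back along $\cP$ produces precisely the categories $\mathrm{Fact}(f\mid g\circ h)$, rather than some larger diagram of factorizations with non-elementary targets.
\end{itemize}
To make the second point work, I would first try to use the Segal condition on $\cP$ itself to decompose a general active target $y\actarrow z$ into its elementary pieces $z\intarrow e$. This should reduce the general case to the elementary case covered by (CC).
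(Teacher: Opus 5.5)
Your Steps 1 and 2 follow the paper's architecture. \cref{prop:comparison algebrads vs segal spaces} identifies $\Algd(\cO)$ with $\CSeg(\Tree[\cO])$. Writing $X$ for the complete Segal presheaf of $\cP$, \cref{lemma:conduche reduction} then reduces exponentiability to the following: for every tree $[n;t]$, the pullback $v\colon X\times\Gamma[n;t]\to X\times[n;t]$ of a generating Segal extension must be a complete Segal extension. You also have to treat the completeness generators, which is easy (\cref{lemma:all is fine with completeness extension}) but absent from your proposal.

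The real gap is Step 3, where all the work lies: it is only asserted. Two things are missing.

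First, the locality of $\cP$ can only enter through the fact that $X':=X\times[n;t]\to[n;t]$ is a complete Segal fibration. If ``pulling back along the representables mapping to $\cP$'' means writing $\cP$ as a colimit of trees $R$, the hypothesis is lost. Indeed, $s\times R$ being a local equivalence for all trees $R$ would make every object of $\Algd(\cO)$ exponentiable.

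Second, and this is the missing idea, you need an explicit Segal replacement of $X'':=X'\times_{[n;t]}\Gamma[n;t]$. The paper uses the generators $\Gamma[n]=[n-1]\cup_{[0]}[1]\to[n]$ rather than spines, so there is only one gluing point. It sets $QX'=j_!j^*X'$, left Kan extended from the $n$-convex maps $[m;s]\to[n;t]$ (\cref{cons:Q}). Using the resolutions $T_k$ of \cref{con:functor T}, it shows three things (\cref{prop:computation Q} and \cref{prop:Q is a replacement}):
\begin{itemize}
\item on $n$-concave maps, $QX'$ is computed by the bar constructions $\colim_{[k]\in\Delta^\op}\Map(T_k[m]\boxtimes_{[n]}[n;s],X')$;
\item $X''\to QX'$ is a Segal extension;
\item $QX'\to[n;t]$ is a complete Segal fibration.
\end{itemize}
Hence $v$ is a complete Segal extension iff $QX'\to X'$ is an equivalence. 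That condition reduces to a single condition on trees $[2;t]$ (\cref{lemma:exponentiable Segal fibrations}). Only at that point does an Ayala--Francis argument identify the fibres with $|\Fact(f\mid g\circ h)|$ (\cref{lemma:dictionary cc conditions}).

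The fallback in your last paragraph does not work in general. Deducing the factorization condition over a non-elementary target from (CC), via the Segal condition on $\cP$, requires commuting the $\Delta^\op$-indexed bar construction with the limit over the elementary slice of the target. This is possible when that slice receives an initial functor from a finite set, as for operads (\cref{prop:main-thm-trees-vs-forests}), but not in general. \cref{prop:conter-example-forest-conduche} and \cref{rem:VDC-expo-vs-actives-conduche} give an exponentiable map of virtual double categories satisfying (CC) whose functor on actives is not exponentiable.

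The paper's argument never needs such targets. The chains that arise, $t_0\actarrow t_1=\cdots=t_1\actarrow t_2$ with $t_2$ elementary, are all trees. That is exactly why the indexing category must consist of chains ending at an elementary object.
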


In particular, if $\cP \to \cO$ is exponentiable in $\Cat_{\infty/\cO}$, then it is also exponentiable in $\Algd(\cO)$.
For $\infty$-operads, \cref{thmA} was also obtained by Hinich \cite[\S 2.8]{Hinich2020YonedaLemmaEnriched}, where he calls $\infty$-operads satisfying the condition \cref{CondCrit} \emph{flat}.
However, even in this case our proof is completely different and in particular does not rely on the model-categorical arguments from \cite[Appendix B]{HA}; see \cref{intro:other results} for a detailed comparison between our proof and previous proof strategies.

A natural follow-up question is whether this criterion is also necessary; that is, whether \cref{thmA} can be upgraded to an ``if and only if'' statement. 
Under a condition on $\cO$ called \emph{robustness} (see \cref{def:new robust}), our methods will show that this is indeed the case.

\begin{thmintro}\label{thmB}
    Let $\cO$ be a robust algebraic pattern. Then any exponentiable object in $\Algd(\cO)$ satisfies condition \cref{CondCrit} from \cref{thmA}.
\end{thmintro}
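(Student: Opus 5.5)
The plan is to reduce necessity to the classical necessity half of the Conduché criterion, by testing the right adjoint $[\cP,-]$ on carefully chosen algebrads. Suppose $\cP \to \cO$ is exponentiable in $\Algd(\cO)$, and fix a composable pair of actives $x \actarrow y \actarrow e$ with $e$ elementary, together with a lift $f$ of $g\circ h$ in $\cP$. Exponentiability means $-\times_\cO \cP \colon \Algd(\cO)\to\Algd(\cO)$ preserves colimits. We will exhibit a specific pushout in $\Algd(\cO)$ built from ``free'' algebrads on simplices, and show that $-\times_\cO\cP$ preserves it only if $\mathrm{Fact}(f\mid g\circ h)$ is weakly contractible. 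Classically, for $\Cat_{\infty/\cB}$ one uses the pushout $[1]\sqcup_{[0]}[1]\to[2]$, i.e.\ the inner horn $\Lambda^2_1\to\Delta^2$ mapping to $\cB$ via $(h,g)$. Pulling back along $p$ and comparing fibers over $f$ yields exactly the contractibility of $\mathrm{Fact}(f\mid g\circ h)$.

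\textbf{Step 1.} Using robustness (\cref{def:new robust}), produce free algebrads $F(\sigma)$ on active simplices $\sigma\colon[n]\to\cO^\act$ ending at an elementary object. Robustness should guarantee that the free algebrad on $[2]\to\cO$ picking out $x\actarrow y\actarrow e$ is computed as an $\infty$-category over $\cO$ without further Segal completion in the relevant fibers. Concretely, it should be the saturation of $\Delta^2$ by inert maps, and its restriction to actives over $e$ should be unchanged. In the same way, $F(\Lambda^2_1)\to F(\Delta^2)$ should be an equivalence in $\Algd(\cO)$, since the Segal/saturation localization inverts inner horns lying over an active composite into an elementary object.

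\textbf{Step 2.} Apply $-\times_\cO\cP$ and compute mapping data over $e$. Colimit preservation gives that $F(\Lambda^2_1)\times_\cO\cP\to F(\Delta^2)\times_\cO\cP$ is an equivalence after algebrad localization. We then evaluate the fiber over the lift $f$, or equivalently the space of maps from a free algebrad on a $1$-simplex picking $f$. On the $\Delta^2$ side this space is contributed by $\mathrm{Fact}(f\mid g\circ h)$ localized to its classifying space. On the $\Lambda^2_1$ side the composite edge is absent, and it is generated only through the localization, where it appears as $|\mathrm{Fact}(f\mid g\circ h)|$. Comparing with the direct edge forces $|\mathrm{Fact}(f\mid g\circ h)|\simeq *$. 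The new description of algebrads as Segal spaces on ``tree'' categories, promised in the abstract, should be the tool for computing these localizations fiberwise over elementary targets.

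\textbf{Main obstacle.} The crux is Step 2: showing that algebrad localization does not alter the relevant active-over-elementary mapping spaces of $F(\Lambda^2_1)\times_\cO\cP$ beyond adjoining the colimit over factorizations. This is where robustness is essential, since it must rule out inert decompositions of $x$ and $y$ contributing extra factorization data. I would first attempt this via the tree-category Segal space model, where the Segal conditions only involve elementary decompositions of the source. Under robustness, those decompositions should split off as a product that does not interact with the fiber over $f$.
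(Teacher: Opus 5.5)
\textbf{There is a genuine gap, and it is in Step 1.} Robustness does \emph{not} guarantee that the free algebrad on a simplex $x\actarrow y\actarrow e$ needs no further completion, i.e.\ that the tree $[2;t]$ is already a complete Segal $\Tree[\cO]$-space.

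This does hold when $\cO$ is atomically robust (\cref{cor:fibrant forest}). In that case your argument is essentially the short one in \cref{rem:thmB easy for atomic}. It fails, however, for robust patterns with disconnected elementary slices, such as $\Span(\F)^\flat$ and $\Span(\F_G)^\flat$, which cover operads and $G$-operads.

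Here is an explicit failure in $\Span(\F)^\flat$. Take $t=(\underline 3\actarrow\underline 2\actarrow\underline 1)$ with $1,2\mapsto 1$ and $3\mapsto 2$. This is a binary root vertex $r$, with a binary vertex $q$ on its first input and a unary vertex $w$ on its second. For $s=(\underline 2\xrightarrow{=}\underline 2\actarrow\underline 1)$, the spine $(\id,w,r)\colon\Sp[2;s]\to[2;t]$ does not extend to $[2;s]$. Indeed, over $\id_{[2]}$ the cartesian squares would force $|s_0|=|t_0|$, and no other map $[2]\to[2]$ realizes it. So $[2;t]$ is not even Segal.

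Its localization therefore adds the partial composites $r\circ_2 w$ and $r\circ_1 q$. These are new active morphisms into the elementary root, and they give new routes to the composite edge $r\circ(q,w)$. Consequently there is no reason for the fiber over $f$ of the localization of $\cP\times_\cO F(\Lambda^2_1)$ to be $|\mathrm{Fact}(f\mid g\circ h)|$. Computing it means controlling all these partial composites, and that is the real content of the theorem. Your hope that the components $\pi_0(y)$ ``split off'' has it backwards: those components are exactly what the Segal condition couples to the factorization data.

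\textbf{The missing idea is to avoid localizing over $[2;t]$ at all.}

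The paper grafts $h$ onto one component $i\in\pi_0(y)$ at a time, gluing in degenerate layers on the other components (\cref{cons:grafting}). Robustness is used precisely to show two things (\cref{prop:fibrancy of the grafting construction}):
\begin{enumerate}
\item the result $[u\circ^i t]$ is complete Segal;
\item $\Gamma[u\circ^i t]\to[u\circ^i t]$ is a Segal extension.
\end{enumerate}
Both come from relative Segality of $[n;t]\to q^*[n;\pi_0(t)]$ over $\Tree[\Span(\F)^\flat]$, combined with the operadic case.

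Over this complete Segal base one can write down an explicit replacement $RX$. Exponentiability then yields the Conduch\'e condition for single-component graftings only.

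The full condition (CC) is recovered by induction on the number of grafted components (\cref{cons:multigraphting}, \cref{lemma:the last technical lemma}). This step uses the bisimplicial objects $\Upsilon_{k,l}$ and the siftedness of $\Delta^{\op}$.

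Without this bootstrapping, your proposal establishes the theorem only for atomically robust patterns.
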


In \cref{subsec:examples-robust}, we show that there exist robust algebraic patterns $\cO$ such that $\Algd(\cO)$ is equivalent to the $\infty$-categories of
\begin{itemize}
    \item $\infty$-operads,
    \item equivariant $\infty$-operads, and
    \item virtual double $\infty$-categories.
\end{itemize}
For all of these $\infty$-categories, we then obtain a complete characterization of their exponentiable objects, which will be discussed in \cref{subsection:examples of CC} and \cref{subsection:examples of CC converse}. We could not find a proof of similar characterizations in the current literature; see \cref{intro:other results} for a thorough overview of what was known before.

\begin{remark}
    \cref{thmA} also provides a criterion for when a morphism $\mathscr{Q} \to \cP$ in $\Algd(\O)$ is exponentiable.
    To see this, note that by a proof similar to \cite[Corollary 4.1.17]{BarkanHaugsengSteinebrunner}, the category $\cP$ admits an algebraic pattern structure such that $\Algd(\cO)_{/\cP} \simeq \Algd(\cP)$.
    From this it follows that $\mathscr{Q} \to \cP$ is exponentiable in $\Algd(\cO)$ if and only if it is exponentiable as an object in $\Algd(\cP)$, to which we can apply \cref{thmA}.
    In fact, in our proof of \cref{thmA} we directly characterize the exponentiable morphisms in $\Algd(\cO)$, and not just the exponentiable objects.
\end{remark}

\subsection{The underlying graph of an exponential object}

An important feature of Day convolution for symmetric monoidal $\infty$-categories is that the underlying $\infty$-category of the $\infty$-operad $[\cC^\otimes,\cD^\otimes]$ is simply the functor category $\Fun(\cC,\cD)$.
This fact generalizes to algebrads.

To any algebrad $\cP \to \cO$ over an algebraic pattern $\cO$, one can assign its \emph{underlying graph}, which is a functor $\Gamma\cP \colon \cO^\elem \to \Cat_\infty$ generalizing the notion of \emph{underlying $\infty$-category} for $\infty$-operads.
The $\infty$-category $\Fun(\cO^\elem,\Cat_\infty)$ is cartesian closed, and it turns out exponential objects in $\Algd(\cO)$ are compatible with the internal hom $[-,-]$ of $\Fun(\cO^\elem,\Cat_\infty)$.
More precisely, we show the following.

\begin{thmintro}\label{thmC}
    Let $\cO$ be an algebraic pattern and let $\cP$ be exponentiable in $\Algd(\cO)$.
    Then for any algebrad $\cQ \to \cO$, the canonical comparison map
    \[\Gamma [\cP,\cQ] \to [\Gamma \cP, \Gamma \cQ]\]
    is an equivalence.
\end{thmintro}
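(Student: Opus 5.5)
We prove \cref{thmC} by Yoneda, testing both sides against a family of algebrads that corepresent the underlying graph. For $e \in \cO^\elem$ and an $\infty$-category $K$, write $\iota_!(e,K)$ for the free algebrad on the graph that is $K$ concentrated at $e$. Concretely, $\Gamma\colon \Algd(\cO) \to \Fun(\cO^\elem,\Cat_\infty)$ should have a left adjoint $L$, since $\Gamma$ is computed by pulling back along $\cO^\elem \to \cO$ and taking fibres, which preserves limits and filtered colimits. The maps $\Gamma\cP \to \Gamma\cQ$ are maps of graphs, so $\Gamma$ is at least a functor, and the comparison map $\Gamma[\cP,\cQ] \to [\Gamma\cP,\Gamma\cQ]$ is adjoint to $\Gamma[\cP,\cQ]\times\Gamma\cP \simeq \Gamma([\cP,\cQ]\times\cP) \to \Gamma\cQ$. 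This uses that $\Gamma$ preserves products, which holds because it is a right adjoint.

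The first step is to evaluate both sides on mapping spaces out of a test graph $G$:
\[\Hom(G,\Gamma[\cP,\cQ]) \simeq \Hom(LG\times \cP, \cQ), \qquad \Hom(G,[\Gamma\cP,\Gamma\cQ]) \simeq \Hom(LG,\,?)\]
Because $\Gamma$ preserves products, $\Hom(G,[\Gamma\cP,\Gamma\cQ]) \simeq \Hom(G\times\Gamma\cP,\Gamma\cQ) \simeq \Hom(L(G\times\Gamma\cP),\cQ)$. The comparison map is then induced by the canonical map
\[\theta\colon L(G\times\Gamma\cP) \to LG\times\cP,\]
the adjoint of $G\times\Gamma\cP \to \Gamma LG\times\Gamma\cP \simeq \Gamma(LG\times\cP)$. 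It therefore suffices to show that $\theta$ is an equivalence after applying $\Hom(-,\cQ)$ for every $\cQ$, which amounts to showing $\theta$ is an equivalence. This is a projection formula. It is plausible because $LG$ has only "trivial" multimorphisms: its active morphisms to elementary objects are just identities lifted along $G$, so a product with $\cP$ adds no new operations beyond those of $\Gamma\cP$.

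The second step reduces to generators. Graphs are generated under colimits by representables $h_e\times[n]$, in fact by $h_e\times[0]$ and $h_e\times[1]$. If $\cP$ is exponentiable, then $-\times\cP$ preserves colimits in $\Algd(\cO)$, and $L$ preserves colimits. Since $\Fun(\cO^\elem,\Cat_\infty)$ is cartesian closed, $-\times\Gamma\cP$ preserves colimits there too. So both sides of $\theta$ commute with colimits in $G$, and it suffices to treat $G = h_e\times[n]$. Here $LG$ is computed explicitly as the algebrad freely generated by an object or a morphism over $e$, and one checks directly that $\theta$ is an equivalence. Here one uses the paper's later explicit description of algebrads, for instance as Segal objects on tree categories, to identify $L(h_e\times[n])\times\cP$ fibrewise.

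I expect the main obstacle to be this last computation, namely identifying the product $L(h_e\times[n])\times\cP$ with the free algebrad on $h_e\times[n]\times\Gamma\cP$. The difficulty is that products in $\Algd(\cO)$ are fibre products over $\cO$ and the free algebrads are not obviously closed under them. My first attempt would be to exhibit $L(h_e\times[n])$ as the subcategory of $\cO_{/e}^{\act}$-type data, computing it as a left Kan extension along $\cO^\elem\to\cO^\inert$. I would then verify that pulling back $\cP$ along it recovers the left Kan extension of $\Gamma\cP$ restricted accordingly, using that inert-cocartesian lifts in $\cP$ are preserved by base change.
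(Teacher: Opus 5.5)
Your first step matches the paper's: by adjunction and the fact that $\Gamma$ preserves products, the comparison map is induced by $\theta\colon L(G\times\Gamma\cP)\to LG\times\cP$. So everything comes down to showing this projection formula is an equivalence.

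The gap is that you never prove it, and it is essentially the whole content of the theorem. Reducing to $G=h_e\times[n]$ is legitimate, but it only relocates the problem. The ``direct check'' is left undone, as you acknowledge yourself.

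The route you sketch for that check also goes wrong. For any algebrad, the Segal condition forces the fibre over $x$ to be a limit over $\cO^\elem_{x/}$ of fibres over elementaries. So $L(h_e\times[n])$ is not a left Kan extension along $\cO^\elem\to\cO^\inert$. For $\F_*^\flat$, that left Kan extension is empty over $\langle 2\rangle$, since there are no inert maps $\langle 1\rangle\to\langle 2\rangle$. The free algebrad, by contrast, is nonempty there.

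The missing idea is structural rather than computational. On tree spaces, $L$ is induced by left Kan extension $i_!$ along the fully faithful functor $i\colon\Delta\times(\cO^{\elem})^{\op}\to\Tree[\cO]$, $([n],e)\mapsto[n;e=\cdots=e]$. No non-unary tree admits a map to a unary one. Hence $i_!$ already lands in complete Segal objects, $L$ is fully faithful, and its essential image consists exactly of the algebrads $X$ with $X([n;t])=\emptyset$ for every non-unary tree $[n;t]$.

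Limits in $\CSeg(\Tree[\cO])$ are computed pointwise. So for non-unary $[n;t]$ we get $(LG\times\cP)([n;t])=LG([n;t])\times\cP([n;t])=\emptyset$, and $LG\times\cP$ lies in the image of $L$. On that image $\Gamma$ is inverse to $L$. After composing with the unit equivalence, $\Gamma\theta$ is identified with $\eta_G\times\id_{\Gamma\cP}$, which is an equivalence, so $\theta$ is an equivalence.

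This is exactly your heuristic that $LG$ has no non-trivial operations, made rigorous. It handles all $G$ at once and uses neither the reduction to generators nor the exponentiability of $\cP$. Exponentiability is needed only for $[\cP,\cQ]$ to exist.
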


\subsection{Algebrads as complete Segal presheaves on the tree category}

Our proof of \cref{thmA} and \cref{thmB} uses an alternative description of the $\infty$-category $\Algd(\cO)$ of $\cO$-algebrads which is of independent interest.
Namely, we show that to any algebraic pattern, one can associate a certain $\infty$-category of \emph{layered trees} $\Tree[\cO]$ such that $\Algd(\cO)$ is equivalent to the $\infty$-category of \emph{complete Segal} presheaves on $\Tree[\cO]$.\footnote{Our choice of terminology and notation is based on the theory of dendroidal sets, introduced in \cite{MoerdijkWeiss2007DendroidalSets} as a model for $\infty$-operads. However, applying our construction to the category $\F_*$ of finite pointed sets yields the category $\Tree[\F_*]$ of \emph{layered} trees, which admits a map to the Moerdijk--Weiss tree category $\Omega$ but is not equivalent to it.}
The objects in $\Tree[\cO]$ are strings $t_0 \actarrow \cdots \actarrow t_n$ of active morphisms in $\cO$ such that $t_n$ is elementary.
A morphism between two such strings $t_0 \actarrow \cdots \actarrow t_n$ and $s_0 \actarrow \cdots \actarrow s_m$ consists of a map $\phi \colon [n] \to [m]$ in $\Delta$ together with a diagram
\[\begin{tikzcd}
	{t_0} & {t_1} & \cdots & {t_{n-1}} & {t_n} \\
	{s_{\phi(0)}} & {s_{\phi(1)}} & \cdots & {s_{\phi(n-1)}} & {s_{\phi(n)}}
	\arrow[squiggly, from=1-1, to=1-2]
	\arrow[squiggly, from=1-2, to=1-3]
	\arrow[squiggly, from=1-3, to=1-4]
	\arrow[squiggly, from=1-4, to=1-5]
	\arrow[tail, from=2-1, to=1-1]
	\arrow[squiggly, from=2-1, to=2-2]
	\arrow[tail, from=2-2, to=1-2]
	\arrow[squiggly, from=2-2, to=2-3]
	\arrow[squiggly, from=2-3, to=2-4]
	\arrow[tail, from=2-4, to=1-4]
	\arrow[squiggly, from=2-4, to=2-5]
	\arrow[tail, from=2-5, to=1-5]
\end{tikzcd}\]
in $\cO$ whose vertical morphisms are inert.
A presheaf on $\Tree[\cO]$ is called \emph{Segal} if for any string $t_0 \actarrow \cdots \actarrow t_n$, the Segal map
\[X(t_0 \actarrow \cdots \actarrow t_n) \to X(t_0 \actarrow t_1) \times_{X(t_1)} \cdots \times_{X(t_{n-1})} X(t_{n-1} \actarrow t_n)\]
is an equivalence. We say that such an $X$ is \emph{complete} if for any elementary object $e$ in $\cO$, the Segal space
\[[n] \mapsto X(\underbrace{e = \cdots = e}_{n\text{--times}})\]
is complete.
We write $\CSeg(\Tree[\cO]) \subseteq \PSh(\Tree[\cO])$ for the full subcategory of complete Segal presheaves.

\begin{thmintro}\label{thmD}
    Let $\cO$ be an algebraic pattern.
    Then there is a natural equivalence
    \[\Algd(\cO) \simeq \CSeg(\Tree[\cO]).\]
\end{thmintro}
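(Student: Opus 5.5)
We construct the equivalence $\Algd(\cO) \simeq \CSeg(\Tree[\cO])$ in two stages, passing through an intermediate description of algebrads as certain presheaves of spaces.

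\textbf{Stage one: algebrads as presheaves on the active arrow category.} An $\cO$-algebrad $\cP \to \cO$ is in particular an object of $\Cat_{\infty/\cO}$. Using the inert--active factorization system, we first straighten $\cP$ along the inert direction. Concretely, consider the $\infty$-category of simplices of $\cO$ whose edges are active and whose last vertex is elementary, with inert maps between them (these are exactly the objects and morphisms of $\Tree[\cO]$). To $\cP$ we assign the presheaf
\[X_\cP(t_0 \actarrow \cdots \actarrow t_n) \coloneqq \Map_{/\cO}\bigl([n], \cP\bigr) \times_{\Map([n],\cO)} \{t_0 \actarrow \cdots \actarrow t_n\}.\]
This is the space of lifts of the string to $\cP$. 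Functoriality in the vertical inert maps uses the fact that an algebrad admits cocartesian lifts of inert morphisms; we use these to restrict a lift along the inert maps $s_{\phi(i)} \intarrow t_i$.

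\textbf{Stage two: Segal and completeness conditions.} For any $\cP \to \cO$, the Segal condition along the spine $[n]$ holds automatically, since $\Map([n],\cP)$ is computed by Segal decomposition of $\cP$ as a complete Segal space over that of $\cO$. Completeness at elementary objects likewise follows from completeness of $\cP$ restricted to the fibre over $e$. The genuinely algebrad-specific content is the weak Segal condition: the fibre over $x$ decomposes as a limit over the elementary objects under $x$, and mapping spaces over active maps decompose accordingly. The claim is that this condition is exactly what is needed for $\cP$ to be recovered from its values on strings ending in an elementary object. A string $t_0 \actarrow \cdots \actarrow t_n$ with $t_n$ arbitrary has its lifts determined by the lifts of the composites $t_i \actarrow t_n \intarrow e$, taken over $e \in \cO^\elem_{t_n/}$ and then restricted by inert–active factorization. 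So $X_\cP$ determines the full simplicial space of $\cP$ over $\cO$ via a right Kan extension.

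\textbf{Building the equivalence.} The inverse functor sends a complete Segal presheaf $X$ to the following construction:
\begin{itemize}
\item first right Kan extend $X$ to strings with arbitrary endpoint, along the inclusion into the category of all active strings with inert comparison maps;
\item then assemble these into a complete Segal space over $N\cO$ via the inert–active factorization, as in the Chu--Haugseng description of weak Segal fibrations as Segal $\cO$-spaces.
\end{itemize}

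One then shows the result is a weak Segal fibration and that the two constructions are mutually inverse. The main obstacle will be this comparison. We must check that general simplices of $\cP$, whose edges mix inert and active maps, are determined by the active-string data. We would handle this by factoring each simplex $[n] \to \cO$ uniquely into an inert part followed by an active part, using that factorization systems give $\Fun([n],\cO) \simeq$ the category of "zigzag" diagrams. We then use cocartesianness of inert lifts to reduce to purely active strings.

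Alternatively, and perhaps more cleanly, we can compare both sides to $\Seg$ functors on $\cO$ valued in $\Cat_\infty$ via straightening, when $\cO$ has the needed properties. In general, however, we would go through the explicit Kan extension argument above, verifying on generators that the unit and counit are equivalences.
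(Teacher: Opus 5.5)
Your outline has the right overall shape. The presheaf of lifts $X_\cP$ is exactly the nerve description the paper derives in \cref{ssec:unraveling}, and ``right Kan extend to all active strings'' is the paper's $i_*\colon \PSh(\Tree[\cO]) \to \PSh(\Forest[\cO])$. But the step you yourself call the main obstacle is where the proof actually lives, and you do not supply the idea that resolves it. To build the inverse you must produce a category $\cP \to \cO$ from a presheaf on $\Forest[\cO]$: its mixed inert/active morphisms, their coherent composition, and cocartesian lifts of inerts. Observing that every simplex of $\cO$ factors into inert and active pieces only restates this problem. The paper resolves it with two ingredients:
\begin{itemize}
\item Juran's theorem (\cref{prop:factorization system and double factorysation caregory}) that factorization systems are equivalent to factorization double categories;
\item the fact that $\cP \to \cO$ has cocartesian lifts of inerts (with the inerts of $\cP$ exactly the cocartesian ones) if and only if $\Sq_{L,R}(\cP) \to \Sq_{L,R}(\cO)$ is levelwise a left fibration.
\end{itemize}
Straightening over $\Delta^\op$ and \cref{lem:fiberwise lfibs} then identify such levelwise left fibrations with $\LFib(\Forest[\cO]^\op) \simeq \PSh(\Forest[\cO])$. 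This is also what makes the inert functoriality of $X_\cP$ coherent, which your Stage one only asserts. Your fallback through Segal $\cO$-categories cannot work in general. Straightening only captures the algebrads that are cocartesian fibrations, and only the maps preserving cocartesian edges.

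Stage two also contains an error and several omissions.
\begin{itemize}
\item The Segal condition for a $\Tree[\cO]$-space is not automatic. $\Sp[n;t]$ is glued from the forests $[1;t_{i,i+1}]$ and $[0;t_i]$, whose endpoints are not elementary. So by \cref{remark:unpacking-complete-segal-condition} the condition is a statement about $i_*X_\cP$. It holds only once you know $i_*i^*Y_\cP \simeq Y_\cP$ for the presheaf of lifts $Y_\cP$ on all of $\Forest[\cO]$. This is exactly where axioms (2) and (3) of an algebrad enter.
\item Your description of this Kan extension as a limit over $e \in \cO^\elem_{t_n/}$ is only correct for sound patterns (\cref{prop:RKan-trees-to-forest-sound}). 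In general $i_*X$ is characterized by the pullback squares of \cref{prop:RKan-trees-to-forests}, which also involve $X(\langle n-1;t_{\leq n-1}\rangle)$. These reduce to the level-one algebrad square only in the presence of the forest Segal condition (\cref{prop:RKan-trees-to-forests-simplified}).
\item Conversely, for $X$ complete Segal on trees you still have to show that $i_*X$ satisfies the Segal condition on every forest. You also need completeness at non-elementary objects, so that the resulting double category is complete, as Juran's theorem requires. The paper gets both from \cref{prop:simplicial segal extension} and the forest decomposition formula \cref{prop:forest decomposition}; nothing in your sketch plays that role.
\item The naturality in strong Segal maps asserted in the statement is not addressed.
\end{itemize}
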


Concretely, the equivalence $\Algd(\cO) \to \CSeg(\Tree[\cO])$ is given by a \emph{nerve} construction with respect to a certain functor $\Tree[\cO] \to \Algd(\cO)$; see \cref{ssec:unraveling} for details.
Similar equivalences were studied by Barwick \cite{Barwick} in the context of operator categories and by Kern \cite{Kern} for a special class of algebraic patterns called \emph{combinatorial} algebraic patterns.
Furthermore, Felix Naß has an unpublished proof of this equivalence in the case of soundly extendable patterns that uses different methods.

In practice, we will demonstrate \cref{thmD} by passing through an auxiliary gadget. Using Juran's  double $\infty$-categorical perspective on factorization systems \cite{BrankoJuran}, we may equivalently view every algebraic pattern $\O$ as a particular double $\infty$-category $\O^\mathrm{dbl}$ with a distinguished class of elementary objects. This intermediate double $\infty$-categorical point of view on algebraic patterns will turn out to be convenient to simplify arguments and formulate statements throughout the paper. For instance, we may construct the tree $\infty$-category $\Tree[\O]$ by unstraightening the underlying functor $\Delta^\op \to \Cat$ of $\O^{\mathrm{dbl}}$.

The $\infty$-category $\Tree[\cO]$ behaves quite similarly to the simplex category $\Delta$.
This will allow us to prove \cref{thmA} and \cref{thmB} by a strategy similar to that of Ayala--Francis \cite[Lemma 2.2.8]{AyalaFrancis2020FibrationsInftyCategories}.

In fact, we will demonstrate that $\Tree[\O]^\op$ can again be given the structure of an algebraic pattern. The complete Segal presheaves on $\Tree[\O]$ are precisely the Segal objects for this pattern, in the sense of Chu--Haugseng \cite{ChuHaugseng2021HomotopycoherentAlgebraSegal}, that admit an additional univalence condition. In particular, this gives a way to iterate the tree construction (see \cref{ex:iterated trees}) and to produce a tower of (non-full) inclusions 
$$
\Algd(\O) \to \Algd(\Tree[\O]^\op) \to \Algd(\Tree^2[\O]^\op) \to \dotsb.
$$

\subsection{Relation to other results}\label{intro:other results}
Exponentiability in the $\infty$-category of $\infty$-operads was studied by Lurie \cite[\S 2.2.6]{HA} and Hinich \cite[\S 2.8]{Hinich2020YonedaLemmaEnriched}, who proved \cref{thmA} in this setting.
This was subsequently generalized by Nardin--Shah \cite[\S 3]{NardinShah}, who proved a version of \cref{thmA} in the setting of $G$-$\infty$-operads.
In \cite{ChuHaugseng2023FreeAlgebrasDay}, Chu--Haugseng prove a version of Day convolution for \emph{cartesian} algebraic patterns $\cO$ when the source is an $\cO$-monoidal $\infty$-category and the target is the $\infty$-category of spaces.
Our \cref{thmA} generalizes all these cases.
Moreover, to our knowledge there is no place in the literature that establishes \emph{necessary} conditions for exponentiability (our \cref{thmB}). 
We will now compare our proof strategy to that of Lurie, Hinich and Nardin--Shah and explain why the same strategy cannot be used to prove a result at the level of generality of \cref{thmA}.

The proof strategies of Nardin--Shah and Hinich are essentially the same as that of Lurie: they verify that the conditions of \cite[Theorem B.4.2]{HA} are satisfied.
Roughly, this goes as follows:
Given a map of $\infty$-operads $f \colon \cQ \to \cP$, one obtains a pullback functor
\[f^* \colon \Cocart^\inert(\cP) \to \Cocart^\inert(\cQ)\]
where $\Cocart^\inert(-)$ denotes the sub-$\infty$-category of $(\Cat_{\infty})_{/-}$ spanned by functors that admit cocartesian lifts of inerts, and maps between them that preserve these.
One easily observes that $f^*$ takes $\infty$-operads over $\cP$ to $\infty$-operads over $\cQ$.
Moreover, the functor $f^*$ admits a right adjoint $f_* \colon \Cocart^\inert(\cQ) \to \Cocart^\inert(\cP)$ if and only if $\cQ^\activ \to \cP^\activ$ is exponentiable (cf.\ \cref{prop:expo-inert-fibrations}).
The exponentiability of $f \colon \cQ \to \cP$ is then proved by verifying that this right adjoint $f_*$ takes $\infty$-operads over $\cQ$ to $\infty$-operads over $\cP$.

Observe that this strategy can only work if $\cQ^\activ \to \cP^\activ$ is exponentiable, because otherwise the right adjoint $f_* \colon \Cocart^\inert(\cQ) \to \Cocart^\inert(\cP)$ does not exist.
However, our main result \cref{thmA} only requires the Conduché criterion for pairs of maps $x \actarrow y \actarrow e$ that end with an elementary object, not for all maps in $\cP^\activ$.
In the case of $\infty$-operads, these conditions are equivalent by \cite[Lemma 2.8.2]{Hinich2020YonedaLemmaEnriched}, but Hinich's argument does not go through for most other algebraic patterns.
In \cref{subsection:non-triv exp map between vdc}, we give an explicit counterexample of a map between virtual double $\infty$-categories $\cQ \to \cP$ that is exponentiable but for which $\cQ^\activ \to \cP^\activ$ is not exponentiable in $\Cat$.
In particular, the proof strategy of Lurie, Nardin--Shah and Hinich cannot be used to characterize all exponentiable morphisms between virtual double $\infty$-categories.
(It is also worth pointing out that condition (5) of \cite[Theorem B.4.2]{HA} does not hold for most algebraic patterns.)

Instead, our strategy is to write $\Algd(\cO)$ as the localization $\CSeg(\Tree[\cO])$ of a presheaf $\infty$-category (\cref{thmD}).
Since any morphism $f \colon X \to Y$ in a presheaf $\infty$-category is exponentiable, we lose the requirement that $\cQ^\activ \to \cP^\activ$ is exponentiable this way.
We then prove \cref{thmA} by studying when the right adjoint $f_* \colon \PSh(\Tree[\cO])_{/X} \to \PSh(\Tree[\cO])_{/Y}$ preserves complete Segal objects.

Finally, let us mention that similar results exist in the strict categorical setting. Day introduced \textit{promonoidal symmetric monoidal categories} in \cite{Day1970ClosedCategoriesFunctors}. These structures were characterized by Pisani as operads that satisfy a 1-categorical version of condition \cref{CondCrit}, and moreover, Pisani showed these are precisely the exponentiable operads, thereby proving a version of \cref{thmB} for ordinary operads \cite{Pisani2014SeqMulticategories}. In the setting of ordinary virtual double categories, it was recently shown by Arkor \cite{Arkor} that all pseudo double categories are exponentiable as virtual double categories. More generally, a characterization of all exponentiable virtual double categories was recently announced by Thompson \cite{EaEThompson}.
The $\infty$-categorical versions of these statements are a consequence of \cref{thmA} and \cref{thmB}, see also \cref{exa:double-cat-is-expo}.
We suspect that versions of \cref{thmA} and \cref{thmB} in the strict setting can be deduced by restricting to truncated objects in $\Seg(\Tree[\cO])$ or $\CSeg(\Tree[\cO])$, but we will not pursue this here.

\subsection{Outline of the paper}

In \cref{sec:Algebrads}, we recall some basic facts on algebraic patterns and discuss our main examples.
We then reformulate this theory in terms of double $\infty$-categories, using Juran's \cite{BrankoJuran} equivalence between $\infty$-categories equipped with a factorization system and factorization double $\infty$-categories.
In \cref{sec:forest-and-tree-cats}, we define the category of trees $\Tree[\cO]$ for any algebraic pattern and study its basic properties.
Here we also define robustness for algebraic patterns and show that most of our examples satisfy this property.
Combining the results from \cref{sec:Algebrads} and \cref{sec:forest-and-tree-cats}, we establish \cref{thmD} in \cref{sec:algebrads_as_segal_presheaves_on_the_tree_category}.
We then provide sufficient conditions for the exponentiability of objects in $\CSeg(\Tree[\cO])$ in \cref{sec:exponentiable-objects}, exploiting the simplicial nature of the category $\Tree[\cO]$.
Using \cref{thmD}, we translate this characterization back to the setting of algebrads over $\cO$ and establish \cref{thmA}.
In \cref{sec:Underlying-graph}, we study the underlying graphs of our exponential objects and prove \cref{thmC}.
After that, we shift our attention to proving \cref{thmB}. As a preliminary step, we introduce \textit{robust} algebraic patterns in \cref{section:robust patterns}. We then demonstrate \cref{thmB} in \cref{section:necessity}. We work out condition (CC) for various examples of patterns in \cref{subsection:examples of CC,subsection:examples of CC converse}.
We conclude the paper with the short \cref{sec:examples} where we discuss a few examples of exponentiable $\cO$-algebrads.
In particular, we show that to any $\infty$-category $\cC$ one can associate its \emph{virtual} cospan double $\infty$-category $\DCospan(\cC)$ (even if $\cC$ does not admit pushouts) and that $\DCospan(\cC)$ is always an exponentiable object in the category of virtual double $\infty$-categories.

\subsection{Future work} In a planned future work, we aim to use the results of this paper to prove a universal property of mapping out of virtual double $\infty$-categories of cospans.
The goal is to establish the same property as was obtained by Dawson--Par\'e--Pronk in the strict setting \cite{DawsonParePronk}. Our strategy involves exponentiating double $\infty$-categories by virtual cospan double $\infty$-categories. As a stepping stone, we already show in \cref{subsection:cospans} that virtual cospan double $\infty$-categories are indeed exponentiable.

The latter two authors are developing the basic aspects of category theory for algebrads in a sequel to this work. The Day convolution of algebrads proven here, can be used to define presheaf algebrads. One may then show a suitable version of the Yoneda lemma, and develop a theory of Kan extensions and cocompletions for algebrads. When specializing the theory to the pattern describing operads, the operadic Kan extensions of Lurie \cite[\S 3.1]{HA} will be recovered.

\subsection*{Acknowledgments}
The authors are grateful to the Max Planck Institute for Mathematics in Bonn for its hospitality and support during the writing of this paper.

\subsection*{Conventions}\label[subsecconventions]{conventions}
Throughout the rest of this article, we will make use of the following conventions:
\begin{enumerate}
    \item From now on, we will drop the `$\infty$' symbol from our notation. For example, we will refer to $\infty$-categories as categories, to $\infty$-operads as operads and to $(\infty,n)$-categories as $n$-categories.
    \item The category of spaces or ($\infty$-)groupoids is denoted by $\cS$.
    \item If $\C$ is a category, then we will write $\PSh(\C) \coloneqq \Fun(\C^\op, \cS)$ for the category of presheaves on $\C$.
    \item If $\C$ is a category, then we will write $\Ar(\C) \coloneqq \Fun([1],\C)$ for the category of arrows in $\C$.
    \item If $x$ and $y$ are objects of a category $\C$, then we write $\Hom_\C(x,y)$ or $\Hom(x,y)$ for the space of maps from $x$ to $y$.
    \item If $\cC$ is a category that admits cartesian products and $c$ is an object in $\cC$ such that $c \times -$ admits a right adjoint, then we will denote this right adjoint by $[c,-]$ and call it an \emph{internal hom-object} or \emph{exponential object}.
    \item Given a sequence $t = (t_0 \to \cdots \to t_n)$ of composable maps in a category, we will write $t_{i-1}$ for the $i$-th object of this sequence. Given $0 \leq i \leq j \leq n$, we will write $t_{i,j}$ for the subsequence $(t_i \to \cdots \to t_j)$, and we write $t_{\leq i}$ for $(t_{0} \to \cdots \to t_{i})$ and $t_{\geq j}$ for $(t_j \to \cdots \to t_n)$.
    \item Suppose that we are given a commutative square
    \[
        \begin{tikzcd}
            a\arrow[r]\arrow[d] & b \arrow[d] \\
            c \arrow[r] & d
        \end{tikzcd}
    \]
    in a category $\C$. If $\C$ has pullbacks, then the unique map $a \to b\times_d c$ will be called the \textit{gap} map associated to the square. Dually, if $\C$ has pushouts, then the unique map $b \cup_a c \to d$ will be called the \textit{cogap} map associated to the square.
\end{enumerate}

\section{Algebrads}\label{sec:Algebrads}

We commence by discussing the notions that play a key role throughout this article.

\subsection{Recollections on algebraic patterns}

We briefly recall the theory of algebraic patterns. For details, the reader is referred to \cite{ChuHaugseng2021HomotopycoherentAlgebraSegal,BarkanHaugsengSteinebrunner}.

\begin{definition}
    An \textit{algebraic pattern} is a category $\cO$ equipped with a factorization system $(\cO^\inert,\cO^\act)$ and a full subcategory $\cO^\el$ of $\cO^\inert$. We will write $\AlgPatt$ for the category of algebraic patterns.
\end{definition}

\begin{notation}
    We will use the following notation and terminology:
    \begin{itemize}
        \item The morphisms in  $\cO^\inert$ and $\cO^\activ$ will be called \emph{inert} and \emph{active} and often be denoted by $\intarrow$ and $\actarrow$, respectively.
        \item The objects in $\cO^\elem$ are called \emph{elementary}.
        \item Given an object $x$ in $\cO$, we will write $\cO^\elem_{x/}$ for the fiber product $\cO^\elem \times_{\cO} \cO_{x/}$.
        \item The full subcategories of $\Ar(\cO) $ spanned by the inert and active morphisms will be denoted $\Ar_\inert(\cO)$ and $\Ar_\activ(\cO)$, respectively.
    \end{itemize}
\end{notation}

\begin{definition}[Chu--Haugseng]
    \label{defi:of algebrad for factactization system}
   Let $\cO$ be an algebraic pattern. A functor $p \colon \P \to \O$ is called an \emph{$\cO$-algebrad}, or \emph{algebrad} for short, if the following conditions hold:
   \begin{enumerate}[(1)]
       \item $p$ has cocartesian lifts of inert morphisms in $\O$, 
       \item for all $x\in \O$, the canonical functor 
      $$\P_x\to \lim_{(x\intarrow e)\in \O^{\el}_{x/}}\P_e$$
      is an equivalence,
       
     \item\label{defi:algd fact:item3} for any $x,y$ in $\cO$, any $\overline x \in \cP_x$ and any $\overline y \in \cP_y$, the square
    \[\begin{tikzcd}
    	{\Map_\cP(\overline x, \overline y)} & {\lim\limits_{(\phi \colon y \intarrow e) \in \cO^\elem_{y/}}\Map_\cP(\overline x,\phi_! \overline y)} \\
    	{\Map_\cO(x,y)} & {\lim\limits_{(\phi \colon y \intarrow e) \in \cO^\elem_{y/}} \Map_\cO(x,e)}
    	\arrow[from=1-1, to=1-2]
    	\arrow[from=1-1, to=2-1]
    	\arrow[from=1-2, to=2-2]
    	\arrow[from=2-1, to=2-2]
    \end{tikzcd}\]
    is a pullback square.
   \end{enumerate}
\end{definition}

\begin{remark}
In the work of Chu and Haugseng, {algebrads} were originally introduced as \textit{weak Segal fibrations} \cite[Definition 9.6]{ChuHaugseng2021HomotopycoherentAlgebraSegal}. However, in what follows we will give several equivalent definitions of algebraic patterns and of algebrads. In these other contexts, it seemed to us that the original terminology was less suitable. We therefore chose to modify it, and the term \textit{algebrad} seemed to us to be an appropriate choice, as it reflects that this notion is a generalization of an \textit{operad}.
\end{remark}

\begin{definition}\label{def:category-of-algebrads}
    If $\cP \to \cO$ is an algebrad, then $\cP$ inherits the structure of an algebraic pattern whose inert morphisms are the cocartesian lifts of inert morphisms of $\O$. The active morphisms (resp.\ elementary objects) are the ones lying over active morphisms  (resp.\ elementary objects) of $\O$. This was explained in \cite[Lemma 9.10]{ChuHaugseng2021HomotopycoherentAlgebraSegal}. We write $$\Algd(\cO)\subset \AlgPatt_{/\cO}$$ for the full subcategory spanned by the $\cO$-algebrads in this sense.
\end{definition}

\begin{example}\label{examples:examples of patterns}
    We have the following examples of algebrads:
    \begin{itemize}
        \item \textit{Operads}, as introduced by Lurie \cite{HA}, are precisely the algebrads for the algebraic pattern structure on $\F_*$ with the inerts and actives as in \cite[\S 2.1.1]{HA} and elementaries given by $\{\left<1\right>\}$. We will denote this algebraic pattern by $\F_*^\flat$.
        \item If we add the additional elementary $\left<0\right>$ to $\F_*$, then we obtain a pattern which we will denote by $\F_*^\natural$. The algebrads for this pattern are \textit{generalized operads} \cite[\S 2.3.2]{HA}.
        \item The category $\Delta^\op$ can be given the structure of an algebraic pattern where the inert morphisms are the inclusions of convex subsets, the actives are the morphisms that preserve the minimal and maximal elements, and the only elementary object is $[1]$. Algebrads for this pattern are \textit{non-symmetric operads} (see \cite[Definition 3.1.3]{GepnerHaugseng2015EnrichedCategoriesNonsymmetric} and \cite[Definition 4.1.3.2]{HA}).
        This algebraic pattern structure will be denoted $\Delta^{\op,\flat}$.
        \item Choosing the elementaries of $\Delta^\op$ to be $\{[0],[1]\}$ instead, its algebrads are known as \textit{virtual double categories} (called \textit{generalized non-symmetric operads} in \cite{GepnerHaugseng2015EnrichedCategoriesNonsymmetric}).
        This pattern structure will be denoted $\Delta^{\op,\natural}$ to distinguish it from the previous one.
        \item Let $\F_G$ be the category of finite $G$-sets for $G$ a finite group. The (2,1)-category $\Span(\F_G)$ of spans in $\F_G$ admits the structure of an algebraic pattern whose inert and active maps are of the form 
        \[
            \begin{tikzcd}[column sep = tiny, row sep = tiny]
                & X\arrow[dl]\arrow[dr,"\cong"] \\ 
                Y & & X'
            \end{tikzcd} ~~~~ \text{and} ~~~~
             \begin{tikzcd}[column sep = tiny, row sep = tiny]
                & X\arrow[dl,"\cong"']\arrow[dr] \\ 
                X' & & Y
            \end{tikzcd}
        \]respectively, where the $\cong$-marked maps are isomorphisms. Its elementary objects are the transitive $G$-sets, i.e.\ the \textit{$G$-orbits}. Then as remarked in \cite[Observation  5.2.12]{BarkanHaugsengSteinebrunner}, the algebrads for this pattern deserve to be called \textit{$G$-operads}. We will write $\Span(\F_G)^\flat$ for this pattern structure.
        \item We will also consider the pattern structure $\Span(\F)^\natural$ on $\Span(\F)$ with the same actives and inerts as above (taking $G$ to be trivial), and where the elementary objects are the finite sets of cardinality at most $1$.
        We will see in \cref{exa:CC-necessary-genoperad} below that the inclusion $\F_* \hookrightarrow \Span(\F)$ induces an equivalence $\Algd(\F_*^\natural) \simeq \Algd(\Span(\F)^\natural)$, so algebrads for this pattern also describe generalized operads. (This also follows by applying \cite[Theorem A]{BarkanHaugsengSteinebrunner} to $\F_*^\natural \hookrightarrow \Span(\F)^\natural$.)
    \end{itemize}
\end{example}

\begin{remark}\label{remark:algebrads-all-elementary}
    If every object in $\cO$ is elementary, then $\cP \to \cO$ is an $\cO$-algebrad precisely if it admits cocartesian lifts of inerts.
    In this case $\Algd(\cO)$ is equivalent to the subcategory $\Cocart^\inert(\cO) \hookrightarrow \Cat_{/\cO}$ whose objects are the functors $\cC \to \cO$ that admit cocartesian lifts over $\cO^\inert$ and whose morphisms are those functors that preserve cocartesian lifts over $\cO^\inert$.
\end{remark}

We also recall the following notion from \cite{ChuHaugseng2021HomotopycoherentAlgebraSegal}:

\begin{definition}\label{def:segal objects}
Let $\C$ be a limit complete category. A functor $\P : \O \to \C$
is called a \textit{Segal $\O$-object} in $\C$ if the canonical map 
$$
\P(x) \to \lim_{(x\intarrow e) \in \O^{\el}_{x/}} \P(e)
$$
is an equivalence. We will write $\Seg(\O, \C) \subset \Fun(\O,\C)$ for the full subcategory spanned by the Segal $\O$-objects.
If $\C = \Cat$, then Segal $\O$-objects are referred to as \textit{Segal $\O$-categories}.
\end{definition}

It follows from \cite[Remark 9.11]{ChuHaugseng2021HomotopycoherentAlgebraSegal} that the Grothendieck construction $\Fun(\O,\Cat) \to \Cat_{/\O}$ restricts to a functor 
$\Seg(\O, \Cat) \to \Algd(\O)$ that selects the (non-full) subcategory spanned by the algebrads $\P \to \O$ that are cocartesian fibrations, and maps between algebrads that preserve cocartesian arrows.

\begin{example}
    We have the following examples of Segal categories:
    \begin{enumerate}
        \item \textit{Symmetric monoidal categories} are the Segal categories for the pattern $\F_*$.
        \item \textit{Monoidal categories} are the Segal categories for the pattern $\Delta^{\op, \flat}$.
        \item \textit{Double categories} are the Segal categories for the pattern $\Delta^{\op, \natural}$; see also \cref{def:double category}.
        \item \textit{Symmetric monoidal $G$-categories} are the Segal categories for the pattern $\Span(\F_G)^\flat$; see \cite[\S 2.3]{NardinShah} and \cite[\S 5.2]{BarkanHaugsengSteinebrunner}.
    \end{enumerate}
\end{example}

\subsection{Algebraic patterns as double categories}
\label{section:algebraic pattern}

In this subsection, we will introduce a double categorical perspective on the previous theory. 

\begin{definition}\label{def:double category}
A \textit{double category} is a functor $\P \colon \Delta^{\op}\to \Cat$ such that for any $n$, the functor 
$$\P_n\to \P_1\times_{\P_0}\dotsb\times_{\P_0}\P_1$$
is an equivalence.
A double category $\cP$ will be called \emph{complete} if moreover the functor
\[\cP_0 \to \cP_3 \times_{\cP_{\{0 \leq 2\}} \times \cP_{\{1 \leq 3\}}} (\cP_0 \times \cP_0)\]
is an equivalence.
The objects and morphisms of $\P_0$ are called the \textit{objects} and \textit{vertical morphisms} of $\P$. The objects and morphisms of $\P_1$ are referred to as the \textit{horizontal morphisms} and \textit{2-cells} of $\P$.
The full subcategory of $\Fun(\Delta^\op,\Cat)$ of double categories will be denoted $\DblCat$.
\end{definition}

\begin{example}
    Let $\C$ be a category. The \textit{double category $
    \Sq(\C)$ of squares in $\C$} is defined by 
    $$
    \Sq(\C)_n \coloneqq \Fun([n],\C).
    $$
\end{example}

We recall the following definition of Juran \cite{BrankoJuran}:

\begin{definition}[Juran]
A complete double category $\P$ is called a \emph{factorization double category} if the target map $t \colon \P_1 \to \P_0$ is a left fibration. In other words, if every solid diagram as pictured below
\[\begin{tikzcd}
	x & y \\
	{y'} & z
	\arrow[""{name=0, anchor=center, inner sep=0}, from=1-1, to=1-2]
	\arrow[dashed, from=1-1, to=2-1]
	\arrow[from=1-2, to=2-2]
	\arrow[""{name=1, anchor=center, inner sep=0}, dashed, from=2-1, to=2-2]
	\arrow[between={0.2}{0.8}, Rightarrow, dashed, from=0, to=1]
\end{tikzcd}\]
can be extended to the dashed 2-cell in an essentially unique way. 
\end{definition}

\begin{remark}
We have swapped the roles of the vertical and the horizontal morphisms with respect to the definition of a factorization double category given in \cite{BrankoJuran}.
This convention will be important in \cref{sec:forest-and-tree-cats} when we define the forest and tree categories of an algebraic pattern.
\end{remark}

\begin{remark}\label{rem:slices-factorization-double-category}
If $\cP$ is a factorization double category, then the Segal condition implies that for every $n \geq 1$, the target map $\cP_n \to \cP_0$ induced by $\{n\} \hookrightarrow [n]$ is a left fibration.
In particular, for any $t = (t_0 \to t_1 \to \cdots \to t_n)$ in $\cP_n$, the target projection $(\cP_n)_{t/} \to (\cP_0)_{t_n/}$ is an equivalence.
\end{remark}

The name for these structures is inspired by the following result.

\begin{construction}
    Let $(\C, \C_L, \C_R)$ be a factorization system. We denote by  $\Sq_{L,R}(\C)$ the  sub double category of  $\Sq(\C)$ whose vertical arrows are contained in $\C_L$ and whose horizontal arrows are contained in $\C_R$.
\end{construction}

\begin{proposition}[Juran]\label{prop:factorization system and double factorysation caregory}
    The construction $(\C, \C_L,\C_R) \mapsto \Sq_{L,R}(\C)$ defines a fully faithful functor $\Sq_{L,R} \colon \Fact \to \DblCat$, where $\Fact$ is the category of factorization systems.
    Its essential image consists precisely of the factorization double categories.
\end{proposition}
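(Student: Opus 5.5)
We prove Juran's proposition in three steps: check that $\Sq_{L,R}(\C)$ is a factorization double category, prove full faithfulness, and then reconstruct a factorization system from an arbitrary factorization double category.

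\emph{Step 1: $\Sq_{L,R}$ lands in factorization double categories.} Write $\Sq_{L,R}(\C)_n$ for the subcategory of $\Fun([n],\C)$ whose objects are strings of $R$-morphisms and whose morphisms are natural transformations with components in $L$. The Segal condition is inherited from $\Sq(\C)$: a string in $\C_R$ is determined by its consecutive pieces, and likewise for $L$-transformations. Completeness reduces to the fact that $\C_L\cap\C_R$ consists of the equivalences. For the left fibration condition, a lifting problem for $t\colon \Sq_{L,R}(\C)_1\to\C_L$ consists of an $R$-map $y'\to z$ and an $L$-map $y\to z$. The lifting problem asks for a square with top edge an $R$-map $x\to y$, left edge $L$, and right edge the given $L$-map; on reading the diagram in the definition, one sees that $t$ here is the projection $\Ar_R(\C)\to \C$ restricted to $L$-morphisms. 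Unique liftability along $L$-maps into the target then follows from orthogonality together with the essential uniqueness of $(L,R)$-factorizations: the space of such squares is a space of factorizations of a fixed composite, hence contractible. I would phrase this via the standard equivalence $\Ar(\C)\simeq \Ar_L(\C)\times_\C \Ar_R(\C)$ given by factoring.

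\emph{Step 2: full faithfulness.} A map of double categories $\Sq_{L,R}(\C)\to\Sq_{L',R'}(\D)$ consists of a functor $\C_L\to\D_{L'}$, compatibly with $\C_R\to \D_{R'}$ and with the squares. Every morphism of $\C$ factors essentially uniquely as $r\circ l$; equivalently, $\C$ is the localization, or rather the Segal-type colimit, $\C\simeq$ the category whose morphisms are the squares $(l,r)$ with trivial edges. Concretely, I would exhibit $\C$ as recovered functorially from the double category, for instance as the "vertical composite of horizontal and vertical" via $\Sq_{L,R}(\C)_1\times_{\Sq_0}$ data. Thus a double functor determines a unique functor $\C\to\D$ preserving $L$ and $R$. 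The main obstacle is making "recovering $\C$" functorial and coherent, rather than only on objects and morphisms. I would try a functor $\Phi\colon\DblCat\to\Cat$ obtained by taking the category of elements or the "total" category: take the bisimplicial space $(m,n)\mapsto \Hom([m],\P_n)$ and restrict along the diagonal-like construction that composes an $L$-column with an $R$-row. Showing $\Phi\Sq_{L,R}(\C)\simeq\C$ uses the factorization equivalence above, and this provides the left inverse needed for fully faithfulness.

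\emph{Step 3: essential image.} Given a factorization double category $\P$, set $\C=\Phi(\P)$, with $L$ the vertical and $R$ the horizontal morphisms. The left fibration property supplies exactly the unique "swap" of an $R$-then-$L$ composite into an $L$-then-$R$ composite, which yields a composition law on $\C$ and essentially unique factorizations. Completeness ensures $L\cap R$ consists of equivalences and that $\P_0$ is the correct space of objects. Verifying $\P\simeq\Sq_{L,R}(\C)$ then reduces, by the Segal condition, to levels $0$ and $1$, where it follows from the construction.
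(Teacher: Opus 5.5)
You should know that the paper does not reprove this statement; it quotes \cite[Theorem 3.19]{BrankoJuran}. As a self-contained argument, your proposal has a genuine gap, and it sits exactly where you locate ``the main obstacle''.

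\textbf{Step 2.} The logic does not work as stated. A left inverse $\Phi$ with $\Phi\circ\Sq_{L,R}\simeq\id$ only shows that the map $\Map_{\Fact}(\C,\D)\to\Map_{\DblCat}(\Sq_{L,R}\C,\Sq_{L',R'}\D)$ admits a retraction. It does not show that every double functor is of the form $\Sq_{L,R}(F)$. For that you need an adjunction.

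Your diagonal construction can supply one, once it is followed by complete Segal localization so that it lands in $\Cat$. It is then $\Phi(\P)\simeq\int^{[n]\in\Delta}\P_n\times[n]$, i.e.\ the localization of $[k]\mapsto\Map([k],\P_k)$. This is left adjoint to $\Sq\colon\Cat\to\DblCat$, because $\Sq(\D)_n=\Fun([n],\D)$. Since $\Sq_{L',R'}(\D)\subset\Sq(\D)$ is levelwise a subcategory inclusion, $\Map_{\DblCat}(\Sq_{L,R}\C,\Sq_{L',R'}\D)$ becomes a union of components of $\Map_{\Cat}(\Phi\Sq_{L,R}\C,\D)$. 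Full faithfulness then follows once you know that the map $\Phi(\Sq_{L,R}\C)\to\C$ adjoint to $\Sq_{L,R}\C\subset\Sq\C$ is an equivalence.

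That equivalence is the real content, and ``uses the factorization equivalence above'' does not prove it. The diagonal simplicial space consists of $[k]\times[k]$-grids with $L$-edges in one direction and $R$-edges in the other. Its map to $N\C$ is in general not even surjective on $1$-simplices, since the diagonal of such a square must admit both an $(L,R)$- and an $(R,L)$-factorization. So everything happens in the Segal completion. You need a coherent argument using the unique factorizations of all composites in a string, not just $\Ar(\C)\simeq\Ar_L(\C)\times_\C\Ar_R(\C)$.

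\textbf{Step 3.} The same gap appears in a sharper form. For a factorization double category $\P$ you must show three things: that $\P_0$ and the horizontal category of $\P$ map to subcategories of $\Phi(\P)$ (this is where completeness enters); that these form a factorization system; and that the induced $\P\to\Sq_{L,R}(\Phi\P)$ is an equivalence in degrees $0$ and $1$. None of this follows ``from the construction''. Mapping spaces of a colimit in $\Cat$ are not accessible without an explicit model, and the ``swap'' given by the left fibration condition defines a composition law only up to homotopy.

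A more tractable route is to build $\C$ directly as a simplicial space, for instance $\C_n=\Map_{\DblCat}(\mathbb{T}_n,\P)$. Here $\mathbb{T}_n$ is the double category on the poset $\{(i,j):0\le i\le j\le n\}$ whose vertical arrows raise $j$ and whose horizontal arrows raise $i$. In $\Sq_{L,R}(\C)$, such a diagram is the array of factorizations of all composites of a string $c_0\to\cdots\to c_n$. The Segal condition for $\C$ then follows from iterated use of the left fibration condition on $t\colon\P_1\to\P_0$, and completeness of $\P$ is what should make $\C$ complete.

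\textbf{Step 1.} You have the lifting problem backwards. The given data are an object $x\to y$ of $\P_1$ (the top edge) and a vertical arrow $y\to z$; the unknowns are $y'$, $x\to y'$ and $y'\to z$. With the data as you list them (bottom and right edges given), this would be a right fibration condition, whose fillers are not factorizations. Your concluding sentence, identifying the fillers with the contractible space of factorizations of $x\to y\to z$, is the correct argument.
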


\begin{proof}
This is \cite[Theorem 3.19]{BrankoJuran}.
\end{proof}

This motivates the following definition.

\begin{definition}\label{def:double-pattern}
    A \textit{double pattern} $\O$ is a factorization double category together with a full subcategory $\cO_0^\elem \subset \cO_0$, called the \textit{elementaries}. In this case, the vertical arrows of $\O$ are called \emph{inert} and denoted $\intarrow$, and the horizontal arrows are called \emph{active} and denoted $\actarrow$.
    
    For any $n \geq 0$, we define 
    $\O_n^{\el}$ as the full subcategory of $\O_n$ fitting in the pullback
    \[\begin{tikzcd}
    	{\O_n^{\el}} & {\O_n} \\
    	{\O_{0}^{\el}} & {\O_{0}},
    	\arrow[from=1-1, to=1-2]
    	\arrow[from=1-1, to=2-1]
    	\arrow[from=1-2, to=2-2]
    	\arrow[from=2-1, to=2-2]
    \end{tikzcd}\]
    where the vertical maps are induced by the inclusion $\{n\} \hookrightarrow [n]$.
    The pair $(\cO_n,\cO_n^\elem)$ defines an algebraic pattern where every map is inert.
\end{definition}

\begin{remark}\label{remark:elementary-slices-On}
    It follows immediately that for any $t$ in $\cO_n$, the equivalence $(\cO_n)_{t/} \to (\cO_0)_{t_n/}$ from \cref{rem:slices-factorization-double-category} restricts to an equivalence $(\cO_n^\el)_{t/} = (\cO_0^\el)_{t_n/}$.
\end{remark}

The following is a direct consequence of \cref{prop:factorization system and double factorysation caregory}:

\begin{corollary}\label{cor:equivalence between double and algebraic pattern}
    The functor $\Sq_{L,R}$ induces an equivalence between the category $\AlgPatt$ of algebraic patterns and the category $\DblPatt$ of double patterns. \qed
\end{corollary}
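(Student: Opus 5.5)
The plan is to realise $\AlgPatt$ and $\DblPatt$ as pullbacks of $\Fact$ and of the full subcategory of factorization double categories along forgetful functors. The equivalence of \cref{prop:factorization system and double factorysation caregory} should then induce an equivalence between these pullbacks.

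First I make both categories precise. An algebraic pattern is a factorization system together with a full subcategory of the underlying category. A morphism is a morphism of factorization systems that carries elementaries to elementaries. So
\[\AlgPatt \simeq \Fact \times_{\Cat} \Cat^{\mathrm{full}}, \qquad \DblPatt \simeq \mathrm{FactDbl} \times_{\Cat} \Cat^{\mathrm{full}}.\]
Here $\Cat^{\mathrm{full}}$ is the category of pairs $(\cC,\cC')$ with $\cC' \subset \cC$ a full subcategory, with morphisms the functors preserving the subcategories. The map $\Cat^{\mathrm{full}} \to \Cat$ forgets $\cC'$. In the first pullback the map $\Fact \to \Cat$ sends $(\C,\C_L,\C_R)$ to $\C_L$, which is where $\cO^\el$ lives, since the definition requires $\cO^\el \subset \cO^\inert$. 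In the second, $\mathrm{FactDbl} \to \Cat$ sends $\cP$ to $\cP_0$.

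Next I check that the two forgetful maps to $\Cat$ are compatible under $\Sq_{L,R}$. By construction $\Sq_{L,R}(\C)_0$ is the subcategory of $\Fun([0],\C) = \C$ whose morphisms are the vertical arrows, namely those in $\C_L$. So $\Sq_{L,R}(\C)_0 \simeq \C_L$, naturally in $(\C,\C_L,\C_R)$, because $\Sq_{L,R}$ is a functor and evaluation at $[0]$ is natural. This produces a commutative triangle
\[\Fact \xrightarrow{\Sq_{L,R}} \mathrm{FactDbl} \xrightarrow{\ev_0} \Cat.\]
Since $\Sq_{L,R}$ is an equivalence onto $\mathrm{FactDbl}$ by Juran's result, base change along $\Cat^{\mathrm{full}} \to \Cat$ gives the equivalence $\AlgPatt \simeq \DblPatt$.

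The main care point is the identification of morphism spaces. A full subcategory is a property, not structure, so morphisms of patterns are exactly the morphisms of factorization systems whose induced functor on $\C_L$ preserves elementaries. The pullback then really does add only a condition, and this condition is matched under the natural equivalence $\Sq_{L,R}(\C)_0 \simeq \C_L$. Everything else is formal.
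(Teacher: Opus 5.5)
Your proposal is correct and matches the paper's argument. The paper records this corollary as a direct consequence of Juran's theorem with no further proof. You make explicit the one implicit step: the elementaries are a full subcategory (a property-like piece of data) of $\cO^\inert \simeq \Sq_{L,R}(\cO)_0$. Hence the equivalence between $\Fact$ and the factorization double categories, which is compatible with the projections to $\Cat$, base-changes to an equivalence $\AlgPatt \simeq \DblPatt$.
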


\begin{notation}
    We will denote this equivalence by
    \[(-)^\dbl : \AlgPatt \simeq \DblPatt : (-)^\alg.\]
\end{notation}

\subsection{Algebrads in the double categorical context}\label{subsec:double-algebrads}

Let us start with defining the analogue of algebrads in the setting of double patterns.

\begin{definition}
    \label{defi:of algebrad for double pattern}
    Let $\cO$ be a double pattern. A functor $p \colon \cP \to \cO$ is called an \emph{$\cO$-algebrad}, or \emph{algebrad} for short, if
    \begin{enumerate}[(1)]
        \item for every $n \geq 0$, the functor $p_n \colon \cP_n \to \cO_n$ is a left fibration,
        \item for any $t$ in $\O_0$, the morphism 
        $$(\P_0)_t\to \lim_{(t\intarrow s)\in (\O_0)^{\el}_{t/}}(\P_0)_s$$
        is an equivalence,
        \item\label{item3:defi-double-algebrad} for any $t$ in $\cO_1$, the square
        $$
        \begin{tikzcd}
        	{(\cP_1)_t} & {\lim\limits_{(t \intarrow s) \in(\cO_1)^\elem_{t/}}(\cP_1)_s} \\
        	{(\cP_0)_{t_0}} & {\lim\limits_{(t \intarrow s) \in(\cO_1)^\elem_{t/}}(\cP_{0})_{s_0}}
        	\arrow[from=1-1, to=1-2]
        	\arrow["{d_1}"', from=1-1, to=2-1]
        	\arrow["{d_1}", from=1-2, to=2-2]
        	\arrow[from=2-1, to=2-2]
        \end{tikzcd}$$
        is cartesian.
    \end{enumerate}
\end{definition}

\begin{remark}
    To see how this square is constructed, let $P_n \colon \cO_n \to \cS$ denote the functors classifying  $p_n \colon \cP_n \to \cO_n$.
    The commutative square
    \[\begin{tikzcd}
    	{\cP_1} & {\cP_0} \\
    	{\cO_1} & {\cO_0}
    	\arrow["{d_1}", from=1-1, to=1-2]
    	\arrow["{p_1}"', from=1-1, to=2-1]
    	\arrow["{p_0}", from=1-2, to=2-2]
    	\arrow["{d_1}", from=2-1, to=2-2]
    \end{tikzcd}\]
    then induces maps $(\cP_1)_{t} = P_1(t) \to P_0(d_1(t)) = (\cP_0)_{t_0}$ that are natural in $t \in \cO_1$.
    The horizontal maps in the square of the condition $(3)$ of \cref{defi:of algebrad for double pattern} are obtained by restricting and right Kan extending along $\cO^\elem_1 \hookrightarrow \cO_1$.
\end{remark}

\begin{definition}
    If $\cP \to \cO$ is an algebrad, then $\cP$ inherits the structure of a double pattern where $\cP^\elem$ consists of the objects lying over an elementary in $\cO$. We write $$\Algd(\cO)\subset \DblPatt_{/\cO}$$ for the full subcategory spanned by the $\cO$-algebrads in this sense.
\end{definition}

We will now compare algebrads for double patterns and algebrads for algebraic patterns.

\begin{lemma}\label{lem:algebrad-n=1-suffices}
    Let $p \colon \cP \to \cO$ be an algebrad. Then for every $n \geq 1$ and $t$ in $\cO_n$, the square
    \[\begin{tikzcd}
    	{(\cP_n)_t} & {\lim\limits_{(t \intarrow s) \in(\cO_n)^\elem_{t/}}(\cP_n)_s} \\
    	{(\cP_{n-1})_{t_{\leq n-1}}} & {\lim\limits_{(t \intarrow s) \in(\cO_n)^\elem_{t/}}(\cP_{n-1})_{s_{\leq n-1}}}
    	\arrow[from=1-1, to=1-2]
    	\arrow["{d_n}"', from=1-1, to=2-1]
    	\arrow["{d_n}", from=1-2, to=2-2]
    	\arrow[from=2-1, to=2-2]
    \end{tikzcd}\]
    is a pullback square. Here, we denote by $t_{\leq n-1}$  the image of an element $t\in \P_n$ under the morphism $d_n\colon\P_n\to \P_{n-1}$.
\end{lemma}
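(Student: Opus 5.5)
We argue by induction on $n$; the case $n=1$ is condition \cref{item3:defi-double-algebrad} of \cref{defi:of algebrad for double pattern}. The tools are the Segal conditions for $\cP$ and $\cO$, the fact that each $p_n$ is a left fibration, and the identification of elementary slices from \cref{remark:elementary-slices-On}: $(\cO_n^\el)_{t/}\simeq(\cO_0^\el)_{t_n/}$, and likewise for $\cO_1$. The limits appearing in the statement are therefore all indexed by the same category $I\coloneqq(\cO_0^\el)_{t_n/}$. Each $s$ in it is of the form $s=(s_0\actarrow\cdots\actarrow s_n)$, obtained by factoring $t\intarrow$ along the inert map $t_n\intarrow s_n$ via the left fibration $t\colon\cO_n\to\cO_0$.

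First, we decompose fibers using the Segal condition. Since $\cP_n\simeq\cP_{n-1}\times_{\cP_0}\cP_1$ over $\cO_n\simeq\cO_{n-1}\times_{\cO_0}\cO_1$, and all $p_k$ are left fibrations, the fibers decompose as
\[(\cP_n)_t\simeq(\cP_{n-1})_{t_{\le n-1}}\times_{(\cP_0)_{t_{n-1}}}(\cP_1)_{t_{n-1,n}},\]
naturally in $t$. Taking the fiber of $d_n$ over a point $x\in(\cP_{n-1})_{t_{\le n-1}}$ with image $\bar x\in(\cP_0)_{t_{n-1}}$, the left vertical map therefore has fiber $\mathrm{fib}_{\bar x}\bigl((\cP_1)_{t_{n-1,n}}\to(\cP_0)_{t_{n-1}}\bigr)$. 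This reduces the claim to comparing the right-hand column with the analogous one for $n=1$.

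Second, we rewrite the right-hand column. Since limits commute with fiber products, the right-hand column is
\[\lim_{s\in I}(\cP_{n-1})_{s_{\le n-1}}\times_{(\cP_0)_{s_{n-1}}}(\cP_1)_{s_{n-1,n}}\;\longrightarrow\;\lim_{s\in I}(\cP_{n-1})_{s_{\le n-1}}.\]
Now $s\mapsto s_{n-1,n}$ identifies $I$ with the elementary slice $(\cO_1^\el)_{t_{n-1,n}/}$, so the $n=1$ square applied to $t_{n-1,n}$ expresses $(\cP_1)_{t_{n-1,n}}$ as the pullback of $\lim_I(\cP_1)_{s_{n-1,n}}$ along $(\cP_0)_{t_{n-1}}\to\lim_I(\cP_0)_{s_{n-1}}$. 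Pasting pullback squares then gives the result: our square is the composite of the base change of the $n=1$ square along $d_n$ with a square of the form
\[A\times_{B}C\to \lim(A_s\times_{B_s}C_s),\]
which is a pullback whenever the map on $C$-terms is pulled back from the map on $B$-terms. This is exactly what the $n=1$ case provides.

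The main obstacle is the naturality bookkeeping. We must check that the maps $(\cP_{n-1})_{t_{\le n-1}}\to\lim_I(\cP_{n-1})_{s_{\le n-1}}$ and those for $\cP_0$, $\cP_1$ are compatible with the Segal decomposition, and that the functor $I\to(\cO_{n-1})_{t_{\le n-1}/}$, $s\mapsto s_{\le n-1}$, is the one used in the statement. We would verify this by writing everything as restriction followed by right Kan extension of the classifying functors $P_k\colon\cO_k\to\cS$, and using that the Segal equivalence $P_n\simeq P_{n-1}\times_{P_0}P_1$ is a natural equivalence of functors on $\cO_n$. Notably, no Segal condition on the limits themselves is needed; we only need the pasting lemma for pullbacks.
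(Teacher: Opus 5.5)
Your proof is correct and is essentially the paper's argument: both use the Segal decomposition $(\cP_n)_t\simeq(\cP_{n-1})_{t_{\le n-1}}\times_{(\cP_0)_{t_{n-1}}}(\cP_1)_{t_{n-1,n}}$ and its limit version on the right-hand column, the $n=1$ condition applied to $t_{n-1,n}$ (identifying the index categories with $(\cO_0^\el)_{t_n/}$), and pullback pasting. You never use an inductive hypothesis for $n-1$, so the induction framing is unnecessary; the argument is direct, exactly as in the paper.
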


\begin{proof}
    Consider the squares
    \[\begin{tikzcd}
    	{(\cP_n)_t} & {\lim\limits_{(t \intarrow s) \in(\cO_n)^\elem_{t/}}(\cP_n)_s} & {\lim\limits_{(t \intarrow s) \in(\cO_n)^\elem_{t/}}(\cP_1)_{s_{\geq n-1}}} \\
    	{(\cP_{n-1})_{t_{\leq n-1}}} & {\lim\limits_{(t \intarrow s) \in(\cO_n)^\elem_{t/}}(\cP_{n-1})_{s_{\leq n-1}}} & {\lim\limits_{(t \intarrow s) \in(\cO_n)^\elem_{t/}}(\cP_{0})_{s_{n-1}}}.
    	\arrow[from=1-1, to=1-2]
    	\arrow["{d_n}", from=1-1, to=2-1]
    	\arrow[from=1-2, to=1-3]
    	\arrow["{d_n}", from=1-2, to=2-2]
	\arrow["\lrcorner",phantom, very near start, from=1-2, to=2-3]
    	\arrow[from=1-3, to=2-3]
    	\arrow[from=2-1, to=2-2]
    	\arrow[from=2-2, to=2-3]
    \end{tikzcd}\]
    The right square is cartesian since $\cP \to \cO$ is a functor of double categories, so it suffices to show that the outer rectangle is cartesian. Observe that this is also the outer rectangle in
    \[\begin{tikzcd}
    	{(\cP_n)_t} & {(\cP_1)_{t_{\geq n-1}}} & {\lim\limits_{(t \intarrow s) \in(\cO_n)^\elem_{t/}}(\cP_1)_{s_{\geq n-1}}} \\
    	{(\cP_{n-1})_{t_{\leq n-1}}} & {(\cP_0)_{t_{n-1}}} & {\lim\limits_{(t \intarrow s) \in(\cO_n)^\elem_{t/}}(\cP_{0})_{s_{n-1}}}.
    	\arrow[from=1-1, to=1-2]
    	\arrow["{d_n}", from=1-1, to=2-1]
    	\arrow[from=1-2, to=1-3]
    	\arrow["{d_n}", from=1-2, to=2-2]
    	\arrow["\lrcorner", phantom, very near start, from=1-2, to=2-3]
    	\arrow["\lrcorner", very near start, phantom, from=1-1, to=2-2]
    	\arrow[from=1-3, to=2-3]
    	\arrow[from=2-1, to=2-2]
    	\arrow[from=2-2, to=2-3]
    \end{tikzcd}\]
    The left square is cartesian since $\cP$ is a double category, while the right square is cartesian since $\cP \to \cO$ is an algebrad.
\end{proof}

\begin{notation}
Given an algebraic pattern $\cO$ and a map $f \colon x \to y$ in $\cO$, we will write $f^\inert$ and $f^\activ$ for the inert and active parts of its inert-active factorization. Given a functor $\cP \to \cO$ and $\overline x \in \cP_x$, $\overline y \in \cP_y$, we write $$\Map_\cP^f(\overline x, \overline y) \coloneqq \Map_\cP(\overline x, \overline y) \times_{\Map_\cO(x,y)} \{f\}.$$
\end{notation}

\begin{lemma}\label{lem:simplified-WSF}
    Let $\cO$ be an algebraic pattern and $\cP \to \cO$ a functor satisfying the conditions $(1)$ and $(2)$ of \cref{defi:of algebrad for factactization system}. Then $\cP \to \cO$ is an algebrad if and only if for any active morphism $f \colon x \to y$ in $\cO$ and any $\overline{x} \in \cP_{x}$ and $\overline{y} \in \cP_{y}$, the map
    \begin{equation}\label{eq:simplified-WSF}
        \map^f_\cP(\overline x, \overline y) \to \lim_{\phi \in \cO^\elem_{y/}} \map^{(\phi f)^\activ}_\cP((\phi f)^\inert_!\overline x, \phi_! \overline y)
    \end{equation}
    is an equivalence.
\end{lemma}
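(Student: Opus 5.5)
We reduce condition (3) of \cref{defi:of algebrad for factactization system} to its fibrewise form and then use the cocartesian lifts of inerts to rewrite the fibres. Since (3) asks that a square of spaces over $\Map_\cO(x,y)$ be cartesian, it is equivalent to asking that for every $g \colon x \to y$ in $\cO$ the induced map on fibres
\[
\Map^g_\cP(\overline x,\overline y) \to \lim_{\phi \in \cO^\elem_{y/}} \Map^{\phi g}_\cP(\overline x, \phi_!\overline y)
\]
is an equivalence. Limits commute with fibres, so the fibre of the right-hand vertical map over the point $(\phi g)_\phi$ is exactly this limit.

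First we reduce to active $g$. Write $g = g^\activ \circ g^\inert$ with $g^\inert \colon x \intarrow x'$. Because $\overline x \to g^\inert_!\overline x$ is cocartesian, precomposition gives an equivalence $\Map^{g^\activ}_\cP(g^\inert_!\overline x, \overline z) \simeq \Map^{g}_\cP(\overline x,\overline z)$ for every $\overline z$ over $y$. The same holds for the targets $\phi_!\overline y$, naturally in $\phi$. Hence the fibrewise condition for $g$ at $\overline x$ is equivalent to the condition for the active map $g^\activ$ at $g^\inert_!\overline x$. It therefore suffices to treat active $f \colon x \actarrow y$, with arbitrary $\overline x \in \cP_x$.

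Next, for active $f$ we identify the limit term with the one in \eqref{eq:simplified-WSF}. Factor $\phi f = (\phi f)^\activ \circ (\phi f)^\inert$, with $(\phi f)^\inert \colon x \intarrow x_\phi$. The cocartesian edge $\overline x \to (\phi f)^\inert_!\overline x$ again yields an equivalence
\[
\Map^{(\phi f)^\activ}_\cP((\phi f)^\inert_!\overline x, \phi_!\overline y) \simeq \Map^{\phi f}_\cP(\overline x,\phi_!\overline y).
\]
We must make this natural in $\phi \in \cO^\elem_{y/}$, so that it assembles into an equivalence of limits compatible with the maps out of $\Map^f_\cP(\overline x,\overline y)$. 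This is the main obstacle. The plan is to write both sides as functors of $\phi$. The assignment $\phi \mapsto (\phi f)^\inert_!\overline x$ is functorial because factorizations in a factorization system are functorial: a map $\psi \colon e \to e'$ under $y$ gives a map $x_\phi \to x_{\phi'}$ under $x$. Functoriality of cocartesian pushforward then gives maps $(\phi f)^\inert_!\overline x \to (\phi' f)^\inert_!\overline x$. We then interpret the precomposition equivalence as a natural transformation of $\cO^\elem_{y/}$-indexed diagrams. Concretely, both sides are obtained by straightening the fibration $\Map_\cP(\overline x, -)$ restricted along $\cO^\elem_{y/}$, and the comparison is precomposition with the fixed cocartesian edges. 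Once this is set up, the map in \eqref{eq:simplified-WSF} composed with this equivalence is the fibrewise map from the first paragraph.

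Combining the three steps, condition (3) holds if and only if \eqref{eq:simplified-WSF} is an equivalence for all active $f$ and all $\overline x,\overline y$, which proves the lemma.
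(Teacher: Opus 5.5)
Your proposal is correct and follows essentially the same route as the paper: you reduce cartesianness of the square in condition (3) to equivalences on fibres over each $g \in \Map_\cO(x,y)$. You then use cocartesian transport along the inert part of $g$ (and of $\phi g$) to reduce to active maps and to rewrite the limit as in \eqref{eq:simplified-WSF}. The only difference is that, for the identification over an active $f$ (naturality in $\phi$ included), the paper simply cites \cite[Remark 9.7]{ChuHaugseng2021HomotopycoherentAlgebraSegal}, whereas you sketch it directly via the functoriality of the inert--active factorization and of cocartesian pushforward.
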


\begin{proof}
    We need to verify that for any $x,y$ in $\cO$, any $\overline x \in \cP_x$ and any $\overline y \in \cP_y$, the square
    \[\begin{tikzcd}
    	{\Map_\cP(\overline x, \overline y)} & {\lim\limits_{(\phi \colon y \intarrow e) \in \cO^\elem_{y/}}\Map_\cP(\overline x,\phi_! \overline y)} \\
    	{\Map_\cO(x,y)} & {\lim\limits_{(\phi \colon y \intarrow e) \in \cO^\elem_{y/}} \Map_\cO(x,e)}
    	\arrow[from=1-1, to=1-2]
    	\arrow[from=1-1, to=2-1]
    	\arrow[from=1-2, to=2-2]
    	\arrow[from=2-1, to=2-2]
    \end{tikzcd}\]
    is cartesian.
    By \cite[Remark 9.7]{ChuHaugseng2021HomotopycoherentAlgebraSegal}, the map between vertical fibers over an active morphism $f \colon x \actarrow y$ may be identified with the map \cref{eq:simplified-WSF}.
    This shows that the map \cref{eq:simplified-WSF} is an equivalence if $\cP \to \cO$ is an algebrad.
    For the converse, we need to show that the map of fibers
    \[\map^g_\cP(\overline x,\overline y) \to \lim_{(\phi \colon y \intarrow e) \in \cO^\elem_{y/}} \map^{\phi g}_\cP(\overline x, \phi_! \overline y)\]
    is an equivalence for any $g \colon x \to y$ in $\cO$, any $\overline x \in \cP_x$ and any $\overline y \in \cP_y$.
    By factoring $g = f \circ j$, where $f$ is active and $j$ inert, and using cocartesian transport along inerts, we may identify this map with
    \[\Map^f_{\cP}(j_! \overline x, \overline y) \to \lim_{\phi \in \cO^\elem_{y/}} \Map^{\phi f}_{\cP}(j_! \overline x, \phi_! \overline y) \simeq \lim_{\phi \in \cO^\elem_{y/}} \Map^{(\phi f)^\activ}((\phi f j)^\inert_! \overline{x}, \phi_!\overline y).\]
    Since $f$ is active, this map is an equivalence by assumption.
\end{proof}

\begin{proposition}
    \label{prop:equivalence between double and algebraic algebrads}
    Suppose that $\cO$ is an algebraic pattern. Then the  equivalence $(-)^\dbl \colon \AlgPatt_{/\cO} \to \DblPatt_{/\cO^\dbl}$ restricts to an equivalence
    $$\Algd(\O)\to  \Algd(\O^{\dbl}).$$
\end{proposition}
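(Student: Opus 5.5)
Since $(-)^\dbl \colon \AlgPatt_{/\cO} \to \DblPatt_{/\cO^\dbl}$ is an equivalence by \cref{cor:equivalence between double and algebraic pattern}, and both sides of the claimed restriction are full subcategories, it suffices to show that a map of algebraic patterns $p \colon \cP \to \cO$ is an $\cO$-algebrad in the sense of \cref{defi:of algebrad for factactization system} (with the pattern structure of \cref{def:category-of-algebrads}) if and only if $p^\dbl \colon \cP^\dbl \to \cO^\dbl$ is an algebrad in the sense of \cref{defi:of algebrad for double pattern}. Here we must take care that the induced pattern structure on $\cP$ matches: in both settings inerts of $\cP$ are the cocartesian lifts of inerts, actives lie over actives, and elementaries lie over elementaries.

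\textbf{Condition (1).} Since $\cP^\dbl_n$ is the category of strings of $n$ active maps with inert maps of strings as morphisms, $p_0 \colon \cP^\inert \to \cO^\inert$ is a left fibration exactly when the inerts of $\cP$ are cocartesian lifts and every inert of $\cO$ has such a lift from any object. So condition (1) for $n=0$ matches condition (1) of \cref{defi:of algebrad for factactization system}, given that inerts in $\cP$ are exactly the $p$-cocartesian lifts. For $n \geq 1$, I would use that both $\cP^\dbl$ and $\cO^\dbl$ are factorization double categories: the target maps $\cP_n \to \cP_0$ and $\cO_n \to \cO_0$ are left fibrations (\cref{rem:slices-factorization-double-category}), so $p_n$ is a left fibration by left cancellation once $p_0$ is. Conversely, if $p$ is a map of double patterns with $p_0$ a left fibration, the inert-active factorization in $\cP$ of a lift shows that cocartesian lifts of inerts exist.

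\textbf{Condition (2).} This is immediate, since $(\cP_0)_t$ is the fiber $\cP_t$ taken inside $\cP^\inert$. Because inert maps are cocartesian, fibers over $\cO^\inert$ and over $\cO$ agree as groupoids, and the two limits are indexed by the same category $\cO^\el_{t/} = (\cO_0^\el)_{t/}$ (elementaries form a full subcategory of $\cO^\inert$).

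\textbf{Condition (3), the main step.} Here I would use \cref{lem:simplified-WSF}, which reduces the algebraic condition to the maps \cref{eq:simplified-WSF} over active $f$. Take $t = (f \colon x \actarrow y)$ in $\cO_1$. The fibers of the vertical maps in the square of \cref{defi:of algebrad for double pattern}(3) over $\overline x \in (\cP_0)_x$ are then computed as follows:
\begin{itemize}
\item On the left, the fiber is $\coprod_{\overline y}\Map^f_\cP(\overline x, \overline y)$.
\item On the right, we use \cref{remark:elementary-slices-On} to identify $(\cO_1^\el)_{t/} \simeq \cO^\el_{y/}$, sending $\phi$ to the factorization square of $\phi f$ with top row $(\phi f)^\activ$ and left leg $(\phi f)^\inert$. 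The fiber becomes $\lim_\phi \coprod_{\overline e} \Map^{(\phi f)^\activ}((\phi f)^\inert_!\overline x, \overline e)$.
\end{itemize}
Pulling back further along the target, and using condition (2) to identify $(\cP_0)_y \simeq \lim_\phi (\cP_0)_{e}$ via $\overline y \mapsto (\phi_!\overline y)$, the cartesianness of the square becomes equivalent to \cref{eq:simplified-WSF} being an equivalence for all $\overline x, \overline y$.

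The obstacle is making this fiber computation coherent. The right-hand limit involves both source transport $(\phi f)^\inert_!$ and target transport $\phi_!$, naturally in $\phi$. I would handle this by expressing everything through the left fibrations classified by $P_1$ and $P_0$, and comparing with the description of mapping spaces in \cite[Remark 9.7]{ChuHaugseng2021HomotopycoherentAlgebraSegal}. This step requires conditions (1) and (2) to be already established, so I do it last.
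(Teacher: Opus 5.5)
Your proposal is correct and follows the paper's proof. For condition (3) you identify $(\cO_1^\el)_{f/}\simeq\cO^\el_{y/}$, take fibers of the vertical maps and then over $(\cP_0)_y\simeq\lim_\phi(\cP_0)_e$, and invoke \cref{lem:simplified-WSF}, exactly as the paper does; for condition (1) the paper simply cites Juran's Lemma 4.2, whereas you sketch the level-$0$ comparison by hand and obtain $n\geq 1$ by cancelling the target left fibrations $\cP_n\to\cP_0$ and $\cO_n\to\cO_0$.

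One small point, which the paper glosses over in the same way: condition (2) of \cref{defi:of algebrad for factactization system} concerns the fiber \emph{categories} $\cP_x$, whose cores are the spaces $(\cP_0)_x$, so the double-categorical (2) only gives an equivalence on cores. Full faithfulness of $\cP_x\to\lim_e\cP_e$ must instead be extracted from (3) by taking fibers over $\id_x$; this is legitimate because the proof of \cref{lem:simplified-WSF} only uses condition (1).
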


\begin{proof}
    Let $p \colon \cP \to \cO$ in $\AlgPatt_{/\cO}$ be given. By \cite[Lemma 4.2]{BrankoJuran}, the functor $p^\dbl \colon \cP^\dbl \to \cO^\dbl$ is pointwise a left fibration if and only if $\cP \to \cO$ is a cocartesian fibration over $\cO^\inert$ and all morphisms in $\cP^\inert$ are cocartesian.
    Since $\cO^\dbl_0 = \cO^\inert$, we see that the conditions $(2)$ of \cref{defi:of algebrad for factactization system} and \cref{defi:of algebrad for double pattern}  are equivalent.
    
    Now let $f \colon x \actarrow y$ be an active morphism in $\cO$. Using that $(\cO^\dbl_1)^\elem_{f/} \simeq \cO^\elem_{y/}$ by \cref{remark:elementary-slices-On}, we can rewrite the square of the condition $(3)$ of \cref{defi:of algebrad for double pattern} as
    \[\begin{tikzcd}
        	{(\cP^\dbl_1)_f} & {\lim\limits_{(\phi \colon y \intarrow e) \in \cO^\elem_{y/}}(\cP^\dbl_1)_{\phi_* f}} \\
        	{(\cP^\dbl_0)_{x}} & {\lim\limits_{(y \intarrow e) \in \cO^\elem_{y/}}(\cP^\dbl_{0})_{(\phi_*f)_0}}.
        	\arrow[from=1-1, to=1-2]
        	\arrow["{d_1}"', from=1-1, to=2-1]
        	\arrow["{d_1}", from=1-2, to=2-2]
        	\arrow[from=2-1, to=2-2]
    \end{tikzcd}\]
    Note that the top horizontal map is fibered via the target projection over
    \[(\cP^\dbl_0)_{y} \simeq \lim\limits_{(\phi \colon y \intarrow e) \in \cO^\elem_{y/}}(\cP^\dbl_0)_{e}.\]
    Taking fibers of the vertical maps in this square and over $(\cP^\dbl_0)_{y}$, it follows that this square is cartesian precisely if the map
    \[\map^f_\cP(\overline y,\overline x) \to \lim_{\phi \in \cO^\elem_{y/}} \map^{(\phi f)^\activ}_\cP((\phi f)^\inert_!\bar y, \phi_!\bar x)\]
    is an equivalence for every $\overline x \in \cP_x$ and $\overline y \in \cP_y$.
    The result now follows from \cref{lem:simplified-WSF}.
\end{proof}

\begin{convention}
    On account of \cref{cor:equivalence between double and algebraic pattern}, the notions of algebraic patterns and double patterns coincide. Moreover, \cref{prop:equivalence between double and algebraic algebrads} asserts that the notions of algebrads for double patterns and algebraic patterns coincide as well. This justifies referring to double patterns as simply \textit{algebraic patterns} as well. We will do so in the remainder of this article. When it is necessary, we will explicitly state if we are thinking of an algebraic pattern as a factorization system or as a factorization double category.
\end{convention}

\section{The forest and tree categories of an algebraic pattern}\label{sec:forest-and-tree-cats}
In this section, we associate a certain category of \textit{(layered)} \textit{forests} and \textit{trees} to any algebraic pattern $\cO$. We also study the categories of presheaves on these forest and tree categories, and the precise relation between these presheaf categories.
The results we obtain will be used in \cref{sec:algebrads_as_segal_presheaves_on_the_tree_category}, where we show that $\cO$-algebrads can be modelled as certain \emph{complete Segal} presheaves on these forest and tree categories.

From now on, in light of the equivalence between algebraic patterns and double patterns from \cref{cor:equivalence between double and algebraic pattern}, we will not distinguish between $\cO$ when viewed as an algebraic pattern or as a double pattern anymore.
In particular, if $\cO$ is an algebraic pattern, we will simply write $\cO_n$ for $\cO^\dbl_n$.

\begin{definition}
Let $\cO$ be an algebraic pattern. The \textit{$\O$-forest category} is defined as
$$
\textstyle \Forest[\O] \coloneqq (\int_{\Delta^\op} \O)^\op,
$$
the Grothendieck construction of $\O$.
If
${t}\coloneqq t_0 \actarrow t_1 \actarrow \dotsb \actarrow t_n$
is an object of $\O_n$, then we will write $\langle n;{t}\rangle$ for the corresponding object in $\Forest[\O]$.
The objects of $\Forest[\O]$ will be called \emph{forests}.

The \textit{$\O$-tree category} is the full subcategory
$$
\textstyle \Tree[\O] \subset \Forest[\O]
$$
spanned by objects of the shape $\langle n;t \rangle$ such that $t \in \O^\el_n$, i.e.\ so that $t_n$ is an elementary object of $\cO$. These are called \textit{trees}. When viewed as elements of the subcategory $\Tree[\O]$, the trees will be denoted by $[n;t] \coloneqq \langle n;t \rangle$.
If $n = 0$, then $[0;t]$ is called a \textit{root}, and if $n=1$, then $[1;t]$ is called a \emph{corolla}.
\end{definition}

\begin{remark}
    It would be better to call the objects of $\Phi[\cO]$ and $\Omega[\cO]$ \emph{layered} forests and \emph{layered} trees, respectively, as we will see in \cref{ex:forest cat of Fin*} and \cref{remark:trees for Fin* are layered} below.
    However, to avoid cluttering the text and since we don't use non-layered trees throughout this paper, we have decided to drop the adjective \emph{layered} everywhere.
\end{remark}

\begin{remark}
    We note that the forest category $\Forest[\cO]$ only depends on the inert-active factorization system on $\cO$, while $\Tree[\cO]$ also depends on the collection of elementary objects.
\end{remark}

\subsection{Examples}

We now describe the categories $\Forest[\O]$ and $\Tree[\O]$ in a few examples, which also motivate the names ``(layered) forest category'' and ``(layered) tree category''.
Observe that since $\Forest[\O] \to \Delta$ is a cartesian fibration, a map $\langle n;t \rangle \to \langle m;s \rangle$ consists of a map $\phi \colon [n] \to [m]$ in $\Delta$ together with a map
\[\begin{tikzcd}
	{s_{\phi(0)}} & {s_{\phi(1)}} & \cdots & {s_{\phi(n-1)}} & {s_{\phi(n)}} \\
	{t_0} & {t_1} & \cdots & {t_{n-1}} & {t_n}
	\arrow[squiggly, from=1-1, to=1-2]
	\arrow[tail, from=1-1, to=2-1]
	\arrow[squiggly, from=1-2, to=1-3]
	\arrow[tail, from=1-2, to=2-2]
	\arrow[squiggly, from=1-3, to=1-4]
	\arrow[squiggly, from=1-4, to=1-5]
	\arrow[tail, from=1-4, to=2-4]
	\arrow[tail, from=1-5, to=2-5]
	\arrow[squiggly, from=2-1, to=2-2]
	\arrow[squiggly, from=2-2, to=2-3]
	\arrow[squiggly, from=2-3, to=2-4]
	\arrow[squiggly, from=2-4, to=2-5]
\end{tikzcd}\]
in $\O_n$ (beware of the variance!).
We will write $\underline{n}$ for the set $\{1, \cdots, n\}$ with $n$ elements and $\langle n \rangle$ for the pointed set $\{*,1,\ldots,n\}$.

\begin{example}[$\O = \F_*$]\label{ex:forest cat of Fin*}
    Let $\F_*$ be the category of finite pointed sets, with one of the algebraic pattern structures from \cref{examples:examples of patterns}. The category $\F_*^\act$ is equivalent to the category of finite sets, so we may identify the objects of $\Forest[\F_*]$ with sequences $X_0 \to \cdots \to X_n$ of maps between finite sets. Such a sequence of $n$ maps can be visualized as a \emph{forest with $n$ layers}. For example, the forest
    \[\scalebox{0.8}{\begin{tikzpicture}
        \draw[fill] (0,1) circle [radius=0.085];
        \draw[fill] (-1,2) circle [radius=0.085];
        \draw[fill] (0,2) circle [radius=0.085];
        \draw[fill] (1,2) circle [radius=0.085];
        
        \draw[fill] (3,1) circle [radius=0.085];
        \draw[fill] (2.25,2) circle [radius=0.085];
        \draw[fill] (3.75,2) circle [radius=0.085];
        
        \draw[thick] (0,0) -- (0,1);
        \draw[thick] (0,1) -- (-1,2);
        \draw[thick] (0,1) -- (0,2);
        \draw[thick] (0,1) -- (1,2);
        \draw[thick] (1,2) -- (1.5,3);
        \draw[thick] (1,2) -- (0.5,3);
        \draw[thick] (0,2) -- (0,3);
        \draw[thick] (-1,2) -- (-1,3);

        \draw[thick] (3,0) -- (3,1);
        \draw[thick] (3,1) -- (2.25,2);
        \draw[thick] (3,1) -- (3.75,2);
        \draw[thick] (3.75,2) -- (3.10,3);
        \draw[thick] (3.75,2) -- (3.75,3);
        \draw[thick] (3.75,2) -- (4.40,3);
        
        \draw[dashed, gray] (-1.5,0.5) -- (5,0.5) node[right, black, font=\small]{Layer 0};
        \draw[dashed, gray] (-1.5,1.5) -- (5,1.5) node[right, black, font=\small]{Layer 1};
        \draw[dashed, gray] (-1.5,2.5) -- (5,2.5) node[right, black, font=\small]{Layer 2};
    \end{tikzpicture}}\]
    corresponds to a sequence of maps $\underline 7 \to \underline 5 \to \underline 2$, where the edges correspond to the elements of the sets.

    Note that inert maps $\langle k \rangle \to \langle l \rangle$ can be identified with injections $\underline l \to \underline k$. This allows us to identify the maps of forests $\langle n;t \rangle \to \langle m;s \rangle$ over $\phi \colon [m] \to [n]$ with diagrams
    \[\begin{tikzcd}
    	{t_0} & {t_1} & \cdots & {t_{n-1}} & {t_n} \\
    	{s_{\phi(0)}} & {s_{\phi(1)}} & \cdots & {s_{\phi(n-1)}} & {s_{\phi(n)}}
    	\arrow[from=1-1, to=1-2]
    	\arrow[hook, from=1-1, to=2-1]
    	\arrow["\lrcorner", very near start, phantom, from=1-1, to=2-2]
    	\arrow[from=1-2, to=1-3]
    	\arrow[hook, from=1-2, to=2-2]
    	\arrow["\lrcorner", very near start, phantom, from=1-2, to=2-3]
    	\arrow[from=1-3, to=1-4]
    	\arrow[from=1-4, to=1-5]
    	\arrow[hook, from=1-4, to=2-4]
    	\arrow["\lrcorner", very near start, phantom, from=1-4, to=2-5]
    	\arrow[hook, from=1-5, to=2-5]
    	\arrow[from=2-1, to=2-2]
    	\arrow[from=2-2, to=2-3]
    	\arrow[from=2-3, to=2-4]
    	\arrow[from=2-4, to=2-5]
    \end{tikzcd}\]
    of finite sets such that the vertical maps are injective and every square is cartesian.
    It follows that $\Forest[\F_*]$ is precisely the category $\Delta_\mathbb{F}$ defined in \cite{Barwick}, see also \cite[Definition 2.1]{ChuHaugsengea2018TwoModelsHomotopy}.
    The category $\Tree[\F_*^\flat]$ is the full subcategory of $\Forest[\F_*]$ spanned by the trees, which is denoted $\Delta_\mathbb{F}^1$ in \cite{ChuHaugsengea2018TwoModelsHomotopy}.
    If we take the pattern structure $\F_*^\natural$ where the set $\langle 0 \rangle$ is also elementary, then we obtain a slightly larger full subcategory $\Tree[\F_*^\natural] \subset \Forest[\F_*]$ which also contains the forests of the form $\langle n; \langle 0 \rangle = \cdots = \langle 0 \rangle \rangle$.
\end{example}

\begin{remark}\label{remark:trees for Fin* are layered}
    Let us warn the reader that $\Tree[\F_*^\flat] = \Delta^1_\mathbb{F}$ does not agree with the category $\Omega$ of trees introduced by Weiss--Moerdijk \cite{MoerdijkWeiss2007DendroidalSets}, which is used to define dendroidal sets and spaces.
    However, it is shown in \cite{ChuHaugsengea2018TwoModelsHomotopy} that there is a functor $\Tree[\F_*^\flat] \to \Omega$ inducing an equivalence between (complete) Segal presheaves on the categories $\Tree[\F_*^\flat]$ and $\Omega$.
\end{remark}

\begin{example}[$\O = \Span(\F)$]\label{exa:span-forests}
    By taking $G$ to be the trivial group in \cref{examples:examples of patterns}, $\Span(\F)$ has a pattern structure where the active morphisms are the forward maps and the inerts are the backward maps.
    In particular, since $\Span(\F)^\act = \F = \F^\act_*$, the objects of $\Forest[\Span(\F)]$ agree with those of $\Forest[\F_*]$.
    However, the category $\Forest[\Span(\F)]$ has more maps: 
    a map of forests $f \colon \langle n;t \rangle \to \langle m;s\rangle$ over $\phi \colon [m] \to [n]$ is a diagram
    \begin{equation}\label{eq:forestmapsSpanF}
    \begin{tikzcd}
    	{t_0} & {t_1} & \cdots & {t_{n-1}} & {t_n} \\
    	{s_{\phi(0)}} & {s_{\phi(1)}} & \cdots & {s_{\phi(n-1)}} & {s_{\phi(n)}}
    	\arrow[from=1-1, to=1-2]
    	\arrow[from=1-1, to=2-1]
    	\arrow["\lrcorner", very near start, phantom, from=1-1, to=2-2]
    	\arrow[from=1-2, to=1-3]
    	\arrow[from=1-2, to=2-2]
    	\arrow["\lrcorner", very near start, phantom, from=1-2, to=2-3]
    	\arrow[from=1-3, to=1-4]
    	\arrow[from=1-4, to=1-5]
    	\arrow[from=1-4, to=2-4]
    	\arrow["\lrcorner", very near start, phantom, from=1-4, to=2-5]
    	\arrow[from=1-5, to=2-5]
    	\arrow[from=2-1, to=2-2]
    	\arrow[from=2-2, to=2-3]
    	\arrow[from=2-3, to=2-4]
    	\arrow[from=2-4, to=2-5]
    \end{tikzcd}
    \end{equation}
    of finite sets where every square is cartesian. This means that the images of two trees under $f$ are allowed to overlap in the forest $\langle m;s \rangle$, which is not allowed in $\Forest[\F_*]$.
    Observe that if $t_n = \underline 1$ or $t_n = \underline 0$, then every vertical map in \cref{eq:forestmapsSpanF} is injective.
    In particular, the tree categories $\Tree[\Span(\F)^\flat]$ and $\Tree[\F_*^\flat]$ are equivalent, and similarly for $\Tree[\Span(\F)^\natural]$ and $\Tree[\F_*^\natural]$.
\end{example}

\begin{example}[$\O = \Delta^{\op,\flat}$]
    In \cref{examples:examples of patterns}, we described a pattern structure on $\Delta^\op$ where $[1]$ is the only elementary object.
    The tree category for this pattern can be identified with a certain category of ``planar trees with layers'', in analogy with \cref{ex:forest cat of Fin*}.
    The objects of the forest category for this algebraic pattern can be viewed as ordered collections of such planar trees.
\end{example}

\begin{example}[$\O = \Delta^{\op,\natural}$]
    The algebraic pattern $\Delta^{\op,\natural}$ has the same actives and inerts as $\Delta^{\op,\flat}$, but its elementaries are both $[0]$ and $[1]$. Algebrads for this pattern are (virtual) double categories, which suggests a different visual representation of the objects of $\Forest[\Delta^\op]$, namely as pasting diagrams.
    For example, the trees $[1;[2] \leftarrow [1]]$ and $[1;[0] \leftarrow [1]]$ can be pictured as
    \[\begin{tikzcd}
    	\bullet & \bullet & \bullet \\
    	\bullet && \bullet
    	\arrow["\shortmid"{marking}, from=1-1, to=1-2]
    	\arrow[from=1-1, to=2-1]
    	\arrow["\shortmid"{marking}, from=1-2, to=1-3]
    	\arrow[from=1-3, to=2-3]
    	\arrow[""{name=0, anchor=center, inner sep=0}, "\shortmid"{marking}, from=2-1, to=2-3]
    	\arrow[between={0}{0.8}, Rightarrow, from=1-2, to=0]
    \end{tikzcd}\quad \text{and} \quad
    \begin{tikzcd}[column sep=small]
    	& \bullet & \\
    	\bullet && \bullet
    	\arrow[from=1-2, to=2-1]
    	\arrow[from=1-2, to=2-3]
    	\arrow[""{name=0, anchor=center, inner sep=0}, "\shortmid"{marking}, from=2-1, to=2-3]
    	\arrow[between={0}{0.8}, Rightarrow, from=1-2, to=0]
    \end{tikzcd},\]
    while the forest  $\langle 2;[4]\xleftarrow{d^2}[3]\xleftarrow{d^1} [2]\rangle$ is represented by
\[\begin{tikzcd}
	\bullet & \bullet & \bullet & \bullet & \bullet \\
	\bullet & \bullet && \bullet & \bullet \\
	\bullet &&& \bullet & \bullet
	\arrow[""{name=0, anchor=center, inner sep=0}, "\shortmid"{marking}, from=1-1, to=1-2]
	\arrow[from=1-1, to=2-1]
	\arrow[""{name=1, anchor=center, inner sep=0}, "\shortmid"{marking}, from=1-2, to=1-3]
	\arrow[from=1-2, to=2-2]
	\arrow["\shortmid"{marking}, from=1-3, to=1-4]
	\arrow[""{name=2, anchor=center, inner sep=0}, "\shortmid"{marking}, from=1-4, to=1-5]
	\arrow[from=1-4, to=2-4]
	\arrow[from=1-5, to=2-5]
	\arrow[""{name=3, anchor=center, inner sep=0}, "\shortmid"{marking}, from=2-1, to=2-2]
	\arrow[from=2-1, to=3-1]
	\arrow[""{name=4, anchor=center, inner sep=0}, "\shortmid"{marking}, from=2-2, to=2-4]
	\arrow[""{name=5, anchor=center, inner sep=0}, "\shortmid"{marking}, from=2-4, to=2-5]
	\arrow[from=2-4, to=3-4]
	\arrow[from=2-5, to=3-5]
	\arrow[""{name=6, anchor=center, inner sep=0}, "\shortmid"{marking}, from=3-1, to=3-4]
	\arrow[""{name=7, anchor=center, inner sep=0}, "\shortmid"{marking}, from=3-4, to=3-5]
	\arrow[between={0.2}{0.8}, Rightarrow, from=0, to=3]
	\arrow[between={0.6}{0.9}, Rightarrow, from=1, to=6]
	\arrow[between={0}{0.8}, Rightarrow, from=1-3, to=4]
	\arrow[between={0.2}{0.8}, Rightarrow, from=2, to=5]
	\arrow[between={0.2}{0.8}, Rightarrow, from=5, to=7]
\end{tikzcd}.\]
    More generally, trees and forests for $\Delta^{\op,\natural}$ correspond to the pasting diagrams appearing in the classical definition of (ordinary) virtual double categories.
\end{example}

\begin{example}[$\O = \Span(\F_G)$]
    Similarly to \cref{exa:span-forests}, the objects of the forest category $\Forest[\Span(\F_G)]$ may be identified with sequences of maps between finite $G$-sets.
    A map of forests $f \colon \langle n;t\rangle \to \langle m;s \rangle$ over $\phi \colon [m] \to [n]$ is then a diagram
    \begin{equation}
    \begin{tikzcd}
    	{t_0} & {t_1} & \cdots & {t_{n-1}} & {t_n} \\
    	{s_{\phi(0)}} & {s_{\phi(1)}} & \cdots & {s_{\phi(n-1)}} & {s_{\phi(n)}}
    	\arrow[from=1-1, to=1-2]
    	\arrow[from=1-1, to=2-1]
    	\arrow["\lrcorner",very near start, phantom, from=1-1, to=2-2]
    	\arrow[from=1-2, to=1-3]
    	\arrow[from=1-2, to=2-2]
    	\arrow["\lrcorner",very near start, phantom, from=1-2, to=2-3]
    	\arrow[from=1-3, to=1-4]
    	\arrow[from=1-4, to=1-5]
    	\arrow[from=1-4, to=2-4]
    	\arrow["\lrcorner",very near start, phantom, from=1-4, to=2-5]
    	\arrow[from=1-5, to=2-5]
    	\arrow[from=2-1, to=2-2]
    	\arrow[from=2-2, to=2-3]
    	\arrow[from=2-3, to=2-4]
    	\arrow[from=2-4, to=2-5]
    \end{tikzcd}
    \end{equation}
    of finite $G$-sets where every square is cartesian.
    In fact, $\Forest[\Span(\F_G)]$ is equivalent to the category $\Fun(BG,\Forest[\Span(\F)])$ of (layered) forests with a $G$-action.
    The category $\Tree[\Span(\F_G)]$ is then the full subcategory spanned by those forests $X_0 \to \cdots \to X_n$ for which $G$ acts transitively on its trees; that is, $G$ acts transitively on $X_n$.
    These categories are layered versions of the categories of $G$-forests and $G$-trees studied by Pereira--Bonventre \cite{Pereira2018EquivariantDendroidalSetsa,BonventrePereira2020EquivariantDendroidalSegal,BonventrePereira2022EquivariantDendroidalSets}.
\end{example}

We will now discuss an example of a different flavour:
it turns out that the tree category $\Tree[\cO]^\op$ of an algebraic pattern $\cO$ admits an algebraic pattern structure which, in a sense, intertwines the algebraic pattern structure of $\Delta^\op$ and $\cO$.
This will allow us to iterate the tree construction.

\begin{definition}\label{def:inerts and actives of the tree category}
Let $f \colon [m;s] \to [n;t]$ be a map in $\Tree[\O]$ with underlying map $\phi \colon [m] \to [n]$ in $\Delta$. Then $f$ is called \textit{inert} if $\phi$ is inert. We call $f$ \textit{active} if $\phi$ is active and for each $i\leq m$, the inert morphism $t_{\phi(i)}\intarrow s_{i}$ is an equivalence, or equivalently, if $t_{n}\intarrow s_{m}$ is an equivalence.
\end{definition}

\begin{example}[The iterated tree construction]\label{ex:iterated trees}
Let $\O$ be an algebraic pattern. We will see in \cref{lemma:factorization system on the tree category} below that the inert and active maps form a factorization system on $\Tree[\cO]$. We therefore obtain a pattern structure on $\Tree[\cO]^\op$ by taking as elementary objects the roots and corollas (i.e.\ trees $[n;t]$ with $n \leq 1$), which we denote by $\Tree[\O]^{\op,\natural}$.

One can then iterate the tree construction. We define $\Tree^n[\O]$ inductively by setting $\Tree^1[\O] \coloneqq \Tree[\O]$, and
\[
\Tree^{n+1}[\O] \coloneqq \Tree\big[\Tree^n[\O]^{\op,\natural}\big].
\]
In particular, $\Tree^{1}[*]^{\op,\natural}$ is the pattern $\Delta^{\op,\natural}$, where $\ast$ is the terminal algebraic pattern.
We recently learned that Cl\'emence Chanavat has an (unpublished) proof of the fact that the pattern $\Tree^n[*]^{\op, \natural}$ can be used to describe the (strict) virtual $n$-uple categories considered by Arkor in \cite{Arkor_slide}.
\end{example}

\subsection{The slice categories of the forest category}\label{sub:the_mapping_spaces_of_the_forest_category}

As preparation for what follows, we need a better understanding of the slices $\Forest[\O]_{/[n;t]}$ of the forest category $\Forest[\O]$. We introduce the following subcategories:

\begin{definition}
    \label{definition: delta circ i}
    Let $0 \leq i \leq n$ be an integer.
    We will write $\Delta^{\circ i}_{/[n]}$ for the full subcategory of $\Delta_{/[n]}$ spanned by the maps $\phi \colon [m] \to [n]$ so that $\phi(m) = i$.  If $\langle n;t \rangle$ is a forest, then we define the full subcategory $\Forest^{\circ i}[\O]_{/\langle n;t \rangle}$ by the pullback 
    \[
    \begin{tikzcd}
        \Forest^{\circ i}[\O]_{/\langle n;t\rangle}\arrow[d] \arrow[r] \ar[dr,"\lrcorner",very near start, phantom] & \Forest[\O]_{/\langle n;t\rangle}\arrow[d]\\
        \Delta^{\circ i}_{/[n]} \arrow[r] & \Delta_{/[n]}.
    \end{tikzcd}
    \]
\end{definition}

\begin{proposition}
    \label{proposition:explicit description of mapping space}
    Let $\cO$ be an algebraic pattern, $\langle n;t \rangle \in \Forest[\cO]$ a forest and $0 \leq i \leq n$ an integer.
    Then there exists an equivalence 
    $$
    \Delta_{/[n]}^{\circ i} \times (\O_{t_{i}/}^\inert)^\op \simeq \Delta_{/[n]}^{\circ i} \times (\O_i^\op)_{/t_{\leq i}} \xrightarrow{\simeq} \Forest^{\circ i}[\O]_{/\langle n;t \rangle}
    $$
    that carries  a pair $(\phi \colon [k]\to [i], g  \colon t_{\leq i}\to s)$ to the composite
$$\langle k;\phi^*s \rangle \to \langle i;s \rangle \to \langle i;t_{\leq i} \rangle \to \langle n;t \rangle.$$
\end{proposition}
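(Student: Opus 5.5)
The plan is to compute the slice $\Forest[\O]_{/\langle n;t\rangle}$ fibrewise over $\Delta_{/[n]}$, using that $\Forest[\O]\to\Delta$ is a cartesian fibration, since it is the opposite of the Grothendieck construction of $\O\colon\Delta^\op\to\Cat$. The general fact I would use is this: for a cartesian fibration $p\colon\mathscr E\to\mathscr B$ with straightening $F\colon\mathscr B^\op\to\Cat$, the induced functor $\mathscr E_{/e}\to\mathscr B_{/p(e)}$ is again a cartesian fibration. Its fibre over $\phi\colon b\to p(e)$ is $F(b)_{/\phi^*e}$. Here $\Forest[\O]=(\int\O)^\op$, so $F(b)=\O_b^\op$. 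The fibre over $\phi\colon[k]\to[n]$ is therefore $(\O_k^\op)_{/\phi^*t}\simeq ((\O_k)_{\phi^*t/})^\op$.

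Next I restrict to $\Delta^{\circ i}_{/[n]}$. Every $\phi$ with $\phi(k)=i$ factors uniquely as $[k]\xrightarrow{\phi'}[i]\hookrightarrow[n]$, where the second map is the initial segment inclusion. This gives an isomorphism $\Delta^{\circ i}_{/[n]}\cong\Delta^{\circ i}_{/[i]}$, and under it $\phi^*t=\phi'^*t_{\leq i}$. By \cref{rem:slices-factorization-double-category}, the target projection gives $(\O_k)_{\phi'^*t_{\leq i}/}\simeq(\O_0)_{t_i/}=\O^\inert_{t_i/}$, using that $\phi'(k)=i$. Likewise $(\O_i)_{t_{\leq i}/}\simeq \O^\inert_{t_i/}$. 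So every fibre is identified with $(\O^\inert_{t_i/})^\op$, which accounts for both equivalences in the statement.

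It remains to show that the cartesian fibration $\Forest^{\circ i}[\O]_{/\langle n;t\rangle}\to\Delta^{\circ i}_{/[n]}$ is trivial, i.e. that the transition functors are the identity under these identifications. For $\psi\colon[k]\to[k']$ over $[i]$, the transition functor is restriction $\psi^*\colon(\O_{k'})_{\phi'^*t_{\leq i}/}\to(\O_k)_{\phi'\psi\,{}^*t_{\leq i}/}$. Since $\psi$ preserves the last vertex, it commutes with the target projections to $(\O_0)_{t_i/}$, so it corresponds to the identity. To make this coherent, I would define the comparison functor directly, as in the statement. A pair $(\phi,g)$ is sent to the composite $\langle k;\phi^*s\rangle\to\langle i;s\rangle\to\langle i;t_{\leq i}\rangle\to\langle n;t\rangle$. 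In words: a cartesian lift of $\phi$, then the fibre map $g$ in $\O_i^\op$, then the cartesian lift of the inclusion $[i]\hookrightarrow[n]$. This is functorial in the pair, because cartesian lifts compose.

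Then I check that this functor is a map of cartesian fibrations over $\Delta^{\circ i}_{/[n]}$. It sends the projection-cartesian morphisms of the product, namely those with $g$ fixed, to composites of cartesian lifts, and these are cartesian in the slice. Finally I check that it is a fibrewise equivalence. On the fibre over $\phi$ it is the functor $(\O_i^\op)_{/t_{\leq i}}\to(\O_k^\op)_{/\phi^*t}$, $g\mapsto\phi'^*g$. Via the two target-projection equivalences, this is the identity on $(\O^\inert_{t_i/})^\op$. A fibrewise equivalence of cartesian fibrations is an equivalence, which finishes the argument.

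The main obstacle is the first step: correctly identifying the fibres of $\Forest[\O]_{/\langle n;t\rangle}\to\Delta_{/[n]}$, including the variance bookkeeping caused by the $(-)^\op$ in the definition of $\Forest[\O]$. I would handle it with the standard description of mapping spaces in a cartesian fibration: $\Map(\langle k;s\rangle,\langle n;t\rangle)\simeq\coprod_{\phi}\Map_{\O_k^\op}(s,\phi^*t)$. This gives both the fibres and the fully-faithfulness needed for the fibrewise check.
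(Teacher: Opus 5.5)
Your proposal is correct and follows essentially the same route as the paper. Both arguments pass to the cartesian fibration $\Forest[\O]_{/\langle n;t\rangle}\to\Delta_{/[n]}$, restrict to $\Delta^{\circ i}_{/[n]}$ via the unique factorization through $[i]\hookrightarrow[n]$, and use the target-projection equivalences of \cref{rem:slices-factorization-double-category} to see that all transports are trivial. The only difference is the last step: the paper observes that $[i]\hookrightarrow[n]$ is a final object of $\Delta^{\circ i}_{/[n]}$ and applies \cref{lemma:trivializing fibration}, which also handles the coherent construction of your comparison functor (it is exactly cartesian transport out of the fibre over that final object), a point you justify only by ``cartesian lifts compose''.
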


To prove the above, we will make use of the following basic results on cartesian fibrations:

\begin{lemma}\label{lemma:slicing fibration}
    Let $p \colon E \to \C$ be a cartesian fibration. Then for every $e \in E$, the induced functor $p_{/e} \colon E_{/e} \to \C_{/p(e)}$ is a cartesian fibration as well. For any morphism $x \xrightarrow{f} y \xrightarrow{g} p(e)$, the associated structure map $(E_{/e})_y \to (E_{/e})_x$ of $p_{/e}$ is given by the functor
    $(E_y)_{/g^*e} \to (E_x)_{/f^*g^*e}$ induced by $f^* \colon E_y \to E_x$.
\end{lemma}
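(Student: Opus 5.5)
The plan is to construct cartesian lifts for $p_{/e}$ explicitly and verify the cartesian property by computing mapping spaces in slice categories. First I identify the fibers. An object of $E_{/e}$ lying over $(y, g \colon y \to p(e)) \in \C_{/p(e)}$ is a morphism $\beta \colon b \to e$ in $E$ with $p(b) = y$ and $p(\beta) = g$. Since $p$ is a cartesian fibration, every such $\beta$ factors essentially uniquely as a map $b \to g^*e$ in $E_y$ followed by the cartesian morphism $g^*e \to e$. This yields an equivalence $(E_{/e})_{(y,g)} \simeq (E_y)_{/g^*e}$. To see that it is an equivalence of categories, and not merely a bijection on objects, I compare mapping spaces using the defining pullback property of cartesian morphisms.

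Next I construct the lifts. Let $f \colon (x, gf) \to (y,g)$ be a morphism in $\C_{/p(e)}$ and let $\beta \colon b \to e$ be an object over $(y,g)$. Choose a $p$-cartesian morphism $\tilde f \colon f^*b \to b$ over $f$. The candidate lift is $\tilde f$, viewed as a morphism $(f^*b, \beta\tilde f) \to (b,\beta)$ in $E_{/e}$. Under the identification of the first step, its source corresponds to the map $f^*b \to f^*g^*e \simeq (gf)^*e$ in $E_x$ obtained by applying $f^*$ to $b \to g^*e$. This already gives the claimed description of the transport functor, provided $\tilde f$ is $p_{/e}$-cartesian.

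The remaining step is to show that $\tilde f$ is $p_{/e}$-cartesian; I expect this to be the main (if routine) obstacle. For any object $(a,\alpha \colon a \to e)$ of $E_{/e}$, the mapping space is
\[\Map_{E_{/e}}((a,\alpha),(b,\beta)) \simeq \Map_E(a,b)\times_{\Map_E(a,e)}\{\alpha\},\]
and similarly $\Map_{\C_{/p(e)}}$ is a fiber of $\Map_\C(p(a),y)\to\Map_\C(p(a),p(e))$. I must show that the square formed by these mapping spaces for the targets $(f^*b, \beta\tilde f)$ and $(b,\beta)$ is a pullback. Concretely, I write it as the square of fibers (over $\{\alpha\}$ and $\{p(\alpha)\}$) of the cube whose faces are the squares
\[\Map_E(a,f^*b)\to \Map_E(a,b) \quad\text{over}\quad \Map_\C(p a,x)\to\Map_\C(p a,y),\]
together with the corresponding square with target $e$ and the identity on $\Map_E(a,e)$ and $\Map_\C(pa,pe)$. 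The square for $\tilde f$ is a pullback because $\tilde f$ is $p$-cartesian. The square with target $e$ is trivially a pullback, since its horizontal maps are identities. Taking fibers over compatible points preserves pullbacks, because limits commute with limits, so the square of fibers is a pullback. This proves the cartesian property.

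Alternatively, and as a cross-check, one can argue model-independently. By \cite[Proposition 2.1.2.1]{HTT}-type results, $E_{/e}\to E\times_\C \C_{/p(e)}$ is a right fibration. Then $p_{/e}$ is the composite of this right fibration with the pullback of the cartesian fibration $p$, which is again a cartesian fibration. A right fibration $q$ lifts every morphism to a $q$-cartesian one, so composing it with a cartesian fibration preserves the existence of cartesian lifts of the relevant edges. This recovers the same lifts as above.
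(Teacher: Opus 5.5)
Your proposal is correct and is essentially the paper's approach: the paper just cites \cite[Proposition 2.4.3.1]{HTT}. That result says that morphisms of $E_{/e}$ whose image in $E$ is $p$-cartesian are $p_{/e}$-cartesian, and the transport functors are then read off exactly as you do. Your mapping-space cube argument is a model-independent proof of that cited fact, and your alternative is also valid: factor $p_{/e}$ as the right fibration $E_{/e}\to E\times_\C\C_{/p(e)}$ followed by the base change of $p$, and use that composites of cartesian fibrations are cartesian.
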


\begin{proof}
    This follows from \cite[Proposition 2.4.3.1]{HTT} and the explicit description of the cartesian morphisms given there.
\end{proof}

\begin{lemma}\label{lemma:trivializing fibration}
    Let $\C$ be a category with a final object $\ast$. If $p \colon E \to \C$ is a cartesian fibration so that the transport maps $E_\ast \to E_c$ are equivalences for every $x \in \C$, then there exists an equivalence $E \simeq \C \times E_\ast$ over $\C$.
\end{lemma}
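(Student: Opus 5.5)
The plan is to straighten $p$ and reduce to a statement about contravariant functors on a category with a terminal object. By straightening, $p$ corresponds to a functor $F \colon \C^\op \to \Cat$ with $F(c) \simeq E_c$. The transport map $E_\ast \to E_c$ is then $F$ applied to the unique map $c \to \ast$. The hypothesis says exactly that $F$ sends every morphism of the form $c \to \ast$ to an equivalence.

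First I show that $F$ inverts every morphism of $\C$. Take any $f \colon x \to y$. The composite $x \to y \to \ast$ is the unique map $x \to \ast$, since $\ast$ is final. Hence $F(f) \circ F(y \to \ast) \simeq F(x \to \ast)$. Both $F(y\to\ast)$ and $F(x\to\ast)$ are equivalences, so by two-out-of-three $F(f)$ is one as well.

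Next I show that $F$ is equivalent to the constant functor at $F(\ast)$. Since $\ast$ is terminal in $\C$, it is initial in $\C^\op$. The left Kan extension of $F|_{\{\ast\}}$ along $\{\ast\} \hookrightarrow \C^\op$ is therefore the constant functor $\underline{F(\ast)}$. The counit $\underline{F(\ast)} \to F$ is a natural transformation whose component at $c$ is $F(c \to \ast)$. By hypothesis each component is an equivalence, so the counit is an equivalence $F \simeq \underline{E_\ast}$.

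Finally I unstraighten. The unstraightening of a constant functor $\underline{D}$ on $\C^\op$ is the projection $\C \times D \to \C$. Since unstraightening is an equivalence of categories $\Fun(\C^\op,\Cat) \simeq \mathrm{Cart}(\C)$, it gives $E \simeq \C \times E_\ast$ over $\C$.

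The only step needing care is identifying the components of the counit with the transport maps; this is the standard compatibility of straightening with fibers. Everything else is formal, so I do not expect a serious obstacle here.
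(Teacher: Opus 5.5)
Your proposal is correct and matches the paper's proof, which likewise observes that the classifying functor $F\colon \C^\op \to \Cat$ is left Kan extended from $\{\ast\}$ and hence constant with value $E_\ast$. Your first step, showing that $F$ inverts every morphism, is true but not used later in the argument.
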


\begin{proof}
    Let $F \colon \C^\op \to \Cat$ be the functor classifying $p \colon E \to \C$. The conditions on $p$ imply that $F$ is left Kan extended from its restriction to $*$, hence that it is the constant functor with value $F(*) = E_*$.
\end{proof}

\begin{proof}[Proof of \cref{proposition:explicit description of mapping space}]
    On account of \cref{lemma:slicing fibration}, the projection $\Forest[\O]_{/\langle n;t \rangle} \to \Delta_{/[n]}$ is again a cartesian fibration. Let $\phi \colon [m] \to [n]$ be a map so that $\phi(m) = i$. Then $\phi$ factors uniquely through the inclusion $\{0\leq \dotsb \leq i\} \colon [i] \hookrightarrow [n]$ via a map $\psi \colon [m] \to [i]$. Thus $\Delta^{\circ i}_{/[n]}$ has a final object given by the canonical inclusion $\{0 \leq \dotsc \leq i\} \colon [i] \hookrightarrow [n]$. The transport functor along $\psi$ of the cartesian fibration $\Forest[\O]_{/\langle n;t \rangle} \to \Delta_{/[n]}$  fits in a commutative triangle
    \[
        \begin{tikzcd}[column sep =tiny]
            (\O_i)_{t_{\leq i}/} \arrow[rr, "\psi^*"]\arrow[dr] && (\O_m)_{\psi^*t_{\leq i}/}\arrow[dl] \\
            & (\O_0)_{t_i/}
        \end{tikzcd}
    \]
    after applying $(-)^\op$.
    The slanted arrows are equivalences by \cref{rem:slices-factorization-double-category} since $\O$ is a factorization double category, so the result follows from \cref{lemma:trivializing fibration}.
\end{proof}

\begin{corollary}\label{cor:explicit description of mapping space}
    Let $0 \leq i \leq n$ be an integer.
    There is a natural pullback square 
    \[  
    \begin{tikzcd}
        (\O^{\op}_0)_{/t_i} \arrow[r]\arrow[d] & \Forest[\O]_{/\langle n;t\rangle} \arrow[d] \\
        \{\phi\} \arrow[r] & \Delta_{/[n]}
    \end{tikzcd}
    \]
    for $\phi \in \Delta^{\circ i}_{/[n]}$. \qed
\end{corollary}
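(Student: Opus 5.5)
The plan is to read \cref{cor:explicit description of mapping space} off from \cref{proposition:explicit description of mapping space} by taking fibres. Fix $\phi \colon [m] \to [n]$ with $\phi(m) = i$, so that $\phi \in \Delta^{\circ i}_{/[n]}$. By the definition of $\Forest^{\circ i}[\O]_{/\langle n;t\rangle}$ as a pullback along the full inclusion $\Delta^{\circ i}_{/[n]} \hookrightarrow \Delta_{/[n]}$, the fibre of $\Forest[\O]_{/\langle n;t\rangle} \to \Delta_{/[n]}$ over $\phi$ coincides with the fibre of $\Forest^{\circ i}[\O]_{/\langle n;t\rangle} \to \Delta^{\circ i}_{/[n]}$ over $\phi$. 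Pasting the two pullback squares therefore reduces the claim to identifying the latter fibre.

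By \cref{proposition:explicit description of mapping space}, there is an equivalence
\[
\Delta^{\circ i}_{/[n]} \times (\O^\op_i)_{/t_{\leq i}} \simeq \Forest^{\circ i}[\O]_{/\langle n;t\rangle}.
\]
This equivalence lies over $\Delta^{\circ i}_{/[n]}$: it is built via \cref{lemma:trivializing fibration}, which produces an equivalence over the base, and one can also see this from the explicit formula, since $(\psi,g)$ is sent to a map whose underlying map in $\Delta$ is the composite $[k] \to [i] \hookrightarrow [n]$. Taking fibres over $\phi$ identifies the fibre with $(\O^\op_i)_{/t_{\leq i}} = ((\O_i)_{t_{\leq i}/})^\op$. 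By \cref{rem:slices-factorization-double-category}, the target projection gives an equivalence $(\O_i)_{t_{\leq i}/} \simeq (\O_0)_{t_i/}$, so after taking opposites the fibre is $(\O_0^\op)_{/t_i}$, as claimed.

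For naturality, the map $(\O^\op_0)_{/t_i} \to \Forest[\O]_{/\langle n;t\rangle}$ is explicit. An inert map $g \colon t_i \intarrow s$ is lifted to the essentially unique $t_{\leq i} \to s'$ over it; this lift exists because the target map is a left fibration. One then sends $g$ to the composite $\langle m;\psi^* s'\rangle \to \langle i;s'\rangle \to \langle i;t_{\leq i}\rangle \to \langle n;t\rangle$, where $\psi$ is the factorization of $\phi$ through $[i]$. Everything here is functorial in $\phi \in \Delta^{\circ i}_{/[n]}$ and in $t$. The only point needing care is checking that the equivalence of \cref{lemma:trivializing fibration} is compatible with the projection to the base. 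This is immediate, because the straightening of the cartesian fibration is a constant functor, and the unstraightening of a constant functor is the product projection.
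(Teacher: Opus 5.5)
Your proposal is correct and is exactly the argument the paper leaves implicit (the corollary is marked \qed). Restrict the equivalence of \cref{proposition:explicit description of mapping space}, which lies over $\Delta^{\circ i}_{/[n]}$ by its construction via \cref{lemma:trivializing fibration}, to the fibre over $\phi$; pullback pasting along the full inclusion $\Delta^{\circ i}_{/[n]} \hookrightarrow \Delta_{/[n]}$ and the identification $(\O_0^\op)_{/t_i} = (\O^\inert_{t_i/})^\op$ then give the square, and your explicit description of the comparison functor matches the remark following the corollary in the paper.
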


\begin{remark}
    The equivalence of above can be described on points as follows. For a map $f \colon t_i \to x$, the corresponding morphism $\langle m;s \rangle \to \langle n;t \rangle$ corresponds to $\phi^*t \to s$ in $\O_m$ obtained by completing the diagram below with the dashed factorizations:
    \[
        \begin{tikzcd}
            t_{\phi(0)} \arrow[squiggly, r] \arrow[d,dashed] & t_{\phi(1)}\arrow[d,dashed] \arrow[squiggly, r]& \dotsb \arrow[squiggly, r] & t_{\phi(m-1)}\arrow[squiggly, r] \arrow[d,dashed]& t_{\phi(m)} \arrow[d, "f",tail] \\ 
            s_0 \arrow[r,dashed] & s_1\arrow[r,dashed] & \dotsb \arrow[r, dashed] & s_{m-1}\arrow[r,dashed] & s_m.
        \end{tikzcd}
    \]
\end{remark}

\begin{remark}
    One can also describe the mapping spaces in $\Forest[\cO]$ over a given map $\phi \colon [m] \to [n]$ in $\Delta$. Since $\Forest[\cO] \to \Delta$ is cartesian, the space $\map^\phi_{\Forest[\cO]}(\langle m;s \rangle , \langle n;t \rangle )$ of maps $\langle m;s \rangle  \to \langle n;t \rangle $ over $\phi$ agrees with the space $\map_{\cO_m}(\phi^*t,s)$.
    When $m=1$, this mapping space is given by
    \[\map_{\O_1}(\phi^*t,s) \simeq \map_{\O_0}(t_{\phi(1)},s_1) \times_{(\O^\act_{/s_1})^\simeq} \{s_0 \rightsquigarrow s_1\}.\]
    There is a similar formula for general $m$, which can either be described iteratively (using the Segal condition of $\cO$) or by replacing $\cO^\activ_{/s_1}$ with the category of strings of $m$ active morphisms that end with $s_m$.
\end{remark}

Finally, we are now ready to prove that the inert and active maps from \cref{def:inerts and actives of the tree category} form the factorization system on $\Tree[\cO]$ used in \cref{ex:iterated trees}.

\begin{lemma}
\label{lemma:factorization system on the tree category}
The active and inert morphisms of $\Tree[\O]$ form a factorization system.
\end{lemma}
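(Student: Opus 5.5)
The plan is to reduce the statement to the active--inert factorization system on $\Delta$, using the description of the slices of $\Forest[\cO]$ from \cref{proposition:explicit description of mapping space}.

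Recall the relevant conventions. The functor $\Tree[\cO] \to \Delta$ is the restriction of the cartesian fibration $\Forest[\cO] \to \Delta$ to a full subcategory. A map $f \colon [m;s] \to [n;t]$ lying over $\phi \colon [m] \to [n]$ is a map $\phi^*t \to s$ in $\cO_m$ with inert components. The map $f$ is inert when $\phi$ is inert. It is active when $\phi$ is active and the last component $t_{\phi(m)} \intarrow s_m$ is an equivalence.

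\emph{Existence of factorizations.} Fix $f$ as above and put $i = \phi(m)$. First factor $\phi = \iota \circ \alpha$ in $\Delta$, with $\alpha \colon [m] \to [k]$ active and $\iota \colon [k] \hookrightarrow [n]$ the convex inclusion of $\{\phi(0), \dots, i\}$.
\begin{itemize}
\item By \cref{rem:slices-factorization-double-category}, the target map $\cO_k \to \cO_0$ is a left fibration. Let $u \in \cO_k$ be the transport of $\iota^*t$ along the last component $g \colon t_i \intarrow s_m$ of $f$. Then $u_k = s_m$ is elementary, so $[k;u]$ is a tree.
\item The cocartesian edge $\iota^*t \to u$ gives a map $[k;u] \to [n;t]$ over $\iota$, which is inert.
\item Both $\alpha^*u$ and $s$ are transports of $\phi^*t$ along the same map $g$ in $\cO_0$. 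Hence, again by \cref{rem:slices-factorization-double-category}, there is an essentially unique map $\alpha^*u \to s$ whose last component is $\mathrm{id}_{s_m}$. This gives a map $[m;s] \to [k;u]$ over $\alpha$, which is active.
\end{itemize}
The composite of these two maps is $f$, since both are determined by the same data: the map over $\phi$ together with the component $g$.

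\emph{Uniqueness of factorizations.} I would show that the space of active--inert factorizations of $f$ is contractible.
\begin{itemize}
\item Any inert map $j \colon [k';u'] \to [n;t]$ lies over some inert $\iota' \colon [k'] \to [n]$. Since a map out of $[m;s]$ into $[k';u']$ is to be active, $\iota'$ must send $k'$ to $i$.
\item By \cref{proposition:explicit description of mapping space}, the slice $\Forest^{\circ i}[\cO]_{/\langle n;t \rangle}$ splits as $\Delta^{\circ i}_{/[n]} \times (\cO^\inert_{t_i/})^\op$. The tree condition is a condition on the second factor only, namely that the target is elementary.
\item Under this splitting, a factorization of $f$ amounts to a pair consisting of a factorization of $\phi$ in $\Delta$, and a factorization of $g \colon t_i \intarrow s_m$ as an inert map $t_i \intarrow u'_{k'}$ followed by an equivalence $u'_{k'} \simeq s_m$.
\item The space of factorizations of the first kind is contractible because $\Delta$ has the active--inert factorization system. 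The space of the second kind is contractible because factorizations through an equivalence form the contractible space $\{g\}$.
\end{itemize}

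\emph{Expected main obstacle.} The difficulty is making the splitting of the factorization space coherent. The factorization space is a fiber of the composition map $\Ar_{\act}(\Tree[\cO]) \times_{\Tree[\cO]} \Ar_{\inert}(\Tree[\cO]) \to \Ar(\Tree[\cO])$. It has to be identified with a fiber computed in the slice $\Tree[\cO]_{/[n;t]}$, and in particular the active part has to be expressed via the fibration $\Tree[\cO]_{/[n;t]} \to \Delta_{/[n]}$.

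To do this I would use \cref{lemma:slicing fibration} and \cref{cor:explicit description of mapping space} to compute fiberwise over $\Delta_{/[n]}$. Over a fixed factorization of $\phi$, the remaining data is the space of objects $x$ of $(\cO^\inert_{t_i/})^\op$ receiving a map to $g$ whose component is an equivalence. This space is contractible.

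Alternatively, one can avoid the fiber computation by checking closure under composition and unique lifting (orthogonality) directly. The lifting problem decomposes in the same way: a lifting problem in $\Delta$, where it is solvable by the factorization system on $\Delta$, together with a trivial lifting problem for equivalences in $\cO_0$.
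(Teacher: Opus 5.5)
Your proposal is correct and follows the paper's own route. The paper likewise reduces to contractibility of the fibers of the composition functor $\Ar^{\act}(\Tree[\O]) \times_{\Tree[\O]} \Ar^{\inert}(\Tree[\O]) \to \Ar(\Tree[\O])$. It then notes that the inert factor must lie in $\Tree[\O]^{\circ i}_{/[n;t]}$ with $i = \phi(m)$, and uses \cref{proposition:explicit description of mapping space} to split the factorization category into an active--inert factorization of $\phi$ in $\Delta$ times the contractible category of factorizations of $t_i \intarrow s_m$ through an equivalence. Your separate existence construction is redundant, since contractibility already includes existence, and the coherence step you flag is exactly the one the paper dismisses with ``one readily verifies''.
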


\begin{proof}
We have to show that the composition functor
$$
\Ar^{\act}(\Tree[\O]) \times_{\Tree[\O]} \Ar^{\inert}(\Tree[\O]) \to \Ar(\Tree[\O])
$$
has contractible fibers on account of \cite[Proposition 3]{FactSystems}. If $f : [m;s] \to [n;t]$ is a map between trees, then one readily verifies that the fiber above $f$ is given by the full subcategory 
$
\mathrm{Fact}(f) \subset (\Tree[\O]_{/[n;t]})_{f/}
$
that corresponds to the triangles
\[
    \begin{tikzcd}[column sep = tiny, row sep =tiny]
        {[m;s]} \arrow[dr,"f"']\arrow[rr, "f'"] && {[k;v]}\arrow[dl, "f''"]\\
        & {[n;t]}
    \end{tikzcd}
\]
so that $f'$ is active and $f''$ is inert. Let $\phi$ be the underlying map of $f$. Then $f$ is contained in $\Tree[\O]^{\circ i}_{/[n;t]}$ with $i := \phi(m)$, and this automatically implies that $f'' \in \Tree[\O]^{\circ i}_{/[n;t]}$ as well since $f'$ is active.   On account of \cref{proposition:explicit description of mapping space}, we may identify $\mathrm{Fact}(f)$ with the full subcategory of $(\Delta_{/[n]})_{\phi/} \times ((\O^{\el, \op})_{/t_i})_{\psi/}$ spanned by the pairs of triangles
\[
    \left(\begin{tikzcd}[column sep = tiny, row sep =tiny]
       {[m]} \arrow[dr,"\phi"']\arrow[rr, "\phi'"] && {[k]},\arrow[dl, "\phi''"]\\
        & {[n]}
    \end{tikzcd}
    \begin{tikzcd}[column sep = tiny, row sep =tiny]
       {s_m} \arrow[dr,"\psi"']\arrow[rr, "\simeq"] && {e}\arrow[dl]\\
        & {t_i}
    \end{tikzcd}
    \right)
\]
so that $\phi'$ is active and $\phi''$ is inert.
Here $\psi$ is induced by $f$. Thus $\mathrm{Fact}(f)$ is a contractible category.
\end{proof}

\subsection{Comparing presheaves on the forest and tree categories}\label{subsection:comparing forests and trees}

Let $i \colon \Tree[\cO] \to \Forest[\cO]$ be the inclusion.
Since $i$ is fully faithful, right Kan extension along $i$ gives an adjunction
\[i^* : \PSh(\Forest[\O]) \rightleftarrows \PSh(\Tree[\O]) : i_*,\]
such that $i_*$ is fully faithful.
The goal of this section is to identify the image of $i_*$.

\begin{proposition}\label{prop:RKan-trees-to-forests}
    Let $\cO$ be an algebraic pattern and $X$ a presheaf on $\Forest[\cO]$. Then $X$ lies in the image of $i_* \colon \PSh(\Tree[\cO]) \to \PSh(\Forest[\cO])$ if and only
    \begin{itemize}
        \item for every object $x$ in $\cO$, the map
        \[X(\langle 0;x \rangle ) \to \lim_{e \in \cO^\elem_{x/}} X(\langle 0;e \rangle)\]
        is an equivalence, and
        \item for every forest $\langle n;t \rangle$ with $n \geq 1$, the square
    \begin{equation}\label{eq:Tree-vs-forest-pullback-square}\begin{tikzcd}
        {X(\langle n;t \rangle )} & {\lim\limits_{(t \intarrow s) \in (\cO_{n})^\elem_{t/}}X(\langle n;s \rangle )} \\
        {X(\langle n-1;t_{\leq n-1} \rangle)} & {\lim\limits_{(t \intarrow s) \in (\cO_{n})^\elem_{t/}}X(\langle n-1;s_{\leq n-1} \rangle )}
        \arrow[from=1-1, to=1-2]
        \arrow["{d_n}"', from=1-1, to=2-1]
        \arrow["{d_n}", from=1-2, to=2-2]
        \arrow[from=2-1, to=2-2]
    \end{tikzcd}\end{equation}
    is cartesian.
    \end{itemize}
\end{proposition}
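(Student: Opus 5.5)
The plan is to use the pointwise formula for right Kan extension. For $Y \in \PSh(\Tree[\cO])$ and a forest $F=\langle n;t\rangle$ we have
\[(i_*Y)(F) \simeq \lim_{(T \to F) \in (\Tree[\cO]_{/F})^\op} Y(T).\]
Since $i_*$ is fully faithful, $X$ lies in the image of $i_*$ exactly when the unit $X \to i_*i^*X$ is an equivalence. The proof therefore splits into two parts: (a) show that every $i_*Y$ satisfies the two conditions; (b) show that two presheaves satisfying the conditions and agreeing on trees agree everywhere, which we apply to the map $X \to i_*i^*X$.

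For (a), I would analyse the slice $\Tree[\cO]_{/\langle n;t\rangle}$ through its projection to $\Delta_{/[n]}$, which is a cartesian fibration by \cref{lemma:slicing fibration}. Let $\phi \colon [m]\to[n]$ be the underlying map of an object. Objects with $\phi(m)<n$ span a sieve $S$: a map $T'\to T$ over $F$ can only lower the value $\phi(m)$. Objects with $\phi(m)=n$ span the complementary cosieve $C$.
\begin{itemize}
\item By \cref{proposition:explicit description of mapping space}, $S$ is identified with $\Tree[\cO]_{/\langle n-1;t_{\leq n-1}\rangle}$, since such maps factor through $\langle n-1; t_{\le n-1}\rangle \to \langle n;t\rangle$.
\item Again by \cref{proposition:explicit description of mapping space}, $C \simeq \Delta^{\circ n}_{/[n]} \times ((\cO^\elem_n)_{t/})^\op$, using \cref{remark:elementary-slices-On}.
\end{itemize}
Because $\Delta^{\circ n}_{/[n]}$ has the terminal object $\id_{[n]}$, the limit over $C^\op$ collapses to $\lim_{(t\intarrow s)} Y(\langle n;s\rangle)$.

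A limit over a category decomposed into a sieve and its complement is the pullback
\[\lim_C \times_{\lim_{\text{rel}}} \lim_S,\]
where $\lim_{\text{rel}}$ is the limit over the comma part of maps from $S$-objects to $C$-objects. The key identification is that this relative term is
\[\lim_{(t\intarrow s)\in (\cO_n)^\elem_{t/}} (i_*Y)(\langle n-1; s_{\le n-1}\rangle).\]
I would obtain this by applying the same description slice-wise over each $s$ and using \cref{rem:slices-factorization-double-category}. With it, the pullback becomes exactly the square \cref{eq:Tree-vs-forest-pullback-square}, so the square is cartesian. For $n=0$, the slice $\Tree[\cO]_{/\langle 0;x\rangle}$ is $(\cO^\elem_{x/})^\op$ by \cref{cor:explicit description of mapping space}, which gives the first condition directly.

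For (b), let $X \to X'$ be a map of presheaves that both satisfy the conditions and that is an equivalence on trees. I would prove it is an equivalence on $\langle n;t\rangle$ by induction on $n$.
\begin{itemize}
\item For $n=0$, the first condition writes both values as limits over the roots $\langle 0;e\rangle$, where the map is an equivalence.
\item For $n\ge1$, the cartesian square writes $X(\langle n;t\rangle)$ as a pullback of three terms. One is a limit of values on trees $\langle n;s\rangle$ with $s$ elementary. The other two are values, or limits of values, on forests with $n-1$ layers, handled by the induction hypothesis.
\end{itemize}
Naturality of the squares then gives the equivalence.

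The main obstacle is the sieve/cosieve gluing in (a). One must check carefully that the comma part between $S$ and $C$ has the claimed limit: each $s$ in $(\cO_n)^\elem_{t/}$ should contribute precisely the sub-slice $\Tree[\cO]_{/\langle n-1;s_{\le n-1}\rangle}$, compatibly with the maps $d_n$. I would first try to exhibit $\Tree[\cO]_{/\langle n;t\rangle}$ as a cartesian fibration over $\Delta_{/[n]}$, then compute the right Kan extension in two steps: first along the fibres via \cref{lemma:trivializing fibration}, then over $\Delta_{/[n]}$, where the splitting $\phi(m)<n$ versus $\phi(m)=n$ is transparent.
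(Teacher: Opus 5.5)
Your proposal is correct and follows essentially the paper's argument. The paper likewise splits $\Tree[\cO]_{/\langle n;t\rangle}$ according to whether the underlying map reaches $n$, and writes the limit as a pullback via its lemma on limits over a category with a functor to $[1]$. It then collapses the reaching part using the terminal object $\id_{[n]}$ of $\Delta^{\circ n}_{/[n]}$, and identifies the non-reaching objects over each tree $[n;s]$ with $\Tree[\cO]_{/\langle n-1;s_{\leq n-1}\rangle}$.

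The only difference is packaging. You prove ``$i_*Y$ satisfies the conditions'' and then give an inductive rigidity argument for the unit $X\to i_*i^*X$. The paper instead runs a single induction showing that, given the lower levels, the square is cartesian if and only if $X(\langle n;t\rangle)\to\lim X$ is an equivalence.

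Two small corrections. For $n=0$ the slice is $\Delta\times(\cO^\elem_{x/})^\op$ rather than $(\cO^\elem_{x/})^\op$; its limit collapses for the same reason as the reaching part. The fallback in your last paragraph would not work as stated, because $\Tree[\cO]_{/\langle n;t\rangle}\to\Delta_{/[n]}$ is not a cartesian fibration in general: the transport $\langle m';\psi^*s\rangle$ of a tree $[m;s]$ along $\psi$ with $\psi(m')<m$ is only a forest. Your sieve/cosieve decomposition already makes that fallback unnecessary.
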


We will use the following lemma.

\begin{lemma}\label{lem:decomposition-colimit-over-correspondence}
    Let $\cC$ be a category and let $p \colon \cC \to [1]$ be a functor with fibers $\cC_0$ and $\cC_1$ over $0$ and $1$, respectively. Then for any complete category $\mathscr{E}$ and any diagram $F \colon \cC \to \mathscr{E}$, the square
    \[\begin{tikzcd}
        {\lim_{c \in \cC} F(c)} & {\lim_{d \in\cC_1} F(d)} \\
        {\lim_{c \in\cC_0} F(c)} & {\lim_{c \in\cC_0} \lim_{d \in(\cC_1)_{c/}} F(d)}
        \arrow[from=1-1, to=1-2]
        \arrow[from=1-1, to=2-1]
        \arrow[from=1-2, to=2-2]
        \arrow[from=2-1, to=2-2]
    \end{tikzcd}\]
    is cartesian.
\end{lemma}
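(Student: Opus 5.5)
The plan is to write $\lim_{\cC} F$ as a limit of a right Kan extension along $p$, and then evaluate that Kan extension using the pointwise formula. Since $p \colon \cC \to [1]$, every $c \in \cC$ lies in exactly one of $\cC_0$ and $\cC_1$. We have $\lim_{\cC} F \simeq \lim_{[1]} p_* F$, where $p_*F \colon [1] \to \mathscr{E}$ is the right Kan extension. It exists because $\mathscr{E}$ is complete.

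The pointwise formula gives $(p_*F)(j) \simeq \lim_{\cC_{j/}} F$, where $\cC_{j/} = \cC \times_{[1]} [1]_{j/}$.
\begin{itemize}
\item For $j=1$, the comma category $\cC_{1/}$ is just the fiber $\cC_1$, so $(p_*F)(1) \simeq \lim_{\cC_1} F$.
\item For $j=0$, the comma category $\cC_{0/}$ is all of $\cC$. This is not directly useful, so we instead compute $\lim_{[1]} p_*F$ differently.
\end{itemize}

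To get the pullback, view $[1]$ as the cospan-indexing shape $\{0\} \to \{1\} \leftarrow \{1\}$; more precisely, use the decomposition below.
\begin{enumerate}
\item Let $G = F|_{\cC_0}$ and let $H$ be the right Kan extension of $F$ along the inclusion $j \colon \cC_1 \hookrightarrow \cC$, so that $H = j_* j^* F$.
\item The unit gives a map $F \to j_*j^*F$.
\item There is a pullback square of functors on $\cC$ with corners $F$, $j_*j^*F$, $i_*i^*F$ and $i_*i^*j_*j^*F$, where $i \colon \cC_0 \hookrightarrow \cC$. This is the standard recollement-style fracture square for a decomposition of $\cC$ into a sieve $\cC_0$ and a cosieve $\cC_1$.
\item We check the square objectwise using the pointwise formulas. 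On $c \in \cC_1$, both $i_*$-terms are terminal, since no map goes from $\cC_1$ to $\cC_0$; the other two terms agree. On $c \in \cC_0$, the $i_*i^*$-terms just evaluate at $c$, because $\cC_0$ is a sieve and $c \in \cC_0$. So both vertical maps in the square become identities, and the square is cartesian there.
\end{enumerate}

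Next, apply $\lim_{\cC}$, which preserves pullbacks. Right Kan extension commutes with taking global limits, so:
\begin{itemize}
\item $\lim_{\cC} j_*j^*F \simeq \lim_{\cC_1}F$;
\item $\lim_{\cC} i_*i^*F \simeq \lim_{\cC_0}F$;
\item $\lim_{\cC} i_*i^*j_*j^*F \simeq \lim_{c\in\cC_0} (j_*j^*F)(c) \simeq \lim_{c\in\cC_0}\lim_{d\in(\cC_1)_{c/}}F(d)$, where we use the pointwise formula $(j_*j^*F)(c)=\lim_{(\cC_1)_{c/}}F$.
\end{itemize}
This gives the desired cartesian square. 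It remains to check that its maps are the canonical ones, which holds because they are all induced by units of these adjunctions.

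The main obstacle is the objectwise verification of the fracture square, specifically on objects of $\cC_0$. There one must identify $(i_*i^*j_*j^*F)(c)$ with $(j_*j^*F)(c)$. This identification holds because $\cC_{0}$ is a sieve: the comma category $c/\cC_0$ is cofinal-trivially computed by $c$ itself, being the full comma $\cC_{c/}$ restricted to $\cC_0$, which has initial object $\id_c$.
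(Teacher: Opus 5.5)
Your argument is correct, but it takes a different route from the paper's.

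\textbf{The paper's route.} The paper decomposes the \emph{indexing category}. Via Ayala--Francis it writes $\cC \simeq \cC_0 \cup_{\Gamma(\cC)} \Gamma(\cC)\times[1] \cup_{\Gamma(\cC)} \cC_1$, where $\Gamma(\cC)=\Fun_{/[1]}([1],\cC)$ is the category of sections. This turns $\lim_\cC F$ into an iterated pullback. An initiality argument reduces it to a single pullback with corner $\lim_{\gamma\in\Gamma(\cC)}F(\gamma(1))$. That corner is then identified with $\lim_{c\in\cC_0}\lim_{d\in(\cC_1)_{c/}}F(d)$ by right Kan extending along the cartesian fibration $\Gamma(\cC)\to\cC_0$, whose fibers are $(\cC_1)_{c/}$.

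\textbf{Your route.} You decompose the \emph{diagram} instead, using the fracture square with corners $F$, $j_*j^*F$, $i_*i^*F$, $i_*i^*j_*j^*F$ in $\Fun(\cC,\mathscr{E})$. You check it pointwise using only the pointwise formula for right Kan extensions along the full inclusions $i$ and $j$. Applying $\lim_\cC$ and transitivity of right Kan extensions then gives the square. The bottom-right corner appears directly as $\lim_{\cC_0} i^*j_*j^*F$, so its functoriality in $c$ and the fact that all four maps are the canonical ones come for free.

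\textbf{Comparison.} Your argument is more elementary and self-contained: it needs no collage decomposition, no category of sections, and no Kan extension along a cartesian fibration. The paper's argument instead produces a decomposition of $\cC$ itself as a pushout of categories, independent of $F$ and $\mathscr{E}$. The price is invoking the Ayala--Francis result and the identification of the limit over $\Gamma(\cC)$.

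\textbf{Cosmetic points.} The opening discussion of $p_*F$ is a dead end and can be deleted. At objects $c\in\cC_0$, the vertical maps are equivalences because $i$ is fully faithful, so $(\cC_0)_{c/}$ has initial object $\id_c$; this is not a consequence of $\cC_0$ being a sieve. The sieve property (no morphisms from $\cC_1$ to $\cC_0$) is what you actually use at objects $c\in\cC_1$, where $(\cC_0)_{c/}$ is empty and the $i_*$-terms are terminal.
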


\begin{proof}
    Let us write $\Gamma(\cC) = \Fun_{/[1]}([1],\cC)$ for the category of sections of $p$.
    By \cite[Lemma 5.1.1]{AyalaFrancis2020FibrationsInftyCategories}, we obtain an equivalence
    \[\cC_0 \cup_{\Gamma(\cC)} \Gamma(\cC) \times [1] \cup_{\Gamma(\cC)} \cC_1 \simeq \cC.\]
    In particular, the limit of $F \colon \cC \to \mathscr{E}$ decomposes as an iterated pullback
    \[\lim_{c \in \cC} F(c) \simeq \lim_{c \in \cC_0} F(c) \times_{\lim_{\gamma \in \Gamma(\cC)} F(\gamma(0))} \lim_{(\gamma,i) \in \Gamma(\cC) \times [1]} F(\gamma(i)) \times_{\lim_{\gamma \in \Gamma(\cC)} F(\gamma(1))} \lim_{d \in \cC_1} F(d).\]
    Since $\Gamma(\cC) \times \{0\} \hookrightarrow \Gamma(\cC) \times [1]$ is initial, we can write this as a single pullback
    \[\begin{tikzcd}
        {\lim_{c \in \cC} F(c)} \ar[dr,"\lrcorner",very near start, phantom] & {\lim_{d \in\cC_1} F(d)} \\
        {\lim_{c \in\cC_0} F(c)} & {\lim_{\gamma \in \Gamma(\cC)} F(\gamma(1))}.
        \arrow[from=1-1, to=1-2]
        \arrow[from=1-1, to=2-1]
        \arrow[from=1-2, to=2-2]
        \arrow[from=2-1, to=2-2]
    \end{tikzcd}\]
    Finally, to see that the bottom right limit agrees with $\lim_{c \in \cC_0} \lim_{d \in (\cC_1)_{c/}} F(d)$, note that limits over $\Gamma(\cC)$ can be computed by first right Kan extending along $\Gamma(\cC) \to \cC_0$ and then taking the limit over $\cC_0$.
    Since $\Gamma(\cC) \to \cC_0$ is a cartesian fibration whose unstraightening is the functor $c \mapsto (\cC_1)_{c/}$, it follows from the dual of \cite[Proposition 4.3.3.10]{HTT} that the right Kan extension of $\Gamma(\cC) \to \Spc; \gamma \mapsto F(\gamma(1))$ along $\Gamma(\cC) \to \cC_0$ is given by $c \mapsto \lim_{\gamma \in (\cC_1)_{c/}} F(\gamma(1))$.
    This concludes the proof.
\end{proof}

Let $f \colon \langle m;s \rangle \to \langle n;t \rangle$ be a map in $\Forest[\cO]$ and $\phi \colon [m] \to [n]$ the underlying map in $\Delta$.
We say that $f$ \emph{reaches $n$} if $\phi(m) = n$. Note that for a triangle
\[\begin{tikzcd}[column sep=tiny]
	{\langle m;s \rangle} && {\langle l;u \rangle} \\
	& {\langle n;t \rangle,}
	\arrow[from=1-1, to=1-3]
	\arrow["f"', from=1-1, to=2-2]
	\arrow["g", from=1-3, to=2-2]
\end{tikzcd}\]
if $f$ reaches $n$, then so does $g$.
In particular, we obtain a functor
\[\Tree[\cO]_{/\langle n;t \rangle} \to [1]; \quad (f \colon \langle m;s \rangle \to \langle n;t \rangle) \mapsto \begin{cases}
1 &\text{if } f \text{ reaches n,} \\
0 &\text{otherwise.}
\end{cases}\]
Let us write $(\Tree[\cO]_{/\langle n;t \rangle})_0$ and $(\Tree[\cO]_{/\langle n;t \rangle})_1$ for the fibers of this functor over $0$ and $1$, respectively.
We will prove \cref{prop:RKan-trees-to-forests} by applying \cref{lem:decomposition-colimit-over-correspondence} to this functor.
This requires the following two lemmas.

\begin{lemma}\label{lem:lemma1-correspondence}
    Let $\cO$ be an algebraic pattern and $\langle n;t \rangle \in \Forest[\cO]$ a forest. Then the inclusion
    \[\cO_{t_n/}^\elem \simeq (\cO_n^\elem)_{t/} \to (\Tree[\cO]_{/\langle n;t \rangle})_1^\op\]
    is initial.
\end{lemma}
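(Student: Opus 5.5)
The plan is to identify $(\Tree[\cO]_{/\langle n;t \rangle})_1$ explicitly as a product, using \cref{proposition:explicit description of mapping space} with $i = n$. Under that product description the inclusion in the statement will become the inclusion of a terminal object in one factor. Initiality of the functor in the statement is equivalent to finality of its opposite
\[((\cO_n^\elem)_{t/})^\op \to (\Tree[\cO]_{/\langle n;t \rangle})_1,\]
which sends an inert map $t \intarrow s$ with $s \in \cO_n^\elem$ to the map of forests $[n;s] \to \langle n;t \rangle$ lying over $\id_{[n]}$. I will prove this finality statement.

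First observe that a map $\langle m;s\rangle \to \langle n;t\rangle$ reaches $n$ exactly when it lies in $\Forest^{\circ n}[\cO]_{/\langle n;t \rangle}$. Hence $(\Tree[\cO]_{/\langle n;t \rangle})_1$ is the full subcategory of $\Forest^{\circ n}[\cO]_{/\langle n;t \rangle}$ spanned by the maps whose source is a tree.

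By \cref{proposition:explicit description of mapping space} with $i=n$, there is an equivalence
\[\Delta^{\circ n}_{/[n]} \times (\cO_n^\op)_{/t} \simeq \Forest^{\circ n}[\cO]_{/\langle n;t \rangle}.\]
It sends $(\phi\colon [k]\to[n],\, g\colon t \to s)$ to the composite $\langle k;\phi^*s\rangle \to \langle n;s\rangle \to \langle n;t\rangle$. Since $\phi(k) = n$, the last object of $\phi^*s$ is $s_n$. So the source $\langle k;\phi^*s\rangle$ is a tree precisely when $s_n$ is elementary, that is, when $s \in \cO_n^\elem$. This condition does not depend on $\phi$, so the equivalence restricts to an equivalence
\[\Delta^{\circ n}_{/[n]} \times ((\cO_n^\elem)_{t/})^\op \simeq (\Tree[\cO]_{/\langle n;t \rangle})_1 .\]

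Under this restricted equivalence, the functor from $((\cO_n^\elem)_{t/})^\op$ is the inclusion $\{\id_{[n]}\} \times ((\cO_n^\elem)_{t/})^\op$. This holds because for $\phi = \id$ the composite above is just $[n;s] \to \langle n;t\rangle$ over $\id_{[n]}$. Moreover, $\id_{[n]}$ is a terminal object of $\Delta^{\circ n}_{/[n]}$, as noted in the proof of \cref{proposition:explicit description of mapping space}.

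The inclusion of a terminal object $\{*\} \to \cD$ is final. Finality is stable under taking products with a fixed category: this follows from Quillen's Theorem A, since the relevant slices of $\{*\}\times\cC \to \cD \times \cC$ are $\cD_{d/}\times\cC_{c/}$ restricted to $\{*\}$, which are contractible. Hence the functor is final, and passing to opposites gives the claimed initiality. Finally, \cref{remark:elementary-slices-On} supplies the identification $\cO^\elem_{t_n/} \simeq (\cO_n^\elem)_{t/}$ appearing in the statement.

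The main point needing care is compatibility: the equivalence of \cref{proposition:explicit description of mapping space} must restrict correctly to trees and must carry $\{\id\}\times(-)$ to the given inclusion. Both reduce to the explicit formula for the equivalence on objects, namely $(\phi,g) \mapsto \langle k;\phi^*s\rangle \to \langle n;t\rangle$, together with the observation that $(\phi^* s)_k = s_n$.
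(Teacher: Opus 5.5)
Your proof is correct and follows the paper's argument. Both identify $(\Tree[\cO]_{/\langle n;t\rangle})_1$ with $\Delta^{\circ n}_{/[n]}\times((\cO^\elem_n)_{t/})^\op$ via \cref{proposition:explicit description of mapping space}, and both observe that the functor is the inclusion at the terminal object $\id_{[n]}$. The only difference is cosmetic: the paper concludes by noting this inclusion is a left adjoint and hence initial, whereas you check finality of the opposite functor via Quillen's Theorem A.
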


\begin{proof}
    By \cref{proposition:explicit description of mapping space}, this inclusion is equivalent to the inclusion
    \[\cO_{t_n/}^\elem \hookrightarrow (\Delta^{\circ n}_{/[n]})^\op \times \cO_{t_n/}^\elem \simeq (\Tree[\cO]_{/\langle n;t \rangle})_1^\op\]
    that takes $f \in \cO_{t_n/}^\elem$ to $(\id_{[n]},f)$.
    Since $\id_{[n]}$ is terminal in $\Delta^{\circ n}_{/[n]}$, this functor is a left adjoint and hence initial.
\end{proof}

\begin{lemma}\label{lem:lemma2-correspondence}
    Let $\cO$ be an algebraic pattern, $\langle n;t \rangle \in \Forest[\cO]$ a forest and $\langle n;s \rangle \to \langle n;t \rangle$ a map of forests whose underlying map in $\Delta$ is $\id_{[n]}$.
    Then the canonical map
    \[\Tree[\cO]_{/\langle n-1;s_{\leq n-1} \rangle} \to ((\Tree[\cO]_{/\langle n;t \rangle})_0)_{/\langle n;s\rangle}\]
    is an equivalence.
\end{lemma}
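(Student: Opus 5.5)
The plan is to reduce the statement to the universal property of cartesian morphisms for the cartesian fibration $\Forest[\cO] \to \Delta$.

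First I simplify the right-hand side. Let $\sigma \colon \langle n;s \rangle \to \langle n;t \rangle$ be the given map. For any category $\C$ and morphism $\sigma \colon a \to b$, there is a canonical equivalence $(\C_{/b})_{/\sigma} \simeq \C_{/a}$, and it is compatible with restricting to full subcategories of the source objects. Moreover, a map $g \colon [m;u] \to \langle n;s \rangle$ reaches $n$ if and only if $\sigma \circ g$ does, because $\sigma$ lies over $\id_{[n]}$. Hence $((\Tree[\cO]_{/\langle n;t \rangle})_0)_{/\langle n;s \rangle}$ is identified with the full subcategory $(\Tree[\cO]_{/\langle n;s\rangle})_0 \subset \Tree[\cO]_{/\langle n;s \rangle}$ spanned by the maps that do not reach $n$. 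The canonical map in the statement becomes postcomposition with the cartesian lift $c \colon \langle n-1; s_{\leq n-1}\rangle \to \langle n;s \rangle$ of $d^n \colon [n-1] \hookrightarrow [n]$.

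Second, I show that postcomposition with $c$ gives an equivalence
\[\Forest[\cO]_{/\langle n-1;s_{\leq n-1}\rangle} \xrightarrow{\simeq} \Forest[\cO]_{/\langle n;s \rangle} \times_{\Delta_{/[n]}} \Delta_{/[n-1]},\]
where $\Delta_{/[n-1]} \to \Delta_{/[n]}$ is postcomposition with $d^n$. This is the standard reformulation of $c$ being $p$-cartesian for $p \colon \Forest[\cO] \to \Delta$. On mapping spaces from an object $\langle m;u\rangle$, the cartesian property says that
\[\map(\langle m;u\rangle, \langle n-1;s_{\leq n-1}\rangle) \simeq \map(\langle m;u\rangle,\langle n;s\rangle) \times_{\map([m],[n])} \map([m],[n-1]).\]
This identifies the objects of the two sides. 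The mapping spaces in the slices are fibers of these, so the functor is fully faithful and essentially surjective. Alternatively, the comparison over $\Delta_{/[n-1]}$ is a map of cartesian fibrations (by \cref{lemma:slicing fibration}) that is a fiberwise equivalence, since both fibers are $(\cO_m)_{/\ldots}$ by the explicit description in \cref{lemma:slicing fibration}.

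Third, I identify the pullback with the non-reaching part. The functor $\Delta_{/[n-1]} \to \Delta_{/[n]}$ is fully faithful, because $d^n$ is a monomorphism in the $1$-category $\Delta$. Its image consists precisely of the $\phi \colon [m]\to[n]$ with $\phi(m) \le n-1$, since such $\phi$ factor uniquely through $d^n$. Therefore the pullback is the full subcategory of $\Forest[\cO]_{/\langle n;s\rangle}$ on the maps not reaching $n$. Restricting both sides to sources lying in the full subcategory $\Tree[\cO]$ then yields the claimed equivalence.

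The only point requiring care is the second step: verifying that the slice-level comparison is an equivalence of $\infty$-categories, not merely a bijection on objects. I would handle this with the fiberwise-equivalence argument via \cref{lemma:slicing fibration}. Everything else is formal.
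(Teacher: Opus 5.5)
Your proposal is correct and takes the same route as the paper. You first identify the double slice with the full subcategory of $\Tree[\cO]_{/\langle n;s\rangle}$ spanned by the maps not reaching $n$, and then identify that with $\Tree[\cO]_{/\langle n-1;s_{\leq n-1}\rangle}$. The paper simply asserts this last equivalence as canonical, whereas you justify it via the cartesian lift of $d^n$ for $\Forest[\cO]\to\Delta$ together with full faithfulness of $\Delta_{/[n-1]}\to\Delta_{/[n]}$, which is the right argument.
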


\begin{proof}
    Note that $((\Tree[\cO]_{/\langle n;t \rangle})_0)_{/\langle n;s \rangle}$ is the full subcategory of $(\Tree[\cO]_{/\langle n;t \rangle})_{/\langle n;s \rangle} \simeq \Tree[\cO]_{/\langle n;s \rangle}$ spanned by those maps $\langle m;u \rangle \to \langle n;s \rangle$ that don't reach $n$.
    This is canonically equivalent to $\Tree[\cO]_{/\langle n-1;s_{\leq n-1} \rangle}$.
\end{proof}

\begin{proof}[Proof of \cref{prop:RKan-trees-to-forests}]
    If $n=0$, then $\cO_{x/}^\elem \to (\Tree[\cO]_{/\langle 0;x \rangle})^\op$ is initial by \cref{lem:lemma1-correspondence}.
    The result therefore follows if we can show that the square \cref{eq:Tree-vs-forest-pullback-square} is cartesian for every forest $\langle n;t \rangle$ with $n \geq 1$ if and only if the map
    \[X(\langle n;t \rangle ) \to \lim_{[m;s] \in (\Tree[\cO]_{/\langle n;t \rangle })^\op} X(\langle m;s\rangle )\]
    is an equivalence for every forest $\langle n;t \rangle $ with $n \geq 1$.
    We will show this by induction on $n$.
    Combining \cref{lem:decomposition-colimit-over-correspondence}, \cref{lem:lemma1-correspondence} and \cref{lem:lemma2-correspondence}, it follows that this map is an equivalence precisely if the square
    \[\begin{tikzcd}
        {X(\langle n;t \rangle)} & {\lim\limits_{(t \intarrow s) \in (\cO_{n})^\elem_{t/}}X(\langle n;s \rangle)} \\
        {\lim\limits_{[m;u] \in (\Tree[\cO]_{/\langle n-1;t_{\leq n-1} \rangle})^\op} X(\langle m;u \rangle)} & {\lim\limits_{(t \intarrow s) \in (\cO_{n})^\elem_{t/}}\left(\lim\limits_{[m;u] \in (\Tree[\cO]_{/\langle n-1;s_{\leq n-1} \rangle})^\op} X(\langle m;u \rangle) \right)}
        \arrow[from=1-1, to=1-2]
        \arrow["{d_n}"', from=1-1, to=2-1]
        \arrow["{d_n}", from=1-2, to=2-2]
        \arrow[from=2-1, to=2-2]
    \end{tikzcd}\]
    is cartesian.
    By the induction hypothesis, the bottom map is equivalent to $X(\langle n-1;t_{\leq n-1} \rangle) \to \lim_{t \intarrow s \in (\cO_n)^\elem_{t/}} X(\langle n-1;s_{n-1} \rangle)$ and hence this square is equivalent to the square \cref{eq:Tree-vs-forest-pullback-square}, concluding the proof.
\end{proof}

The conditions of \cref{prop:RKan-trees-to-forests} are reminiscent of the definition of an algebrad given in \cref{subsec:double-algebrads}.
However, in the definition of an algebrad one only needs to consider cartesian squares for $n=1$.
We will now show that, under a mild condition on the presheaf $X$, one only needs to consider the squares in \cref{prop:RKan-trees-to-forests} where $n=1$.

\begin{proposition}\label{prop:RKan-trees-to-forests-simplified}
    Let $\cO$ be an algebraic pattern and $X$ a presheaf on $\Forest[\cO]$ such that for any forest $\langle n;t \rangle$, the canonical map
    \[X(\langle n;t \rangle) \to X(\langle 1;t_{0,1} \rangle) \times_{X(\langle 0;t_1 \rangle)} \cdots \times_{X(\langle 0;t_{n-1} \rangle} X(\langle 1;t_{n-1,n} \rangle)\]
    is an equivalence.
    Then $X$ lies in the image of $i_*$ if and only if for every object $x$ in $\cO$, the map $X(\langle 0;x \rangle) \to \lim_{e \in \cO^\elem_{x/}} X(\langle 0;e \rangle)$ is an equivalence and for any forest $\langle 1;t \rangle$ of length 1, the square
    \[\begin{tikzcd}[ampersand replacement=\&]
    	{X(\langle 1;t \rangle)} \& {{\lim\limits_{(t \intarrow s) \in(\cO_1)^\elem_{t/}}} X(\langle 1;s \rangle)} \\
    	{X(\langle 0;t_0 \rangle)} \& {{\lim\limits_{(t \intarrow s) \in(\cO_1)^\elem_{t/}}} X(\langle 0;s_0 \rangle)}
    	\arrow[from=1-1, to=1-2]
    	\arrow["{d_1}"', from=1-1, to=2-1]
    	\arrow["{d_1}", from=1-2, to=2-2]
    	\arrow[from=2-1, to=2-2]
    \end{tikzcd}\]
    is cartesian.
\end{proposition}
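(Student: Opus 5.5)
By \cref{prop:RKan-trees-to-forests}, $X$ lies in the image of $i_*$ if and only if two things hold: the condition on roots, and the square \cref{eq:Tree-vs-forest-pullback-square} is cartesian for every $n\geq 1$. The root condition and the $n=1$ square are exactly the hypotheses in the present statement. So the \emph{only if} direction is immediate. For the \emph{if} direction, it suffices to show that, under the Segal hypothesis on $X$, the $n=1$ squares imply the squares for all $n\geq 2$. This is the same pattern as \cref{lem:algebrad-n=1-suffices}, and I would copy its pasting argument with $X$ in place of the fibers of $\cP$.

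Fix $\langle n;t\rangle$ with $n\geq 2$. The square \cref{eq:Tree-vs-forest-pullback-square} is the left square of a horizontal pasting. Its right square has top edge
\[\lim_{(t\intarrow s)\in(\cO_n)^\elem_{t/}} X(\langle n;s\rangle)\to \lim_{(t\intarrow s)\in(\cO_n)^\elem_{t/}} X(\langle 1;s_{n-1,n}\rangle)\]
and bottom edge
\[\lim_{(t\intarrow s)\in(\cO_n)^\elem_{t/}} X(\langle n-1;s_{\leq n-1}\rangle)\to\lim_{(t\intarrow s)\in(\cO_n)^\elem_{t/}} X(\langle 0;s_{n-1}\rangle).\]
The Segal hypothesis gives $X(\langle n;s\rangle)\simeq X(\langle n-1;s_{\leq n-1}\rangle)\times_{X(\langle 0;s_{n-1}\rangle)}X(\langle 1;s_{n-1,n}\rangle)$ naturally in $s$. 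Limits commute with pullbacks, so the right square is cartesian. By the pasting lemma, it then suffices to show that the outer rectangle is cartesian.

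The outer rectangle also factors the other way. Its left square is
\[X(\langle n;t\rangle)\to X(\langle 1;t_{n-1,n}\rangle)\quad\text{over}\quad X(\langle n-1;t_{\leq n-1}\rangle)\to X(\langle 0;t_{n-1}\rangle),\]
which is cartesian by the Segal hypothesis applied to $t$. Its right square compares $X(\langle 1;t_{n-1,n}\rangle)\to X(\langle 0;t_{n-1}\rangle)$ with the corresponding limits over $(\cO_n)^\elem_{t/}$.

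The main point to check is that this right square is the $n=1$ square for the forest $\langle 1;t_{n-1,n}\rangle$. By \cref{remark:elementary-slices-On}, both $(\cO_n)^\elem_{t/}$ and $(\cO_1)^\elem_{t_{n-1,n}/}$ are identified with $\cO^\elem_{t_n/}$ via the target projection. Restriction along $\{n-1\leq n\}\colon[1]\to[n]$ sends $t\intarrow s$ to $t_{n-1,n}\intarrow s_{n-1,n}$ compatibly with these identifications. So the indexing categories agree, and the diagrams being limited agree as well. Hence the right square is the cartesian square from the hypothesis.

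Naturality of all the maps involved comes from the fact that the Segal maps and the restriction maps are induced by maps in $\Forest[\cO]$. Pasting twice then shows that \cref{eq:Tree-vs-forest-pullback-square} is cartesian for every $n$, and \cref{prop:RKan-trees-to-forests} concludes.
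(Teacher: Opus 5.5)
Your proposal is correct and follows the paper's own route: it reduces to \cref{prop:RKan-trees-to-forests} and transports the double pasting argument of \cref{lem:algebrad-n=1-suffices} to $X$, which is exactly what the paper's one-line proof indicates. You also spell out the one identification the paper leaves implicit, namely that via \cref{remark:elementary-slices-On} the remaining right-hand square is the $n=1$ square for the forest $\langle 1;t_{n-1,n}\rangle$.
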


\begin{proof}
    A proof similar to \cref{lem:algebrad-n=1-suffices} shows that this implies that for any forest $\langle n;t \rangle$ with $n \geq 1$, the square \cref{eq:Tree-vs-forest-pullback-square} from \cref{prop:RKan-trees-to-forests} is cartesian.
\end{proof}

In case $\cO$ is sound in the sense of \cite[Definition 3.3.4]{BarkanHaugsengSteinebrunner}, one can simplify the conditions further.

\begin{proposition}\label{prop:RKan-trees-to-forest-sound}
    Suppose $\cO$ is sound and let $X$ be a presheaf on $\Forest[\cO]$. Then $X$ lies in the image of $i_*$ if and only if for every forest $\langle n;t \rangle$, the map
    \[X(\langle n;t \rangle) \to \lim\limits_{(t \intarrow s) \in (\cO_{n})^\elem_{t/}} X(\langle n;s \rangle)\]
    is an equivalence.
    If $X$ satisfies the condition from \cref{prop:RKan-trees-to-forests-simplified}, then this only needs to hold for forests of length $\leq 1$.
\end{proposition}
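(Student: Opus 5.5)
The plan is to reduce to \cref{prop:RKan-trees-to-forests} by showing that, for sound $\cO$, its two conditions together are equivalent to the single ``Segal'' condition that $X(\langle n;t\rangle) \to \lim_{(t\intarrow s)\in(\cO_n)^\elem_{t/}} X(\langle n;s\rangle)$ is an equivalence for every forest. For $n=0$ the two conditions coincide, using $(\cO_0)^\elem_{x/} = \cO^\elem_{x/}$. For $n\geq 1$, consider the square \cref{eq:Tree-vs-forest-pullback-square}. If its bottom map is an equivalence, the square is cartesian exactly when its top map is an equivalence. Inducting on $n$, it therefore suffices to show that, when the condition holds for forests of length $n-1$, the bottom map
\[X(\langle n-1;t_{\leq n-1}\rangle) \to \lim_{(t\intarrow s)\in(\cO_n)^\elem_{t/}} X(\langle n-1;s_{\leq n-1}\rangle)\]
is an equivalence. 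By hypothesis this map agrees with
\[\lim_{(t_{\leq n-1}\intarrow u)\in(\cO_{n-1})^\elem_{t_{\leq n-1}/}} X(\langle n-1;u\rangle)\to \lim_{(t\intarrow s)\in(\cO_n)^\elem_{t/}} X(\langle n-1;s_{\leq n-1}\rangle).\]

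The next step identifies the right-hand side using soundness. By \cref{remark:elementary-slices-On} we have $(\cO_n)^\elem_{t/}\simeq \cO^\elem_{t_n/}$, and an element $\phi\colon t_n\intarrow e$ is sent to $s_{\leq n-1}$, the inert-active factorization of $t_{\leq n-1}$ restricted along $\phi$. By the Segal condition on $X$ at length $n-1$, the term $X(\langle n-1;s_{\leq n-1}\rangle)$ is itself a limit over $(\cO_{n-1})^\elem_{s_{\leq n-1}/}\simeq \cO^\elem_{s_{n-1}/}$. The right-hand side thus becomes a limit over the category of pairs $(t_n\intarrow e,\ s_{n-1}\intarrow e')$, where $s_{n-1}\actarrow e$ is the active part of $t_{n-1}\actarrow t_n\intarrow e$. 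This is a limit over the Grothendieck construction of $\phi\mapsto \cO^\elem_{(\phi f)^\inert\text{-target}/}$ with $f\colon t_{n-1}\actarrow t_n$. Soundness in the sense of \cite[Definition 3.3.4]{BarkanHaugsengSteinebrunner} is precisely the statement that the functor from this category to $\cO^\elem_{t_{n-1}/}$, given by composing inerts, is (co)final. Given that, the limit reduces to $\lim_{\cO^\elem_{t_{n-1}/}} X(\langle n-1;\cdots\rangle)$, which matches the left side. The main obstacle is checking carefully that the paper's variance conventions agree with the finality statement that soundness provides, and that the identification is compatible with the maps. The converse direction (image of $i_*$ implies the Segal condition) follows from the same equivalence run backwards: by induction, cartesianness of the square plus invertibility of its bottom map gives invertibility of the top map.

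For the final claim, assume $X$ satisfies the Segal decomposition of \cref{prop:RKan-trees-to-forests-simplified}. That proposition reduces us to the $n=0$ condition and the $n=1$ squares. The same argument with $n=1$ shows that the $n=1$ square is cartesian if and only if the top map is an equivalence, given the $n=0$ condition. So only forests of length $\leq 1$ need to be checked.
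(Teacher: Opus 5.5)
Your proposal is correct and is essentially the paper's proof. Like the paper, you reduce to \cref{prop:RKan-trees-to-forests}, induct on the length, and use the induction hypothesis together with soundness to show that the bottom map of \cref{eq:Tree-vs-forest-pullback-square} is an equivalence. Cartesianness of the square is then equivalent to the top map being an equivalence, and the length $\leq 1$ claim follows from \cref{prop:RKan-trees-to-forests-simplified} in the same way. The only difference is that the paper cites \cite[Observation 3.3.6]{BarkanHaugsengSteinebrunner} to identify the iterated limit. You instead unpack this step directly: you use initiality (not finality) of $\cO^\elem(f)\to\cO^\elem_{t_{n-1}/}$, applied to a limit over the cartesian fibration $\cO^\elem(f)\to\cO^\elem_{t_n/}$, and this is correct.
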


\begin{proof}
    We will show that these conditions are equivalent to those of \cref{prop:RKan-trees-to-forests}.
    The case $n=0$ gives the first condition of \cref{prop:RKan-trees-to-forests}.
    We now show by induction on $k \geq 1$ that
    \[X(\langle n;t \rangle) \to \lim\limits_{(t \intarrow s) \in (\cO_{n})^\elem_{t/}} X(\langle n;s \rangle)\]
    is an equivalence for forests of length $n \leq k$ precisely if the square \cref{eq:Tree-vs-forest-pullback-square} from \cref{prop:RKan-trees-to-forests} is cartesian for all forests of length $n \leq k$.
    Observe that by the induction hypothesis, the bottom map of the square \cref{eq:Tree-vs-forest-pullback-square} can be rewritten as
    \[X(\langle n-1;t_{\leq n-1} \rangle) \to \operatorname{lim}_{s \in (\cO_n)^\elem_{t/}} \operatorname{lim}_{u \in (\cO_{n-1})^\elem_{s_{\leq n-1}/}} X(\langle n-1;u \rangle).\]
    By \cite[Observation 3.3.6]{BarkanHaugsengSteinebrunner} and the equivalences $(\cO_n)^\elem_{t/} \simeq \cO^\elem_{t_n/}$ and $(\cO_{n-1})^\elem_{s_{\leq n-1}/} \simeq \cO^\elem_{s_{n-1}/}$, soundness of $\cO$ implies that this map agrees with
    \[X(\langle n-1;t_{\leq n-1} \rangle) \to \lim_{s \in (\cO_{n-1})^\elem_{t_{\leq n-1}/}} X(\langle n-1;s \rangle),\]
    which is an equivalence by the induction hypothesis.
    In particular, the square \cref{eq:Tree-vs-forest-pullback-square} from \cref{prop:RKan-trees-to-forests} is cartesian precisely if the top map is an equivalence, which finishes the induction.
    It is clear that if $X$ satisfies the conditions from \cref{prop:RKan-trees-to-forests-simplified}, then it suffices to consider forests of length $\leq 1$.
\end{proof}

\section{Algebrads as Segal presheaves on the tree category}\label{sec:algebrads_as_segal_presheaves_on_the_tree_category}

In this section, we will present yet another perspective on algebrads, namely as certain ``complete Segal'' presheaves on the tree category $\Tree[\cO]$.
In the context of \textit{operator categories}, a similar equivalence was exhibited by Barwick \cite{Barwick}.
For algebraic patterns, a similar equivalence was studied by Kern \cite{Kern} as well. 
We fix the following terminology:

\begin{definition}
    Presheaves on $\Tree[\O]$ are called \textit{$\Tree[\O]$-spaces}.
\end{definition}

\subsection{Segal presheaves}
We introduce Segal conditions for $\Tree[\O]$-spaces by making use of the simplicial nature of $\Tree[\O]$. We will see at the end of this section that this terminology matches up with the previous terminology of Segal objects (\cref{def:segal objects}) applied to the pattern $\Tree[\O]^{\op, \natural}$ of \cref{ex:iterated trees}.

\begin{notation}\label{notation:forest as tree space}
    The projection $\Tree[\cO] \to \Delta$ will be denoted by $p$, and the presheaf on $\Tree[\cO]$ represented by a tree $[n;t]$ will be denoted by $[n;t]$.
    More generally, let $\langle n; t\rangle$ be a forest. Then we will consider the $\Tree[\O]$-space 
    $$
    [n;t] \coloneqq i^*\langle n ; t\rangle
    $$
    that is obtained by restricting the presheaf represented by $\langle n; t \rangle$ along the inclusion $i \colon \Tree[\O] \to \Forest[\O]$. Note that this leads to no ambiguity in notation when $t_n \in \O^\el$. We note that there is a natural projection $[n;t] \to p^*[n]$ obtained by applying $i^*$ to $\langle n;t \rangle \to q^*[n]$, where $q$ denotes the projection $\Forest[\cO] \to \Delta$.
\end{notation}

\begin{construction}\label{cons:loose braket}
    Let $Y$ be a simplicial space. Then we consider the functor 
    $$
    (-)\boxtimes_Y(-) \colon \PSh(\Delta)_{/Y} \times \PSh(\Tree[\O])_{/p^*Y} \to \PSh(\Tree[\O])_{/p^*Y}^{\times 2} \xrightarrow{\times_{p^*Y}} \PSh(\Tree[\O]).
    $$
    Since $\PSh(\Tree[\O])$ is locally cartesian closed, this preserves colimits in both variables.
    In practice, we will use this construction when $Y = [n]$, so that we obtain a colimit preserving functor 
    $$
    (-) \boxtimes_{[n]} [n;t] \colon \PSh(\Delta)_{/[n]} \to \PSh(\Tree[\O])
    $$
    for every forest $\langle n;t \rangle$.
\end{construction}

We will make extensive use of the following \textit{forest decomposition formulas}:

\begin{lemma}\label{prop:forest decomposition}
    Let $\langle n;t\rangle$ be a forest, and suppose that $X$ is a simplicial space over $[n]$.  Then we have the following formulas:
    \begin{itemize}
        \item if $n = 0$, then the canonical map 
    $$
    \colim_{e \in (\O^{\el,\op})_{/t}}X \boxtimes_{[0]} [0;e] \to X \boxtimes_{[0]} [0;t]
    $$
    is an equivalence, and
        \item in the case that $n \geq 1$, the commutative square 
    \[
    \begin{tikzcd}
        \colim_{s \in (\O^{\el,\op}_n)_{/t}} X_{\leq n-1} \boxtimes_{[n-1]} [n-1;s_{\leq n-1}] \arrow[r]\arrow[d] & \colim_{s \in (\O^{\el,\op}_n)_{/t}} X \boxtimes_{[n]} [n;s] \arrow[d] \\
        X_{\leq n-1} \boxtimes_{[n-1]} [n-1;t_{\leq n-1}]  \arrow[r] &  X \boxtimes_{[n]} [n;t]
    \end{tikzcd}
    \]
   is a pushout square, where $X_{\leq n-1} \coloneqq d_n^*X$.
    \end{itemize}
\end{lemma}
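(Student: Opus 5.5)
The plan is to reduce to representable simplicial spaces $X$ and then deduce both formulas from \cref{prop:RKan-trees-to-forests} by the Yoneda lemma.

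\emph{Reduction to representables.} Colimits in $\PSh(\Tree[\O])$ are computed pointwise, and by \cref{cons:loose braket} both $X \mapsto X\boxtimes_{[n]}[n;t]$ and $X \mapsto X_{\leq n-1}\boxtimes_{[n-1]}[n-1;t_{\leq n-1}]$ preserve colimits in $X$; the second does so because $d_n^*$ preserves colimits. The colimits over $(\O^{\el,\op}_n)_{/t}$ commute with colimits in $X$, and so do pushouts. Since $X$ is a colimit of representables $\phi\colon[m]\to[n]$, it therefore suffices to treat $X=[m]\xrightarrow{\phi}[n]$.

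\emph{Identifying the representable case.} Because $q\colon\Forest[\O]\to\Delta$ is a cartesian fibration, the presheaf $q^*[m]\times_{q^*[n]}\langle n;t\rangle$ is represented by the cartesian pullback $\langle m;\phi^*t\rangle$. Restricting along $i$ gives $[m]\boxtimes_{[n]}[n;t]\simeq i^*\langle m;\phi^*t\rangle$. The same holds for each $[n;s]$ with $s$ elementary.

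I would then split according to whether $\phi(m)=n$.
\begin{itemize}
\item If $\phi(m)<n$, then $\phi$ factors through $d^n$, so $d_n^*[m]=[m]$. Both horizontal maps are then equivalences: the colimit over $(\O_n^{\el,\op})_{/t}\simeq(\O^{\el,\op})_{/t_n}$ on the left equals the one on the right, since neither side sees level $n$. Hence the square is a pushout.
\item If $\phi(m)=n$, I would verify the pushout by mapping into an arbitrary $\Tree[\O]$-space $Z$. By Yoneda and the adjunction $i^*\dashv i_*$, we have $\Map(i^*\langle k;u\rangle,Z)\simeq (i_*Z)(\langle k;u\rangle)$, and for a tree $[k;u]$ this is just $Z([k;u])$. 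So mapping the square into $Z$ yields a square of values of the presheaf $i_*Z$.
\end{itemize}

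For $n=0$, this square is $(i_*Z)(\langle m;\phi^*t\rangle)\to\lim_{e}(i_*Z)(\langle m;\phi^*e\rangle)$. By \cref{lem:lemma1-correspondence}, the relevant indexing category is $\O^\el_{t/}\simeq(\O_m^\el)_{\phi^*t/}$, which is initial in $(\Tree[\O]_{/\langle m;\phi^*t\rangle})^\op_1$. With $\phi(m)=0$ every tree mapping to $\langle m;\phi^*t\rangle$ reaches the top, so the limit over $(\O_m^\el)_{\phi^*t/}$ computes the full right Kan extension, giving the first formula.

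For $n\ge1$, I would use \cref{prop:RKan-trees-to-forests} for the forest $\langle m;\phi^*t\rangle$, more precisely its proof via \cref{lem:decomposition-colimit-over-correspondence}, \cref{lem:lemma1-correspondence} and \cref{lem:lemma2-correspondence}. Let $m'$ be the last index with $\phi(m')<n$, so that $d_n^*[m]=[m']$. The slice $\Tree[\O]_{/\langle m;\phi^*t\rangle}$ decomposes into trees reaching level $m$ and trees not reaching it. Trees reaching $m$ are controlled by $\O^\el_{t_n/}$. Trees not reaching $m$ are controlled by $\langle m-1;\dots\rangle$.

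This yields the cartesian square of \cref{prop:RKan-trees-to-forests} for $i_*Z$. Its bottom row involves $\langle m-1;\phi^*t_{\leq m-1}\rangle$ rather than $\langle m';\cdots\rangle$. The levels $m'+1,\dots,m$ all lie over $n$, where the active maps are equivalences, so I would iterate \cref{prop:RKan-trees-to-forests} down through these levels to pass from $m-1$ to $m'$. In each such step the elementary indexing categories are all identified with $\O^\el_{t_n/}$ via \cref{remark:elementary-slices-On}, so the pullbacks paste. The resulting cartesian square is exactly $\Map(-,Z)$ applied to the square in the statement, so the square is a pushout.

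\emph{Main obstacle.} The hard step is the case $\phi(m)=n$ with $\phi$ hitting $n$ several times. There I must check that the iterated pullback along the degenerate top levels collapses correctly, and that the colimit indexing categories $(\O^{\el,\op}_n)_{/t}$ and $(\O^{\el,\op}_m)_{/\phi^*t}$ match naturally in $\phi$. If pasting proves messy, I would instead use \cref{proposition:explicit description of mapping space}: it shows directly that maps from a tree to $\langle m;\phi^*t\rangle$ which reach the top depend only on the final inert map out of $t_n$, which would give the identification of indexing categories by hand.
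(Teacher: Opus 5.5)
Your reduction to representables $X=[m]\xrightarrow{\phi}[n]$ is legitimate. For $n\geq 1$ and $\phi(m)=n$, pasting the cartesian squares of \cref{prop:RKan-trees-to-forests} for $i_*Z$ down through the degenerate levels $m'+1,\dots,m$ does give the result.

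The $n=0$ step, however, rests on a false claim. It is not true that every tree mapping to $\langle m;\phi^*t\rangle=\langle m;t=\cdots=t\rangle$ reaches the top. For $m\geq 1$, the map $[0;e]\to\langle m;t=\cdots=t\rangle$ over $0\colon[0]\to[m]$ does not. So \cref{lem:lemma1-correspondence}, which only concerns the part $(\Tree[\O]_{/\langle m;\phi^*t\rangle})_1$, does not show that $\lim_e (i_*Z)(\langle m;\phi^*e\rangle)$ computes $(i_*Z)(\langle m;\phi^*t\rangle)$.

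The conclusion is nevertheless true, and there are two easy repairs:
\begin{itemize}
\item Since $\langle m;\phi^*t\rangle\to\langle 0;t\rangle$ is cartesian, \cref{proposition:explicit description of mapping space} gives $\Tree[\O]_{/\langle m;\phi^*t\rangle}\simeq\Delta_{/[m]}\times(\O^{\el,\op})_{/t}$. Initiality of $\O^\el_{t/}$ then follows because $\id_{[m]}$ is terminal in $\Delta_{/[m]}$.
\item Alternatively, run your downward iteration all the way to level $0$ and finish with the first bullet of \cref{prop:RKan-trees-to-forests}.
\end{itemize}

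You also missed, for $n\geq 1$, the case $\phi(0)=n$. There $d_n^*[m]=\varnothing$, not $[m']$, and the square is a pushout iff its right vertical map is an equivalence. That is exactly the $n=0$ statement for $t_n$.

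All of this case analysis is avoidable, and the paper avoids it by reducing in the other variable. The functor $X\boxtimes_{[n]}(-)$ is base change along $p^*X\to p^*[n]$, so it preserves colimits. Also $X_{\leq n-1}\boxtimes_{[n-1]}[n-1;-]\simeq X\boxtimes_{[n]}[n-1;-]$ by pullback pasting. Hence the square for an arbitrary $X$ is the image under $X\boxtimes_{[n]}(-)$ of the square for $X=[n]$.

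In that absolute case, applying $\Map(-,Z)$ yields verbatim the conditions of \cref{prop:RKan-trees-to-forests} for $i_*Z$: the first bullet when $n=0$, and the square \cref{eq:Tree-vs-forest-pullback-square} when $n\geq 1$. There are no degenerate top levels to collapse and no dependence on $\phi$. Your representable case is just the base change of this absolute square along $p^*[m]\to p^*[n]$, so your iteration is in effect re-proving universality of colimits by hand.
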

\begin{proof}
    Recall that the functor $X \boxtimes_{[n]}(-)$ preserves colimits and that $X_{\leq n-1} \boxtimes_{[n-1]} [n-1;-] = X \boxtimes_{[n]} [n-1;-]$ by pullback pasting. Consequently, if the formulas hold for the absolute case that $X = [n]$ when there are no box-products, then they will hold for general $X$. To check this absolute case, we can apply $\Map_{\PSh(\Tree[\O])}(-,Y)$, where $Y$ is an arbitrary $\Tree[\O]$-space, to the diagrams in both bullet points. In this case, we recover the  formulas for $i_*Y$ of \cref{prop:RKan-trees-to-forests}.
\end{proof}

\begin{remark}\label{rem:forest decomposition sound}
    If $\O$ is sound, then it follows from \cref{prop:RKan-trees-to-forest-sound} that the canonical map 
    $$
    \colim_{s\in(\O^{\el,\op}_n)_{/t}} X \boxtimes_{[n]} [n;s] \to X \boxtimes_{[n]} [n;t]
    $$
    is an equivalence for every forest $\left<n;t\right>$ and simplicial space $X$ over $[n]$.
\end{remark}

\begin{lemma}\label{lemma:box with forest is forest}
    Let $\langle n;t\rangle$ be a forest. For every map $\phi \colon [m] \to [n]$, there is  a natural equivalence 
    $$
    [m] \boxtimes_{[n]} [n;t] \simeq [m; \phi^*t].
    $$
    If $X \to [n]$ is a map between simplicial spaces, then we have a colimit expression 
    $$\colim_{(\phi \colon [k]\to [n])\in (\Delta_{/[n]})_{/X}}[k;\phi^*{t}] \xrightarrow{\simeq} X\boxtimes_{[n]} [n;t].$$
\end{lemma}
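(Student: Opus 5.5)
The first equivalence comes from a pullback computation at the level of forests, which we then restrict along $i$. Since $i^*$ is a right adjoint, it preserves limits, so it suffices to show that
\[
\langle m;\phi^*t\rangle \simeq q^*[m]\times_{q^*[n]}\langle n;t\rangle
\]
in $\PSh(\Forest[\O])$, where $q\colon \Forest[\O]\to\Delta$ is the projection. Note that $p^*=i^*q^*$.

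To check this, we evaluate both sides on a forest $\langle k;u\rangle$. Because $q$ is a cartesian fibration, the space of maps $\langle k;u\rangle\to\langle n;t\rangle$ decomposes over $\Hom_\Delta([k],[n])$. The fiber over $\psi$ is $\Hom_{\O_k}(\psi^*t,u)$. The right hand side therefore computes pairs $(\chi\colon[k]\to[m],\ g\in\Hom_{\O_k}((\phi\chi)^*t,u))$. The left hand side gives pairs $(\chi,\ g\in \Hom_{\O_k}(\chi^*\phi^*t,u))$. These agree via $(\phi\chi)^*\simeq\chi^*\phi^*$. More invariantly, the cartesian morphism $\langle m;\phi^*t\rangle\to\langle n;t\rangle$ over $\phi$ induces the comparison map, and the statement is exactly the universal property of cartesian morphisms:
\[
\Hom(x,\langle m;\phi^*t\rangle)\simeq \Hom(x,\langle n;t\rangle)\times_{\Hom(q x,[n])}\Hom(qx,[m]).
\]
Naturality in $\phi$ follows from functoriality of cartesian transport. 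Applying $i^*$, and using $[m;\phi^*t]\coloneqq i^*\langle m;\phi^*t\rangle$, gives the first claim.

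For the colimit formula, we write $X\to[n]$ as a colimit of representables in $\PSh(\Delta)_{/[n]}$. Because $\PSh(\Delta)_{/[n]}\simeq\PSh(\Delta_{/[n]})$, the density formula gives
\[
X\simeq\colim_{(\phi\colon[k]\to[n])\in(\Delta_{/[n]})_{/X}}[k]
\]
over $[n]$. By \cref{cons:loose braket}, the functor $(-)\boxtimes_{[n]}[n;t]$ preserves colimits, so it carries this colimit to $\colim [k]\boxtimes_{[n]}[n;t]$. Applying the first part identifies the terms with $[k;\phi^*t]$.

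The only point needing care is the naturality of the first equivalence in $\phi$, since the colimit formula needs it as an equivalence of diagrams indexed by $(\Delta_{/[n]})_{/X}$. I would handle this by noting that the chosen cartesian lifts assemble into a functor $\Delta_{/[n]}\to\Forest[\O]_{/\langle n;t\rangle}$: this is the restriction of the cartesian fibration to the fiber over $\langle n;t\rangle$, as in \cref{lemma:slicing fibration}. Both sides are then functors of $\phi$ with a natural comparison map, and that map is pointwise an equivalence by the first step.
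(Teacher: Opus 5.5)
Your argument is correct and is essentially the paper's: both identify $[m]\boxtimes_{[n]}[n;t]$ with the restriction of the presheaf represented by the cartesian transport $\langle m;\phi^*t\rangle$, using the universal property of $q$-cartesian morphisms, and both obtain the colimit formula from density of representables in $\PSh(\Delta)_{/[n]}\simeq\PSh(\Delta_{/[n]})$ together with cocontinuity of $(-)\boxtimes_{[n]}[n;t]$. The differences are cosmetic: you prove the pullback in $\PSh(\Forest[\O])$ and then apply the limit-preserving functor $i^*$, where the paper tests against trees $[k;s]$ directly; you also make explicit the naturality in $\phi$, via the section $\Delta_{/[n]}\to\Forest[\O]_{/\langle n;t\rangle}$ of cartesian lifts, which the paper leaves implicit.
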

\begin{proof}
    Let $[k;s]$ be a tree. Then there is a natural pullback square
    \[
        \begin{tikzcd}
            \map_{\PSh(\Tree[\O])}([k;s], [m] \boxtimes_{[n]} [n;t]) \arrow[r]\arrow[d] & \map_{\Forest[\O]}(\left<k;s\right>, \langle n;t \rangle) \arrow[d]  \\
            \map_{\Delta}([k], [m]) \arrow[r,"\phi \circ(-)"] & \map_{\Delta}([k], [n]),
        \end{tikzcd}
    \]
    thus $[m] \boxtimes_{[n]} [n;t] \to [n;t]$ must be represented by the cartesian transport of $\langle n;t\rangle $ along $\phi$ with respect to the projection $\Forest[\O] \to \Delta$. The second assertion now follows immediately.
\end{proof}

\begin{definition}
    \label{definition: segal tree space}
    We recall that we have the following special morphisms between simplicial spaces:

    \begin{enumerate}[label=(\roman*)]
    \item for $n \geq 0$, a \textit{spine inclusion} $\Sp[n] \coloneqq [1] \cup_{[0]} \dotsb \cup_{[0]} [1] \to [n]$, 
    \item  the \textit{generating completeness extension} $J\to [0]$, where $J$ is the simplicial set defined by the pushout square 
    \[
        \begin{tikzcd}[column sep = large]
            {[1]}\sqcup [1] \arrow[r, "\{0 \leq 2\} \sqcup \{1\leq 3\}"]\arrow[d] & {[3]} \arrow[d] \\
            {[0]} \sqcup [0] \arrow[r] & J.
        \end{tikzcd}
    \]
\end{enumerate}
Thanks to \cref{cons:loose braket}, these induce the following classes of maps between $\Tree[\O]$-spaces:
\begin{enumerate}
    \item for every tree $[n;t]$, a \textit{spine inclusion}  $$\Sp[n;t] \coloneqq \Sp[n] \boxtimes_{[n]} [n;t] \to [n;t],$$
    \item for every root $[0;e]$, a \textit{generating completeness extension} $$[J;e] \coloneqq J \boxtimes_{[0]}[0;e]\to [0;e].$$
\end{enumerate}
\end{definition}

\begin{definition}\label{def:segal presheaves}
A map $f \colon X\to Y$ between $\Tree[\O]$-spaces is called a \textit{Segal fibration} if it has the unique right lifting property with respect to the spine inclusions. Additionally, $f$ is called a \textit{complete Segal fibration} if it is a Segal fibration and it has the unique right lifting property with respect to the generating completeness extensions. 

An $\Tree[\O]$-space $X$ is called a \textit{(complete) Segal $\Tree[\O]$-space} if the unique map $X \to \ast$ is a (complete) Segal fibration. We will write $$
\CSeg(\Tree[\O]) \subset \Seg(\Tree[\O]) \subset \PSh(\Tree[\O])
$$
for the full subcategories spanned by the complete Segal $\Tree[\O]$-spaces and Segal $\Tree[\O]$-spaces, respectively.

A map $A\to B$ between $\Tree[\O]$-spaces is called a \textit{(complete) Segal extension} if it has the unique left lifting property with respect to the (complete) Segal fibrations.
\end{definition}

On account of \cite[Example 5.5.5.6 \& Proposition 5.5.5.7]{HTT}, the Segal extensions form the left part of a factorization system on $\PSh(\Tree[\cO])$, whose right part consists of the Segal fibrations. Moreover, the Segal extensions are the smallest \emph{saturated} class containing the spine inclusions. Analogous statements hold for complete Segal extensions and complete Segal fibrations.

\begin{remark}\label{remark:saturated-vs-strongly-saturated}
    By \cite[Proposition 5.5.4.15]{HTT}, the category $\CSeg(\Tree[\cO])$ is a reflective subcategory of $\PSh(\Tree[\cO])$ and thus admits a reflection functor $L \colon \PSh(\Tree[\cO]) \to \CSeg(\Tree[\cO])$.
    The functor $L$ carries all complete Segal extensions to equivalences.
    On the other hand, if $i \colon A \to B$ is a map of $\Tree[\cO]$-spaces such that $Li$ is an equivalence, then it is not necessarily true that $i$ is a complete Segal extension.
    However, if $B$ is a complete Segal object, then this \emph{does} hold by the following standard argument:
    First, factor $A \to B$ as a complete Segal extension $A \to B'$ followed by a complete Segal fibration $B' \to B$. By 2-out-of-3, the map $LB' \to LB$ is an equivalence. Since $B'$ is a complete Segal object, it follows that $B' \to B$ is an equivalence and hence that  $A \to B$ is a complete Segal extension.
\end{remark}

Recall that for a sequence of active morphisms $t = (t_0 \actarrow \cdots \actarrow t_n)$ and $0 \leq i < j \leq n$, we write $t_{i,j}$ for the subsequence $(t_i \actarrow \cdots \actarrow t_j)$.

\begin{remark}\label{remark:unpacking-complete-segal-condition}
    By \cref{lemma:box with forest is forest}, we have an equivalence
    \[\Sp[n;t] \simeq [1;t_{0,1}] \cup_{[0;t_1]} \cdots \cup_{[0;t_{n-1}]} [1;t_{n-1,n}].\]
    It therefore follows that a $\Tree[\cO]$-space $X$ is Segal if and only if its right Kan extension $i_*X$ has the property that for any tree $[n;t]$, the canonical map
    \[i_*X(\langle n;t \rangle) \to i_*X(\langle 1;t_{0,1} \rangle) \times_{i_*X(\langle 0;t_{1} \rangle)} \cdots \times_{i_*X(\langle 0;t_{n-1} \rangle)} i_*X(\langle 1;t_{n-1,n} \rangle)\]
    is an equivalence.
    It similarly follows that $X$ is complete if and only if
    \[X([0;e]) \to X([3;e=\cdots=e]) \times_{X([1;e=e]) \times X([1;e=e])} (X([0;e]) \times X([0;e]))\]
    is an equivalence for every $e$ in $\cO^\elem$.
\end{remark}

\begin{remark}
    Suppose that $p \colon X\to Y$ is a (complete) Segal fibration. If $Y$ is a  (complete) Segal $\Tree[\O]$-space, then so is $X$.
\end{remark}

\begin{proposition}\label{prop:simplicial segal extension}
    Let $\left<n;t\right>$ be a forest. If $X \to Y$ is a (complete) Segal extension between simplicial spaces over $[n]$, then 
    the induced map
    $$X \boxtimes_{[n]} [n;t] \to Y \boxtimes_{[n]} [n;t]$$
    between $\Tree[\O]$-spaces is a (complete) Segal extension as well.
\end{proposition}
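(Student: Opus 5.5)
The plan is to reduce the statement to the generating maps of the relevant saturated classes. The functor $(-)\boxtimes_{[n]}[n;t] \colon \PSh(\Delta)_{/[n]} \to \PSh(\Tree[\O])$ preserves colimits by \cref{cons:loose braket}. It therefore carries saturated classes into saturated classes, in the following sense: the class of maps in $\PSh(\Delta)_{/[n]}$ whose image is a (complete) Segal extension is closed under pushouts, transfinite composition and retracts. This holds because colimits in the slice $\PSh(\Delta)_{/[n]}$ are computed in $\PSh(\Delta)$, and the (complete) Segal extensions in $\PSh(\Tree[\O])$ form a saturated class. A (complete) Segal extension $X \to Y$ of simplicial spaces over $[n]$ lies in the saturated class generated by the spine inclusions $\Sp[k] \to [k]$ (together with $J \to [0]$ in the complete case). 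Hence the map lies in the saturated class of $\PSh(\Delta)_{/[n]}$ generated by the maps $\Sp[k] \to [k]$ and $J \to [0]$, each equipped with all possible structure maps $[k] \to [n]$, respectively $[0] \to [n]$.

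This last claim is the one delicate point, since saturation is taken in $\PSh(\Delta)$ rather than in the slice. I would handle it as follows. The forgetful functor $\PSh(\Delta)_{/[n]} \to \PSh(\Delta)$ creates colimits. Equivalently, $\PSh(\Delta)_{/[n]} \simeq \PSh(\Delta_{/[n]})$, and the small object argument of \cite[Proposition 5.5.5.7]{HTT} can be run in the slice. Using this, the saturated class of the slice generated by all generators with arbitrary maps to $[n]$ consists exactly of the maps over $[n]$ whose underlying map is an extension. Indeed, both classes have the same right orthogonal complement: a map $Z \to W$ over $[n]$ has unique right lifting against all generators over $[n]$ if and only if its underlying map is a (complete) Segal fibration. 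Since the left class of a factorization system is determined by its right class, the two left classes agree.

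It then suffices to check the generators. Consider first a map $\phi \colon [k] \to [n]$. Using that $\Sp[k]$ is an iterated pushout of copies of $[1]$ and $[0]$ over $[k]$, together with \cref{lemma:box with forest is forest} and the colimit preservation of $\boxtimes$, we obtain
\[\Sp[k] \boxtimes_{[n]} [n;t] \simeq \Sp[k] \boxtimes_{[k]} \big([k]\boxtimes_{[n]}[n;t]\big) \simeq \Sp[k]\boxtimes_{[k]} [k;\phi^* t].\]
So the image of the generator is the map $\Sp[k;\phi^*t] \to [k;\phi^*t]$, where $\langle k;\phi^*t\rangle$ is a forest that need not be a tree. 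If $\phi^*t$ is a tree, this map is a spine inclusion by definition. In general, I would prove that $\Sp[k;s] \to [k;s]$ is a Segal extension for every forest $\langle k;s\rangle$, by induction on $k$ using the forest decomposition formulas of \cref{prop:forest decomposition}.

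For $k=0$ the map is an identity. For $k \geq 1$, \cref{prop:forest decomposition} gives pushout squares for both $X=\Sp[k]$ and $X=[k]$, compatible with the map between them. In these squares the upper-right corners are colimits of spine inclusions of trees $[k;s']$, which are Segal extensions. The left-hand corners are handled by the induction hypothesis, since $d_k^*\Sp[k] = \Sp[k-1]$ up to the last edge. The main obstacle is bookkeeping here: $d_k^*\Sp[k]$ is not literally $\Sp[k-1]$, so I would instead use $d_k^*$ of the inclusion, which is $\Sp[k-1] \to [k-1]$ after restricting. A cube or gluing argument then shows that the induced map of pushouts is a Segal extension, by applying stability under pushouts and under maps of pushout squares whose components are extensions. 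Equivalently, it is cleaner to argue dually: a Segal $\Tree[\O]$-space $Z$ has $i_*Z$ satisfying the Segal condition on all forests. This follows from \cref{prop:RKan-trees-to-forests}, because limits commute with the Segal fiber products. The same holds for Segal fibrations in the relative form.

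The completeness generator $J\to[0]$ over $i\colon[0]\to[n]$ gives $J\boxtimes_{[0]}[0;t_i]$. By the first formula of \cref{prop:forest decomposition}, this is a colimit over $e\in(\O^{\el,\op})_{/t_i}$ of the maps $[J;e]\to[0;e]$, and hence a complete Segal extension.
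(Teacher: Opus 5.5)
Your proposal is correct and follows the paper's proof. You reduce to the generators $\Sp[k]\to[k]$ and $J\to[0]$ using that $(-)\boxtimes_{[n]}[n;t]$ preserves colimits. You rewrite the image of a generator as $\Sp[k]\boxtimes_{[k]}[k;\phi^*t]\to[k;\phi^*t]$ via \cref{lemma:box with forest is forest}. You then induct on $k$ with the forest decomposition formulas of \cref{prop:forest decomposition} to reduce to trees, and to $[J;e]\to[0;e]$ in the complete case.

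Your care about saturation in the slice is a welcome detail that the paper leaves implicit. Your bookkeeping worry is unnecessary, however. Pullback in a presheaf category preserves colimits, and the last edge of $\Sp[k]$ pulls back to the vertex $\{k-1\}$, so $d_k^*\Sp[k]\simeq\Sp[k-1]$ directly.
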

\begin{proof}
    First suppose $X \to Y$ is a Segal extension.
    As $(-) \boxtimes_{[n]} [n;t]$ preserves colimits, it suffices to handle the case that $X \to Y$ is given by $\Sp[m] \to [m]$. If $\phi \colon [m] \to [n]$ denotes the underlying map, then the induced map in question is identified with the canonical map
    $\Sp[m] \boxtimes_{[m]} [m; \phi^*t] \to [m; \phi^*t]$
    on account of \cref{lemma:box with forest is forest}.
    All in all, we may thus reduce to the case that $X \to Y$ is given by $\Sp[n] \to [n]$, so that we have to show that the map
    \[
        \Sp[n] \boxtimes_{[n]} [n;t] \to [n;t]
    \]
    is a Segal extension for every forest $\left<n;t\right>$.
    In light of \cref{prop:forest decomposition}, we see that an induction argument on $n$ allows us to further reduce to the case that $[n;t]$ is a tree. Then the map is a Segal extension by definition.
    For the claim about complete Segal extensions, it suffices to consider the case that $X \to Y$ is of the form $J \to [0]$.
    This follows by an analogous argument.
\end{proof}

\begin{proposition}\label{prop:boxtimes with exponentiable map}
    Let $X \to Y$ be an exponentiable map between Segal spaces. If $A \to B$ is a Segal extension between $\Tree[\O]$-spaces over $Y$,  then the induced map $$X \boxtimes_Y A \to X \boxtimes_Y B$$
    between $\Tree[\O]$-spaces is a Segal extension as well.
\end{proposition}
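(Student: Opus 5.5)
Since $X \boxtimes_Y (-) = p^*X \times_{p^*Y} (-)$ commutes with colimits over $p^*Y$, and Segal extensions over $p^*Y$ are the saturated class generated by the spine inclusions, I first reduce to the generators. A Segal extension $A \to B$ over $p^*Y$ is a retract of a transfinite composite of pushouts of maps $\Sp[n;t] \to [n;t]$, each equipped with a structure map $[n;t] \to p^*Y$. Pullback along $p^*X \to p^*Y$ preserves colimits, hence pushouts, transfinite composites and retracts. Segal extensions are closed under all of these. It therefore suffices to show that $X \boxtimes_Y \Sp[n;t] \to X \boxtimes_Y [n;t]$ is a Segal extension for every tree $[n;t]$ and every map $[n;t] \to p^*Y$.

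Such a map corresponds, by Yoneda and the fact that $p^*Y([n;t]) = Y_n$, to a map $\sigma \colon [n] \to Y$ of simplicial spaces. Since $[n;t] \to p^*Y$ factors as $[n;t] \to p^*[n] \to p^*Y$, pullback pasting gives
\[X \boxtimes_Y [n;t] \simeq (X \times_Y [n]) \boxtimes_{[n]} [n;t], \qquad X \boxtimes_Y \Sp[n;t] \simeq (X \times_Y \Sp[n]) \boxtimes_{[n]} [n;t].\]
The second identification uses $\Sp[n;t] = \Sp[n] \boxtimes_{[n]} [n;t]$ and the associativity of iterated fibre products. The map in question is thus obtained by applying $(-) \boxtimes_{[n]} [n;t]$ to $X \times_Y \Sp[n] \to X \times_Y [n]$. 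By \cref{prop:simplicial segal extension}, it is enough to show that this latter map is a Segal extension of simplicial spaces over $[n]$.

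This last step is where the hypotheses enter, and I expect it to be the main obstacle. The claim is that pulling back the spine inclusion $\Sp[n] \to [n]$ along an exponentiable map $X \to Y$ between Segal spaces produces a Segal extension. Write $X' = X \times_Y [n]$, and note that $\Sp[n] \to [n]$ is a Segal extension between Segal spaces. The map $X' \to [n]$ is a pullback of an exponentiable map, hence exponentiable. The pullback functor $\PSh(\Delta)_{/[n]} \to \PSh(\Delta)_{/X'}$ then has a right adjoint $\Pi$. Testing the unique left lifting property of $X'|_{\Sp[n]} \to X'$ against a Segal fibration $Z \to W$ over $X'$ transposes to testing $\Sp[n] \to [n]$ against $\Pi Z \to \Pi W$. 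It therefore suffices that $\Pi$ preserves Segal fibrations. Equivalently, pulling back along $X' \to [n]$ should preserve Segal extensions, or just the generating spine inclusions $\Sp[m] \to [m]$ over $[n]$.

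I would carry this out concretely using the Conduché/Ayala--Francis description of exponentiable maps of Segal spaces. For $\Sp[m] \to [m] \to [n]$, the pullback $X' \times_{[n]} \Sp[m]$ is a colimit of $X'|_{[1]}$'s glued along $X'|_{[0]}$'s. Exponentiability, meaning weak contractibility of the factorization categories, is exactly what makes the canonical comparison to $X'|_{[m]}$ a Segal equivalence. Here $X'|_{[m]}$ is Segal, being a pullback of Segal spaces. Such an equivalence whose target is Segal is a Segal extension by the argument of \cref{remark:saturated-vs-strongly-saturated}. I would cite the corresponding statement for simplicial spaces from \cite[Lemma 2.2.8]{AyalaFrancis2020FibrationsInftyCategories}, namely that a Conduché map of Segal spaces pulls back spine inclusions to Segal extensions, rather than reprove it.
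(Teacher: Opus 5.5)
Your proposal is correct and follows the paper's proof: reduce to the spine inclusions $\Sp[n;t]\to[n;t]$ over $p^*Y$, rewrite the map by pullback pasting as $(X\times_Y\Sp[n]\to X\times_Y[n])\boxtimes_{[n]}[n;t]$, and conclude with \cref{prop:simplicial segal extension}; the paper simply asserts that the hypothesis makes $X\times_Y\Sp[n]\to X\times_Y[n]$ a Segal extension. For that step you need neither the $\Pi$ reformulation (which only restates the goal) nor the Conduch\'e citation: $X\times_Y(-)$ preserves the colimit $[n]\simeq L\,\Sp[n]$ in $\Seg(\Delta)_{/Y}$ (with $L$ the Segal localization), so $L(X\times_Y\Sp[n])\simeq X\times_Y[n]$ has Segal target and the argument of \cref{remark:saturated-vs-strongly-saturated} applies; also, in the $\infty$-setting the reduction to generators should invoke closure under colimits in the arrow category and left cancellation, rather than ``retracts of transfinite composites of pushouts''.
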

\begin{proof}
    We may reduce to the case that $A\to B$ is given by the spine inclusion $\Sp[n;t] \to [n;t]$. The map of the proposition can be rewritten as the map obtained by applying $(-)\boxtimes_{[n]} [n;t]$ to the map $i : X \times_Y \Sp[n] \to X \times_Y [n]$ induced by the underlying map $[n]\to Y$. The assumption implies that $i$ is a Segal extension between simplicial spaces, so the desired conclusion follows from \cref{prop:simplicial segal extension}.
\end{proof}

As promised at the beginning of this subsection, we include the following observation for thoroughness:

\begin{proposition}\label{prop:segal tree spaces are segal objects}
    Let $X$ be a $\Tree[\O]$-space. Then $X$ is a Segal $\Tree[\O]$-space in the sense of \cref{def:segal presheaves} if and only if it is a Segal space for the algebraic pattern $\Tree[\O]^{\op, \natural}$ in the sense of \cref{def:segal objects}.
\end{proposition}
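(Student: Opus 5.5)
We compare the two conditions directly. Being a Segal object for $\Tree[\O]^{\op,\natural}$ means that for every tree $[n;t]$ the map
\[X([n;t]) \to \lim_{([n;t] \leftarrow [k;s]) \in (\Tree[\O]^{\op,\natural})^{\el}_{[n;t]/}} X([k;s])\]
is an equivalence. Here the indexing category is the opposite of the category of inert maps $[k;s] \intarrow [n;t]$ in $\Tree[\O]$ with $k \leq 1$. By \cref{remark:unpacking-complete-segal-condition}, being Segal in the sense of \cref{def:segal presheaves} means exactly that $X$ sends the spine colimit $[1;t_{0,1}] \cup_{[0;t_1]} \cdots \cup_{[0;t_{n-1}]} [1;t_{n-1,n}]$ to a limit, where the pieces are the forests $\langle 1;t_{i,i+1}\rangle$ and $\langle 0;t_i\rangle$ restricted along $i$. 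So the proof reduces to identifying the category of elementary inert maps into $[n;t]$ and computing its limit.

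\emph{Step 1: describe the elementary slice.} An inert map $[k;s] \to [n;t]$ with $k \leq 1$ has an underlying inert map $\phi \colon [k] \to [n]$, which is either a vertex $\{i\}$ or an edge $\{i,i+1\}$. By \cref{proposition:explicit description of mapping space} (more precisely \cref{cor:explicit description of mapping space}), the maps over a fixed $\phi$ with $\phi(k) = i$ form $(\O_0^{\op})_{/t_i}$. Requiring the source to be a tree restricts this to $(\O^{\el,\op})_{/t_i}$, i.e.\ to $(\O^\el_{t_i/})^\op$. Hence the elementary slice fibres over the poset of vertices and edges of $\Sp[n]$ (the opposite of the usual spine diagram), with fibre $\O^\el_{t_i/}$ over the relevant face. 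The same description holds with $[n;t]$ replaced by the forests $\langle 1;t_{i,i+1}\rangle$ and $\langle 0; t_i\rangle$.

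\emph{Step 2: compute the limit.} I would compute the limit in two stages: first right Kan extend along the projection to the spine poset, then take the limit over that poset. This is a cartesian/cocartesian fibration argument as in \cref{lem:decomposition-colimit-over-correspondence}, or one can directly identify the elementary slice with a colimit of slices. Over the edge $\{i,i+1\}$, the inner limit is $\lim_{e \in \O^\el_{t_{i+1}/}} X([1;\ldots])$. By \cref{prop:RKan-trees-to-forests}, together with \cref{lem:lemma1-correspondence} applied to $\langle 1;t_{i,i+1}\rangle$, this should equal $i_*X(\langle 1;t_{i,i+1}\rangle)$ once the lower-layer contributions are accounted for. Over a vertex $\{i\}$, the inner limit is $\lim_{\O^\el_{t_i/}} X([0;e]) = i_*X(\langle 0;t_i\rangle)$, by the first bullet of \cref{prop:RKan-trees-to-forests}. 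The outer limit over the spine zigzag is then exactly the iterated fibre product $i_*X(\langle 1;t_{0,1}\rangle)\times_{i_*X(\langle 0;t_1\rangle)} \cdots$.

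\emph{Main obstacle.} The subtle point is the edge fibre. An inert map $[1;s] \to [n;t]$ over $\{i,i+1\}$ only records the elementary $s_1$ under $t_{i+1}$, whereas $i_*X(\langle 1;t_{i,i+1}\rangle)$ is by definition the limit over all of $\Tree[\O]_{/\langle 1;t_{i,i+1}\rangle}$, which also includes non-inert maps and roots. I would resolve this by showing that the inclusion of the elementary inert maps into $\Tree[\O]_{/\langle 1;t_{i,i+1}\rangle}$, or rather of the whole elementary slice into the full slice $\Tree[\O]_{/\langle n;t\rangle}$ restricted to maps landing in the spine, is initial after taking opposites. The tool is the active–inert factorization of \cref{lemma:factorization system on the tree category}: every map into $[n;t]$ that factors through an edge or vertex factors uniquely as an active map followed by an inert one, and the resulting comma categories should have initial objects. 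This is the usual proof that Segal objects for a pattern are detected by the elementary slice, in the form of the "elementary objects are initial in the inert slice" argument of Chu–Haugseng. Given this initiality, the two limits agree, and so do the two Segal conditions.
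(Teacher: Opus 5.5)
Your Step~2 is not correct as written. The projection of $\Tree[\O]^{\el}_{/[n;t]}$ onto the poset of vertices and edges of $\Sp[n]$ is not a cartesian fibration. Restricting a corolla $[1;s]\intarrow[n;t]$ over $\{i,i+1\}$ to the vertex $i$ gives the forest $\langle 0;s_0\rangle$, which is not a root, since $s_0$ need not be elementary. So the right Kan extension to an edge is the limit over the comma category. That comma category is the full elementary slice of the forest $\langle 1;t_{i,i+1}\rangle$: corollas together with roots over both vertices. It is not the fibre $\O^{\el}_{t_{i+1}/}$. The fibre-only limit agrees with $i_*X(\langle 1;t_{i,i+1}\rangle)$ under extra hypotheses such as soundness (cf.\ \cref{prop:RKan-trees-to-forest-sound}), but not for a general pattern. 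Once this is corrected, your first option in the last paragraph is exactly the $n=1$ case of the paper's proof: finality of the elementary slice of $\langle 1;t_{i,i+1}\rangle$ in $\Tree[\O]_{/\langle 1;t_{i,i+1}\rangle}$.

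Your second option gives a complete proof by itself and makes Step~2 unnecessary. The details are as follows.
\begin{itemize}
\item Since $\Sp[n]\to[n]$ is a monomorphism, $\Tree[\O]_{/\Sp[n;t]}$ is the full subcategory of $\Tree[\O]_{/[n;t]}$ on maps whose underlying $\phi$ lands in an edge. Hence $\Map(\Sp[n;t],X)$ is the limit of $X$ over its opposite.
\item For such a map, take its inert--active factorization from \cref{lemma:factorization system on the tree category}. The inert part lies over the convex hull of the image of $\phi$, which has length at most one, so its source is a root or a corolla.
\item By orthogonality, this factorization is an initial object of the relevant comma category. Hence $\Tree[\O]^{\el}_{/[n;t]}\hookrightarrow\Tree[\O]_{/\Sp[n;t]}$ is a right adjoint, and therefore final.
\end{itemize}
This finality is precisely the key claim of the paper's proof.

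The paper establishes it differently. It writes both categories as iterated pushouts to reduce to the pieces $[0;t_i]$ and $[1;t_{i,i+1}]$. These pieces are forests, so the factorization system on $\Tree[\O]$ is not directly available. Roots are handled by \cref{lem:lemma1-correspondence}. Corollas are handled by a modified factorization through the subcategory $\Tree[\O]^{\bot}_{/[1;t]}$ of maps whose underlying map preserves $0$, using two left adjoints and cancellation.

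Your global argument is shorter because $[n;t]$ is itself a tree. In exchange, the paper's route yields finality for forests $\langle j;t\rangle$ with $j\le 1$, together with the pushout decomposition of elementary slices. Both are reused in the proof of \cref{lemma: when tree is sound and robust}.
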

\begin{proof}
    Let $[n;t]$ be a tree.
    Then one may consider the canonical span
    of functors
    $$
     \Tree[\O]^{\el}_{/[n;t]} \xleftarrow{\simeq} \Tree[\O]^{\el}_{/\Sp[n;t]} \xrightarrow{i} \Tree[\O]_{/\Sp[n;t]}.
    $$
    This induces a natural comparison map
    $
    \colim_{[m;s] \in \Tree[\O]^{\el}_{/[n;t]}} [m;s] \to \Sp[n;t]
    $
    in $\PSh(\Tree[\O])$.
    The proposition follows if we show that this map is an equivalence, so we will show that $i$ is final.
  
    If $n \geq 2$, then we may write $i$ as an iterated pushout
    \[\begin{tikzcd}[row sep=small]
    	{\Tree[\O]^{\el}_{/[1;t_{01}]} \cup_{\Tree[\O]^{\el}_{/[0;t_1]}} \dotsb \cup_{\Tree[\O]^{\el}_{/[0;t_{n-1}]}}\Tree[\O]^{\el}_{/[1;t_{n-1,n}]}} \\
    	{\Tree[\O]_{/[1;t_{01}]} \cup_{\Tree[\O]_{/[0;t_1]}} \dotsb \cup_{\Tree[\O]_{/[0;t_{n-1}]}}\Tree[\O]_{/[1;t_{n-1,n}]}.}
    	\arrow[from=1-1, to=2-1]
    \end{tikzcd}\]
    Thus we can reduce to the cases $n = 0,1$.
    Note that $\Tree[\cO]_{/[m;s]} \simeq \Tree[\cO]_{/\langle m;s \rangle}$ since we defined $[m;s]$ as the restriction $i^*\langle m;s \rangle$ for any forest $\langle m;s \rangle$.
    The case $n=0$ now follows directly from \cref{lem:lemma1-correspondence}. 
    
    Now suppose that $n=1$. Let $\Tree[\cO]^\bot_{/[1;t]}$ be the full subcategory of $\Tree[\cO]_{/[1;t]}^\elem \subset \Tree[\cO]_{/[1;t]}$ spanned by the maps $[m;s] \to [1;t]$ whose underlying map $\phi \colon [m] \to [1]$ preserves $0$, so $\phi$ is either the the identity or the inclusion of $0$.
    Observe that by the same proof as \cref{lemma:factorization system on the tree category}, any map $[m;s] \to [1;t]$ uniquely factors as a composite $[m;s] \to [i;s'] \to [1;t]$ such that $[m] \to [i]$ preserves the top element and $s'_i \intarrow s_m$ is invertible, and the second map lies in $\Tree[\cO]^\bot_{/[1;t]}$.
    This shows that the inclusions $\Tree[\cO]^\bot_{/[1;t]} \hookrightarrow \Tree[\cO]^\elem_{/[1;t]}$ and $\Tree[\cO]^\bot_{/[1;t]} \hookrightarrow \Tree[\cO]_{/[1;t]}$ both admit a left adjoint, hence they are final.
    We conclude that the inclusion $\Tree[\cO]^\elem_{/[1;t]} \hookrightarrow \Tree[\cO]_{/[1;t]}$ is final.
\end{proof}

\subsection{Functoriality}\label{subsection:functoriality of algebrad cats} Every map $f \colon \O \to \P$ between algebraic patterns induces a functor $\Tree[f] \colon \Tree[\O]\to\Tree[\P]$ between their tree categories. In turn, we obtain an adjunction $$
    \Tree[f]_! : \PSh(\Tree[\O]) \rightleftarrows \PSh(\Tree[\P]) : \Tree[f]^*.
$$
One might ask whether this derives to an adjunction between the categories of complete Segal presheaves. We recall the following notion from \cite[\S 4]{ChuHaugseng2021HomotopycoherentAlgebraSegal}:

\begin{definition}[Chu--Haugseng]
    A map  $f \colon \O \to \P$ between algebraic patterns is called \textit{strong Segal}  if for every object $t \in \O$, the induced map 
    $
    \O^\el_{t/} \to \P^\el_{f(t)/}
    $
    is initial.
\end{definition}

\begin{proposition}
    If $f \colon \O \to \P$ is strong Segal, then the canonical map $$\Tree[f]_!(X \boxtimes_{[n]} [n;t]) \to X \boxtimes_{[n]} [n; f(t)]$$ in $\PSh(\Tree[\P])$ is an equivalence for every $\O$-forest $\left<n;t\right>$ and every simplicial space $X$ over $[n]$.
\end{proposition}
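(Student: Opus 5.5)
Both sides commute with colimits in $X$, so I would first reduce to representable $X$ and then handle the forest $[n;t]$ itself by induction on $n$, using the forest decomposition formulas. The left-hand side is colimit-preserving in $X$, since $\Tree[f]_!$ and $(-)\boxtimes_{[n]}[n;t]$ both preserve colimits. The right-hand side is $(-)\boxtimes_{[n]}[n;f(t)]$, which also preserves colimits. The comparison map is natural in $X$. Writing $X$ as the colimit of its simplices $[k]\to X\to[n]$ and using \cref{lemma:box with forest is forest}, which gives $[k]\boxtimes_{[n]}[n;t]\simeq[k;\phi^*t]$ and $[k]\boxtimes_{[n]}[n;f(t)]\simeq[k;\phi^*f(t)]=[k;f(\phi^*t)]$, it suffices to treat $X=[n]$. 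So the claim becomes that $\Tree[f]_!\,i^*\langle n;t\rangle\to i^*\langle n;f(t)\rangle$ is an equivalence for every $\O$-forest $\langle n;t\rangle$.

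\textbf{Trees.} If $t_n$ is elementary, then $[n;t]$ is representable. Since $f$ preserves elementaries, $\Tree[f]_![n;t]=[n;f(t)]$ is again representable, and there is nothing to prove.

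\textbf{Forests, case $n=0$.} By the first formula of \cref{prop:forest decomposition}, $[0;t]\simeq\colim_{e\in(\O^{\el,\op})_{/t}}[0;e]$. Applying $\Tree[f]_!$ gives $\colim_{e\in(\O^{\el}_{t/})^\op}[0;f(e)]$. The corresponding formula in $\P$ writes $[0;f(t)]$ as the colimit over $(\P^{\el}_{f(t)/})^\op$ of $[0;e']$. Since $f$ is strong Segal, $\O^\el_{t/}\to\P^\el_{f(t)/}$ is initial, so its opposite is final. Hence the two colimits agree, compatibly with the comparison map.

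\textbf{Forests, case $n\ge1$.} Here I use the pushout square of \cref{prop:forest decomposition} with $X=[n]$. $\Tree[f]_!$ preserves pushouts and colimits. On the bottom-left corner the comparison is an equivalence by the induction hypothesis for $\langle n-1;t_{\le n-1}\rangle$. On the top-right corner each term $[n;s]$ is a tree, so there the map is an equivalence termwise, and likewise on the top-left corner by induction. It then remains to see that the colimit indexing categories match. By \cref{remark:elementary-slices-On}, $(\O_n^{\el})_{t/}\simeq\O^\el_{t_n/}$ and $(\P_n^{\el})_{f(t)/}\simeq\P^\el_{f(t_n)/}$, and the strong Segal hypothesis makes the induced functor initial. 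So the colimit over $(\O_n^{\el,\op})_{/t}$ of the functor obtained by composing with $f$ agrees with the colimit over $(\P_n^{\el,\op})_{/f(t)}$. Comparing the image of the $\O$-pushout square with the $\P$-pushout square for $\langle n;f(t)\rangle$ then gives the equivalence on the bottom-right corner.

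\textbf{Main obstacle.} The key point is the $n\ge1$ case: I need to check carefully that the functor $(\O^{\el}_n)_{t/}\to(\P^{\el}_n)_{f(t)/}$ is compatible, under the identifications of \cref{remark:elementary-slices-On}, with $\O^\el_{t_n/}\to\P^\el_{f(t_n)/}$. I also need the diagrams $s\mapsto[n-1;s_{\le n-1}]$ and $s\mapsto[n;s]$ on the $\P$-side to be the restrictions along this functor, so that finality transfers both corners of the square simultaneously. This should follow from the naturality of the target projections $\O_n\to\O_0$ under the double functor induced by $f$.
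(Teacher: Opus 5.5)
Your proposal is correct and follows the paper's proof. You reduce to $X=[n]$ via \cref{lemma:box with forest is forest}, then induct on $n$ using the forest decomposition formulas of \cref{prop:forest decomposition} together with the finality of $(\O^{\el,\op}_n)_{/t}\to(\P^{\el,\op}_n)_{/f(t)}$. That finality is the strong Segal condition transported along the identifications of \cref{remark:elementary-slices-On}, which the paper simply asserts and you (rightly) flag as a routine naturality check.
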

\begin{proof}
    In light of \cref{lemma:box with forest is forest}, it suffices to show this for $X = [n]$. We may now proceed inductively using the forest decomposition formula of \cref{prop:forest decomposition}, and use that the map $(\O^{\el,\op}_n)_{/t} \to (\P_n^{\el, \op})_{/f(t)}$ is final for every $t \in \O_n$ and $n\geq 0$.
\end{proof}

In particular, $\Tree[f]_!$ preserves the spine inclusions and generating completeness extensions.
We conclude the following.

\begin{corollary}\label{rem:base change decorated Segal spaces}
    If $f \colon \O \to \P$ is a strong Segal map between algebraic patterns, then the adjunction $\Tree[f]_! \dashv \Tree[f]^*$
    derives to an adjunction 
    $$
    f_! : \CSeg(\Tree[\O]) \rightleftarrows \CSeg(\Tree[\P]) :  f^*.
    $$
    In particular, $\Tree[f]^*$ preserves complete Segal objects.\qed
\end{corollary}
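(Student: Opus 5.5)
The plan is to deduce both claims from the previous proposition together with the standard adjunction argument for localizations of presheaf categories. By the previous proposition, applied with $X = \Sp[n] \to [n]$ and $X = [n]$, the left adjoint $\Tree[f]_!$ carries each spine inclusion $\Sp[n;t] \to [n;t]$ of $\Tree[\O]$ to
\[
\Sp[n] \boxtimes_{[n]} [n;f(t)] \to [n;f(t)].
\]
Here $[n;f(t)]$ is a tree of $\P$: since $f$ is a map of algebraic patterns, it preserves elementary objects, so $f(t)_n$ is elementary. Hence this image is again a spine inclusion. Likewise, applying the proposition with $X = J$ and $X = [0]$ over $[0]$, the generating completeness extension $[J;e] \to [0;e]$ is sent to $[J;f(e)] \to [0;f(e)]$, which is again a generating completeness extension.

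It follows that $\Tree[f]_!$ sends the generating complete Segal extensions into the class of complete Segal extensions. Since $\Tree[f]_!$ preserves colimits, it preserves the saturated class generated by these maps. So $\Tree[f]_!$ carries complete Segal extensions to complete Segal extensions.

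By adjunction, a map $A \to B$ has the unique left lifting property against $\Tree[f]^*Y \to \Tree[f]^*Z$ precisely when $\Tree[f]_!A \to \Tree[f]_!B$ has it against $Y \to Z$. Consequently $\Tree[f]^*$ sends complete Segal fibrations to complete Segal fibrations. Applying this to $Y \to \ast$, and using that $\Tree[f]^*$ preserves the terminal object because it is a right adjoint, we get that $\Tree[f]^*$ preserves complete Segal objects. This is the final claim.

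The adjunction $f_! \dashv f^*$ is then obtained by setting $f^* := \Tree[f]^*|_{\CSeg}$ and $f_! := L \circ \Tree[f]_!|_{\CSeg}$, where $L$ is the reflection of \cref{remark:saturated-vs-strongly-saturated}. For $X \in \CSeg(\Tree[\O])$ and $Y \in \CSeg(\Tree[\P])$ we have natural equivalences
\[
\Map(L\Tree[f]_!X, Y) \simeq \Map(\Tree[f]_!X, Y) \simeq \Map(X, \Tree[f]^*Y),
\]
which give the adjunction.

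The only step needing care is checking that the images of the generators really are generators. This requires $f$ to preserve elementaries, so that trees go to trees, and this holds because $f$ is a map of algebraic patterns. Everything else is formal.
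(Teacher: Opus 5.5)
Your proposal is correct and follows the paper's route: the preceding proposition shows that $\Tree[f]_!$ sends spine inclusions and generating completeness extensions to spine inclusions and generating completeness extensions. The derived adjunction $L\circ\Tree[f]_! \dashv \Tree[f]^*$, and the fact that $\Tree[f]^*$ preserves complete Segal objects, then follow formally by adjunction. The detour through the saturated class is harmless but unnecessary, because complete Segal fibrations are defined directly by unique lifting against the generators.
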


\begin{remark}
    We can now rephrase the completeness condition of \cref{def:segal presheaves} as follows. Suppose that $e \in \O$ is an elementary. Then this corresponds to a map $e \colon \ast \to \O$ between algebraic patterns where $\ast$ is the terminal algebraic pattern. It is automatic that this is a strong Segal morphism, so that $e$ induces a functor
    $\Tree[e]^* \colon \Seg(\Tree[\O]) \to \Seg(\Delta)$. A Segal $\Tree[\O]$-space $X$ is complete if and only if for every elementary $e \in \O$, the Segal space $\Tree[e]^*X$ is complete in the sense of Rezk \cite{Rezk}.
\end{remark}

Using the initiality of $\cO^\elem_{t/} \to \cP^\elem_{f(t)/}$, one also obtains the following analogue of \cref{rem:base change decorated Segal spaces} for algebrads.
We write $\Cocart^\inert(\cO)$ for the category of functors $\cC \to \cO$ that are cocartesian over $\cO^\inert$ (cf.\ \cref{remark:algebrads-all-elementary}).

\begin{lemma}\label{cor:base change factorization algebrads}
    If $f \colon \cO \to \cP$ is a strong Segal morphism between algebraic patterns, then the pullback functor $f^* \colon \Cocart^\inert(\cP) \to \Cocart^\inert(\cO)$ preserves algebrads. In particular, it restricts to a functor $f^* \colon \Algd(\cP) \to \Algd(\cO)$. \qed
\end{lemma}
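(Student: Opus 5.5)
The plan is to check the three conditions of \cref{defi:of algebrad for factactization system} for the pullback $f^*\cP' := \cP' \times_{\cP} \cO$, where $p \colon \cP' \to \cP$ is an algebrad. I write $q \colon f^*\cP' \to \cO$ for the projection. Since $f$ is a map of algebraic patterns, it carries inerts to inerts. So $q$ has cocartesian lifts of inert morphisms: they are given by pairs whose first component is a $p$-cocartesian lift of $f(\alpha)$, because cocartesian lifts are stable under base change. For the same reason a morphism of $\Cocart^\inert(\cP)$ pulls back to one preserving cocartesian lifts of inerts, so $f^*$ is well defined on $\Cocart^\inert$. This gives condition (1).

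For condition (2), the fiber of $q$ over $x$ is $\cP'_{f(x)}$, and the map to check becomes
\[\cP'_{f(x)} \to \lim_{(x \intarrow e) \in \cO^\el_{x/}} \cP'_{f(e)}.\]
The functor $e \mapsto \cP'_{f(e)}$ is the restriction of $\cP^\el_{f(x)/} \to \Cat$, $e' \mapsto \cP'_{e'}$, along $\cO^\el_{x/} \to \cP^\el_{f(x)/}$. The strong Segal hypothesis says this functor is initial, so the limit agrees with $\lim_{\cP^\el_{f(x)/}} \cP'_{e'}$. That limit is equivalent to $\cP'_{f(x)}$ because $p$ is an algebrad.

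For condition (3), mapping spaces in the pullback are computed as
\[\Map_{f^*\cP'}(\bar x,\bar y) \simeq \Map_{\cP'}(\bar x,\bar y)\times_{\Map_\cP(f x, f y)} \Map_\cO(x,y).\]
The lifts $\phi_!\bar y$ correspond to $f(\phi)_!\bar y$. The square of condition (3) for $q$ should then be obtained from the corresponding square for $p$ at $(fx,fy)$ by base change along $\Map_\cO(x,y) \to \Map_\cP(fx,fy)$. To make this work, I would write the right-hand column for $q$ as a limit over $\cO^\el_{y/}$ of $\Map_{\cP'}(\bar x, f(\phi)_!\bar y) \times_{\Map_\cP(fx, f e)} \Map_\cO(x,e)$. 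Limits commute with fiber products, so pullback pasting reduces the claim to two facts. The first is the square for $p$, which holds by hypothesis, with its right column's limits taken over $\cP^\el_{f(y)/}$; initiality again identifies these with limits over $\cO^\el_{y/}$. The second is a tautological pullback relating $\Map_\cO$ to $\Map_\cP$.

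The main obstacle is making this pasting precise. The right column for $q$ still involves $\Map_\cO(x,e)$, which is not controlled by $\cP$. I would handle this by fixing $g \in \Map_\cO(x,y)$ and comparing fibers over $g$ directly. Then $\Map^g_{f^*\cP'}(\bar x,\bar y) \simeq \Map^{f(g)}_{\cP'}(\bar x,\bar y)$, and the target becomes $\lim_{\cO^\el_{y/}} \Map^{f(\phi g)}_{\cP'}(\bar x, f(\phi)_!\bar y)$. By initiality this agrees with the limit over $\cP^\el_{f(y)/}$, which is an equivalence since $p$ is an algebrad. The restriction to $\Algd$ follows immediately.
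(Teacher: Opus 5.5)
Your proposal is correct and matches what the paper intends: the paper gives no written argument beyond saying the lemma follows from the initiality of $\cO^\elem_{t/} \to \cP^\elem_{f(t)/}$, and your direct check of conditions (1)--(3) of \cref{defi:of algebrad for factactization system} fills this in, reducing (2) and (3) to the corresponding conditions for $p$ via that initiality. The fiberwise comparison over a fixed $g \in \Map_\cO(x,y)$ in your last paragraph already settles (3) on its own, so you can drop the preliminary pasting sketch.
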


\subsection{The equivalence with algebrads} 
We will now show \cref{thmD} from the introduction, namely that algebrads coincide with complete Segal $\Tree[\O]$-spaces.

\begin{lemma}\label{lem:fiberwise lfibs}
    Suppose that we have a map 
    \[
        \begin{tikzcd}[column sep = tiny, row sep = small]
            E\arrow[rr, "f"]\arrow[dr] && E'\arrow[dl] \\
            & \C
        \end{tikzcd}
    \]
    between cocartesian fibrations on $\C$. Then $f$ is a left fibration if and only if $f$ is a fiberwise left fibration, i.e.\ $f_x\colon E_x \to E'_x$ is a left fibration for every $x \in \C$.
\end{lemma}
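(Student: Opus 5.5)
The forward direction is immediate: restrict to fibers. Since $E\to\C$ and $E'\to\C$ are cocartesian fibrations, the fiber $E_x$ is the pullback $E\times_{E'}E'_x$. Left fibrations are stable under pullback, so each $f_x$ is a left fibration. The real content is the converse.

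For the converse I would use the characterization of left fibrations as conservative cocartesian fibrations. Equivalently, $f$ is a left fibration iff every morphism $\alpha\colon f(e)\to e'$ in $E'$ admits an essentially unique lift with source $e$. I would verify the $f$-cocartesian lifting property directly.

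\begin{itemize}
\item Let $e\in E$ over $x\in\C$, and let $\alpha\colon f(e)\to e'$ be a morphism in $E'$ lying over $u\colon x\to y$ in $\C$.
\item Factor $\alpha$ as a cocartesian morphism $f(e)\to u_!f(e)$ in $E'$ followed by a fiber morphism $\beta\colon u_!f(e)\to e'$ in $E'_y$.
\item Since $f$ is a map of cocartesian fibrations over $\C$, it preserves cocartesian edges. Hence $f(e\to u_!e)$ is cocartesian, and $u_!f(e)\simeq f(u_!e)$.
\item Because $f_y$ is a left fibration, $\beta$ lifts to a morphism $\tilde\beta\colon u_!e\to \tilde e$ in $E_y$.
\item The composite $e\to u_!e\xrightarrow{\tilde\beta}\tilde e$ is then the desired lift of $\alpha$.
\end{itemize}

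Two things remain to check. First, each chosen lift is $f$-cocartesian. The cocartesian lift $e\to u_!e$ is $p$-cocartesian, and its image is $p'$-cocartesian, so by \cite[Proposition 2.4.1.3]{HTT} it is $f$-cocartesian. The fiberwise lift $\tilde\beta$ is $f_y$-cocartesian, because every edge of a left fibration is cocartesian. To upgrade this to $f$-cocartesian, I would use the mapping-space criterion and decompose $\Map_E(\tilde e,-)$ over $\Map_\C(y,-)$. Here one uses that postcomposition with cocartesian transport identifies the fiberwise mapping spaces with those in the fibers. Second, $f$ is conservative: an equivalence in $E'$ lies over an equivalence in $\C$, and after cocartesian transport it becomes a fiber equivalence, which the conservative functor $f_y$ reflects.

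Together these show $f$ is a cocartesian fibration whose fibers are spaces, i.e.\ a left fibration. An alternative route is via straightening. $f$ corresponds to a natural transformation $F\Rightarrow F'$ of functors $\C\to\Cat$ with componentwise left-fibration components. The total space of $E\to E'$ is then the lax colimit, and the claim becomes that left fibrations are preserved by this unstraightening. I would cite the standard fact from \cite[Proposition 2.4.2.11]{HTT}-type arguments if needed.

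The main obstacle is the step showing that fiberwise cocartesian edges are globally $f$-cocartesian. I expect to handle it with the relative form of \cite[Proposition 2.4.2.11]{HTT}, since $f$ is a map between locally cocartesian fibrations over the same base $\C$.
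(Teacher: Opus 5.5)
Your argument is correct and is essentially the unpacked version of the paper's proof, which cites \cite[Propositions 2.4.2.11, 2.4.2.8, 2.4.2.4]{HTT} (or calls it an easy exercise). Like that proof, you build $f$-cocartesian lifts by factoring through a $p'$-cocartesian edge followed by a fiberwise lift, and then conclude from the fibers being spaces, equivalently from conservativity. The one point to make explicit is this: in your mapping-space decomposition, the fiber over $v\colon y\to w$ is the square for the transported edge $v_!\tilde\beta$ relative to $f_w$. So what you actually need is that $v_!\tilde\beta$ is $f_w$-cocartesian, not merely that $\tilde\beta$ is $f_y$-cocartesian. This holds here because every edge of the left fibration $f_w$ is cocartesian, whereas for general fiberwise cocartesian edges this compatibility with transport is an extra hypothesis and can fail.
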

\begin{proof}
    This is an easy exercise, or, alternatively, the statement is obtained by combining Propositions 2.4.2.11, 2.4.2.8 and 2.4.2.4 of \cite{HTT}.
\end{proof}

\begin{theorem}\label{prop:comparison algebrads vs segal spaces}
    There exists an equivalence
    $$
        \Algd(\O) \simeq \CSeg(\Tree[\O]).
    $$
    that is natural in strong Segal maps between algebraic patterns.
\end{theorem}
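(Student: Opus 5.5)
We will prove the equivalence by passing through the double-categorical description of algebrads and straightening. By \cref{prop:equivalence between double and algebraic algebrads} it suffices to identify $\Algd(\cO^\dbl)$, the category of maps $p\colon\cP\to\cO$ of double categories satisfying the three conditions of \cref{defi:of algebrad for double pattern}, with $\CSeg(\Tree[\O])$. The first step is to note that a map of double categories $\cP\to\cO$ which is levelwise a left fibration $p_n\colon\cP_n\to\cO_n$ is the same as a functor $\int_{\Delta^\op}\cO\to\cS$ whose restriction to each fiber $\cO_n$ straightens $p_n$. More precisely, unstraightening over $\Delta^\op$ followed by straightening the levelwise left fibrations identifies levelwise left fibrations $\cP\to\cO$ of simplicial categories with functors $\int_{\Delta^\op}\cO\to\cS$, i.e.\ with $\PSh(\Forest[\O])$. \cref{lem:fiberwise lfibs} (applied over $\Delta^\op$ to the cocartesian fibrations $\int\cP\to\Delta^\op$ and $\int\cO\to\Delta^\op$) justifies that the fiberwise left fibration condition is exactly the condition that $\int\cP\to\int\cO$ is a left fibration. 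Under this identification $\cP$ is a double category, i.e.\ satisfies the Segal condition levelwise, precisely when the associated presheaf $X$ on $\Forest[\O]$ satisfies the Segal condition of \cref{prop:RKan-trees-to-forests-simplified}. This uses that $\cO$ itself is Segal, so that fibers of $\cP_n\to\cO_n$ over $t$ compute as iterated fiber products of fibers over $t_{i-1,i}$ and $t_i$. Completeness of $\cP$ will be handled at the end.

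The second step is to translate conditions (2) and (3) of \cref{defi:of algebrad for double pattern}. For a presheaf $X$ on $\Forest[\O]$ corresponding to $\cP$, we have $X(\langle n;t\rangle)=(\cP_n)_t$. Condition (2) is literally the $n=0$ condition of \cref{prop:RKan-trees-to-forests-simplified}, and condition (3) is literally its $n=1$ square. Hence levelwise-left-fibration double-category maps satisfying (2),(3) correspond exactly to Segal presheaves on $\Forest[\O]$ in the essential image of $i_*$. Since $i_*$ is fully faithful, these are equivalent, via $i^*$, to presheaves $Y$ on $\Tree[\O]$ whose right Kan extension satisfies the spine condition on all forests. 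By \cref{remark:unpacking-complete-segal-condition} this is exactly the Segal condition on $Y$, after noting that the spine condition on trees propagates to forests using the forest decomposition of \cref{prop:forest decomposition}, or directly via \cref{prop:RKan-trees-to-forests-simplified}.

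The third step is completeness. A factorization double category $\cO$ is complete, and for $\cP\to\cO$ a levelwise left fibration, completeness of $\cP$ reduces to completeness of the fibers. By the Segal condition (2) this reduces further to fibers over elementary $e$, i.e.\ to completeness of the Segal spaces $[n]\mapsto Y([n;e=\cdots=e])$. That is precisely the completeness in \cref{remark:unpacking-complete-segal-condition}. We also have to check that morphisms match: morphisms in $\Algd(\cO^\dbl)\subset\DblPatt_{/\cO}$ are arbitrary maps over $\cO$, since elementaries are pulled back. Maps between left fibrations over $\int\cO$ are maps of presheaves, so the equivalence is fully faithful.

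For naturality in a strong Segal map $f\colon\cO\to\cP$, both sides pull back: $f^*$ on algebrads by \cref{cor:base change factorization algebrads}, and $\Tree[f]^*$ by \cref{rem:base change decorated Segal spaces}. Each step of the construction (pullback of left fibrations, restriction along $i$) commutes with pullback along $\Forest[f]$. The remaining subtlety is that $i_*$ commutes with $f^*$ only on the Segal/complete subcategories, which follows from initiality of $\cO^\elem_{t/}\to\cP^\elem_{f(t)/}$. I expect the main obstacle to be the first step: making the straightening identification between levelwise left fibrations of simplicial categories over $\cO$ and presheaves on $\Forest[\O]$ precise and functorial. The cleanest route is to treat $\cP\to\cO$ as a left fibration $\int\cP\to\int\cO$ and to verify carefully that cocartesian edges over $\Delta^\op$ in $\int\cP$ correspond to the simplicial structure maps.
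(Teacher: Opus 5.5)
Your proposal is correct and follows the paper's proof essentially step by step: straighten levelwise left fibrations over $\O$ into $\PSh(\Forest[\O])$ via \cref{lem:fiberwise lfibs}, match the algebrad conditions with those of \cref{prop:RKan-trees-to-forests-simplified} to land in the image of $i_*$, then propagate the spine and completeness conditions from trees and elementaries to all forests. That propagation is exactly \cref{prop:simplicial segal extension} together with \cref{prop:forest decomposition}; your alternative ``directly via \cref{prop:RKan-trees-to-forests-simplified}'' does not give it, since that proposition presupposes the forest Segal condition. The only real difference is in naturality: the paper makes it coherent by identifying both sides as cartesian fibrations over $\AlgPatt^{\Seg}$, whereas your sketch only checks objectwise that the construction commutes with base change along strong Segal maps.
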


\begin{proof}
    We will first prove the equivalence $\Algd(\cO) \simeq \CSeg(\Tree[\cO])$, and afterwards show that it is natural in strong Segal maps.
    The straightening equivalence induces an equivalence on slice categories
    $\Fun(\Delta^\op, \Cat)_{/\O} \to \Cocart(\Delta^\op)_{/\Forest[\O]^\op}$.
    Let us write $\C \subset \Fun(\Delta^\op, \Cat)_{/\O}$ for the full subcategory spanned by the maps $X \to \O$ between simplicial categories such that $X_n \to \O_n$ is a left fibration for every $n$.
    On account of \cref{lem:fiberwise lfibs}, the restriction of the straightening equivalence to $\C$ factors through the inclusion $\LFib(\Forest[\O]^\op) \to \Cocart(\Delta^\op)_{/\Forest[\O]^\op}$ via an equivalence
    \[\C \to \LFib(\Forest[\O]^\op) \simeq \PSh(\Forest[\O]).\]
    Unwinding the definitions, we see that the essential image of the algebrads $\Algd(\O) \subset \C$ is given by the full subcategory 
    $$
    \CSeg(\Forest[\O]) \subset  \PSh(\Forest[\O])
    $$ that is spanned by those presheaves $X$ such that
    \begin{enumerate}[(1)]
        \item\label{item1:algd-tree-equiv} for any forest $\left<n;t\right>$, the map
        \[X(\left<n;t\right>) \to X(\left<1;t_{0,1}\right>) \times_{X(\left<0;t_1\right>)} \cdots \times_{X(\left<0;t_{n-1}\right>} X(\left<1;t_{n-1,n}\right>)\]
        is an equivalence,
        \item\label{item2:algd-tree-equiv} for any $x$ in $\cO$, the map
        \[X(\left<0;x\right>) \to X(\left<3;x=\cdots=x\right>) \times_{X(\left<1;x=x\right>) \times X(\left<1;x=x\right>)} (X(\left<0;x\right>) \times X(\left<0;x\right>))\]
        is an equivalence,
        \item\label{item3:algd-tree-equiv} for every $x$ in $\cO$, the map
        \[X(\left<0;x\right>) \to \lim_{e \in \cO^\elem_{x/}}X(\left<0;e\right>)\]
        is an equivalence, and
        \item\label{item4:algd-tree-equiv} for every forest $\left<1;t\right>$ of length 1, the map
        \[X(\left<1;t\right>) \to X(\left<0;t_0\right>) \times_{\lim_{s \in (\cO_1)^\elem_{t/}} X(\left<0;s_0\right>)} \lim_{s \in (\cO_1)^\elem_{t/}} X(\left<1;s\right>)\]
        is an equivalence.
    \end{enumerate}
    Items \cref{item1:algd-tree-equiv} and \cref{item2:algd-tree-equiv} correspond precisely to the condition that $X$ comes from a complete double category.
    Conditions \cref{item3:algd-tree-equiv} and \cref{item4:algd-tree-equiv} correspond to the last two conditions of \cref{defi:of algebrad for double pattern}.
    
    We will now show that the fully faithful functor $i_* \colon \PSh(\Tree[\cO]) \to \PSh(\Forest[\cO])$ identifies $\CSeg(\Tree[\cO])$ with $\CSeg(\Forest[\cO])$.
    It follows from \cref{prop:RKan-trees-to-forests-simplified} that any object in $\CSeg(\Forest[\cO])$ is of the form $i_*X$ for some $\Tree[\cO]$-space $X$, and it follows from \cref{remark:unpacking-complete-segal-condition} that $X$ lies in $\CSeg(\Tree[\cO])$.
    It therefore remains to show that for any complete Segal $\Tree[\cO]$-space $X$, its right Kan extension $i_*X$ lies in $\CSeg(\Forest[\cO])$.
    Observe that items \cref{item3:algd-tree-equiv} and \cref{item4:algd-tree-equiv} hold by \cref{prop:RKan-trees-to-forests}.
    By \cref{prop:simplicial segal extension}, $X$ is local with respect to $\Sp[n;t] \to [n;t]$ for any forest $[n;t]$, hence condition \cref{item1:algd-tree-equiv} holds.
    Finally, it follows from \cref{prop:forest decomposition} that $X$ is local with respect to $J \boxtimes_{[0]} [0;x] \to [0;x]$ for any $x$ in $\cO$, hence condition \cref{item2:algd-tree-equiv} holds for $i_*X$.
    We conclude that $\CSeg(\Tree[\cO]) \simeq \CSeg(\Forest[\cO]) \simeq \Algd(\cO)$.
    
    To prove that the equivalence is natural, write $\AlgPatt^\mathrm{Seg}$ for the wide subcategory of $\AlgPatt$ spanned by the strong Segal morphisms.
    It follows from \cref{cor:base change factorization algebrads} that we have a functor $\Algd(-) \colon \AlgPatt^\mathrm{Seg,op} \to \Cat$.
    We wish to show that this is equivalent to the functor
    \[\Psi \colon \AlgPatt^\mathrm{Seg,op} \to \Cat; \quad \cO \mapsto \CSeg(\Tree[\cO])\]
    that exists by \cref{rem:base change decorated Segal spaces} as a subfunctor of $\PSh(-)$.
    Note that $\Psi$ sends $f \colon \cP \to \cO$ to the functor $\CSeg(\Tree[\cO]) \to \CSeg(\Tree[\cP])$ that restricts along $\Tree[f]$.
    A proof similar to \cref{rem:base change decorated Segal spaces}, combined with \cref{prop:RKan-trees-to-forests}, shows that this agrees with restriction along $\Forest[f]$ under the equivalence $\CSeg(\Forest[\cO]) \simeq \CSeg(\Tree[\cO])$.
    Observe that the cartesian fibration corresponding to the functor $\PSh(\Forest[-])$ is given by the pullback
    \[\begin{tikzcd}
    	{\mathscr{E}} & \LFib \\
    	{\AlgPatt^\Seg} & \Cat,
    	\arrow[from=1-1, to=1-2]
    	\arrow["p"', from=1-1, to=2-1]
    	\arrow["\lrcorner", very near start, phantom, draw=none, from=1-1, to=2-2]
    	\arrow["{\mathrm{target}}", from=1-2, to=2-2]
    	\arrow["{\Forest[-]^\op}"', from=2-1, to=2-2]
    \end{tikzcd}\]
    so the cartesian fibration corresponding to the functor $\Psi$ is the subfibration of $p$ whose fiber over an algebraic pattern $\cO$ is $\CSeg(\Forest[\cO]) \subset \PSh(\Forest[\cO]) \simeq \LFib(\Forest[\cO]^\op)$.
    Let us write $\overline{\Psi} \colon \mathscr{A} \to \AlgPatt^\Seg$ for this cartesian fibration.
    Since $\Forest[\cO]^\op \to \Delta^\op$ is constructed as the cocartesian fibration corresponding to the double category $\cO^\dbl$, we may identify $p$ with the pullback
    \[\begin{tikzcd}
    	\mathscr{E} & {\Ar^\LFib(\Fun(\Delta^\op,\Cat)) \times_{\Fun(\Delta^\op,\Cat)} \DblCat} & {\Ar^\LFib(\Cocart(\Delta^\op))} \\
    	{\AlgPatt^\Seg} & \DblCat & {\Cocart(\Delta^\op)}
    	\arrow[from=1-1, to=1-2]
    	\arrow[from=1-1, to=2-1,"p"']
    	\arrow["\lrcorner"{very near start}, phantom, from=1-1, to=2-2]
    	\arrow[from=1-2, to=1-3, hook]
    	\arrow[from=1-2, to=2-2,"{\mathrm{target}}"]
    	\arrow["\lrcorner"{very near start}, phantom, from=1-2, to=2-3]
    	\arrow[from=1-3, to=2-3,"{\mathrm{target}}"]
    	\arrow[from=2-1, to=2-2,"{(-)^\dbl}"]
    	\arrow[hook, from=2-2, to=2-3,"\smallint_{\Delta^\op}(-)"]
    \end{tikzcd}\]
    where $\Ar^\LFib(-)$ denotes the full subcategory of the arrow category spanned by componentwise or fiberwise left fibrations.
    Using the equivalence $\AlgPatt \simeq \DblPatt$ from \cref{cor:equivalence between double and algebraic pattern} and unwinding the equivalence $\Algd(\cO) \simeq \CSeg(\Forest[\cO])$ proved above, it follows that the cartesian fibration $\overline{\Psi} \colon \mathscr{A} \to \AlgPatt^\Seg$ is equivalent to the functor $\Algd \to \AlgPatt^\Seg$ defined as follows:
    \begin{itemize}
        \item $\Algd$ is the subcategory of $\Ar(\AlgPatt)$
        \begin{itemize}
            \item whose objects are algebrads $\cP \to \cO$ such that $\cP$ has the algebraic pattern structure from \cref{def:category-of-algebrads},
            \item whose morphisms are squares
            \[\begin{tikzcd}
            	{\cP'} & \cP \\
            	{\cO'} & \cO
            	\arrow[from=1-1, to=1-2]
            	\arrow[from=1-1, to=2-1]
            	\arrow[from=1-2, to=2-2]
            	\arrow[from=2-1, to=2-2]
            \end{tikzcd}\]
            such that $\cO' \to \cO$ is a strong Segal morphism, and
        \end{itemize}
        \item the functor $\Algd \to \AlgPatt^\Seg$ is the target projection.
    \end{itemize}
    This cartesian fibration precisely encodes the functor $\Algd(-) \colon \AlgPatt^\mathrm{Seg} \to \Cat$.
\end{proof}

\subsection{Unraveling the equivalence}\label{ssec:unraveling} We will now give an explicit description of the functor underlying the equivalence $\Algd(\cO) \simeq \CSeg(\Tree[\cO]^\op)$ from \cref{prop:comparison algebrads vs segal spaces}.

Let $\cO$ be an algebraic pattern, $\cP \to \cO$ an algebrad and $X \colon \Tree[\cO]^\op \to \Spc$ its corresponding complete Segal $\Tree[\cO]$-space under the equivalence $\Algd(\cO) \simeq \CSeg(\Tree[\cO])$.
Then $X$ is obtained from $\cP$ by first considering its associated double category $\Sq_{L,R}(\cP) \to \cO^\dbl$, then unstraightening this over $\Delta^\op$ to obtain a left fibration $\smallint_{\Delta^\op} \Sq_{L,R}(\cP) \to \Forest[\cO]^\op$, subsequently straightening this to obtain a presheaf on $\Forest[\cO]$ and finally restricting along $i \colon \Tree[\cO] \to \Forest[\cO]$.
Let $[n;t]$ be a tree and write $\overline{t} \colon [n] \to \cO^\activ$ for the corresponding functor.
It follows by construction that the value of $X$ at a tree $[n;t]$ is given by
\[X([n;t]) \simeq \Map_{/\cO}(\overline{t}, \cP).\]
Write $\Cocart^\inert(\cO)$ for the subcategory of $\Cat_{/\cO}$ spanned by functors that admit cocartesian lifts of inerts, and whose morphisms preserve cocartesian lifts of inerts.
Let us write $[t]^\inert = [n] \times_{\cO} \Ar^\inert(\cO)$.
Here $\Ar^\inert(\cO) \subset \Ar(\cO)$ is the full subcategory spanned by the inert morphisms, and the pullback is taken along $\overline{t} \colon [n] \to \cO$ and $\ev_0 \colon \Ar^\inert(\cO) \to \cO$.
By \cite[Corollary 2.1.5]{BarkanHaugsengSteinebrunner}, the target projection $[t]^\inert \to \cO$ is the free cocartesian fibration over $\cO^\inert$ on $\overline{t}$, hence there is an equivalence
\[X([n;t]) \simeq \Map_{/\cO}(\overline{t},\cP) \simeq \Map_{\Cocart^\inert(\cO)}([t]^\inert, \cP).\]
We obtain the following explicit description of the equivalence $\Algd(\cO) \simeq \CSeg(\Tree[\cO])$ as the ``nerve'' with respect to the functor $\Tree[\cO] \to \Cocart^\inert(\cO)$ that sends $[n;t]$ to $[t]^\inert$.

\begin{proposition}\label{prop:equivalence-cseg-algad-is-nerve}
    Let $\cO$ be an algebraic pattern.
    Then the category $\Tree[\cO]$ is equivalent to the full subcategory of $\Cocart^\inert(\cO)$ spanned by objects of the form $[t]^\inert$ for $[n;t]$ a tree,
    and the equivalence $\Algd(\cO) \xrightarrow{\simeq} \CSeg(\Tree[\cO])$ is given by the functor
    \[\cP \mapsto \left([n;t] \mapsto \Map_{\Cocart^\inert(\cO)}([t]^\inert, \cP)\right).\]
\end{proposition}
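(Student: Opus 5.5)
The plan is to construct a functor $N \colon \Tree[\cO] \to \Cocart^\inert(\cO)$ with $[n;t] \mapsto [t]^\inert$. We then check two things: that $N$ is fully faithful, and that the nerve it induces recovers the equivalence of \cref{prop:comparison algebrads vs segal spaces}.

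\textbf{Step 1: constructing $N$ and reducing full faithfulness to a Yoneda argument.} The discussion preceding the statement already yields, for every algebrad $\cP$, an equivalence
\[X_\cP([n;t]) \simeq \Map_{/\cO}(\overline t,\cP) \simeq \Map_{\Cocart^\inert(\cO)}([t]^\inert,\cP),\]
where $X_\cP$ is the complete Segal $\Tree[\cO]$-space attached to $\cP$. The first equivalence is natural in $[n;t]$ by construction, since it comes from unstraightening over $\Forest[\cO]^\op$. The second is natural because it comes from the free cocartesian fibration adjunction of \cite[Corollary 2.1.5]{BarkanHaugsengSteinebrunner}, and $t \mapsto [t]^\inert$ is the left adjoint applied to $\overline t$. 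To make $N$ functorial on all of $\Tree[\cO]$, I would not build it by hand. Instead I would note that each $[t]^\inert \to \cO$ is itself an algebrad, with $\cP = [t]^\inert$. This is the key claim, discussed in Step 3. Granting it, $[t]^\inert$ lies in $\Algd(\cO)$, which is a full subcategory of $\Cocart^\inert(\cO)$: a map of algebrads in $\AlgPatt_{/\cO}$ is exactly a functor over $\cO$ preserving inert-cocartesian lifts. We can then define $N$ as the composite of the Yoneda embedding $\Tree[\cO] \to \PSh(\Tree[\cO])$ with the inverse equivalence $\CSeg(\Tree[\cO]) \simeq \Algd(\cO)$. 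This requires representables $[n;t]$ to be complete Segal $\Tree[\cO]$-spaces, which fails in general. So rather than using the Yoneda embedding directly, I would argue via the nerve formula as follows.

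\textbf{Step 2: identifying the nerve and deducing full faithfulness.} The assignment $\cP \mapsto \Map_{\Cocart^\inert(\cO)}([-]^\inert,\cP)$ is natural in $[n;t]$ once we know $[-]^\inert$ is a functor $\Tree[\cO] \to \Cocart^\inert(\cO)$. That functor exists as the composite
\[\Tree[\cO] \to \Forest[\cO] \to \Cat_{/\cO}^\op \xrightarrow{\text{free}} \Cocart^\inert(\cO).\]
Here the middle functor sends $\langle n;t\rangle$ to $\overline t \colon [n] \to \cO$, and a map covering $\phi$ together with inert comparisons goes to the induced map of free fibrations (the inert legs are absorbed by freeness). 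With this in hand, the natural equivalence above identifies the equivalence of \cref{prop:comparison algebrads vs segal spaces} with the nerve along $N$. For full faithfulness we compute
\[\Map_{\Cocart^\inert(\cO)}([s]^\inert,[t]^\inert) \simeq \Map_{/\cO}(\overline s,[t]^\inert) = ([t]^\inert\text{ evaluated at }\overline s).\]
Unwinding $[t]^\inert = [n]\times_\cO \Ar^\inert(\cO)$, a map $\overline s \to [t]^\inert$ over $\cO$ consists of a functor $[m] \to [n]$ together with a natural family of inert maps $t_{\phi(i)} \intarrow s_i$ making the squares commute. By the description of mapping spaces in $\Forest[\cO]$ (cartesian over $\Delta$ with fibres $\Map_{\cO_m}(\phi^*t,s)$), this is exactly $\Map_{\Tree[\cO]}([m;s],[n;t])$. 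One still has to check that the fibres agree: a functor $[m]\to[n]$ lifting $\overline s$ must be order preserving, and the squares must be the inert-active squares of $\cO^\dbl$.

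\textbf{Step 3: the main obstacle.} The hard part is the mapping-space computation in Step 2, namely verifying that the comparison map
\[\Map_{\Tree}([m;s],[n;t]) \to \Map_{\Cocart^\inert}([s]^\inert,[t]^\inert)\]
is an equivalence on the nose, including higher coherences, rather than just on components. I would first try to reduce this to the fibrewise statement over each $\phi \in \Map_\Delta([m],[n])$. On the tree side, \cref{proposition:explicit description of mapping space} gives the fibre as $\Map_{\cO_m}(\phi^*t,s)$. On the other side, the fibre is the space of lifts of the natural transformation in $\Fun([m],\cO)$ through $\Ar^\inert(\cO)$. Juran's equivalence \cref{prop:factorization system and double factorysation caregory} identifies both with the space of inert-vertical, active-horizontal $m$-fold squares in $\Sq_{L,R}(\cO)$.
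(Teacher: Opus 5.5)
Your proposal has a genuine gap, and it sits where you flag it. The functor $N\colon [n;t]\mapsto [t]^\inert$ is never actually constructed, and the composite $\Forest[\cO] \to \Cat_{/\cO}^\op \to \Cocart^\inert(\cO)$ you propose does not typecheck. A map $\langle m;s\rangle \to \langle n;t\rangle$ consists of $\phi$ together with inerts $t_{\phi(i)} \intarrow s_i$. It gives neither a map $\overline s \to \overline t$ nor a map $\overline t \to \overline s$ over $\cO$; it only gives a map $\overline s \to [t]^\inert$ over $\cO$. The free functor is also covariant, so the $(-)^\op$ cannot be absorbed there. Assembling these maps into a coherent $\infty$-functor is real work. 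Your Step 3 would then still have to show that the map $N$ induces on mapping spaces is the fibrewise identification with $\Map_{\cO_m}(\phi^*t,s)$. That both sides are abstractly equivalent over each $\phi$, which you compute correctly, is not enough.

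Separately, the ``key claim'' of Step 1 is false. For $\cO=\F_*^\flat$ and the root $[0;\langle 1\rangle]$, $[t]^\inert$ is the category of inerts out of $\langle 1\rangle$. Its fibre over $\langle 2\rangle$ is empty, while the Segal condition demands a point. This is exactly why the statement is phrased in $\Cocart^\inert(\cO)$ rather than in $\Algd(\cO)$.

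The missing idea, which removes all of these problems, is to change the pattern rather than the argument. Let $\cO^\sharp$ have the same factorization system, but with every object elementary. Then $\Tree[\cO^\sharp]=\Forest[\cO]$ and $\Algd(\cO^\sharp)=\Cocart^\inert(\cO)$, so $[t]^\inert$ \emph{is} an $\cO^\sharp$-algebrad. Moreover $\cO^\sharp$ is atomically robust by \cref{exa:all-elementary-then-robust}, so every $[n;t]$ is a complete Segal $\Tree[\cO^\sharp]$-space by \cref{cor:fibrant forest}.

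Hence the Yoneda route you abandoned in Step 1 works there. Write $\Psi\colon \Cocart^\inert(\cO)\simeq \CSeg(\Forest[\cO])$. Then $\Psi^{-1}\circ y$ is fully faithful for free, and the natural equivalences $\Map(\Psi^{-1}[n;t],\cP)\simeq \Psi(\cP)([n;t])\simeq \Map([t]^\inert,\cP)$ give $\Psi^{-1}[n;t]\simeq [t]^\inert$ by Yoneda. No mapping space of $\Cocart^\inert(\cO)$ ever needs to be computed, and no functor needs to be built by hand.

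For a general $\cO$, restrict to the full subcategory $\Tree[\cO]\subset\Forest[\cO]$. By \cref{prop:comparison algebrads vs segal spaces}, $\CSeg(\Tree[\cO])$ is identified with the objects of $\CSeg(\Forest[\cO])$ that are right Kan extended along $i$, so the nerve formula restricts.
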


\begin{proof}
    First consider the case where every object of $\cO$ is elementary, so $\Tree[\cO] = \Forest[\cO]$ and $\Algd(\cO) = \Cocart^\inert(\cO)$.
    In this case it follows from \cref{exa:all-elementary-then-robust} and \cref{cor:fibrant forest} that all trees $[n;t]$ lie in $\CSeg(\Tree[\cO])$. (Alternatively, this is straightforward to show by hand.) 
    Write $\Psi$ for the equivalence $\Cocart^\inert(\cO) = \Algd(\cO) \xrightarrow{\simeq} \CSeg(\Tree[\cO]) $ from \cref{prop:comparison algebrads vs segal spaces}.
    By the discussion above, we have natural equivalences
    \[
        \Hom_{\Cocart^\inert(\cO)}(\Psi^{-1}[n;t], \cP) \simeq \Psi(\cP)([n;t]) \simeq \Hom_{\Cocart^\inert(\cO)}([t]^\inert, \cP).
    \]
    This proves the proposition when every object of $\cO$ is elementary.
    For a general algebraic pattern $\cO$, write $\cO^\sharp$ for the algebraic pattern structure with the same inert-active factorization system, but where every object is elementary.
    The result then follows for $\cO$ since \cref{prop:comparison algebrads vs segal spaces} identifies $\CSeg(\Tree[\cO])$ with the full subcategory of $\CSeg(\Tree[\cO^\sharp])$ spanned by those objects that are right Kan extended along $\Tree[\cO] \hookrightarrow \Tree[\cO^\sharp] = \Forest[\cO]$.
\end{proof}

\begin{remark}
    The category $\Algd(\cO)$ is a reflective localization of $\Cocart^\inert(\cO)$; write $L$ for this localization.
    Then we obtain a functor $\Tree[\cO] \to \Algd(\cO)$ given by $[n;t] \mapsto L([t]^\inert)$, and the equivalence $\Algd(\cO) \simeq \CSeg(\Tree[\cO])$ is given by the nerve with respect to this functor.
\end{remark}

\begin{remark}\label{rem:boxproduct-algebrad-side}
    Let $\cP$ be an $\cO$-algebrad and $X$ its corresponding complete Segal $\Tree[\cO]$-space.
    Suppose that $t = t_0 \actarrow \dotsb \actarrow t_n$ is a string of active arrows in $\cO$ corresponding to the functor $\overline{t} \colon [n] \to \cO^\activ$ and let $\phi \colon [m] \to [n]$ be a map in $\Delta$.
    The equivalence $X([n;t]) \simeq \Hom_{/\cO}(\overline{t}, \cP)$ generalizes to an equivalence
    \[\Map([m] \boxtimes_{[n]} [n;t], X) \simeq X([m;\phi^*(t)]) \simeq \Map_{/\cO}(\overline{\phi^*(t)}, \cP),\]
    where $\overline{\phi^*(t)}$ denotes the composite $[m] \xrightarrow{\phi} [n] \xrightarrow{\overline{t}} \cO^\activ$ corresponding to the string $\phi^*(t) = t_{\phi(0)} \actarrow t_{\phi(1)} \actarrow \dotsb \actarrow t_{\phi(m)}$.
\end{remark}

\section{Exponentiable maps between algebrads}\label{sec:exponentiable-objects}

This section is dedicated to the study of 
the exponentiable objects and morphisms in the category of $\cO$-algebrads. In particular, we establish \cref{thmA}.

\begin{definition}
Let $\cC$ be a category that admits finite limits.
A morphism $f \colon x \to y$ in $\cC$ is called \textit{exponentiable} if $f^*\colon \cC_{/y} \to \cC_{/x}$ is a left adjoint.
An object $x$ is \textit{exponentiable} if the unique morphism $x \to \ast$ to the terminal object is exponentiable.
\end{definition}

Our goal is to show the following theorem:

\begin{theorem}\label{thm:main-thm-expo-CSeg}
    Let $\cO$ be an algebraic pattern. A map $f \colon X \to Y$ in $\CSeg(\Tree[\cO])$ is exponentiable if the following condition is satisfied:
    \begin{enumerate}[(CC)]
        \item\label{conditionCC} For every tree $[2;t] \to Y$ of length $2$, the following map is an equivalence:
        \[\colim_{[k] \in \Delta^\op} \map_{/Y}([1+k+1;t_0 \actarrow \underbrace{t_1 = \cdots = t_1}_{k\text{-times}} \actarrow t_2],X) \to \map_{/Y}([1;t_0 \actarrow t_2] ,X).\]
    \end{enumerate}
\end{theorem}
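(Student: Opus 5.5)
Since $\PSh(\Tree[\cO])$ is a topos, $f^* \colon \PSh(\Tree[\cO])_{/Y} \to \PSh(\Tree[\cO])_{/X}$ has a right adjoint $f_*$. The plan is to show that $f_*$ preserves complete Segal fibrations, i.e.\ if $Z \to X$ is a complete Segal fibration then so is $f_*Z \to Y$. If this holds, then pullback along $f$ in $\CSeg(\Tree[\cO])$, which is computed as in presheaves, has a right adjoint given by restricting $f_*$, and $f$ is exponentiable. By adjunction, $f_*$ preserves complete Segal fibrations if and only if $f^*$ sends generating complete Segal extensions over $Y$ to complete Segal extensions over $X$. Concretely, for every spine inclusion $\Sp[n;t] \to [n;t] \to Y$ and every generating completeness extension $[J;e] \to [0;e] \to Y$, we need $X \times_Y \Sp[n;t] \to X \times_Y [n;t]$, respectively $X \times_Y [J;e] \to X \times_Y [0;e]$, to be a complete Segal extension. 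Since $X$ is complete Segal, by \cref{remark:saturated-vs-strongly-saturated} it suffices to check these maps become equivalences after applying the reflection $L$. Equivalently, mapping into complete Segal $\Tree[\cO]$-spaces over $X$ sees them as equivalences.

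The completeness extensions should be harmless. The pullback $X \times_Y [0;e]$ is computed over a root, and over roots the tree category behaves like $\Delta$ over a point. So we reduce to the classical fact that pulling back along a map into a $0$-dimensional object preserves completeness; \cref{prop:simplicial segal extension} together with the decomposition of \cref{prop:forest decomposition} should handle this. The substantial case is the spine inclusions. Using \cref{lemma:box with forest is forest} and the pushout $\Sp[n] = \Sp[n-1] \cup_{[0]} [1]$, induction on $n$ reduces everything to the inner horn case $n=2$. That is, we must show that $X \times_Y \Lambda^2_1[2;t] \to X \times_Y [2;t]$ is a complete Segal extension, where $\Lambda^2_1[2;t] = [1;t_{0,1}] \cup_{[0;t_1]} [1;t_{1,2}]$. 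The reduction from general $n$ to $n=2$ proceeds as in Ayala--Francis \cite[Lemma 2.2.8]{AyalaFrancis2020FibrationsInftyCategories}, writing $\Sp[n] \to [n]$ via iterated pushouts of $\Lambda^2_1$-type maps and their $\boxtimes$-products, with \cref{prop:boxtimes with exponentiable map} used for intermediate steps.

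For $n=2$, I would compute $\Map$ of both sides into an arbitrary complete Segal $W \to X$ and compare. A map $[m;s] \to X\times_Y[2;t]$ lying over $[m;s]\to[2;t]$ with underlying $\phi\colon[m]\to[2]$ either avoids one of the outer vertices, in which case it already factors through the horn, or it hits all three. Decompose $[2;t]$-points of $X$ by the fibre of $\phi$ over $1$. Segality of $X$ identifies $X\times_Y[2;t]$, modulo the horn, with a colimit over $\Delta^\op$ of $X$ evaluated on the trees $[1+k+1; t_0 \actarrow t_1=\cdots=t_1 \actarrow t_2]$, glued to the composite part $[1;t_0\actarrow t_2]$. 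Condition \cref{conditionCC} states exactly that this geometric realization recovers $\Map_{/Y}([1;t_0\actarrow t_2],X)$. So the "missing composite edge" in $X\times_Y[2;t]$ is a colimit of pieces built from the horn, which shows that the map is a Segal extension.

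The main obstacle is this last identification: producing a decomposition of $X \times_Y [2;t]$, as an explicit colimit of representables $[m;u]$ over $[2;t]$ sorted by the preimage of $1$ in $\Delta$, such that the complement of $X\times_Y\Lambda^2_1[2;t]$ is governed precisely by the simplicial object appearing in \cref{conditionCC}. I would first attempt this via \cref{proposition:explicit description of mapping space}, which splits slices of $\Tree[\cO]$ over $[2;t]$ as $\Delta$-slices times inert slices. The goal is to reduce the simplicial bookkeeping to Ayala--Francis's argument for $\Delta$, carried along with the inert coordinate.
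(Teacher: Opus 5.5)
Your outer frame is the paper's: test $f^*$ on the generating extensions over $Y$, handle $[J;e]\to[0;e]$ over roots, and read (CC) as a bar-construction statement. There are two gaps, though. The step you propose for reducing to length two does not work, and the step carrying all the content is left unproved.

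\textbf{The reduction to $n=2$ fails.} Induction via $\Sp[n]=\Sp[n-1]\cup_{[0]}[1]$ does not reach $n=2$. It leaves you with the maps $X\times_Y\Gamma[n;t]\to X\times_Y[n;t]$ for \emph{every} $n$, where $\Gamma[n;t]=[n-1;t_{\leq n-1}]\cup_{[0;t_{n-1}]}[1;t_{n-1,n}]$.
\begin{itemize}
\item In simplicial spaces, $\Gamma[n]=[n]\times_{[2]}\Lambda^2_1$ along the map collapsing $\{0,\dots,n-2\}$. This does not transfer to trees: a tree map $[n;t]\to[2;s]$ over that collapse exists only when $t_0\actarrow\cdots\actarrow t_{n-2}$ are equivalences.
\item Even when such a map exists, whether extensions survive pullback along $f$ is exactly what is at stake. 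So neither the $n=2$ instance of (CC) nor \cref{prop:boxtimes with exponentiable map} gives the general case. That proposition only base-changes along exponentiable maps of simplicial spaces; it shows $\Gamma[n;t]\to[n;t]$ is a Segal extension, not that its pullback along $X\to Y$ is.
\end{itemize}
The paper instead works at each $n$ directly. Take a representable $[m;s]\to[n;t]$ over $\phi$. If $\phi$ is $n$-convex, the pullback of $\Gamma[n;t]\to[n;t]$ is already a Segal extension. If $\phi$ is $n$-concave, with jump $\phi(l)<n-1<n=\phi(l+1)$, the pullback is $([l]\sqcup[m-l-1])\boxtimes_{[m]}[m;s]\to[m;s]$. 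For such $\phi$, the Segal extensions $[l]\cup_{[0]}T_k[1]\cup_{[0]}[m-l-1]\to T_k[m]$ localize the bar construction to the length-two subtree $[2;j^*s]$, where $j$ hits $\phi(l),n-1,n$. This is the only place (CC) enters (see the proof of \cref{lemma:exponentiable Segal fibrations}).

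\textbf{The key identification is missing.} A mapping-space computation alone does not produce an extension. Write $X'=X\times_Y[n;t]$. You need a replacement of $X'\times_{[n;t]}\Gamma[n;t]$ that is reached from it by a Segal extension \emph{and} is a complete Segal fibration over $[n;t]$. Then, by cancellation, $X'\times_{[n;t]}\Gamma[n;t]\to X'$ is a complete Segal extension if and only if the replacement maps to $X'$ by an equivalence.

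The missing ingredient is $Q=j_!j^*$, the left Kan extension from the $n$-convex part $\Lambda[\cO]_{/[n;t]}$ (\cref{cons:Q}).
\begin{itemize}
\item Maps from $[m]\boxtimes_{[n]}[n;s]$ into $QX'$ are computed by $\colim_{[k]\in\Delta^{\op}}\Map(T_k[m]\boxtimes_{[n]}[n;s],X')$. This holds because $T_\bullet f\colon\Delta\to(\Lambda[\cO]_{/[n;t]})_{f/}$ has a right adjoint (\cref{prop:computation Q}).
\item Combined with the Segal extension $\Sp[l]\cup_{[0]}[1+k+1]\cup_{[0]}\Sp[m-l-1]\to[m+k+1]$, this gives fibrancy of $QX'\to[n;t]$ (\cref{prop:Q is a replacement}). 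No use of (CC) is needed there.
\end{itemize}

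\textbf{A smaller point.} \cref{remark:saturated-vs-strongly-saturated} needs the target $X\times_Y[n;t]$ to be complete Segal. That fails in general; for instance, trees are not complete Segal for $\F_*^\flat$. Your reformulation as locality against complete Segal objects over $X$ still suffices for exponentiability in $\CSeg(\Tree[\cO])$. It is not, however, the same as being a complete Segal extension.
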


\begin{remark}
    The proof of \cref{thm:main-thm-expo-CSeg} will show that the same result also holds when replacing the category $\CSeg(\Tree[\cO])$ with the larger category $\Seg(\Tree[\cO])$ that also includes non-complete Segal $\Tree[\O]$-spaces.
\end{remark}

\subsection{Exponentiability in the category of algebrads}

Before embarking on the proof of \cref{thm:main-thm-expo-CSeg}, we will use the equivalence $\CSeg(\Tree[\cO]) \simeq \Algd(\cO)$ from \cref{prop:comparison algebrads vs segal spaces} to describe the exponentiable morphisms in $\Algd(\cO)$.
In particular, this proves \cref{thmA}.

\begin{lemma}\label{lemma:dictionary cc conditions}
    Let $\cO$ be an algebraic pattern. Suppose that $\cP \to\cQ$ is a map of $\cO$-algebrads, equivalently given by a map $X \to Y$ of complete Segal $\Tree[\O]$-spaces. Then the following assertions are equivalent:
    \begin{enumerate}
        \item Condition \cref{conditionCC} of \cref{thm:main-thm-expo-CSeg} holds for $X \to Y$.
        \item For any composable pair of active morphisms $h \colon x \actarrow y$ and $g \colon y \actarrow e$ with $e$ elementary in $\cQ$, and any lift $f \colon \bar{x} \to \bar{e}$ of $g \circ h$, the category 
    \[
    \mathrm{Fact}(f \mid g\circ h) \coloneqq \{x \overset{h}{\actarrow} y \overset{g}{\actarrow} e\} \times_{(\Q_{/{e}})_{{gh}/}} (\P_{/\bar{e}})_{f/}.
    \]
    is weakly contractible.
    \end{enumerate}
\end{lemma}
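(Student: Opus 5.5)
We translate both sides through the dictionary of \cref{rem:boxproduct-algebrad-side}: for a string $t$ of actives ending at an elementary, $\Map([m]\boxtimes_{[n]}[n;t],X)\simeq \Map_{/\cO}(\overline{\phi^*t},\cP)$, and compatibly for $Y$ and $\cQ$. Fix a tree $[2;t]\to Y$, i.e.\ a string $x\actarrow y\actarrow e$ in $\cQ$ with $e$ elementary (the elementaries of $\cQ$ lie over elementaries of $\cO$). Taking fibres over the points of $Y$, the space $\map_{/Y}([1+k+1;t_0\actarrow t_1=\cdots=t_1\actarrow t_2],X)$ is the space of lifts to $\cP$ of the functor $[k+2]\to\cQ$ given by $x\overset{h}{\actarrow} y=\cdots=y\overset{g}{\actarrow} e$, and $\map_{/Y}([1;t_0\actarrow t_2],X)$ is the space of lifts $f$ of $gh$. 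Since maps into the algebrad $\cP$ over $\cO$ of a functor landing in $\cO^\activ$ may be taken over $\cQ$, these identifications hold with $\cQ$ in place of $\cO$.

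Next we analyse the map in \cref{conditionCC} fibrewise over a lift $f\colon\bar x\to\bar e$. Its fibre over $f$ is the colimit over $\Delta^\op$ of the spaces of lifts $\bar x\to\bar y_0\to\cdots\to\bar y_k\to\bar e$ of $x\to y=\cdots=y\to e$ that compose to $f$, where the $\bar y_i\to\bar y_{i+1}$ lie over $\id_y$. Colimits over $\Delta^\op$ in $\cS$ commute with pullbacks, so the comparison map is an equivalence iff each fibre colimit is contractible. This simplicial space is, for fixed $f$, the nerve of the category $\mathrm{Fact}(f\mid g\circ h)$: an object is a factorization $\bar x\to\bar y\to\bar e$ of $f$ over $(h,g)$, and a morphism is a map $\bar y\to\bar y'$ over $\id_y$ compatible with both legs, which is exactly an object of $\{x\actarrow y\actarrow e\}\times_{(\cQ_{/e})_{gh/}}(\cP_{/\bar e})_{f/}$ and maps therein. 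More precisely, the $k$-simplices of the nerve are $\Fun_{/\cQ}([k+2],\cP)$ restricted at the endpoints to $(\bar x,\bar e,f)$; the Segal condition on $X$, equivalently the fact that $\cP$ is a category, identifies this with $\Map([k],\mathrm{Fact}(f\mid g\circ h))$ of the nerve. Since the colimit over $\Delta^\op$ of the (complete Segal) nerve of a category is its classifying space, contractibility of the colimit is equivalent to weak contractibility of $\mathrm{Fact}(f\mid g\circ h)$.

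The main obstacle is the rigorous identification of the simplicial space $[k]\mapsto$ fibre over $f$ with the Rezk nerve of $\mathrm{Fact}(f\mid g\circ h)$, natural in $[k]\in\Delta^\op$, including that the degeneracies and faces of the trees $[1+k+1;\dots]$ (which compose inner identity edges and the outer active edges) match those of the nerve. I would do this by writing $\mathrm{Fact}(f\mid g\circ h)$ as a fibre of $\Fun_{/\cQ}([2],\cP)\to\Fun_{/\cQ}(\{0,2\},\cP)$ restricted to the full sub-pieces over the fixed string, and noting that $[k+2]\simeq[1]\star[k]\star[1]$ collapsing into $[2]$ gives $\Fun([k],\mathrm{Fact})$ as the corresponding fibre of $\Fun_{/\cQ}([k+2],\cP)$; all steps are pullbacks of mapping spaces, hence functorial in $[k]$. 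Finally, since $x,y$ range over all objects of $\cQ$ and $f$ over all lifts, quantifying over all trees $[2;t]\to Y$ yields exactly condition (2).
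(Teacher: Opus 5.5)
Your proposal is correct and follows essentially the same route as the paper. Like the paper, you translate via \cref{rem:boxproduct-algebrad-side} to the map $\colim_{[k]\in\Delta^\op}\Hom_{/\cQ}([1+k+1],\cP)\to\Hom_{/\cQ}([1],\cP)$ and use universality of colimits in spaces to identify the fibre over $f$ with $|\mathrm{Fact}(f\mid g\circ h)|$; your join-based identification of the fibrewise simplicial space with the nerve of $\mathrm{Fact}(f\mid g\circ h)$ just spells out the step the paper cites as ``an argument similar to'' Ayala--Francis (one small slip: the decomposition you want is $[k+2]\simeq[0]\star[k]\star[0]$, not $[1]\star[k]\star[1]$).
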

\begin{proof}
    Let a map $[2;t] \to Y$ from a tree be given. Viewing $t$ as a map $[2] \to \cO^\activ$, the map $[2;t] \to Y$ corresponds to a map $[2] \to \cQ^\activ$ over $\cO^\activ$; in other words, a pair of active morphisms $h \colon x \actarrow y$ and $g \colon y \actarrow e$ in $\cQ$ over $t$. (Note that $e$ is elementary by definition.)
It follows from \cref{rem:boxproduct-algebrad-side} that we may identify the map
\[\colim_{[k] \in \Delta^\op} \Hom_{/Y}([1 + k + 1] \boxtimes_{[2]} [2;t],X) \to \Hom_{/Y}([1] \boxtimes_{[2]} [2;t],X)\]
with the map
\[\colim_{[k] \in \Delta^\op} \Hom_{/\cQ}([1 + k + 1], \cP) \to \Hom_{/\cQ}([1],\cP). \]
An argument similar to \cite[Lemma 2.2.8]{AyalaFrancis2020FibrationsInftyCategories}, using universality of colimits in spaces, shows that the fiber of this map over $f \colon \bar{x} \actarrow \bar{e}$ is given by the geometric realization of the category
$\mathrm{Fact}(f \mid g\circ h)$.
We conclude that $\cP \to \cQ$ satisfies condition (2) of this lemma if and only if $X \to Y$ satisfies the condition from \cref{thm:main-thm-expo-CSeg}.
\end{proof}

We obtain the following as an immediate corollary of \cref{thm:main-thm-expo-CSeg} and \cref{lemma:dictionary cc conditions}:

\begin{theorem}\label{thm:main-thm-expo-factorization-Algd}
    Let $\cO$ be an algebraic pattern and $\cP \to \cQ$ a map of $\cO$-algebrads.
    Then $\cP \to \cQ$ is exponentiable if for any composable pair of active morphisms $h \colon x \actarrow y$ and $g \colon y \actarrow e$ with $e$ elementary in $\cQ$, and any lift $f \colon \bar{x} \to \bar{e}$ of $g \circ h$, the category 
    \[
    \mathrm{Fact}(f \mid g\circ h) \coloneqq \{x \overset{h}{\actarrow} y \overset{g}{\actarrow} e\} \times_{(\Q_{/{e}})_{{gh}/}} (\P_{/\bar{e}})_{f/}.
    \]
    is weakly contractible.\qed
\end{theorem}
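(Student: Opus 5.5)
The statement marked \qed is formally a combination of two earlier results, so the plan is to assemble them. First, \cref{prop:comparison algebrads vs segal spaces} gives an equivalence $\Algd(\cO) \simeq \CSeg(\Tree[\cO])$. Exponentiability of a morphism is defined purely in terms of pullback functors on slice categories and their adjoints. Hence it is invariant under equivalences of categories with finite limits. Thus $\cP \to \cQ$ is exponentiable in $\Algd(\cO)$ if and only if the corresponding map $X \to Y$ is exponentiable in $\CSeg(\Tree[\cO])$.

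Note that $\Algd(\cO)$ has finite limits because $\CSeg(\Tree[\cO])$ is a reflective, hence limit-closed, subcategory of a presheaf category. Moreover, pullbacks in the two categories correspond under the equivalence. I would state this explicitly so that the slices $\Algd(\cO)_{/\cQ}$ and $\CSeg(\Tree[\cO])_{/Y}$ are identified compatibly with pullback.

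Next, the hypothesis of the theorem is exactly condition (2) of \cref{lemma:dictionary cc conditions} for the map $\cP \to \cQ$. By that lemma, it is equivalent to condition \cref{conditionCC} of \cref{thm:main-thm-expo-CSeg} for $X \to Y$. Applying \cref{thm:main-thm-expo-CSeg}, $X \to Y$ is exponentiable, and transporting back along the equivalence gives exponentiability of $\cP \to \cQ$.

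The only point needing care is the dictionary lemma, where the fiber of $\colim_{\Delta^\op}\Hom_{/\cQ}([1+k+1],\cP) \to \Hom_{/\cQ}([1],\cP)$ over $f$ is identified with $|\mathrm{Fact}(f\mid g\circ h)|$. That identification was already carried out there, so the genuine obstacle is really the proof of \cref{thm:main-thm-expo-CSeg} itself, which we are allowed to assume. The corollary therefore reduces to the two-step transport described above. \cref{thmA} is then the special case $\cQ = \cO$, where the object $\cP$ is viewed as the morphism $\cP \to \cO$ to the terminal algebrad.
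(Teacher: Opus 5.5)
Your proposal is correct and matches the paper's argument: the paper states this theorem as an immediate corollary of \cref{thm:main-thm-expo-CSeg} and \cref{lemma:dictionary cc conditions}, transported along the equivalence $\Algd(\cO)\simeq\CSeg(\Tree[\cO])$ of \cref{prop:comparison algebrads vs segal spaces}. Your remark that exponentiability is invariant under equivalence, which the paper leaves implicit, is a harmless addition.
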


\begin{remark}
    In the case that $\O = \ast$ is the terminal pattern, this recovers the Conduch\'e criterion for exponentiable functors between categories that was exhibited by Lurie \cite[Proposition B.3.14]{HA} and Ayala--Francis--Rozen\-blyum \cite[Lemma 5.16(2)]{AyalaFrancisRozenblyum}.
\end{remark}

\begin{remark}
    The condition from \cref{thm:main-thm-expo-factorization-Algd} can be rephrased as follows: for every functor $t \colon [2] \to \Q^\act$ such that $t(2)$ is elementary, the base-change 
    $
    [2] \times_\Q \P \to [2]
    $
    is exponentiable in $\Cat$.
\end{remark}

\begin{remark}\label{remark:expo-over-all-actives}
    It follows that whenever $\cP^\activ \to \cQ^\activ$ is exponentiable in $\Cat$, then $\cP \to \cQ$ is exponentiable in $\Algd(\cO)$.
    However, the condition from \cref{thm:main-thm-expo-factorization-Algd} is generally weaker than the condition that $\cP^\activ \to \cQ^\activ$ is exponentiable.
    An explicit counterexample is described in \cref{subsection:non-triv exp map between vdc}.
\end{remark}

\subsection{A first reduction step}
We start the proof of \cref{thm:main-thm-expo-CSeg} by making a few reduction steps.

\begin{lemma}
    \label{lemma:all is fine with completeness extension}
    Suppose that $[0;e]$ is a root. If 
    \[
        \begin{tikzcd}
            X' \arrow[r, "v"]\arrow[d] & X \arrow[d] \\
            {[J;e]} \arrow[r] & {[0;e]}.
        \end{tikzcd}
    \]
    is a pullback square in $\PSh(\Tree[\cO])$, then $v$ is a complete Segal extension.
\end{lemma}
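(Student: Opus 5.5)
The strategy is to reduce, using colimit-stability, to the case where $X$ is representable. Pullback along $[J;e] \to [0;e]$ preserves colimits in $\PSh(\Tree[\cO])_{/[0;e]}$, because presheaf categories are locally cartesian closed. Complete Segal extensions are closed under colimits in the arrow category, since they form the left class of a factorization system. So we may write $X \to [0;e]$ as a colimit of representables $[n;t] \to [0;e]$ over $[0;e]$, and it suffices to show that each base change $[n;t] \times_{[0;e]} [J;e] \to [n;t]$ is a complete Segal extension.

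Next, describe a map $[n;t] \to [0;e]$ of trees. By the description of maps in $\Forest[\cO]$ over $\Delta$, it lies over the unique map $[n] \to [0]$, and it amounts to a map $t_0 = \cdots = t_0 \to (e)$ in $\cO_n$. Since $\cO_n \to \cO_0$ is a left fibration by \cref{rem:slices-factorization-double-category}, the source string must then be constant: $t_0 = \cdots = t_n$ with $t_n \intarrow e$. Because $e$ is elementary and $[n;t]$ is a tree, $t_n$ lies in $\cO^\elem$.

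Now compute the pullback. Since $[J;e] = J \boxtimes_{[0]} [0;e] = J \times p^*[0] \times [0;e]$, pullback pasting gives
\[ [n;t] \times_{[0;e]} [J;e] \simeq (J \times [n]) \boxtimes_{[n]} [n;t], \]
where the $\boxtimes$ is taken over $[n]$ via the projection $[n;t] \to p^*[n]$. Our map is therefore obtained by applying $(-)\boxtimes_{[n]}[n;t]$ to the map of simplicial spaces $J \times [n] \to [n]$ over $[n]$. By \cref{prop:simplicial segal extension}, it then suffices to show that $J \times [n] \to [n]$ is a complete Segal extension of simplicial spaces, i.e.\ a Rezk-local equivalence.

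This last claim is the classical fact that $J \times [n] \to [0] \times [n]$ is a complete Segal extension. Complete Segal spaces form an exponential ideal (Rezk): for a complete Segal $Z$, the internal hom $Z^{[n]}$ is complete Segal, so $\Map(J \times [n], Z) \simeq \Map(J, Z^{[n]}) \simeq \Map([0], Z^{[n]})$. The only real subtlety is this cartesian-closedness input, together with the bookkeeping identifying the pullback with a box product. The latter needs care to check that the structure map to $p^*[n]$ used in \cref{cons:loose braket} matches the one coming from the pullback.
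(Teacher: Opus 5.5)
Your proposal is correct and is essentially the paper's argument: reduce by colimit-stability to a representable $[n;t]\to[0;e]$, identify $v$ by pullback pasting with a box product of $J\times[n]\to[n]$, and apply \cref{prop:simplicial segal extension}. The only differences are that you box over $[n]$ with $[n;t]$ rather than over $[0]$ with the root $[0;e']$, and that you justify the simplicial input rather than just asserting it. Because of the first difference, your description of the map $[n;t]\to[0;e]$ is not needed; note also that its inert components run $e\intarrow t_i$, not $t_n\intarrow e$.

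In your justification, a complete Segal extension is not in general the same as a Rezk-local equivalence (cf.\ \cref{remark:saturated-vs-strongly-saturated}), and your mapping-space computation only establishes the latter. What upgrades it to a complete Segal extension is that the target $[n]$ is itself a complete Segal space, as in the argument of that remark.
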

\begin{proof} We have to show that the map $v : J \boxtimes_{[0]} X \to X$ is a complete Segal extension for every $X \to [0;e]$. Since both sides are cocontinuous in $X \in \PSh(\Tree[\O])_{/[0;e]}$, we may assume that $X = [n;t]$. In light of \cref{proposition:explicit description of mapping space} and \cref{lemma:box with forest is forest}, the map $X \to [0;e]$ is then of the form $[n] \boxtimes_{[0]} [0;e'] \to [0;e]$, with $e'$ an elementary. By pasting pullback squares, $v$ may now be identified with the map $$(J \times [n]) \boxtimes_{[0]} [0;e'] \to [n] \boxtimes_{[0]} [0;e']$$
induced by the projection $J \times [n] \to [n]$. But the latter map is a complete Segal extension, so that the result follows from \cref{prop:simplicial segal extension}.
\end{proof}

\begin{notation}
    There is a canonical inclusion 
    $\Gamma[n] \coloneqq [n-1] \cup_{[0]} [1] \to [n]$
    of simplicial spaces for every $n\geq 0$. Via \cref{cons:loose braket}, this induces an inclusion 
    $$
    \Gamma[n;t] \coloneqq \Gamma[n] \boxtimes_{[n]} [n;t] \to [n;t]
    $$
    for every tree $[n;t]$.
\end{notation}

\begin{remark}\label{rem:on gamma}
    Via an easy induction argument, it is readily verified that the Segal extensions are generated by the inclusions $\Gamma[n;t] \to [n;t]$ where $[n;t]$ ranges over all trees.
\end{remark}

\begin{lemma}\label{lemma:conduche reduction}
    Let $f \colon X \to Y$ be a map between complete Segal $\Tree[\O]$-spaces. Then $f$ is exponentiable if for every commutative diagram
    \[
        \begin{tikzcd}
            X'' \arrow[r, "v"]\arrow[d] & X' \arrow[r]\arrow[d] & X\arrow[d] \\
            \Gamma[n;t] \arrow[r] & {[n;t]}\arrow[r] & Y
        \end{tikzcd}
    \]
    of pullback squares, with $[n;t]$ a tree,  the map $v$ is a complete Segal extension.
\end{lemma}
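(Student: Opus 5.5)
The plan is to reduce exponentiability of $f$ in $\CSeg(\Tree[\cO])$ to exponentiability in the presheaf category. Every map in $\PSh(\Tree[\cO])$ is exponentiable, so $f^* \colon \PSh(\Tree[\cO])_{/Y} \to \PSh(\Tree[\cO])_{/X}$ has a right adjoint $f_*$. Since $X$ and $Y$ are complete Segal, $\CSeg(\Tree[\cO])_{/Y}$ is the full subcategory of $\PSh(\Tree[\cO])_{/Y}$ spanned by the complete Segal fibrations $Z \to Y$, and similarly over $X$. Moreover, $f^*$ preserves these, because complete Segal fibrations are stable under pullback. It therefore suffices to show that $f_*$ carries complete Segal fibrations $Z \to X$ to complete Segal fibrations $f_*Z \to Y$; the restriction of $f_*$ is then right adjoint to the restricted $f^*$.

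By adjunction, a lifting problem of a map $A \to B$ over $Y$ against $f_*Z \to Y$ corresponds to a lifting problem of $f^*A \to f^*B$ against $Z \to X$. So $f_*Z \to Y$ has the unique right lifting property with respect to $A \to B$ whenever $f^*A \to f^*B$ is a complete Segal extension. Hence it suffices to check that $f^*$ sends a set of generators of the complete Segal extensions over $Y$ to complete Segal extensions. Here $f^*$ preserves colimits by universality of colimits in presheaf categories, and complete Segal extensions are a saturated class, so checking generators is enough.

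For the generators, by \cref{rem:on gamma} the Segal extensions are generated by the maps $\Gamma[n;t] \to [n;t]$ for trees $[n;t]$. Together with the maps $[J;e] \to [0;e]$, these generate the complete Segal extensions. Given any map $[n;t] \to Y$, the pullback of $\Gamma[n;t] \to [n;t]$ along $f$ is exactly the map $v$ in the diagram of the statement, and this is a complete Segal extension by hypothesis. For a map $[0;e] \to Y$, the pullback of $[J;e] \to [0;e]$ along $f$ is the base change of $[J;e] \to [0;e]$ along $X \times_Y [0;e] \to [0;e]$, which is a complete Segal extension by \cref{lemma:all is fine with completeness extension}.

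The only delicate point is the bookkeeping between the slice categories. The generators over $Y$ are maps $A \to B$ together with a map $B \to Y$, and a lifting problem in the slice reduces to generators with arbitrary maps to $Y$. This is the standard fact that a saturated class in $\PSh_{/Y}$ is generated by the generators equipped with all possible structure maps to $Y$. Uniqueness of lifts, as opposed to mere existence, is handled automatically by the mapping-space formulation of the adjunction.
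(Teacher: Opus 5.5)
Your proposal is correct and is essentially the paper's argument: both use \cref{rem:on gamma} and \cref{lemma:all is fine with completeness extension} to show that pulling back along $f$ on presheaf slices sends complete Segal extensions over $Y$ to complete Segal extensions. The only difference is in the last step: you note that $f_*$ then preserves complete Segal fibrations and so restricts to a right adjoint, whereas the paper concludes that the restricted $f^*$ on $\CSeg(\Tree[\O])_{/Y}$ is cocontinuous.
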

\begin{proof}
    As presheaf categories are locally cartesian closed, there is an adjunction $$f^* : \PSh(\Tree[\O])_{/Y} \rightleftarrows \PSh(\Tree[\O])_{/X} : f_*.$$  The base change functor $\CSeg(\Tree[\O])_{/Y} \to \CSeg(\Tree[\O])_{/X}$ induced by $f$ is restricted from this left adjoint, and we will also denote it by $f^*$.
    Suppose that $f$ meets the  condition considered in the statement. Then this hypothesis and \cref{rem:on gamma}, combined with \cref{lemma:all is fine with completeness extension}, imply that the functor $f^*$ on presheaf categories preserves all complete Segal extensions over $Y$.
    Thus the functor $f^*\colon\CSeg(\Tree[\O])_{/Y} \to \CSeg(\Tree[\O])_{/X}$ is cocontinuous.
\end{proof}

\subsection{An explicit Segal replacement} In light of \cref{lemma:conduche reduction}, we may reduce to the case that $Y$ is given by a tree $[n;t]$. Our next goal is to exhibit an explicit Segal replacement for the pullback $X \times_{[n;t]} \Gamma[n;t] \to [n;t]$. We will make use of the following terminology:

\begin{definition}
    \label{defi:shape for conduche fibration}
    A map $\phi \colon [m] \to [n]$ is called \textit{$n$-concave} if there exists a (necessarily unique) integer $0 \leq l < m$ such that $\phi(l) < n - 1$ and $\phi(l+1) = n$.  Otherwise, $\phi$ is called \textit{$n$-convex}. We will write $\Lambda_{/[n]} \subset \Delta_{/[n]}$ for the full subcategory spanned by the $n$-convex maps. 
    
    Similarly, we will say that a map $f \colon [m;s]\to [n;t]$ in $\Tree[\O]$ is \textit{$n$-convex} or \textit{$n$-concave} whenever the underlying simplicial map $\phi \colon [m] \to [n]$ is. We will write $\Lambda[\O]_{/[n;t]} \coloneqq \Tree[\O]_{/[n;t]} \times_{\Delta_{/[n]}} \Lambda_{/[n]}$ for the full subcategory of $\Tree[\O]_{/[n;t]}$ spanned by the $n$-convex maps.
\end{definition}

\begin{construction}\label{cons:Q}
The inclusion $j \colon \Lambda[\O]_{/[n;t]} \to \Tree[\O]_{/[n;t]}$ 
induces an adjunction $$j_! : \PSh(\Lambda[\O]_{/[n;t]}) \rightleftarrows \PSh(\Tree[\O])_{/[n;t]} : j^*,$$
where the left adjoint is computed by left Kan extensions.
We define the \textit{replacement} endofunctor $Q$ by setting
$$Q \coloneqq j_!j^* \colon \PSh(\Tree[\O])_{/[n;t]}\to \PSh(\Tree[\O])_{/[n;t]}.$$
The counit gives rise to a natural map 
$$
\alpha_X \colon QX \to X 
$$
for $X \in \PSh(\Tree[\O])_{/[n;t]}$. Since the inclusion $\Tree[\O]_{/\Gamma[n;{t}]} \subset \Tree[\O]_{/[n;t]}$  factors through $\Lambda[\O]_{/[n;t]}$,  the induced map 
   $
   QX \times_{[n;t]} \Gamma[n;t] \to X \times_{[n;t]} \Gamma[n;t]
   $
   is an equivalence for every $X \in \PSh(\Tree[\O])_{/[n;t]}$. All in all, we obtain a natural factorization 
   $$
   X \times_{[n;t]} \Gamma[n;t] \to QX \to X.
   $$
\end{construction}

We will show that $Q$ can be computed using simplicial resolutions of $n$-concave maps by $n$-convex ones.

\begin{construction}\label{con:functor T}
    Let $\Xi_{/[n]} \subset \Delta_{/[n]}$ denote the full subcategory spanned by the maps that do not hit $n-1$.
    Suppose that $k \geq 0$ is an integer. Let $\phi \colon [m]\to [n]$ be a map in $\Xi_{/[n]}$. If $\phi$ is $n$-concave, then there must exist a unique integer $0\leq l < m$ so that 
    $\phi(l) < n-1$ and $\phi(l+1) = n$. We will then consider the $n$-convex map
      $T_{k}\phi \colon [m+k+1]\to [n]$
    described by
     \[
        (T_k\phi)(i) = \begin{cases}
            \phi(i) & \text{if $i \leq l$,}\\
            n-1 & \text{if $l + 1 \leq i \leq l + k + 1$},\\
            n & \textit{if $l+k +2\leq i$}.
        \end{cases}
     \]
    In other words, $T_k\phi$ coincides with $\phi$ on the inclusion $\{0, \dotsc, l, l+k+2, \dots, m+k+1\} \colon [m] \to [m+k+1]$, and it is constant to $n-1$ on the inclusion $\{l+1, \dotsc, l + k + 1\} \colon [k] \to [m+k+1]$.
    If $\phi \colon [m] \to [n]$ is $n$-convex, then we set 
    $$
    T_k\phi \coloneqq \phi.
    $$
    One readily verifies that this definition assembles to a functor
    $$T\colon \Delta \times \Xi_{/[n]}\to \Delta_{/[n]} ; \quad ([k], \phi) \mapsto T_k\phi.$$ We note that there is a canonical map $$\eta_{k,\phi} \colon \phi \to T_k\phi$$
    that is natural in $\phi \in \Xi_{/[n]}$ and $[k] \in \Delta$.
\end{construction}

\begin{proposition}\label{prop:computation Q} 
    Let $[n;s] \to [n;t]$ be a map of trees above $\id_{[n]} \colon [n] \to [n]$. 
    Suppose that $X \in \PSh(\Tree[\O])_{/[n;t]}$ and $\phi \colon [m] \to [n] \in \Xi_{/[n]}$,  then the natural maps in the span 
    \[
        \begin{tikzcd}[column sep = {10em, between origins}, row sep = small]
            & \colim_{[k] \in \Delta^\op}\map_{/[n;t]}(T_k[m]\boxtimes_{[n]}[n;s],QX) \arrow[dl] \arrow[dr] & \\ 
            \map_{/[n;t]}([m]\boxtimes_{[n]}[n;s], QX) && \colim_{[k] \in \Delta^\op}\map_{/[n;t]}(T_k[m]\boxtimes_{[n]}[n;s],X)
        \end{tikzcd}
    \]
    are equivalences. Here, the left leg is induced by $\eta$, and the right leg is induced by $\alpha$.
\end{proposition}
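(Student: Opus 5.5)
The approach is to compute both legs through the explicit formula for $Q = j_!j^*$ as a left Kan extension along $j \colon \Lambda[\O]_{/[n;t]} \to \Tree[\O]_{/[n;t]}$. By \cref{lemma:box with forest is forest}, $[m]\boxtimes_{[n]}[n;s]$ is represented by the tree $[m;\phi^*s]$. The same holds for $T_k[m]\boxtimes_{[n]}[n;s] \simeq [m+k+1;(T_k\phi)^*s]$. So all mapping spaces in the span are values of the presheaves $QX$ and $X$ on trees over $[n;t]$, and by Yoneda
\[\map_{/[n;t]}([m;\phi^*s],QX) \simeq \colim_{(\lambda,\, [m;\phi^*s]\to\lambda) \in ((\Lambda[\O]_{/[n;t]})_{[m;\phi^*s]/})^\op} X(\lambda).\]

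If $\phi$ is $n$-convex, $T_k\phi = \phi$ for all $k$, so the colimits are over a constant diagram indexed by the weakly contractible $\Delta^\op$. Moreover $[m;\phi^*s]$ is itself initial in the comma category, so $QX$ and $X$ agree there and both legs are equivalences. The real case is $\phi$ $n$-concave with jump at $l$.

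First I will show the right leg is an equivalence. The trees $[m+k+1;(T_k\phi)^*s]$ lie in $\Lambda[\O]_{/[n;t]}$, since $T_k\phi$ is $n$-convex. Hence $\alpha$ is an equivalence on each term, because $j$ is fully faithful and $j^*j_! \simeq \id$. This makes the right leg an equivalence termwise.

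For the left leg, I will show that the functor
\[\Delta^\op \to ((\Lambda[\O]_{/[n;t]})_{[m;\phi^*s]/})^\op, \qquad [k] \mapsto \bigl(\eta_{k,\phi} \colon [m;\phi^*s] \to [m+k+1;(T_k\phi)^*s]\bigr)\]
is final. Then the colimit formula for $QX$ at $[m;\phi^*s]$ reduces to $\colim_k X([m+k+1;(T_k\phi)^*s])$, identifying the left leg with an equivalence, compatibly with the right leg.

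By Quillen's Theorem A, finality amounts to weak contractibility, for each $n$-convex $g \colon [m;\phi^*s] \to \lambda = [p;u]$ over $[n;t]$, of the category of factorizations of $g$ through some $\eta_{k,\phi}$. Using \cref{proposition:explicit description of mapping space} and the fact that the maps are over $\id_{[n]}$ into trees with fixed root data, the $\O$-part of such factorizations is contractible: the inert component at the top is determined. The problem therefore reduces to the purely simplicial statement in $\Delta_{/[n]}$. Given $\phi$ not hitting $n-1$ and $n$-concave, and a map $\phi\to\psi$ with $\psi \colon [p]\to[n]$ $n$-convex, the category of factorizations $\phi \to T_k\phi \to \psi$ (over varying $[k]$) is weakly contractible.

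Concretely, the underlying $\sigma \colon [m]\to[p]$ sends $l$ and $l+1$ to $a<b$ with $\psi(a)<n-1$ and $\psi(b)=n$. Convexity forces the interval $(a,b)$ to contain a nonempty block where $\psi$ equals $n-1$, possibly preceded by smaller values. A factorization amounts to choosing the image of $[k]$ as a nondecreasing map into the $n-1$-block of $\psi$ between $a$ and $b$. This category of factorizations is the category of simplices of a representable simplicial set $\Delta^{r}$ (a standard "simplex of $n-1$'s" argument, as in Ayala–Francis), hence weakly contractible.

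I expect the main obstacle to be this finality step: checking carefully that the indexing really is the category of elements of a nonempty simplex, and that the $\O$-decoration (the inert maps $s_{n-1}$ versus components of $u$) contributes only contractible choices. The latter I will handle via \cref{cor:explicit description of mapping space}, which fibers the slice over $\Delta_{/[n]}$ with fibers $(\O_0^\op)_{/t_i}$ independent of the simplicial shape.
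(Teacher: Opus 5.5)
Your treatment of the $n$-concave case is the paper's argument. The right leg is an equivalence termwise because the trees $[m+k+1;(T_k\phi)^*s]$ lie in $\Lambda[\O]_{/[n;t]}$ and $j^*j_!\simeq \id$. For the left leg you show that $T_\bullet f$ is initial after splitting off the $\O$-factor via \cref{proposition:explicit description of mapping space}. Your computation of the comma categories as $\Delta_{/[r]}$, which has a terminal object, is exactly the statement that $T_\bullet\phi\colon\Delta\to(\Lambda_{/[n]})_{\phi/}$ has a right adjoint; that is how the paper phrases it.

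The gap is your opening claim that $[m]\boxtimes_{[n]}[n;s]$ is always the representable tree $[m;\phi^*s]$, together with the convex-case argument built on it. \cref{lemma:box with forest is forest} only gives $[m]\boxtimes_{[n]}[n;s]\simeq[m;\phi^*s]=i^*\langle m;\phi^*s\rangle$. This is a tree only if $s_{\phi(m)}$ is elementary.

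For $n\ge 2$, the $n$-convex elements of $\Xi_{/[n]}$ are the maps constant at $n$ together with the maps with image in $\{0,\dots,n-2\}$. For the latter, $s_{\phi(m)}$ is generally not elementary, so $[m;\phi^*s]$ is a non-representable $\Tree[\O]$-space. Your justification of the right leg in that case ("$[m;\phi^*s]$ is initial in the comma category, so $QX$ and $X$ agree there") therefore does not apply.

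The paper handles this by proving that $\map_{/[n;t]}([m';s'],QX)\to\map_{/[n;t]}([m';s'],X)$ is an equivalence for all forests $\langle m';s'\rangle\to\langle n;t\rangle$ with $n$-convex underlying map. It reduces to trees by the forest decomposition formula \cref{prop:forest decomposition}; restricting an $n$-convex map to an initial segment keeps it $n$-convex.

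In your situation a quicker repair is available. Every tree mapping to $[m;\phi^*s]$ has underlying map to $[n]$ factoring through $\phi$, so its image lies in $\{0,\dots,n-2\}$ and it is $n$-convex. Write $[m;\phi^*s]$ as the colimit of these representables over $[n;t]$, and use that $\alpha$ is an equivalence on objects of $\Lambda[\O]_{/[n;t]}$. With this step added, your proof is complete.
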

\begin{proof}
    We will first handle the right leg. To this end, it will suffice to verify that the comparison map 
    $$
        \map_{/[n;t]}([m';s'],QX) \to \map_{/[n;t]}([m';s'],X)
    $$
    is an equivalence for every map of \textit{forests} $\left<m';s'\right> \to \left<n;t\right>$ for which the underlying map $[m'] \to [n]$ is $n$-convex; see \cref{lemma:box with forest is forest}. This holds by construction if $\left<m';s'\right>$ is a tree. The general case follows by applying the forest decomposition formula of \cref{prop:forest decomposition}.
    
    We will now handle the left leg.
    If $\phi$ is $n$-convex, then $\eta_{\bullet,\phi}$ is the identity componentwise. Since $\Delta$ is weakly contractible, this then implies that the left leg is an equivalence. 
    
    Suppose now that $\phi$ is $n$-concave. Then the natural transformation
    $$
    \eta'_{\bullet,\phi} \coloneqq \eta_{\bullet,\phi} \boxtimes_{[n]} [n;s] \colon \Delta \times [1] \to \PSh(\Tree[\O])_{/[n;t]}
    $$
    induced by $\eta_{\bullet,\phi}$ factors through the Yoneda embedding $\Tree[\O]_{/[n;t]} \to \PSh(\Tree[\O])_{/[n;t]}$.
    The target of $\eta'_{\bullet,\phi}$ has image in $\Lambda[\O]_{/[n;t]}$. Consequently, it induces a map 
    $$
    T_{\bullet}f \colon \Delta \to (\Lambda[\O]_{/[n;t]})_{f/}
    $$
    where $f$ denotes the map $[m; \phi^*s] = [m] \boxtimes_{[n]}[n;s] \to [n;t]$. One may then identify the left leg of the span from the statement with the map 
    $$
    \colim_{[k] \in \Delta^\op} X(T_k[m] \boxtimes_{[n]} [n;s]) \to \colim_{[m';s'] \to [n;t] \in ((\Lambda[\O]_{/[n;t]})^\op)_{/f}} X([m';s'])
    $$
    induced by restriction along $(T_\bullet f)^\op$. We will conclude the proof by demonstrating that $T_\bullet f$ is initial.
    
    The equivalence of \cref{proposition:explicit description of mapping space} induces an identification
    $$
    (\Lambda[\O]_{/[n;t]})_{f/} \simeq (\Lambda_{/[n]})_{\phi/} \times ((\O^{\el, \op}_n)_{/t})_{\psi/},
    $$
    where $\psi \colon t \to s$ is the morphism in $\O^\el_n$ corresponding to the fixed map $[n;s] \to [n;t]$.
    Under this equivalence, the functor $T_\bullet f$ is given by the functor 
    $$
    (T_\bullet \phi, \{\psi\}) \colon \Delta \to (\Lambda_{/[n]})_{\phi/} \times ((\O^{\el, \op}_n)_{/t})_{\psi/} ; \quad [k] \mapsto ((\phi \to T_k\phi), (\psi = \psi)).
    $$
    This functor is initial since it admits a right adjoint.
\end{proof}

\begin{proposition}\label{prop:Q is a replacement}
    The map $X\times_{[n;t]}\Gamma[n;{t}]\to QX$ is a Segal extension for all maps $p \colon X \to [n;t]$, and $QX \to [n;t]$ is a (complete) Segal fibration if $p$ is a (complete) Segal fibration.
\end{proposition}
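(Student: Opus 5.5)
The plan is to prove the two assertions separately. The first is reduced to representables by cocontinuity. The second is checked through the mapping-space formula of \cref{prop:computation Q}.

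\emph{The map $X\times_{[n;t]}\Gamma[n;t]\to QX$ is a Segal extension.} The functors $X\mapsto X\times_{[n;t]}\Gamma[n;t]$ (pullback in a presheaf topos) and $Q=j_!j^*$ are cocontinuous on $\PSh(\Tree[\cO])_{/[n;t]}$. Segal extensions are closed under colimits in the arrow category. So it suffices to treat $X$ representable, i.e.\ a map $f\colon [m;u]\to[n;t]$. By \cref{proposition:explicit description of mapping space} and \cref{lemma:box with forest is forest}, such an $f$ has the form $[m]\boxtimes_{[n]}[n;s]\to[n;t]$ for a map $[n;s]\to[n;t]$ over $\id_{[n]}$ and some $\phi\colon[m]\to[n]$. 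The whole situation is then obtained by applying $(-)\boxtimes_{[n]}[n;s]$ to simplicial data over $[n]$.
\begin{itemize}
\item In the simplicial world, $j_!j^*[m]$ is the union of the $n$-convex faces of $[m]$.
\item The pullback $[m]\times_{[n]}\Gamma[n]$ is a union of faces, namely those lying over $[n-1]$ or over $\{n-1,n\}$.
\end{itemize}
If $\phi$ is $n$-convex, then $Q$ fixes $f$, and the map is $([m]\times_{[n]}\Gamma[n])\boxtimes_{[n]}[n;s]\to[m]\boxtimes_{[n]}[n;s]$. Here $[m]\times_{[n]}\Gamma[n]\to[m]$ is the inclusion $[0,l']\cup_{\{l'\}}[l',m]\to[m]$, a composite of $\Gamma$-type inclusions, hence a Segal extension. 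If $\phi$ is $n$-concave with jump at $l$, then both sides are identified with the union of the two faces $\{0,\dots,l\}$ and $\{l+1,\dots,m\}$, so the map is an equivalence. In both cases the conclusion transports to $\Tree[\cO]$-spaces by \cref{prop:simplicial segal extension}. The one point to check carefully is that $Q$ of a representable is computed by the corresponding simplicial union after boxing. This follows from the forest decomposition formulas (\cref{prop:forest decomposition}), which show that $Q$ agrees with $X$ on $n$-convex forests as in the proof of \cref{prop:computation Q}.

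\emph{The map $QX\to[n;t]$ is a (complete) Segal fibration.} We must show that for every tree $[m;u]\to[n;t]$, maps out of $[m;u]$ into $QX$ over $[n;t]$ satisfy the spine condition. Again write the tree as $[m]\boxtimes_{[n]}[n;s]$. Every $\phi$ factors through a map in $\Xi_{/[n]}$ after removing the values $n-1$ adjacent to the jump. Using this, we compute both $\Map_{/[n;t]}([m]\boxtimes_{[n]}[n;s],QX)$ and its spine pieces via \cref{prop:computation Q} as geometric realizations $\colim_{\Delta^\op}\Map_{/[n;t]}(T_\bullet[m]\boxtimes_{[n]}[n;s],X)$. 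Because $X\to[n;t]$ is Segal, the term for $T_k\phi$ splits as an iterated fibre product of the corresponding terms for the spine edges. Only the edge $\{l,l+1\}$ carrying the jump gets resolved by $T_\bullet$; all other factors are constant in $k$. Geometric realization commutes with pullbacks along constant simplicial objects, since $\Delta^\op$ is sifted and colimits in $\Spc$ are universal. Hence the fibre product decomposition passes to the colimit, which is exactly the Segal condition for $QX$. Completeness is easier: the maps $[J;e]\to[0;e]$ and their boxed versions over $[n;t]$ lie over $n$-convex simplices. On these $QX$ and $X$ agree by the right-leg equivalence in \cref{prop:computation Q}, so completeness of $X\to[n;t]$ transfers.

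I expect the main obstacle to be the Segal condition for $QX$: the bookkeeping of how $T_\bullet$ interacts with the spine decomposition when $\phi$ also takes the value $n-1$, and the verification that realization genuinely commutes with the relevant fibre products. Here I would use that the fibres are over roots $[0;\cdot]$, where $QX$ and $X$ coincide.
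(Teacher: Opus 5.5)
Your proposal is correct and follows essentially the same route as the paper. For the first claim you use cocontinuity and the $n$-convex/$n$-concave case split with $Q[m;s]\simeq V\boxtimes_{[m]}[m;s]$; for the second you use \cref{prop:computation Q} together with the Segal extension $\Sp[l]\cup_{[0]}[1+k+1]\cup_{[0]}\Sp[m-l-1]\to[m+k+1]$, where your universality-of-colimits step is what the paper's gap-map reduction uses implicitly, and completeness comes from maps out of roots being $n$-convex.

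A few minor slips to fix:
\begin{itemize}
\item A tree over $[n;t]$ with $\phi(m)<n$ need not have the form $[m]\boxtimes_{[n]}[n;s]$. In the convex case, box over $[m]$ with the tree itself, as the paper does.
\item When $\phi$ hits $n-1$ several times, $[m]\times_{[n]}\Gamma[n]$ is $[0,a]\cup_{[b,a]}[b,m]$ with $[b,a]=\phi^{-1}(n-1)$, not a union along a single vertex. It is still a Segal extension.
\item The $n$-convex case of the Segal condition needs no factorization through $\Xi_{/[n]}$, since $QX$ and $X$ already agree on $n$-convex forests.
\end{itemize}
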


\begin{proof}
    We will first show that the comparison map $X \times_{[n;t]} \Gamma[n;t] \to QX$ is a  Segal extension for all $p \colon X \to [n;t]$.
    As the functors that appear on both sides preserve colimits in $f \colon X \to [n;t]$, it suffices to handle the case that $X$ is a tree $[m;s]$. 
    Suppose that the underlying map $\phi\colon[m]\to[n]$ of $f$ is $n$-convex, then 
    the comparison map can be identified with the map
    $$([m]\times_{[n]}\Gamma[n])\boxtimes_{[m]}[m;s]\to  [m;s]$$
    induced by the inclusion $[m] \times_{[n]} \Gamma[n] \to [m]$.
    If $\phi$ skips $n-1$, then this inclusion is the identity. Thus we may assume that $\phi$ hits $n-1$. If $\phi$ skips $n$, then the inclusion map is the identity as well. Otherwise, both $n-1$ and $n$ are contained in the image of $\phi$, and one readily checks  that $[m] \times_{[n]} \Gamma[n] \to [m]$ is a Segal extension so that the desired result follows from \cref{prop:simplicial segal extension}.
    
   If $\phi$ is $n$-concave, then we proceed as follows. 
    Let $l$ be the integer such that $\phi(l) < n-1$ and $\phi(l+1) = n$. Then one readily verifies that the map 
    $[m] \times_{[n]} \Gamma[n] \to [m]$ is given by the canonical inclusion 
    $V \coloneqq [l] \sqcup [m-l-1] \to [m]$. The comparison map is then given by a map
    $$
    V \boxtimes_{[m]} [m;s] \to Q[m;s],
    $$
    and we claim that this is an equivalence.
    As both sides are contained in the image of $j_! \colon \PSh(\Lambda[\O]_{/[n;t]}) \to \PSh(\Tree[\O])_{/[n;t]}$, we have to check that the map 
    \begin{equation}\label{eq: Q is a replacement reduction}
     \Hom_{/[n;t]}([m';s'], V\boxtimes_{[m]} [m;s]) \to \Hom_{/[n;t]}([m';s'], Q[m;s]) \simeq \Hom_{/[n;t]}([m';s'], [m;s])
    \end{equation}
    is an equivalence for every $n$-convex map $[m';s'] \to [n;t]$. One may readily verify that the map displayed in \cref{eq: Q is a replacement reduction} comes from the canonical map $V \boxtimes_{[m]} [m;s] \to [m;s]$, and thus fits in the pullback square
    \[
        \begin{tikzcd}
            \Hom_{/[n;t]}([m';s'], V \boxtimes_{[m]} [m;s]) \arrow[r]\arrow[d] & \Hom_{/[n;t]}([m';s'], [m;s]) \arrow[d] \\
            \Hom_{/[n]}([m'], V) \arrow[r] & \Hom_{/[n]}([m'], [m]).
        \end{tikzcd}
    \]
    The bottom arrow is an equivalence as we assumed that the underlying map $[m'] \to [n]$ is $n$-convex.
    
    For the final assertion, we must demonstrate that $QX \to [n;t]$ is local with respect to the maps $i \colon \Sp[m;s'] \to [m;s']$ over $[n;t]$. Since $X \to [n;t]$ is a Segal fibration, \cref{cons:Q} implies that it will be sufficient to check the case where the underlying map $\phi \colon [m] \to [n]$ is $n$-concave. In particular, $\phi$ hits $n$, so that by \cref{cor:explicit description of mapping space}, the map $[m;s'] \to [n;t]$ factors as $[m;s'] \to [n;s] \to [n;t]$ where the first arrow is cartesian over $\phi$, and the second arrow lies over the identity on $[n]$. Then we must show that the restriction
    $$
    i^* \colon \map_{/[n;t]}([m] \boxtimes_{[n]} [n;s], QX)\to\map_{/[n;t]}(\Sp[m] \boxtimes_{[n]} [n;s], QX)
    $$
    is an equivalence. 
    
    As $\phi$ is $n$-concave, there exists a unique integer $l$ such that $\phi(l)<n-1$ and $\phi(l+1)=n$. The restriction $i^*$ is then given by the gap map (cf.\ \cref{conventions}) in the canonical commutative square
    \[
        \begin{tikzcd}
            \map_{/[n;t]}([m] \boxtimes_{[n]} [n;s], QX)\arrow[d] \arrow[r] & \map_{/[n;t]}([1] \boxtimes_{[n]} [n;s], QX)\arrow[d] \\
            \Hom_{/[n;t]}((\Sp[l] \sqcup \Sp[m - l -1]) \boxtimes_{[n]} [n;s], QX)\arrow[r] & \Hom_{/[n;t]}(([0] \sqcup [0]) \boxtimes_{[n]} [n;s], QX),
        \end{tikzcd}
    \]
    where the top map is induced by the inclusion $\{l, l+1\} \colon [1] \to [m]$. 
    In turn, we may identify this square with the commutative square
    \[
        \begin{tikzcd}
            \colim_{[k]\in\Delta^{\op}}\map_{/[n;t]}(T_k[m] \boxtimes_{[n]} [n;s], X)\arrow[d] \arrow[r] & \colim_{[k]\in\Delta^{\op}}\map_{/[n;t]}(T_k[1] \boxtimes_{[n]} [n;s], X)\arrow[d] \arrow[d] \\
            \Hom_{/[n;t]}((\Sp[l] \sqcup \Sp[m - l -1]) \boxtimes_{[n]} [n;s], X)\arrow[r] & \Hom_{/[n;t]}(([0] \sqcup [0]) \boxtimes_{[n]} [n;s], X)
        \end{tikzcd}
    \]
    using the computation of  \cref{prop:computation Q}.
    By \cref{prop:simplicial segal extension}, it will be sufficient to verify that the map of simplicial sets
     $$\textstyle\Sp[l]\bigcup_{[0]}[1+k+1]\bigcup_{[0]}\Sp[m-l-1]\to [m+k+1]$$
     is a Segal extension. But this is clear.
     
    Finally, we need to show that $QX \to [n;t]$ is local with respect to $[J;e] \to [0;e]$ for any elementary $e$ if $X \to [n;t]$ is.
    This follows since any map $[0;e] \to [n;t]$ lies in $\Lambda[\cO]_{/[n;t]}$.
\end{proof}

\subsection{The conclusion of the proof} We will be ready to finish the proof of \cref{thm:main-thm-expo-CSeg} after the following last observation:

\begin{lemma}\label{lemma:exponentiable Segal fibrations}
    Let $p \colon X\to Y$ be a (complete) Segal fibration between $\Tree[\O]$-spaces. Then the following are equivalent:
    \begin{enumerate}[(1)]
    \item For every diagram of pullback squares
\[\begin{tikzcd}
	{X''} & {X'} & X \arrow[d,"p"]\\
	{\Gamma[n;{t}]} & {[n;t]} & Y,
	\arrow["v", from=1-1, to=1-2]
	\arrow[from=1-1, to=2-1]
	\arrow[from=1-2, to=1-3]
	\arrow[from=1-2, to=2-2]
	\arrow[from=1-3, to=2-3]
	\arrow[from=2-1, to=2-2]
	\arrow[from=2-2, to=2-3]
\end{tikzcd}\]
    $v$ is a (complete) Segal extension.
    \item For every pullback square 
    \[\begin{tikzcd}
	 {X'} & X \arrow[d,"p"]\\
	 {[n;t]} & Y,
	\arrow[from=1-1, to=1-2]
	\arrow[from=1-1, to=2-1]
	\arrow[from=1-2, to=2-2]
	\arrow[from=2-1, to=2-2]
\end{tikzcd}\]
    the canonical map $QX'\to X'$ is an equivalence.

    \item For every map from a tree $[2;t] \to Y$, the induced map
    $$
    \colim_{[k] \in \Delta^\op}\map_{/Y}(T_k[1] \boxtimes_{[2]} [2;t], X) \to \map_{/Y}([1] \boxtimes_{[2]} [2;t], X)
    $$
    is an equivalence.

\end{enumerate}
\end{lemma}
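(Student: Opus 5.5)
We prove $(1)\Leftrightarrow(2)\Leftrightarrow(3)$ after base changing everything along trees. All three conditions are local over maps $[n;t]\to Y$. Moreover, $X'\to[n;t]$ is again a (complete) Segal fibration, since Segal fibrations are stable under pullback. So we may fix $X'\to[n;t]$ and work in $\PSh(\Tree[\O])_{/[n;t]}$. There \cref{cons:Q} gives the factorization $X''=X'\times_{[n;t]}\Gamma[n;t]\to QX'\to X'$ of $v$.

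For $(1)\Leftrightarrow(2)$ we use \cref{prop:Q is a replacement}. It says that $X''\to QX'$ is a Segal extension and that $QX'\to[n;t]$ is a (complete) Segal fibration. Suppose first that $QX'\to X'$ is an equivalence. Then $v$ is a Segal extension, hence a (complete) Segal extension.

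Conversely, suppose $v$ is a (complete) Segal extension. Both $QX'$ and $X'$ are (complete) Segal fibrant over $[n;t]$, so $QX'\to X'$ is a map between local objects in the slice. By 2-out-of-3 for the localization it is a local equivalence, since $X''\to QX'$ and $X''\to X'$ both are. A local equivalence between local objects is an equivalence, exactly as in the argument of \cref{remark:saturated-vs-strongly-saturated} carried out in the slice.

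For $(2)\Leftrightarrow(3)$ we test $QX'\to X'$ on representables $[m;s]\to[n;t]$. By \cref{cor:explicit description of mapping space}, each such map factors as $[m]\boxtimes_{[n]}[n;s]\to[n;s]\to[n;t]$ with $[n;s]\to[n;t]$ over $\id_{[n]}$.

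First suppose the underlying $\phi$ is $n$-convex. Then the map is an equivalence on this representable by the right-leg argument of \cref{prop:computation Q}.

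Now suppose $\phi$ is $n$-concave. Then $\map([m]\boxtimes[n;s],QX')$ is computed by \cref{prop:computation Q}, provided we first reduce to $\phi\in\Xi_{/[n]}$. For $\phi$ hitting $n-1$ this reduction is not needed, because such $\phi$ is automatically convex: concavity requires a jump from below $n-1$ to $n$.

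By the Segal condition (via the square in the proof of \cref{prop:Q is a replacement}), both sides split as pullbacks over the spine pieces $[0;s_i]$. The only non-convex piece is the edge $\{l,l+1\}\colon[1]\to[n]$ with $\phi(l)<n-1$. Hence (2) reduces to the statement that
\[\colim_{[k]\in\Delta^\op}\map(T_k[1]\boxtimes_{[n]}[n;s],X')\to\map([1]\boxtimes_{[n]}[n;s],X')\]
is an equivalence for every edge from $j<n-1$ to $n$.

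Restrict along the face $[2]\to[n]$ given by $\{j,n-1,n\}$. By \cref{lemma:box with forest is forest}, $T_k[1]\boxtimes_{[n]}[n;s]$ is identified with $T_k[1]\boxtimes_{[2]}[2;t']$ for the tree $[2;t']$ with $t'=s_j\actarrow s_{n-1}\actarrow s_n$. Since every $[2;t]\to Y$ arises this way (take $n=2$, $j=0$), the condition is exactly (3).

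The main obstacle is the bookkeeping in this last step. We must check that the simplicial resolution $T_\bullet$ for a general concave $\phi$ is compatible with the Segal decomposition into the single concave edge, and that the colimit over $\Delta^\op$ commutes with the fiber products over the $[0;-]$ pieces. The latter holds because those factors are constant in $k$ and $\Delta^\op$ is sifted, so geometric realization commutes with the finite products involved. We do this first, mirroring the square used in the proof of \cref{prop:Q is a replacement}.
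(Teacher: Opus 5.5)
Your proposal is correct and follows essentially the paper's proof. For $(1)\Leftrightarrow(2)$ you use the factorization $X''\to QX'\to X'$ together with \cref{prop:Q is a replacement}; your local-objects phrasing is the same cancellation argument. For $(2)\Leftrightarrow(3)$ you use \cref{prop:computation Q}, reduce to the single concave edge $\{l,l+1\}$, and then restrict along the face $\{j,n-1,n\}\colon[2]\to[n]$, all as in the paper. The one real difference is that you split off the concave edge before applying \cref{prop:computation Q} rather than after, which makes the colimit-commutation you flag as the main obstacle unnecessary. Also, the factorization you take from \cref{cor:explicit description of mapping space} only holds for maps reaching $n$; that is all you actually use, since concave maps reach $n$.
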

\begin{proof}
    In the situation of (1), we have a factorization 
    $
    X'' \xrightarrow{v'} QX' \xrightarrow{v''} X'
    $
    of $v$. 
    On account of \cref{prop:Q is a replacement}, $v'$ is a Segal extension and $QX'\to[n;t]$ is a (complete) Segal fibration. By left cancellation, this implies that $v''$ is a (complete) Segal fibration as well.  
    Thus $v$ is a (complete) Segal extension if and only if $v''$ is an equivalence. This shows that (1) and (2) are equivalent.
    
    Suppose that (2) holds. 
    Let $f \colon [2;t] \to Y$ be a map. Then for $X' \coloneqq X \times_Y [2;t]$, the map $QX' \to X'$ is an equivalence. Thus it follows from \cref{prop:computation Q} that the map 
    $$
    \colim_{[k] \in \Delta^\op}\map_{/[2;t]}(T_k[1] \boxtimes_{[2]} [2;t], X') \to \map_{/[2;t]}([1] \boxtimes_{[2]} [2;t], X')
    $$
    is an equivalence. But this can be identified with the map from (3).
    
    For the final step, we suppose that (3) holds. By reasoning as in the proof of \cref{prop:Q is a replacement}, assertion (2) holds if and only if the map
$$\map_{/[n;t]}([m]\boxtimes_{[n]}[n;s],QX')\to \map_{/[n;t]}([m]\boxtimes_{[n]} [n;s],X')$$
is an equivalence for every $n$-concave map $\phi \colon [m] \to [n]$ and map $s\to t \in \O^\el_n$ giving rise to a map $[n;s] \to [n;t]$ of trees.
In light of \cref{prop:computation Q}, this is computed by the map 
$$\colim_{[k]\in\Delta^\op}\map_{/[n;t]}(T_k[m]\boxtimes_{[n]}[n;s],X')\to \map_{/[n;t]}([m] \boxtimes_{[n]} [n;s],X').$$
Let $0 \leq l < m$ be the integer so that $\phi(l) < n-1$ and $\phi(l) = n$. This determines a map $\{l,l+1\} \colon [1] \to [m]$. We have the following natural commutative square 
\[
    \begin{tikzcd}
        {[l] \cup_{[0]} [1] \cup_{[0]} [m-l-1]}\arrow[r]\arrow[d] & {[l] \cup_{[0]} T_k[1] \cup_{[0]} [m-l-1]} \arrow[d] \\
        {[m]} \arrow[r] & {T_k[m]}
    \end{tikzcd}
\]
of simplicial spaces over $[n]$, so that the vertical arrows are Segal extensions. Consequently, \cref{prop:simplicial segal extension} 
implies that it suffices to check that the map
\begin{equation}\label{eq:lemma exp Segal reduction}
\colim_{[k]\in\Delta^\op}\map_{/[n;t]}(T_k[1] \boxtimes_{[n]}[n;s],X')\to \map_{/[n;t]}([1] \boxtimes_{[n]}[n;s],X')
\end{equation}
is an equivalence instead. 
There is a commutative square 
\[
    \begin{tikzcd}
        {[1]} \arrow[r,"d_1"] \arrow[d,"{\{l, l+1\}}"'] & {[2]} \arrow[d, "j"] \\ 
        {[m]} \arrow[r,"\phi"] & {[n]},
    \end{tikzcd}
\]
where $j$ is the map that hits $\phi(l)$, $n-1$ and $n$. The map \cref{eq:lemma exp Segal reduction} can then be identified with the map 
$$\colim_{[k]\in\Delta^\op}\map_{/Y}(T_k[1] \boxtimes_{[2]} [2;j^*s],X)\to \map_{/Y}([1] \boxtimes_{[2]} [2;j^*s],X).$$
This is an equivalence by the assumption of (3).
\end{proof}

\begin{proof}[Proof of \cref{thm:main-thm-expo-CSeg}]
    This now follows from combining \cref{lemma:all is fine with completeness extension}, \cref{lemma:conduche reduction}, and \cref{lemma:exponentiable Segal fibrations}.
\end{proof}

\begin{remark}\label{rem:thmB easy for atomic}
    Suppose that $\O$ has the property that all its trees $[n;t]$ are complete Segal $\Tree[\O]$-spaces. Then the converse of \cref{thm:main-thm-expo-CSeg} is easily shown.
     Namely, suppose that $f : X \to Y$ is exponentiable in $\CSeg(\Tree[\O])$. Let $[n;t] \to Y$ be a map from a tree.
     By the assumption on $\O$, we have a colimit expression $$\colim_{[m;s] \in \Tree[\O]_{/\Gamma[n;t]}} [m;s] \xrightarrow{\simeq} [n;t]$$ \emph{in the category} $\CSeg(\Tree[\O])_{/Y}$. In particular, the left adjoint $L \colon \PSh(\Tree[\cO])_{/Y} \to \CSeg(\Tree[\cO])_{/Y}$ takes the map $X \times_Y \Gamma[n;t] \simeq \colim_{[m;s] \in \Tree[\cO]_{/\Gamma[n;t]}} X \times_Y [m;s] \to X \times_Y [n;t]$ to an equivalence.
     By \cref{remark:saturated-vs-strongly-saturated}, this map is a complete Segal extension. One may then invoke \cref{lemma:exponentiable Segal fibrations}.
     
    Unfortunately, the condition on the trees $[n;t]$ to be complete Segal is quite restrictive. For instance, this fails if $\O$ is $\F_*^\flat$ or $\Span(\F_G)^\flat$. We will remedy this situation in \cref{section:robust patterns}, where we introduce so-called \textit{robust} algebraic patterns for which we can show the converse of \cref{thm:main-thm-expo-CSeg} in a more elaborate way. There is a special subclass of robust patterns, called the \textit{atomically} robust patterns, for which the trees $[n;t]$ are always complete Segal (see \cref{cor:fibrant forest}).
\end{remark}

\subsection{Examples}\label{subsection:examples of CC}
We will now spell out the conditions of \cref{thmA} for a range of examples of algebraic patterns $\O$.

\begin{example}\label{example:CC-criterion-VDC}
    Let $\cO = \Delta^{\op,\natural}$ be the pattern from \cref{examples:examples of patterns} describing virtual double categories.
    By \cref{thm:main-thm-expo-factorization-Algd}, it follows that a map $\cP \to \cQ$ in $\VirtDblCat = \Algd(\Delta^{\op,\natural})$ is exponentiable if
    for every functor $t \colon [2] \to \cQ^\activ$ such that $t(2)$ lies over $[1]$ or $[0]$, the base-change $\cP^\activ \times_{\cQ^\activ} [2] \to [2]$ is exponentiable in $\Cat$.
    Note that if $t(2) = [0]$, then all of $t$ must necessarily land in the fiber $\cQ_0$ over $[0]$.
    In particular, this condition can be rephrased as follows: $\cP \to \cQ$ is exponentiable if
    \begin{enumerate}[(1)]
        \item the functor $\cP_0 \to \cQ_0$ in $\Cat$ is exponentiable, and
        \item for any $t \colon [2] \to \cQ^\activ$ such that $t(2)$ lies over $[1]$, the base-change $\cP^\activ \times_{\cQ^\activ} [2] \to [2]$ is exponentiable in $\Cat$.
    \end{enumerate}
    In \cref{exa:CC-necessary-VDC}, we will see that the converse also holds: if $\cP \to \cQ$ is exponentiable, then it satisfies these two conditions.
    Note that these conditions are strictly weaker than $\cP^\activ \to \cQ^\activ$ being exponentiable: in \cref{subsection:non-triv exp map between vdc} we give an explicit example of an exponentiable morphism $\cP \to \cQ$ in $\VirtDblCat$ for which $\cP^\activ \to \cQ^\activ$ is not exponentiable.
\end{example}

\begin{example}\label{example:CC-criterion-gen-operads}
    Recall from \cref{examples:examples of patterns} that generalized operads are algebrads for the pattern $\F_*^\natural$. It follows as in \cref{example:CC-criterion-VDC} that a map $\cP \to \cQ$ of generalized operads is exponentiable in $\Algd(\F_*^\natural)$ if
    \begin{enumerate}[(1)]
        \item the functor $\cP_{\langle 0 \rangle} \to \cQ_{\langle 0 \rangle}$ is exponentiable in $\Cat$, and
        \item for any $t \colon [2] \to \cQ^\activ$ such that $t(2)$ lies over $\langle 1 \rangle$, the base-change $\cP^\activ \times_{\cQ^\activ} [2] \to [2]$ is exponentiable in $\Cat$.
    \end{enumerate}
    In \cref{exa:CC-necessary-genoperad}, we will see that the converse also holds.
\end{example}

In contrast to the case of virtual double categories (cf.\ \cref{rem:VDC-expo-vs-actives-conduche}), the following lemma shows that for various flavors of operads, the conditions from \cref{thm:main-thm-expo-CSeg} and \cref{thm:main-thm-expo-factorization-Algd} are equivalent to $\cP^\activ \to \cQ^\activ$ being exponentiable in $\Cat$.

\begin{proposition}\label{prop:main-thm-trees-vs-forests}
    Suppose that $\O$ is sound and that the elementary slices $\O^\el_{x/}$  accept an initial functor from a finite set for every $x\in \O$. If $f \colon X \to Y$ in $\CSeg(\Tree[\O])$ is a map that meets the condition \cref{conditionCC} of \cref{thm:main-thm-expo-CSeg}, then the comparison map
    \[\colim_{[k] \in \Delta^\op} \map_{/Y}( T_k[1]\boxtimes_{[2]}[2;t] ,X) \to \map_{/Y}([1;t_0 \actarrow t_2] ,X).\]
    is an equivalence for any forest $\langle 2;t \rangle$ of length 2 and any map $[2;t] \to Y$.
\end{proposition}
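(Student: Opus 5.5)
The plan is to reduce from the forest $\langle 2;t\rangle$ to trees by decomposing it into its elementary pieces. Here $t_2$ is not assumed elementary, and $[2;t]$ means $i^*\langle 2;t\rangle$.

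\textbf{Step 1: decomposition of $[2;t]$.} Since $\O$ is sound, \cref{rem:forest decomposition sound} gives, for every simplicial space $Z$ over $[2]$,
\[\colim_{s \in (\O^{\el,\op}_2)_{/t}} Z \boxtimes_{[2]} [2;s] \xrightarrow{\simeq} Z \boxtimes_{[2]} [2;t].\]
By \cref{remark:elementary-slices-On}, $(\O^{\el}_2)_{t/}\simeq \O^\el_{t_2/}$. By hypothesis there is an initial functor $F\to \O^\el_{t_2/}$ from a finite set $F$, so the colimit may be computed over the discrete finite set $F$. Thus $Z\boxtimes_{[2]}[2;t]\simeq \coprod_{a\in F} Z\boxtimes_{[2]}[2;s_a]$, naturally in $Z$, where each $[2;s_a]$ is a tree. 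This applies in particular to $Z = T_k[1]$ and $Z=[1]$.

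\textbf{Step 2: mapping spaces.} Applying $\map_{/Y}(-,X)$ turns these coproducts into products. Fix the given map $[2;t]\to Y$ and restrict it to maps $[2;s_a]\to Y$. For each $k$,
\[\map_{/Y}(T_k[1]\boxtimes_{[2]}[2;t],X)\simeq \prod_{a\in F}\map_{/Y}(T_k[1]\boxtimes_{[2]}[2;s_a],X),\]
and similarly for $[1]$. The comparison map of the statement is then identified with
\[\colim_{[k]\in\Delta^\op}\prod_{a\in F}(\cdots)\longrightarrow \prod_{a\in F}\map_{/Y}([1]\boxtimes_{[2]}[2;s_a],X),\]
which factors through $\prod_a \colim_{[k]}(\cdots)$. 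By condition \cref{conditionCC} applied to each tree $[2;s_a]\to Y$, the second map is a product of equivalences.

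\textbf{Step 3 (main obstacle): commuting the colimit with the product.} It remains to show that $\colim_{\Delta^\op}$ commutes with finite products in $\cS$. This holds because $\Delta^\op$ is sifted, so geometric realizations commute with finite products of spaces. Here finiteness of $F$ is essential: this is exactly where the hypothesis that the elementary slices accept an initial functor from a finite set enters. If $F$ were infinite, a $\Delta^\op$-colimit would not commute with the infinite product.

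Before concluding, I would check that the identifications in Step 1 are natural in $[k]\in\Delta$, i.e.\ compatible with the simplicial structure of $T_\bullet[1]$. This follows because the decomposition is natural in $Z$ over $[2]$, and because $\boxtimes$ preserves colimits (\cref{cons:loose braket}). Combining Steps 1–3 gives the desired equivalence.
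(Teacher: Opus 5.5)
Your proposal is correct and follows essentially the same route as the paper's proof. Both use the sound forest decomposition of \cref{rem:forest decomposition sound} and restrict along the initial functor from a finite set, so that the limit over $(\O^\el_2)_{t/}$ becomes a finite product. Both then commute this product past the $\Delta^\op$-colimit (by siftedness) and apply \cref{conditionCC} to each tree $[2;s_a] \to Y$.
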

\begin{proof}
    Using the forest decomposition formula of \cref{rem:forest decomposition sound} for sound patterns, one shows that the comparison map for a forest $\left<2;t\right>$ is computed by the map
    \[
   \colim_{\Delta^\op} \lim_{s\in(\O^\el_2)_{t/}}\Hom_{/Y}(T_k[1]\boxtimes_{[2]}[2;s],X) \rightarrow \lim_{s\in(\O^\el_2)_{t/}}\Hom_{/Y}([1]\boxtimes_{[2]}[2;s],X).
    \]
   We may interchange the limit and colimit by our assumption on the elementary slices of $\O$ and the fact that $\Delta^\op$ is sifted. Then we can use the assumption that $f$ meets the conditions of \cref{thm:main-thm-expo-CSeg}.
\end{proof}

The following can now be easily deduced by arguing as in the proof of \cref{lemma:dictionary cc conditions}.

\begin{corollary}\label{cor:main-thm-trees-vs-forests}
    Suppose that $\O$ is sound and that the elementary slices $\O^\el_{x/}$  accept an initial functor from a finite set for every $x\in \O$. Let $f \colon \cP \to \cQ$ be a map of $\O$-algebrads in the sense of \cref{defi:of algebrad for factactization system}. Then the condition of \cref{thm:main-thm-expo-factorization-Algd} for $f$ is equivalent to the condition that $f^\act \colon \cP^\act \to \cQ^\act$ is exponentiable in $\Cat$.\qed
\end{corollary}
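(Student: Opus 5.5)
We prove both directions by passing through the condition of \cref{thm:main-thm-expo-CSeg} for forests, and we apply the Ayala--Francis fiberwise argument a second time. Let $X \to Y$ be the map of complete Segal $\Tree[\O]$-spaces corresponding to $f \colon \cP \to \cQ$ under \cref{prop:comparison algebrads vs segal spaces}.

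\emph{Forest condition and exponentiability of $f^\act$.} We first claim that $f^\act \colon \cP^\act \to \cQ^\act$ is exponentiable in $\Cat$ if and only if, for every forest $\langle 2;t\rangle$ and every map $[2;t] \to Y$ (with $[2;t] = i^*\langle 2;t \rangle$ as in \cref{notation:forest as tree space}), the comparison map
\[\colim_{[k] \in \Delta^\op} \map_{/Y}( T_k[1]\boxtimes_{[2]}[2;t] ,X) \to \map_{/Y}([1]\boxtimes_{[2]}[2;t] ,X)\]
is an equivalence. To prove this, apply \cref{rem:boxproduct-algebrad-side}, whose argument uses only the right Kan extension $i_*X$ and hence works verbatim for forests. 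It identifies $\map([m]\boxtimes_{[2]}[2;t],X)$ with $\Map_{/\cO}(\overline{\phi^*t},\cP)$. A map $[2;t]\to Y$ is then the same as a composable pair $x \actarrow y \actarrow z$ of actives in $\cQ$ over $t$, now with $z$ arbitrary. Taking fibers over a lift $\bar f$ of the composite and using universality of colimits in $\cS$, exactly as in the proof of \cref{lemma:dictionary cc conditions} (following \cite[Lemma 2.2.8]{AyalaFrancis2020FibrationsInftyCategories}), the map becomes $|\mathrm{Fact}(\bar f \mid g\circ h)| \to \ast$.

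Two points need checking here. First, the factorizations occurring in $T_k$ involve only active morphisms, since $[1+k+1] \to [2] \to \cO^\act$ lands in actives, so the relevant categories are factorization categories in $\cP^\act$ over $\cQ^\act$. Second, every composable pair in $\cQ^\act$ lies over some $t \in \cO_2$ and therefore arises from some forest. Granting these, the forest condition is exactly the Conduché criterion for $f^\act$ over all of $\cQ^\act$, and the claim follows.

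\emph{The two implications.} If $f$ satisfies the condition of \cref{thm:main-thm-expo-factorization-Algd}, then by \cref{lemma:dictionary cc conditions} the map $X\to Y$ satisfies \cref{conditionCC}. \cref{prop:main-thm-trees-vs-forests} then upgrades this to the forest condition, and the claim above gives exponentiability of $f^\act$. Conversely, if $f^\act$ is exponentiable, the Conduché criterion holds for all composable active pairs, in particular for those ending at an elementary object. This is the condition of \cref{thm:main-thm-expo-factorization-Algd}; equivalently, it is the forest condition restricted to trees.

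\emph{Main obstacle.} The delicate step is the forest version of \cref{rem:boxproduct-algebrad-side}, namely that $\map([m]\boxtimes_{[2]}[2;t],X) \simeq \Map_{/\cO}(\overline{\phi^* t},\cP)$ for a non-elementary endpoint. This follows from \cref{lemma:box with forest is forest}, which identifies $[m]\boxtimes_{[2]}[2;t]$ with $i^*\langle m;\phi^*t\rangle$, together with the full faithfulness of $i_*$ and the identification $i_*X \leftrightarrow \cP$ from the proof of \cref{prop:comparison algebrads vs segal spaces}.
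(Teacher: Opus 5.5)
Your proposal is correct and follows essentially the paper's route. You upgrade \cref{conditionCC} from trees to forests via \cref{prop:main-thm-trees-vs-forests}, then rerun the argument of \cref{lemma:dictionary cc conditions} for forests, using $\Map(i^*\langle n;t\rangle,X)\simeq (i_*X)(\langle n;t\rangle)$, to identify the forest condition with the Conduch\'e criterion for $f^\act$; the converse is, as you say, just the restriction of that criterion to pairs ending at an elementary object.
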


\begin{example}\label{ex:CC-criterion-operads}
    Let $\cO = \F_*^\flat$ be the pattern describing operads.
    Suppose that $f \colon \cP \to \cQ$ is a map between operads.
    It follows from \cref{cor:main-thm-trees-vs-forests} and \cref{thm:main-thm-expo-factorization-Algd} that $f$ is exponentiable in the category $\mathrm{Op} \simeq \Algd(\F_*^\flat)$ of operads if the following two equivalent conditions hold:
    \begin{enumerate}
        \item for every functor $t \colon [2] \to \Q^{\act}$ so that $t(2)$ lies over $\left<1\right>$, the base-change $\P^\act \times_{\Q^\act} [2] \to [2]$ is exponentiable in $\Cat$, and,
        \item the underlying functor $\P^\act\to \Q^\act$ is exponentiable in $\Cat$.
    \end{enumerate}
    (The equivalence of these conditions was also proved by Hinich \cite[Lemma 2.8.2]{Hinich2020YonedaLemmaEnriched}.)
    In \cref{exa:CC-necessary-operad}, we will conversely see that any exponentiable map of operads satisfies these two conditions.
    By \cref{example:CC-criterion-gen-operads}, it follows that such a map of operads is also exponentiable in the bigger category $\Algd(\F_*^\natural)$ of generalized operads
\end{example}

\begin{example}\label{example:CC-criterion-G-operads}
    Let $G$ be a finite group and consider the pattern $\Span(\F_G)$ from \cref{examples:examples of patterns} describing $G$-operads.
    By \cite[Example 3.3.26]{BarkanHaugsengSteinebrunner}, the conditions of \cref{cor:main-thm-trees-vs-forests} are satisfied, so we see that a map $f \colon \cP \to \cQ$ is exponentiable in the category $\Algd(\Span(\F_G))$ of $G$-operads if the following two equivalent conditions hold:
    \begin{enumerate}
        \item for every functor $t \colon [2] \to \cP^{\act}$ so that $t(2)$ lies over a $G$-orbit (i.e.\ a transitive $G$-set), the base-change $\cP^\act \times_{\cQ^\act} [2] \to [2]$ is exponentiable in $\Cat$, and,
        \item the underlying functor $\cP^\act \to \cQ^\act$ is exponentiable in $\Cat$.
    \end{enumerate}
    
    In \cite[Definition 2.1.7]{NardinShah}, $G$-operads were defined as algebrads for a different algebraic pattern $\underline{\F}_{G,*}$ (see also \cite[Observation 5.2.12]{BarkanHaugsengSteinebrunner}).
    By \cite[Observation 5.2.9]{BarkanHaugsengSteinebrunner}, the pattern $\underline{\F}_{G,*}$ satisfies the conditions of \cref{cor:main-thm-trees-vs-forests}, so we see that a map $\cP \to \cQ$ in $\Algd(\underline{\F}_{G,*})$ is exponentiable if $\cP^\activ \to \cQ^\activ$ is exponentiable in $\Cat$.
    This recovers Corollary 3.1.5 of \cite{NardinShah} for $G$-operads.
    We will see in \cref{exa:CC-necessary-G-operad} that the converse also holds.
\end{example}

\section{Underlying graphs of algebrads}\label{sec:Underlying-graph}

Let $f \colon \O \to \P$ be a strong Segal map between algebraic patterns. As explained in \cref{subsection:functoriality of algebrad cats}, there is an induced adjunction 
$$
f_!: \Algd(\O) \rightleftarrows \Algd(\P) : f^*.
$$
The following is an easy observation:

\begin{proposition}
    The right adjoint functor $f^*$ preserves the maps in $\Algd(\P)$ that satisfy condition \cref{conditionCC} from \cref{thm:main-thm-expo-CSeg}. \qed
\end{proposition}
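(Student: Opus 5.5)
We work on the tree side, using the equivalence $\Algd(\cO)\simeq\CSeg(\Tree[\cO])$ of \cref{prop:comparison algebrads vs segal spaces}. Under this equivalence $f^*\colon \Algd(\P)\to\Algd(\O)$ becomes restriction $\Tree[f]^*\colon \CSeg(\Tree[\P])\to\CSeg(\Tree[\O])$, by the naturality of that equivalence in strong Segal maps. So let $p\colon X\to Y$ be a map in $\CSeg(\Tree[\P])$ satisfying \cref{conditionCC}. We must show that $\Tree[f]^*X\to\Tree[f]^*Y$ satisfies \cref{conditionCC} for trees in $\Tree[\O]$.

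Fix a tree $[2;t]$ in $\Tree[\O]$ and a map $[2;t]\to \Tree[f]^*Y$. By the adjunction $\Tree[f]_!\dashv\Tree[f]^*$, this is the same as a map $\Tree[f]_![2;t]\simeq[2;f(t)]\to Y$. Here $[2;f(t)]$ is again a tree, since $f$ preserves actives and elementaries, as it is a map of patterns. More generally, $\Tree[f]_!$ sends representables to representables, so
\[\Tree[f]_!\big(T_k[1]\boxtimes_{[2]}[2;t]\big)\simeq T_k[1]\boxtimes_{[2]}[2;f(t)].\]
This uses either Yoneda, or the proposition preceding \cref{rem:base change decorated Segal spaces}: the object $T_k[1]\boxtimes_{[2]}[2;t]$ is the representable tree $[1+k+1;\,t_0\actarrow t_1=\cdots=t_1\actarrow t_2]$ by \cref{lemma:box with forest is forest}, and $\Tree[f]$ sends it to the corresponding tree on $f(t)$.

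The adjunction, applied over $Y$, then identifies
\[\map_{/\Tree[f]^*Y}\big(T_k[1]\boxtimes_{[2]}[2;t],\,\Tree[f]^*X\big)\simeq \map_{/Y}\big(T_k[1]\boxtimes_{[2]}[2;f(t)],\,X\big),\]
naturally in $[k]\in\Delta^\op$. The same identification holds for $[1;t_0\actarrow t_2]$. Hence the comparison map in \cref{conditionCC} for $\Tree[f]^*X\to \Tree[f]^*Y$ at $[2;t]$ is identified with the comparison map for $X\to Y$ at $[2;f(t)]$, which is an equivalence by hypothesis.

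The only point needing care is the naturality of these identifications in $[k]$, i.e.\ that the simplicial structure maps of $T_\bullet[1]$ are carried to those over $f(t)$. This is automatic, since everything is induced by the functor $\Tree[f]$ on representables over the fixed map to $\Delta$. Alternatively, one can argue entirely on the algebrad side via \cref{lemma:dictionary cc conditions}. The pullback $f^*\cP\to f^*\cQ$ has the same fibers as $\cP\to\cQ$ over lifts along $f$, so each category $\mathrm{Fact}(\bar f\mid g\circ h)$ for $f^*\cP$ is literally a $\mathrm{Fact}$ category for $\cP\to\cQ$ over the image pair in $\cQ$. This again gives weak contractibility.
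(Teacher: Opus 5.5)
Your proof is correct and is essentially the argument the paper intends; the paper gives no written proof and calls the statement an easy observation. Via the adjunction $\Tree[f]_!\dashv\Tree[f]^*$, the comparison map in (CC) for $f^*X\to f^*Y$ at a tree $[2;t]$ is identified with the one for $X\to Y$ at $[2;f(t)]$. This works because $\Tree[f]_!$ sends the representable trees $T_k[1]\boxtimes_{[2]}[2;t]$ and $[1;t_0\actarrow t_2]$ to the corresponding trees over $f(t)$, naturally in $[k]$. Your alternative algebrad-side argument through the $\mathrm{Fact}$ categories of the pullback $f^*\cP = f^*\cQ\times_{\cQ}\cP$ is equally valid.
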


Although $f^*$ might preserve the exponentiable objects that satisfy \cref{CondCrit}, it is not true that $f^*$ commutes with taking \textit{exponential} objects in general. That is, the canonical comparison map $f^*[X,Y] \to [f^*X,f^*Y]$ is not invertible in general for $\P$-algebrads $X$ and $Y$, where $X$ is exponentiable (see \cref{conventions} for the notation). We will now discuss an important example of a morphism $f$ where this \textit{does} hold.

\begin{construction}
    There is a canonical map of algebraic patterns $\O^{\el} \to \O$. Here, $\O^\el$ is considered as an algebraic pattern where all morphisms are inert and all objects are elementary. We note that $\Algd(\O^{\el}) \simeq \Fun(\O^{\el}, \Cat)$. Consequently, \cref{rem:base change decorated Segal spaces} gives rise to an induced adjunction
    $$
    \iota : \Fun(\O^{\el}, \Cat) \rightleftarrows  \Algd(\O) : \Gamma.
    $$
    If $X$ is an algebrad on $\O$, then $\Gamma X$ is called the \textit{underlying graph} of $X$. 
\end{construction}

\begin{definition}
    A tree $[n;t]$ in $\Tree[\O]$ is called \textit{unary} if there exists a cartesian morphism $[n;t] \to [0;t_0]$; i.e.\ if $t_0 \actarrow \cdots \actarrow t_n$ is a sequence of equivalences.
\end{definition}

We will establish the following description of $\iota$:

\begin{proposition}\label{prop:unary algebrads}
    The functor $\iota$ is fully faithful, and an algebrad $X$ is in the essential image of $\iota$ if and only if $X([n;t]) = \emptyset$ for every non-unary tree $[n;t]$.
\end{proposition}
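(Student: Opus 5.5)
We work on the tree side throughout, via \cref{prop:comparison algebrads vs segal spaces}. Let $u \colon \O^\el \to \O$ be the strong Segal map. By \cref{rem:base change decorated Segal spaces}, the adjunction $\iota \dashv \Gamma$ is the derived adjunction $u_! \dashv u^*$ on complete Segal presheaves, where $u^* = \Tree[u]^*$ and $u_!$ is the localization of $\Tree[u]_!$.

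First we identify $\Tree[u]$. A tree for $\O^\el$ is a string of equivalences $e = \cdots = e$ with $e$ elementary. Its image $[n;e=\cdots=e]$ in $\Tree[\O]$ is unary. Conversely, every unary tree $[n;t]$ has $t_n$ elementary and all $t_i \simeq t_n$, so $t$ is equivalent to a constant string. Next we check that $\Tree[u]$ is fully faithful. By \cref{cor:explicit description of mapping space}, maps $[m;s]\to[n;t]$ in $\Tree[\O]$ over a fixed $\phi$ are given by $\Map_{\O_m}(\phi^*t,s)$, which is $\Map_{\O^\inert}(t_{\phi(m)}, s_m)$ because the horizontal components are equivalences. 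Since $\O^\el \subset \O^\inert$ is full, this agrees with the corresponding mapping space in $\Tree[\O^\el]$. Hence $\Tree[u]$ identifies $\Tree[\O^\el]$ with the full subcategory $U \subset \Tree[\O]$ of unary trees. Moreover $U$ is a sieve: if $[m;s]\to[n;t]$ has $[n;t]$ unary, then $s$ is obtained from a constant string by inert pushforward along the factorization, so $s$ is again a string of equivalences.

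Because $U$ is a sieve, the left Kan extension $\Tree[u]_!$ along its inclusion is extension by zero. So $\Tree[u]_!Z$ restricts to $Z$ on $U$ and is $\emptyset$ on non-unary trees, and $\Tree[u]_!$ is fully faithful with essential image the presheaves vanishing off $U$.

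The main step is to show that $\Tree[u]_!Z$ is already complete Segal when $Z$ is complete Segal on $\Tree[\O^\el]$. Then no localization is needed, $\iota = \Tree[u]_!$ on complete Segal objects, and full faithfulness follows. We argue by cases on a tree $[n;t]$.
\begin{itemize}
\item If $[n;t]$ is unary, every tree in its spine is unary, so the Segal condition reduces to that of $Z$.
\item If $[n;t]$ is not unary, some $t_{i}\actarrow t_{i+1}$ is not an equivalence. Then the corolla $[1;t_{i,i+1}]$ is non-unary, so $\Tree[u]_!Z$ vanishes there. Hence the iterated fiber product $\Sp$-value is $\emptyset$, matching the value $\emptyset$ at $[n;t]$.
\end{itemize}
Completeness only involves the unary trees $[k;e=\cdots=e]$, so it is inherited from $Z$.

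Conversely, let $X$ be complete Segal with $X([n;t])=\emptyset$ for every non-unary $[n;t]$. Then $X$ lies in the essential image of extension by zero, namely $X \simeq \Tree[u]_!(X|_U)$. Since $X$ is complete Segal, so is $X|_U$. Indeed, under the equivalence $U \simeq \Tree[\O^\el]$, spine inclusions and $J$-extensions of unary trees correspond to those in $\Tree[\O^\el]$, so $X|_U = u^*X$ is complete Segal. This identifies the essential image as claimed.

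The expected subtle point is the verification that $U$ is a sieve, i.e.\ that inert pushforward of a string of equivalences again consists of equivalences. This should follow from uniqueness of the inert–active factorization: an equivalence factors as inert followed by an equivalence.
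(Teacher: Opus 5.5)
Your proof follows the paper's own argument. The paper identifies $\Tree[\O^\el]$ with $\Delta\times\O^{\el,\op}$ and shows the resulting functor $i$ into $\Tree[\O]$ is fully faithful (\cref{lem: about graphs i ff}), which is your full faithfulness step. It then shows that no non-unary tree maps to a unary one (\cref{lem: about value on non-unaries}), which is your sieve property, so $i_!$ is extension by zero. Finally it checks that $i_!$ already preserves complete Segal objects, so that $\iota$ is just $i_!$, and reads off the essential image exactly as you do.

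One step in your non-unary case needs a small repair. You pick \emph{some} index with $t_i\actarrow t_{i+1}$ non-invertible and call $[1;t_{i,i+1}]$ a corolla. It is only a tree when $t_{i+1}$ is elementary. Otherwise it is the restriction to $\Tree[\O]$ of the representable forest $\langle 1;t_{i,i+1}\rangle$, and maps from it into an extension-by-zero presheaf need not form an empty space.

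For a concrete failure, take the pattern given by the poset $\{0<1<2\}$, with every map active, only identities inert, and $2$ the only elementary object. Consider the tree $t=(0\actarrow 1\actarrow 2)$. No tree maps to $\langle 1;0\actarrow 1\rangle$, since there are no inerts from $0$ or $1$ to an elementary object. So that spine piece is the empty presheaf, and its factor in the spine mapping space is contractible rather than empty.

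The fix is to take $i$ \emph{maximal} with $t_i\actarrow t_{i+1}$ non-invertible. Then $t_{i+1}\simeq\cdots\simeq t_n$ is elementary, so $[1;t_{i,i+1}]$ is a genuine non-unary corolla. Its factor is therefore empty, and the spine mapping space is empty, as required. The paper's one-line claim that this mapping space is empty implicitly uses the same observation.
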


To this end, we will need some preparations. We will view $\O$ as a double category in what follows.

\begin{remark}
    The final object of $\Delta$ is given by $[0]$, hence there is a unique functor $\Delta \times \O_0^\op \to \Forest[\O]$ over $\Delta$ that preserves cartesian edges, and that induces an equivalence on the fiber above $[0]$. Its restriction $$i \colon \Delta \times \O_0^{\el,\op} \to \Tree[\O] ; \quad ([n],e) \mapsto [n;\overline{e}]$$ is precisely the functor that is induced by the map $\O^{\el}_0 \to \O$ between algebraic patterns. By definition, the functor $\iota$ is derived from the left Kan extension functor
    $$i_! \colon \PSh(\Delta \times \O_0^{\el,\op}) \to \PSh(\Tree[\O]).$$
    We note that $i_!$ carries $([n], e)$ to the tree $[n; \bar{e}] = [n] \boxtimes_{[0]} [0;e]$.
\end{remark}

\begin{lemma}\label{lem: about graphs i ff}
    The functor $i\colon \Delta \times \O_0^{\el,\op} \to \Tree[\O]$ is fully faithful.
\end{lemma}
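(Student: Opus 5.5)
The plan is to compute mapping spaces in $\Tree[\O]$ between objects in the image of $i$, using the cartesian fibration $p\colon \Tree[\O] \to \Delta$. Since $\Tree[\O] \to \Delta$ is cartesian, for trees $[m;s]$ and $[n;t]$ the space of maps over a fixed $\phi \colon [m] \to [n]$ is $\Map_{\O_m}(\phi^*t, s)$ (the remark following \cref{cor:explicit description of mapping space}). We apply this with $s = \bar e'$ and $t = \bar e$, the constant strings $e' = \cdots = e'$ and $e = \cdots = e$. Then $\phi^*\bar e = \bar e$ is again the constant string of length $m$. The functor $i$ sends a map $(\phi, g \colon e' \to e)$ in $\Delta \times \O_0^{\el,\op}$ to the map over $\phi$ determined by the constant diagram $\bar g \colon \bar e \to \bar e'$ in $\O_m$. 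It therefore suffices to show that, for each $\phi$, the map
\[\Map_{\O_0^\el}(e, e') \to \Map_{\O_m}(\bar e, \bar e')\]
induced by the degeneracy $s \colon \O_0 \to \O_m$ (the constant-string functor) is an equivalence. Fully faithfulness then follows by summing over $\phi \in \Map_\Delta([m],[n])$.

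For this we use the target projection $\O_m \to \O_0$. By \cref{rem:slices-factorization-double-category}, it is a left fibration, so $(\O_m)_{\bar e/} \simeq (\O_0)_{e/}$. Hence $\Map_{\O_m}(\bar e, \bar e')$ is the fiber of this left fibration over $e'$ evaluated at the object $\bar e'$. Concretely, a map $\bar e \to \bar e'$ in $\O_m$ is determined by its last component $g \colon e \intarrow e'$, and all other components are forced by factoring the composites $e \simeq e \intarrow e'$. Equivalently, $\Map_{\O_m}(\bar e,u) \simeq \Map_{\O_0}(e,u_m)\times_{(\ldots)}\{u\}$, and the forced factorization of the identity string is again constant. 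Thus the composite
\[\Map_{\O_0}(e,e') \xrightarrow{s} \Map_{\O_m}(\bar e,\bar e') \xrightarrow{\text{target}} \Map_{\O_0}(e,e')\]
is the identity, and the second map is an equivalence because the unique lift of $g$ starting at $\bar e$ is $\bar g$ (the string of equivalences $e' = \cdots = e'$ is the codomain of the cocartesian lift). It follows that $s$ is an equivalence on these mapping spaces.

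The main obstacle is making the last point precise: identifying the codomain of the unique lift of $g$ in the left fibration $\O_m \to \O_0$ with the constant string $\bar e'$. For this we use the Segal decomposition $\O_m \simeq \O_1 \times_{\O_0} \cdots \times_{\O_0} \O_1$, which reduces the claim to $m=1$. In that case a lift of $g$ starting at the identity $e = e$ is obtained by factoring $e \xrightarrow{=} e \xrightarrow{g} e'$ as inert followed by active. By uniqueness of factorizations, this factorization is $g$ followed by the identity of $e'$, which is active. Hence the target of the lift is $e' = e'$, as required. Note also that $\O_0^\el$ is a full subcategory of $\O_0$, so no further restriction on the elementary objects is needed.
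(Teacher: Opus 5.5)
Your reduction matches the paper's: using $\Map^{\phi}([m;s],[n;t])\simeq\Map_{\O_m}(\phi^*t,s)$, you reduce to showing that the degeneracy $s\colon\O_0\to\O_m$ induces an equivalence $\Map_{\O_0}(e,e')\to\Map_{\O_m}(\bar e,\bar e')$, and you use the Segal condition to reduce to $m=1$. (That formula holds because $\Tree[\O]\subset\Forest[\O]$ is full and $\Forest[\O]\to\Delta$ is cartesian. $\Tree[\O]\to\Delta$ is not a cartesian fibration in general, but constant strings are stable under $\phi^*$, so this slip is harmless.)

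The gap is in your final step. There you conclude that the target map $r\colon\Map_{\O_m}(\bar e,\bar e')\to\Map_{\O_0}(e,e')$ is an equivalence because the lift of $g$ starting at $\bar e$ ends at $\bar e'$. Since $\O_m\to\O_0$ is a left fibration, the fiber of $r$ over $g$ is the space of paths from $g_!\bar e$ to $\bar e'$ in the fiber $(\O_m)_{e'}$. Your computation $g_!\bar e\simeq\bar e'$ only shows that this path space is nonempty, hence equivalent to the loop space of $(\O_m)_{e'}$ at $\bar e'$. It does not show that it is contractible, and the left fibration property cannot give this on its own: for the left fibration $BG\to *$ all lifts are unique and $g_!x\simeq x$, yet $\Map_{BG}(x,x)\simeq G$. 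Together with $r\circ s=\id$, you have only shown that $r$ is a retraction. So the point you flag as the main obstacle, identifying the codomain of the lift, is not where the real issue lies.

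The missing input is that the constant string has contractible automorphism space over $\id_{e'}$. For $m=1$, the fiber $(\O_1)_{e'}$ is a subcategory of the fiber $\O_{/e'}$ of $\ev_1\colon\Fun([1],\O)\to\O$; its objects are the active arrows into $e'$. Since $\id_{e'}$ is terminal in $\O_{/e'}$, the space $\Map_{(\O_1)_{e'}}(\id_{e'},\id_{e'})$ is a nonempty union of components of a contractible space, hence contractible. The Segal condition then gives the case of general $m$.

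The paper's proof packages this more directly. $\O_1$ is the subcategory of $\Fun([1],\O)$ on active arrows and squares with inert legs, and the constant-diagram functor $\O\to\Fun([1],\O)$ is fully faithful. Hence $\Map_{\O_1}(\id_e,\id_{e'})$ is the union of those components of $\Map_\O(e,e')$ consisting of inert maps, i.e.\ $\Map_{\O_0}(e,e')$. With either addition your argument is complete.
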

\begin{proof}
    It suffices to check that the functor $\Delta \times \O_0^\op \to \Forest[\O]$ is fully faithful. This may be checked fiberwise, so we have to verify that the functor $\O_0 \to \O_n$ is fully faithful for every $n$. Using the Segal condition, we may reduce to the case that $n=1$. As a double category, $\O$ is in the image of the $\Sq_{L,R}$-construction. Thus the fully faithfulness in level $n=1$ ultimately follows from the fact that the diagonal functor $\C \to \Fun([1], \C)$ is fully faithful for every category $\C$, since $[1] \times [1] \cup_{\{0,1\}\times [1]} \{0,1\} = [1]$.
\end{proof}

\begin{lemma}\label{lem: about value on non-unaries}
    Let $X$ be a presheaf on $\Delta \times \O^{\el,\op}_0$. Then $(i_!X)([n;t]) = \emptyset$ for every non-unary tree $[n;t]$.
\end{lemma}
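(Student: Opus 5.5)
The plan is to compute $i_!X$ pointwise via the colimit formula for left Kan extension:
\[(i_!X)([n;t]) \simeq \colim_{(([m],e),\, [n;t] \to i([m],e)) \in ((\Delta \times \O_0^{\el,\op})_{[n;t]/})^\op} X([m],e).\]
If the indexing category is empty, the colimit is the initial space $\emptyset$. So it suffices to show that for a non-unary tree $[n;t]$ there is no map $[n;t] \to [m;\bar e] = i([m],e)$ in $\Tree[\O]$. A colimit over an empty diagram is empty in $\cS$, so this finishes the argument.

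To check that no such map exists, I would unwind the description of maps in $\Forest[\O]$ from the beginning of \cref{sec:forest-and-tree-cats}. A map $[n;t] \to [m;\bar e]$ consists of some $\phi \colon [n] \to [m]$ together with a morphism $\phi^*\bar e \to t$ in $\O_n$. Here $\phi^*\bar e$ is the constant string $e = \cdots = e$. Such a morphism is a commutative ladder with inert vertical maps $e \intarrow t_i$, identity horizontal maps on the top row, and the active maps $t_{i} \actarrow t_{i+1}$ on the bottom row.

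Now fix $i$ and look at the square with corners $e$, $e$, $t_i$, $t_{i+1}$. Its composite $e \to t_{i+1}$ has two factorizations. The first is inert $e \intarrow t_{i+1}$ followed by the identity, which is active. The second is the inert map $e \intarrow t_i$ followed by the active map $t_i \actarrow t_{i+1}$. By uniqueness of inert–active factorizations in $\O$, the active parts must agree, so $t_i \actarrow t_{i+1}$ is an equivalence. Doing this for every $i$ shows that $t$ is a sequence of equivalences, i.e.\ $[n;t]$ is unary, which contradicts the hypothesis.

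An equivalent way to see the same thing is to use that $\O_0 \to \O_1$ (the degeneracy) has essential image closed under the target-fibration transport. The factorization-system argument above is the cleanest.

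The only delicate step is orientation. I need to get the direction of the 2-cells right: the vertical maps go from $s_{\phi(i)}$ to $t_i$ in the variance convention of \cref{sec:forest-and-tree-cats}. I also need to be sure the uniqueness of factorization is applied in the form inert-then-active, as in $\Sq_{L,R}(\O)$. Once that is checked, the statement follows immediately.
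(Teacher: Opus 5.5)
Your proof is correct and has the same structure as the paper's: both show that the comma category $(\Delta\times\O_0^{\el,\op})_{[n;t]/}$ is empty, so the pointwise colimit computing $(i_!X)([n;t])$ is $\emptyset$. The only difference is how you rule out a map $[n;t]\to[m;\bar e]$. The paper composes with $[m;\bar e]\to[0;e]$ and invokes the slice description of \cref{proposition:explicit description of mapping space}, whereas you unwind the ladder in $\O_n$ and apply uniqueness of inert--active factorizations square by square. This is the same underlying fact (target maps $\O_n\to\O_0$ are left fibrations), just made explicit, and your orientation of the vertical inerts $e\intarrow t_i$ is right.
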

\begin{proof}
     The commutative square
     \[
        \begin{tikzcd}
            \emptyset \arrow[r] \arrow[d] & \Tree[\O]_{[n;t]/}\arrow[d] \\
            \Delta \times \O^{\el, \op} \arrow[r] & \Tree[\O]
        \end{tikzcd}
     \]
     is a pullback square. Namely, if $[n;t] \to [m;\overline{e}]$ were a map, then we could consider the composite $[n;t] \to [m;\overline{e}] \to [0;e]$. But then the description of \cref{proposition:explicit description of mapping space} would imply that $[n;t]$ was unary.
\end{proof}

\begin{proof}[Proof of \cref{prop:unary algebrads}]
    Recall that $\iota$ is derived from $i_!$, but in fact, we claim that $\iota$ is also restricted from $i_!$. Since $i_!$ is fully faithful by \cref{lem: about graphs i ff}, this will then prove that $\iota$ is fully faithful. The claim follows by observing that $i_!$ carries each complete Segal $(\Delta \times \O^{\el,\op})$-space $X$ to a complete Segal $\Tree[\O]$-space. Namely, it is clear that $i_!X$ is local with respect to the completeness extensions by the fully faithfulness of $i$.
    To show that $i_!X$ is local with respect to the Segal extensions, we note that $\Map_{\PSh(\Tree[\O])}(\Sp[n;t], i_!X) = \Map_{\PSh(\Tree[\O])}([n;t], i_!X) = \emptyset$ if $[n;t]$ is non-unary by \cref{lem: about value on non-unaries}. If $[n;t]$ is unary, then $i_!X$ is local with respect to its associated spine inclusion by the fully faithfulness of $i_!$. 
    
    For the second assertion, assume that $X$ is an $\O$-algebrad. Then it follows from the above and \cref{lem: about value on non-unaries} that $X([n;t]) = \emptyset$ for all non-unary trees $[n;t]$ if $X$ is in the image of $\iota$. Conversely, suppose that $X$ satisfies $X([n;t]) = \emptyset$ for every non-unary tree $[n;t]$. Then one readily deduces that the counit $\iota\Gamma X \to X$ must be an equivalence.
\end{proof}

It follows directly from \cref{prop:unary algebrads} that the unary trees are the only trees that are in the essential image of $\iota$. Therefore, we may introduce the following extension of the terminology:

\begin{definition}
The algebrads in the essential image of $\iota$ are called \textit{unary algebrads}.
\end{definition}

\begin{corollary}\label{cor:unary only accepts unary}
    If $X$ is a unary algebrad, then $\Map_{\Algd(\O)}(Y,X) = \emptyset$ for every non-unary algebrad $Y$.
\end{corollary}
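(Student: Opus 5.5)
The plan is to argue directly on the level of $\Tree[\O]$-spaces, using the equivalence $\Algd(\O)\simeq\CSeg(\Tree[\O])$ of \cref{prop:comparison algebrads vs segal spaces}, and to exploit the fact that a unary algebrad has no trees over non-unary shapes. Let $X=\iota G$ be unary and $Y$ an arbitrary algebrad, viewed as a complete Segal $\Tree[\O]$-space, and suppose we are given a map $Y\to X$. I will show this forces $Y$ to be unary. Since the corollary asserts the mapping space is empty whenever $Y$ is non-unary, this is exactly the contrapositive.

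Key step: by \cref{prop:unary algebrads}, $Y$ is non-unary precisely when there is a non-unary tree $[n;t]$ with $Y([n;t])\neq\emptyset$. By the Yoneda lemma, a point of $Y([n;t])$ is a map $[n;t]\to Y$ in $\PSh(\Tree[\O])$. Composing with the given map $Y\to X$ yields a map $[n;t]\to X$, i.e.\ a point of $X([n;t])$. But $X([n;t])=\emptyset$ by \cref{prop:unary algebrads} (equivalently \cref{lem: about value on non-unaries}), since $X$ lies in the essential image of $\iota$ and $[n;t]$ is non-unary. This is a contradiction, so no map $Y\to X$ exists, and $\Map_{\Algd(\O)}(Y,X)=\emptyset$.

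The only point needing care is that mapping spaces in $\Algd(\O)$ agree with those in $\PSh(\Tree[\O])$. This holds because $\CSeg(\Tree[\O])$ is a full subcategory and the equivalence of \cref{prop:comparison algebrads vs segal spaces} identifies $\Algd(\O)$ with it. I also need that emptiness of $X([n;t])$ is preserved under composing from representables, which is immediate since a map into the empty space cannot exist. I do not expect a genuine obstacle: the argument is a one-line consequence of presheaf evaluation being functorial, so that a map $Y\to X$ induces maps $Y([n;t])\to X([n;t])=\emptyset$, forcing $Y([n;t])=\emptyset$ for all non-unary trees.
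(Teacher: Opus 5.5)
Your proof is correct and is the same argument as the paper's. Since $Y$ is non-unary, \cref{prop:unary algebrads} gives a non-unary tree $[n;t]$ with a map $[n;t]\to Y$. Precomposing with it yields a map $\Map_{\Algd(\O)}(Y,X)\to X([n;t])=\emptyset$, so the mapping space must be empty.
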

\begin{proof}
    Since $Y$ is non-unary, \cref{prop:unary algebrads} implies that there exists a map $[n;t] \to Y$ where $[n;t]$ is non-unary. We then obtain a restriction map $\Map_{\Algd(\O)}(Y,X) \to X([n;t])$, so that \cref{prop:unary algebrads} implies the desired result.
\end{proof}

\begin{corollary}\label{cor:unaries form ideal}
    If $X$ is a unary algebrad, then $X \times Y$ is unary as well for every algebrad $Y$.
\end{corollary}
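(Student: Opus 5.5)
We will use the characterization of unary algebrads from \cref{prop:unary algebrads}: an algebrad lies in the essential image of $\iota$ precisely when it takes the value $\emptyset$ on every non-unary tree. The plan is to check this vanishing condition directly for the product $X \times Y$.

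First, $X \times Y$ is again an algebrad. Under the equivalence $\Algd(\O) \simeq \CSeg(\Tree[\O])$ of \cref{prop:comparison algebrads vs segal spaces}, the subcategory $\CSeg(\Tree[\O])$ is reflective in $\PSh(\Tree[\O])$. It is therefore closed under limits, so the product of $X$ and $Y$ in algebrads is computed pointwise as presheaves on $\Tree[\O]$. This gives, for every tree $[n;t]$,
\[
(X \times Y)([n;t]) \simeq X([n;t]) \times Y([n;t]).
\]

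Now let $[n;t]$ be a non-unary tree. Since $X$ is unary, \cref{prop:unary algebrads} gives $X([n;t]) = \emptyset$. The product of the empty space with any space is empty, so $(X \times Y)([n;t]) = \emptyset$. Applying the converse direction of \cref{prop:unary algebrads} to the algebrad $X \times Y$ shows that $X \times Y$ lies in the essential image of $\iota$, that is, it is unary.

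There is no real obstacle here. The only point to justify is that products in $\Algd(\O)$ are computed levelwise on trees, and this follows from the reflectivity of $\CSeg(\Tree[\O])$ in $\PSh(\Tree[\O])$.
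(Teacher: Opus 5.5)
Your proposal is correct and matches the paper's proof: both compute $(X \times Y)([n;t]) \simeq X([n;t]) \times Y([n;t]) = \emptyset$ on non-unary trees and then apply \cref{prop:unary algebrads} in both directions. Your extra remark that products are computed pointwise because $\CSeg(\Tree[\O])$ is reflective in $\PSh(\Tree[\O])$ makes explicit a step the paper leaves implicit.
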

\begin{proof}
    If $[n;t]$ is a non-unary tree, then $(X \times Y)([n;t]) = X([n;t]) \times Y([n;t]) = \emptyset$ by \cref{prop:unary algebrads}. Thus the desired conclusion follows from another application of \cref{prop:unary algebrads}.
\end{proof}

\begin{remark}\label{rem:trivial algebrad}
    The functor $\iota \colon \Fun(\O^\el, \Cat) \to \Algd(\O)$ factors as the composite
    $$
    \Fun(\O^\el, \Cat) \to \Algd(\O)_{/\iota(\ast)} \to \Algd(\O),
    $$
    where $\ast \colon \O^\el \to \Cat$ denotes the terminal functor. The second functor is the projection. The resulting first functor is fully faithful on account of \cref{prop:unary algebrads}. Moreover, \cref{cor:unary only accepts unary} also implies that it is essentially surjective. Under this identification $\Fun(\O^\el, \Cat) \simeq \Algd(\O)_{/\iota(\ast)}$, the adjunction $\iota \dashv \Gamma$ is thus induced by the unary algebrad $\iota(\ast)$ via push-forward and base-change.
\end{remark}

We can now easily show that the underlying graph functor preserves exponential objects, establishing \cref{thmC}:

\begin{theorem}\label{thm:graph preserve exponential objects}
    The graph functor $\Gamma$ preserves exponential objects. Precisely, for each exponentiable algebrad $X$, and every algebrad $Y$, the canonical comparison map 
    $
    \Gamma[X,Y] \to [\Gamma X, \Gamma Y]
    $
    is an equivalence in $\Fun(\O^{\el}, \Cat)$.
\end{theorem}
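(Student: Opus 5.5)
We prove the equivalence through the Yoneda lemma, testing both sides against an arbitrary $A \in \Fun(\O^\el,\Cat)$. By adjunction,
\[
\Map(A, \Gamma[X,Y]) \simeq \Map_{\Algd(\O)}(\iota A, [X,Y]) \simeq \Map_{\Algd(\O)}(\iota A \times X, Y).
\]
For the other side,
\[
\Map(A, [\Gamma X,\Gamma Y]) \simeq \Map(A \times \Gamma X, \Gamma Y) \simeq \Map_{\Algd(\O)}(\iota(A\times \Gamma X), Y).
\]
It therefore suffices to produce a natural equivalence $\iota(A \times \Gamma X) \simeq \iota A \times X$ in $\Algd(\O)$. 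We must also check that the induced map on mapping spaces is the one coming from the canonical comparison map $\Gamma[X,Y]\to[\Gamma X,\Gamma Y]$. This follows by unwinding the mates, since the comparison map is built from the counit $\iota\Gamma \to \id$ and the evaluation maps.

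To identify the product, we use \cref{rem:trivial algebrad}, which gives $\Fun(\O^\el,\Cat) \simeq \Algd(\O)_{/\iota(\ast)}$. Under this equivalence $\iota$ is the forgetful functor $\Algd(\O)_{/\iota(\ast)} \to \Algd(\O)$, and $\Gamma$ is base change along $\iota(\ast) \to \ast$, so that $\Gamma X \simeq X \times \iota(\ast)$ over $\iota(\ast)$. Products in the slice are fiber products over $\iota(\ast)$. Hence
\[
\iota(A\times \Gamma X) \simeq \iota A \times_{\iota(\ast)} (\iota(\ast) \times X) \simeq \iota A \times X,
\]
which is the required equivalence. Equivalently, this is the projection formula for the adjunction $\Sigma_{\iota(\ast)} \dashv (-)\times \iota(\ast)$ in any category with finite limits. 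It is consistent with \cref{cor:unaries form ideal}, which already shows that $\iota A \times X$ is unary and hence lies in the image of $\iota$.

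The main point requiring care is the identification in \cref{rem:trivial algebrad} of $\Gamma$ with base change along $\iota(\ast)$. That remark asserts that the adjunction is induced by push-forward and base change. To confirm it, note that $\iota$ is fully faithful and that its essential image is exactly the slice over $\iota(\ast)$: any map into a unary algebrad forces the source to be unary by \cref{cor:unary only accepts unary}, and conversely every unary algebrad maps uniquely to $\iota(\ast)$. Given this, the right adjoint of the forgetful functor from the slice is $X \mapsto X\times\iota(\ast)$, so it agrees with $\Gamma$ by uniqueness of adjoints.

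Finally, the naturality check that the Yoneda equivalence is induced by the canonical map is a routine diagram chase. It uses the counit of $\iota\dashv\Gamma$ and the evaluation $[X,Y]\times X\to Y$; we do not expect an obstacle there.
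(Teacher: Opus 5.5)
Your proposal is correct and takes essentially the same route as the paper: both use the Yoneda lemma and the two adjunctions to reduce the claim to showing that the comparison map $\gamma_A \colon \iota(A \times \Gamma X) \to \iota A \times X$ is an equivalence. The paper obtains this directly from \cref{cor:unaries form ideal}: $\iota A \times X$ is unary, so $\iota A\times X \simeq \iota\Gamma(\iota A \times X) \simeq \iota(A \times \Gamma X)$, using that $\Gamma$ preserves products and $\iota$ is fully faithful. You instead derive it as the projection formula for the slice identification of \cref{rem:trivial algebrad}, which rests on the same unary-ideal facts.
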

\begin{proof}
Suppose that $Y$ is an $\O$-algebrad, and that $X$ is an exponentiable $\O$-algebrad. The map $\Gamma[X,Y] \to [\Gamma X,\Gamma Y]$ represents the map between presheaves 
    $$
    \map_{\Algd(\O)}(\iota(-) \times X, Y) \to \map_{\Algd(\O)}(\iota((-) \times \Gamma X), Y)
    $$
    that comes from the natural comparison map $\gamma_A : \iota(A \times \Gamma X) \to \iota A \times X$ defined for each algebrad $A$. 
    Thus it suffices to show that $\gamma_A$ is an equivalence. This follows from \cref{cor:unaries form ideal}. 
\end{proof}

\begin{example}
    We consider the pattern $\O = \F_*^\flat$ whose category of algebrads is given by the category $\mathrm{Op} = \Algd(\F_*^\flat)$ of operads. In this case, we obtain an adjunction 
    $$
    \iota : \Cat \rightleftarrows \mathrm{Op} : \Gamma.
    $$
    The underlying graph of an operad $\P$ is precisely the underlying category $\P_{
    \langle 1 \rangle}$ of the operad. The operad $\iota(\ast)$ can be identified with the \textit{trivial operad} of \cite[Example 2.1.1.20]{HA}.
    Suppose that $\P$ is an exponentiable operad. Then for every other operad $
    \Q$, the underlying category of the exponential object $[\P,\Q]$ is given by $\Fun(\P_{\langle 1 \rangle}, \Q_{\langle 1 \rangle}),$ cf.\ \cite[Proposition 2.2.6.4]{HA}.
    
    There is also an equivariant analogue of this example that is obtained when considering the pattern $\O = \Span(\F_G)^\flat$ of \cref{examples:examples of patterns} for $G$-operads, cf.\ \cite[Proposition 3.1.9]{NardinShah}. In this case, we obtain an adjunction 
    $$
    \iota : \Fun(\Orb_G^\op, \Cat) \rightleftarrows  \Algd(\Span(\F_G)^\flat) : \Gamma,
    $$
    where $\Orb_G \subset \F_G$ is the full subcategory spanned by the $G$-orbits. The functor $\Gamma$ assigns the underlying $G$-category to a $G$-operad.
\end{example}

\begin{example}\label{exa:underlying-graph-vdc}
    We consider the pattern $\O=\Delta^{\op,\natural}$ whose category of algebrads $\Algd(\Delta^{\op,\natural}) \simeq \VirtDblCat$ is the category of virtual double categories (see \cref{examples:examples of patterns}). In this case, $$\mathbb{G} \coloneqq \O^\el = \{[1] \rightrightarrows [0]\}$$ is the category of two parallel arrows. Thus we obtain an adjunction 
    $$
       \iota : \Fun(\mathbb{G},\Cat) \rightleftarrows \VirtDblCat : \Gamma.
    $$
    For this specific example, one may view  \cref{prop:unary algebrads} as a generalization of \cite[Proposition 3.3]{Arkor}, and \cref{thm:graph preserve exponential objects} as a generalization of \cite[Lemma 3.8]{Arkor} to the $\infty$-categorical setting.
    
    Let $X, Y \in \Fun(\mathbb{G}, \Cat)$. One can verify that the exponential object $Z \coloneqq [X,Y]$ in $\Fun(\mathbb{G}, \Cat)$ is characterized by the pullback diagram
    \[
        \begin{tikzcd}
            Z_1 \arrow[r]\arrow[d]& \Fun(X_1, Y_1) \arrow[d] \\
            Z_0^{\times 2} \arrow[r] & \Fun(X_1, Y_0)^{\times 2},
        \end{tikzcd}
    \] 
    and where $Z_0$ is defined to be $\Fun(X_0, Y_0)$, and the bottom functor in the square is the obvious one, cf.\ \cite[Proposition 3.6]{Arkor}. If we combine this with the fact that $\Gamma$ preserves exponential objects, we obtain a concrete description of the cells of exponential objects of virtual double categories.
\end{example}

\section{Robust algebraic patterns}\label{section:robust patterns}
In \cref{section:necessity}, we will show the converse of \cref{thm:main-thm-expo-CSeg} (and hence also \cref{thm:main-thm-expo-factorization-Algd}) for so-called \emph{robust} algebraic patterns. This preliminary section is dedicated to introducing this class of patterns. We will discuss a range of examples of robust patterns in \cref{subsec:examples-robust}. The reader may safely skip to \cref{section:robust patterns} and consult the results of this section when needed.

\subsection{The $\pi_0$ functor}
We will start with a general construction that will be necessary to state the definition of the robust patterns. 

Let $x$ be an object of an algebraic pattern $\O$. Then we will write 
$$\pi_0(x):=\pi_0|\O^{\el}_{x/}|$$
for the set of connected components of its elementary slice.
Any inert morphism $f:x\intarrow y$ in $\O$ induces a function 
$\pi_0(y)\to \pi_0(x)$ by precomposition. Moreover, if $\O$ is \emph{sound} in the sense of \cite[Definition 3.3.2]{BarkanHaugsengSteinebrunner}, then  any active map $g:x\actarrow y$ induces a morphism $\pi_0(x)\to \pi_0(y)$. Indeed, following  \cite{BarkanHaugsengSteinebrunner}, we define the category $\cO^\elem(g)$ as the pullback
\[\begin{tikzcd}
	{\cO^\elem(g)} & {\Ar(\cO^\inert_{x/})} \\
	{\cO^\elem_{y/} \times \cO^\elem_{x/}} & {\cO^\inert_{x/} \times \cO^\inert_{x/}}.
	\arrow[from=1-1, to=1-2]
	\arrow[from=1-1, to=2-1]
	\arrow["\lrcorner", phantom, very near start, draw=none, from=1-1, to=2-2]
	\arrow[from=1-2, to=2-2]
	\arrow[from=2-1, to=2-2]
\end{tikzcd}\]
Recall that $\cO$ is defined to be {sound} if for any active morphism $g \colon x \actarrow y$, the projection $\cO^\elem(g) \to \cO^\elem_{x/}$ is initial.
This in particular implies that $\pi_0|\cO^\elem(g)| \to \pi_0|\O^{\el}_{x/}| = \pi_0(x)$ is an equivalence when $g$ is active, hence we obtain the desired map $\pi_0(x)\to \pi_0(y)$. We will now show that we can extend this assignment to an \textit{oplax} functor $\pi_0:\O\to \Span(\Set)$ where $\Span(\Set)$ is the $2$-category of spans of sets.

\begin{construction}
Let $\Ar^\inert(\cO) \subset \Ar(\cO)$ be the full subcategory spanned by inert morphisms. Then $\ev_0 \colon \Ar^\inert(\cO) \to \cO$ is the cartesian fibration corresponding to the functor $\cO^\inert_{-/} \colon \cO^\op \to \Cat$.
We write $\Ar^\inert(\cO)^\vee \to \cO^\op$ for the \textit{cocartesian} fibration corresponding to this functor.
    We will consider the full subcategory $\Ar^\inert(\cO)^{\vee,\elem}$ spanned by the inert arrows $x \intarrow e$ whose target is an elementary.
   
\end{construction}

\begin{remark}
    We note that $\Ar^\inert(\O)^{\vee, \el} \to \O^\op$ is cocartesian when restricted to the inert maps.
\end{remark}

\begin{remark}\label{rem:sections of the expo fib}
    Using the description of the cartesian lifts of $\Ar^\inert(\O)\to \O$, we can compute the fibers of 
    $
    \Fun([1], \Ar^\inert(\O)^{\vee,\el}) \to \Fun([1], \O^\op)
    $
    above a morphism $f : x\to y$ as follows. It is given by the category of commutative diagrams in $\O$ of shape 
    \[\begin{tikzcd}
	x \arrow[r,"f"]\arrow[d,tail] & y\arrow[d,tail] \\
	{x'} \arrow[d,tail]\arrow[r,squiggly]& e. \\
	{e'}
\end{tikzcd}\]
    This recovers the category $\O^\el(f)$ defined before if $f$ is active. We will use the same notation for this category if $f$ is not necessarily active.
\end{remark}

\begin{lemma}
    \label{lema:expo result}
    Let $\O$ be a sound pattern. Then the functor $\Ar^\inert(\cO)^{\vee,\elem}\to \O^\op$ is exponentiable.
\end{lemma}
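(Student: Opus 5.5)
We verify the Conduché criterion (\cite[Proposition B.3.2]{HA}, or \cite[Lemma 2.2.8]{AyalaFrancis2020FibrationsInftyCategories}) for $p \colon \Ar^\inert(\cO)^{\vee,\elem}\to \O^\op$. Fix a composable pair in $\O^\op$, i.e.\ morphisms $x \xrightarrow{f} y \xrightarrow{g} z$ in $\O$, and a lift of $gf$: an object $(x\intarrow e')$ over $x$, an object $(z \intarrow e)$ over $z$, and a morphism between them over $gf$. By \cref{rem:sections of the expo fib}, such a morphism is a diagram $x \intarrow x' \actarrow e$, $x'\intarrow e'$ with $z\intarrow e$ and commuting square, i.e.\ an object of $\O^\el(gf)$. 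The category $\mathrm{Fact}$ to be shown weakly contractible consists of factorizations through an object $(y\intarrow e'')$ over $y$, i.e.\ pairs of objects of $\O^\el(f)$ and $\O^\el(g)$ sharing the elementary $e''$ under $y$, whose composite recovers the chosen object of $\O^\el(gf)$.

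First, we reduce to $f,g$ active. Factor $f$ and $g$ inert–active and use that $p$ is cocartesian over inert maps of $\O$ (the preceding remark). Transporting along the inert parts then identifies $\mathrm{Fact}$ with the corresponding category for the active parts, after moving the inert part of $g$ past the active part of $f$ using the factorization system. This is the usual argument that cocartesian transport does not change factorization categories. So assume $f \colon x\actarrow y$ and $g\colon y\actarrow z$ are active.

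Second, we unwind. With both maps active, the given lift is $x\intarrow x'\actarrow e$ with $z\intarrow e$, where $x'$ is obtained by factoring $x \to z\intarrow e$. Factoring $y\to z \intarrow e$ as $y\intarrow y'\actarrow e$ gives a canonical intermediate $y'$, but $y'$ need not be elementary. A factorization therefore consists of an elementary $e''$ with $y\intarrow e''$, compatible maps $y' \to e''$, and $e'' \to e'$-type data in the $\O^\el(f)$ component. Using the essential uniqueness of factorizations, we identify $\mathrm{Fact}$ with the fiber of $\O^\el(f_{y'}) \to \O^\el_{x'/}$ over the given point $x'\intarrow e'$. Here $f_{y'} \colon x'\actarrow y'$ is the active map induced from $f$ by the inert map $y\intarrow y'$. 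In other words, $\mathrm{Fact}$ is the comma-type category $\O^\el(f_{y'})\times_{\O^\el_{x'/}}(\O^\el_{x'/})_{/(x'\intarrow e')}$ (or the corresponding under-slice, depending on variance).

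Third, we apply soundness. By soundness, $\O^\el(f_{y'})\to\O^\el_{x'/}$ is initial. By Quillen's Theorem A (Joyal's version), initiality says exactly that these slice comma categories are weakly contractible, which gives the Conduché condition.

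The main obstacle is the second step. It requires correctly identifying $\mathrm{Fact}$ with the comma category of the projection $\O^\el(f_{y'})\to \O^\el_{x'/}$, with the right variance, and tracking the cocartesian-versus-cartesian conventions of $\Ar^\inert(\O)^\vee$. I would first carry this out in the fiberwise description of \cref{rem:sections of the expo fib} for functors out of $[2]$, checking that $\Fun_{/\O^\op}([2],\Ar^\inert(\O)^{\vee,\el})$ over $(f,g)$ decomposes as a fiber product of $\O^\el(f)$ and $\O^\el(g)$ over $\O^\el_{y/}$.
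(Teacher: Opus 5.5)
Your route is the paper's. You apply the Conduch\'e criterion and note that a factorization is a diagram in which $y \intarrow y' \actarrow e$ factors $y \to z \intarrow e$ and $x \intarrow x' \actarrow y'$ factors $x \to y \intarrow y'$. You then identify $\Fact$ with the fiber over $j \colon x' \intarrow e'$ of the projection $\O^\el(f') \to \O^\el_{x'/}$, where $f' \colon x' \actarrow y'$ is active.

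Your first step, the reduction to $f,g$ active, can be dropped. The identification in your second step works verbatim for arbitrary $f$ and $g$, because $f'$ is active by construction, and this is how the paper proceeds. The standard cocartesian-transport argument you invoke does not apply as stated anyway: in $\O^\op$ the inert parts sit at the end of $f^\op$ and $g^\op$, not at the start, and the reshuffling changes the middle object of the factorization.

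The genuine gap is in the last step. $\Fact$ is the strict fiber $\O^\el(f') \times_{\O^\el_{x'/}} \{j\}$, not the comma category $\O^\el(f') \times_{\O^\el_{x'/}} (\O^\el_{x'/})_{/j}$. Your ``in other words'' equates two categories that are not equivalent: the comma category also contains objects equipped with a nontrivial inert $e_0 \intarrow e'$ under $x'$. Soundness together with Quillen's Theorem A gives weak contractibility only of the over-comma categories; the under-comma alternative you mention is not what initiality controls.

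To close the gap, you need that $\O^\el(f') \to \O^\el_{x'/}$ is a cocartesian fibration. It is obtained by base change from the arrow category $\Ar(\O^\inert_{x'/})$, projecting to the target. Its cocartesian lifts are given by postcomposing the inert $x'' \intarrow e_0$ with an inert $e_0 \intarrow e_1$ between elementaries under $x'$.

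Granting this, cocartesian pushforward is left adjoint to the inclusion of the fiber over $j$ into the over-comma category. The two categories are therefore weakly homotopy equivalent, and $\Fact$ is weakly contractible. This fiberwise form of soundness is exactly what the paper imports from \cite[Lemma 3.3.9]{BarkanHaugsengSteinebrunner}.
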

\begin{proof}
Let $f:x\to y$ and $g:y\to z$ be two maps in $\O$, and $\alpha$ be a morphism in $\Ar^\inert(\cO)^{\vee,\elem}$ over $g\circ f$. By \cref{rem:sections of the expo fib}, the morphism $\alpha$ corresponds to a diagram 
\[\begin{tikzcd}[sep=scriptsize]
	x & y & z \\
	{x'} && e \\
	e'
	\arrow["f", from=1-1, to=1-2]
	\arrow[tail, from=1-1, to=2-1]
	\arrow["g", from=1-2, to=1-3]
	\arrow["i", tail, from=1-3, to=2-3]
	\arrow[squiggly, from=2-1, to=2-3]
	\arrow["j"', tail, from=2-1, to=3-1]
\end{tikzcd}\]
with $e$ and $e'$ elementary. To apply the Conduch\'e criterion (see \cite[Proposition B.3.2]{HA} or \cite[Lemma 2.2.8]{AyalaFrancis2020FibrationsInftyCategories}), we have to show that the category of diagrams in $\O$ of shape
\[\begin{tikzcd}[sep=scriptsize]
	x & y & z \\
	{x'} & {y'} & e \\
	{x''} & {e''} \\
	e'
	\arrow["f", from=1-1, to=1-2]
	\arrow[tail, from=1-1, to=2-1]
	\arrow["g", from=1-2, to=1-3]
	\arrow[tail, from=1-2, to=2-2]
	\arrow["i", tail, from=1-3, to=2-3]
	\arrow["{f'}", squiggly, from=2-1, to=2-2]
	\arrow[tail, from=2-1, to=3-1]
	\arrow["j"', curve={height=12pt}, tail, from=2-1, to=4-1]
	\arrow[squiggly, from=2-2, to=2-3]
	\arrow[tail, from=2-2, to=3-2]
	\arrow[squiggly, from=3-1, to=3-2]
	\arrow[tail, from=3-1, to=4-1]
\end{tikzcd}\]
is weakly contractible. However, this category is equivalent to the fiber of $\O^{\el}(f')\to \O^{\el}_{e'/}$ at $j$ and is hence weakly contractible by \cite[Lemma 3.3.9]{BarkanHaugsengSteinebrunner}.
\end{proof}

\begin{construction}
\label{construction:oplax functor between corr and span set}
    Let $\mathrm{Prof}$ be the (flagged) 2-category of categories and profunctors/\allowbreak correspondences/\allowbreak two-sided fibrations.\footnote{The underlying (flagged) 1-category is the \textit{opposite} of the category $\mathrm{Corr}$ considered by Ayala--Francis \cite{AyalaFrancis2020FibrationsInftyCategories}.} A \textit{two-sided fibration} from a category $\C$ to a category $\D$ is a span $(p,q) : E \to \C \times \D$ so that its fibers are spaces, $p$ is cocartesian with cocartesian morphisms lying above equivalences in $\C$, and $q$ is cartesian with cartesian morphisms lying above equivalences in $\D$. These were called \textit{bifibrations} in \cite[\S 2.4.7]{HTT}.
    We will now construct a unital oplax functor $\mathrm{Prof}\to \Span(\Set)$ that carries a two-sided fibration $E \to \C \times \D$ to the span $\pi_0|E| \to \pi_0|\C| \times \pi_0|\D|$.
    
    The (flagged) 2-category $\mathrm{Prof}$ arises as the horizontal fragment (see e.g.\ \cite[Section 3.3]{Ruit2025Companions}) of the double category $\mathbb{C}\mathrm{at}$ of categories. Similarly, the 2-category $\Span(\Set)$ is the horizontal fragment of the double category $\SSpan(\Set)$ of spans.
     There is a canonical functor $\mathbb{S}\mathrm{pan}(\Spc) \to \mathbb{C}\mathrm{at}$ between double categories that selects the full subdouble category spanned by the spaces.\footnote{Alternatively, the canonical inclusion from $\mathbb{S}\mathrm{pan}(\Spc)$ to $\mathbb{C}\mathrm{at}$ can be obtained by viewing $\mathbb{C}\mathrm{at}$ as a subdouble category of the Morita construction applied to $\mathbb{S}\mathrm{pan}(\Spc)$; see \cite[Proposition 7.1]{Blom2024StraighteningEveryFunctor}.} Consider the composite inclusion $$\mathbb{S}\mathrm{pan}(\Set) \to \mathbb{S}\mathrm{pan}(\Spc) \to \mathbb{C}\mathrm{at}.$$
    This functor recovers the inclusion $\Set \to \Cat$ on vertical categories, and interprets each span of sets as a two-sided fibration. We claim that this functor has an oplax left adjoint;\ i.e.\ that it has a left adjoint in the ambient 2-category $\Fun^\mathrm{oplax/int-strong}(\Delta^\op, \Cat)$ of functors and oplax natural transformations that are strong when restricted to inerts. By e.g.\ the dual of \cite[Corollary 6.9]{Ruit2025Companions}  and the Segal condition, it suffices to check that the horizontal functors in the square 
    \[
        \begin{tikzcd}
            \Fun(\bullet \leftarrow \bullet \rightarrow \bullet, \Set)\arrow[d] \arrow[r]\arrow[d] & \mathrm{TsFib}\arrow[d] \\
            \Set \times \Set \arrow[r] & \Cat \times \Cat
        \end{tikzcd}
    \]
    admit left adjoints, and that the associated mate is an equivalence. This is readily verified. Here $\mathrm{TsFib} \subset \Fun(\bullet \leftarrow \bullet \rightarrow \bullet, \Cat)$ is the full subcategory spanned by the two-sided fibrations.  The resulting left adjoint functor $\pi_0|-|: \mathbb{C}\mathrm{at} \to \mathbb{S}\mathrm{pan}(\Set)$ is also unital. To see this, we have to verify that the mate of the square 
    \[
        \begin{tikzcd}
            \Set \arrow[d] \arrow[r] & \Cat \arrow[d]\\
            \Fun(\bullet \leftarrow \bullet \rightarrow \bullet, \Set) \arrow[r]& \mathrm{TsFib}
        \end{tikzcd}
    \]
    is also invertible. But this follows from the fact that $|\Fun([1], \C)| \simeq |\C|$. We can now take horizontal fragments to obtain the desired oplax functor $\mathrm{Prof} \to \Span(\Set)$.
\end{construction}

\begin{construction}
    By \cite[Theorem 0.8]{AyalaFrancis2020FibrationsInftyCategories} or \cite[Corollary 6.2]{Blom2024StraighteningEveryFunctor}, and \cref{lema:expo result}, the functor 
    $\Ar^\inert(\cO)^{\vee,\elem} \to \cO^\op$ is classified by a functor $E:\O\to \mathrm{Prof}$ to the (flagged)  2-category of categories and profunctors.
    Composing with the (unital) oplax functor 
     $\mathrm{Prof}\to \Span(\Set)$ of \cref{construction:oplax functor between corr and span set},  we obtain a unital oplax functor $\pi_0:\O\to \Span(\Set)$ that sends a map $f \colon x \to y$ to the span
    \[\begin{tikzcd}[sep={2.5em, between origins}]
    	& {\pi_0|\cO^\elem(f)|} & \\
    	{\pi_0(x)} && {\pi_0(y)},
    	\arrow[from=1-2, to=2-1]
    	\arrow[from=1-2, to=2-3]
    \end{tikzcd}\]
    see \cref{rem:sections of the expo fib}.
    If $f$ is inert, then the right leg is an equivalence since $\Ar^\inert(\cO)^{\vee,\elem} \to \cO^\op$ is a cocartesian fibration over the inert morphisms, and if $f$ is active then the left leg is an equivalence by soundness.
    We conclude that $\pi_0$ is of the desired form.
\end{construction}

\begin{remark}\label{remark:oplax-comparison-pi0}
    Let $f \colon x \to y$ and $g \colon y \to z$ be morphisms in $\cO$.
    Since $E \colon \cO \to \mathrm{Prof}$ is a strong functor, as opposed to (op)lax, the composition formula for two-sided fibrations (see \cite{AyalaFrancis2020FibrationsInftyCategories}) implies that the canonical map
    $|\cO^\elem(f) \times_{\cO^\elem_{y/}} \cO^\elem(g)| \to |\cO^\elem(gf)|$
    is an equivalence.
    In particular, we obtain a diagram
    \[\begin{tikzcd}[sep={3em,between origins}]
    	&& {\pi_0|\cO^\elem(gf)|} && \\
    	& {\pi_0|\cO^\elem(f)|} && {\pi_0|\cO^\elem(g)|} \\
    	{\pi_0(x)} && {\pi_0(y)} && {\pi_0(z)}
    	\arrow[from=1-3, to=2-2]
    	\arrow[from=1-3, to=2-4]
    	\arrow[from=2-2, to=3-1]
    	\arrow[from=2-2, to=3-3]
    	\arrow[from=2-4, to=3-3]
    	\arrow[from=2-4, to=3-5]
    \end{tikzcd}\]
    where the square is obtained by applying $\pi_0|-|$ to the pullback defining $\cO^\elem(f) \times_{\cO^\elem_y/} \cO^\elem(g)$
    and then identifying $\pi_0|\cO^\elem(f) \times_{\cO^\elem_{y/}} \cO^\elem(g)|$ with $\cO^\elem(gf)$ as above.
    The gap map of this square precisely defines the oplax comparison 2-morphism $\pi_0(gf) \Rightarrow \pi_0(g) \circ \pi_0(f)$ of $\pi_0 \colon \cO \to \Span(\Set)$.
\end{remark}

\begin{lemma}
    Suppose that any square of the form
    \begin{equation}\label{eq:inertactivsquare}
    \begin{tikzcd}
        	w & z \\
        	x & y
        	\arrow[squiggly, from=1-1, to=1-2]
        	\arrow[tail, from=2-1, to=1-1]
        	\arrow[tail, from=2-2, to=1-2]
        	\arrow[squiggly, from=2-1, to=2-2]
    \end{tikzcd}
    \end{equation}
    is sent to a cartesian square of sets by the functor $\pi_0$.
    Then the unital oplax functor $\pi_0 \colon \cO \to \Span(\Set)$ sending $x$ to $\pi_0(x)$ is a strong functor.
\end{lemma}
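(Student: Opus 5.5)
We need to show that for composable $f\colon x\to y$ and $g\colon y\to z$ in $\cO$, the oplax comparison $\pi_0(gf)\Rightarrow \pi_0(g)\circ\pi_0(f)$ is invertible. By \cref{remark:oplax-comparison-pi0}, this comparison is the gap map of the square obtained by applying $\pi_0|-|$ to the pullback $\cO^\elem(f)\times_{\cO^\elem_{y/}}\cO^\elem(g)$. So we must show that this square of sets is cartesian, i.e.\ that $\pi_0$ sends the composite to the set-theoretic composite of spans.

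The plan is to reduce to four cases via the inert--active factorization. Write $f = f^\act f^\inert$ and $g = g^\act g^\inert$. Since $\pi_0$ is unital oplax, and oplax comparisons compose coherently, it suffices to show that the comparison is invertible in the following four situations: $(f,g)$ are (inert, inert), (active, active), (inert, active), and (active, inert).

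\textbf{Pairs of the same type.} An inert followed by an inert is fine, because on inerts $\pi_0$ is just the contravariant restriction functor: the right legs are bijections, and composing spans with an invertible leg is strict. The same holds for active followed by active, using soundness: the left legs are bijections. In each case the span with a bijective leg is a (co)graph of a function, and composing graphs of functions involves no comparison issue. Concretely, if the left leg of $\pi_0(g)$ is a bijection, then the pullback defining $\pi_0(g)\circ\pi_0(f)$ is just $\pi_0|\cO^\elem(f)|$, and this agrees with $\pi_0|\cO^\elem(gf)|$ by the equivalence $|\cO^\elem(f)\times_{\cO^\elem_{y/}}\cO^\elem(g)|\simeq|\cO^\elem(gf)|$ once $\cO^\elem(g)\to\cO^\elem_{y/}$ is initial. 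The dual argument applies when the right leg of $\pi_0(f)$ is a bijection. In fact the same reasoning handles (inert, active) directly, since then the right leg of $\pi_0(f)$ and the left leg of $\pi_0(g)$ are both bijections.

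\textbf{The active-then-inert case.} This is the remaining case, with $f$ active and $g$ inert. Factor $gf$ as an inert $x\intarrow w$ followed by an active $w\actarrow z$, which produces a square of the shape \eqref{eq:inertactivsquare} with $y$ playing the role of $z$ there. Now $\pi_0(gf)\cong\pi_0(\text{active})\circ\pi_0(\text{inert})$ by the case just treated. Meanwhile $\pi_0(g)\circ\pi_0(f)$ is the composite of a function $\pi_0(x)\to\pi_0(y)$ with a restriction $\pi_0(y)\leftarrow\pi_0(z)$, which is a span whose apex is $\pi_0(x)\times_{\pi_0(y)}\pi_0(z)$. The hypothesis says precisely that the square $\pi_0$ assigns to this inert-active square is cartesian, so the two spans have isomorphic apexes. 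It remains to check that this isomorphism is the oplax comparison itself. I expect this identification to be the main obstacle. To handle it, I would use naturality of the comparison maps: paste the comparison cells for $(\text{inert},\text{active})$, which are invertible, with those for $(f,g)$ along the commuting square $gf=a\circ i$. Then compare the resulting maps out of $\pi_0|\cO^\elem(gf)|$ into both pullbacks, checking that these maps agree over $\pi_0(x)\times\pi_0(z)$.

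Finally, the general case follows from the four cases by the associativity coherence of an oplax functor. The comparison for $g\circ f$ can be expressed through comparisons of the factors; rearranging the middle composite $g^\inert f^\act$ with the active-then-inert case turns everything into invertible cells.
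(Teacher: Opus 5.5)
Your proposal is correct and is essentially the paper's proof. You reduce via the inert--active factorization and handle the cases with $f$ inert or $g$ active by showing that the relevant projection out of $\cO^\elem(f)\times_{\cO^\elem_{y/}}\cO^\elem(g)$ is a weak equivalence. For $g$ active this is initiality of $\cO^\elem(g)\to\cO^\elem_{y/}$ pulled back along the cartesian fibration $\cO^\elem(f)\to\cO^\elem_{y/}$, a fact you should state explicitly. For $f$ inert, where you only say ``dual argument'', the paper uses that $\cO^\elem(f)\to\cO^\elem_{y/}$ has a fully faithful adjoint, which is stable under base change. Finally you treat $f$ active, $g$ inert by factoring $gf$ as an inert followed by an active and invoking the hypothesis, exactly as the paper does.

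The identification you flag as the main obstacle is in fact automatic. A map of spans into a span with apex $\pi_0(x)\times_{\pi_0(y)}\pi_0(z)\subseteq\pi_0(x)\times\pi_0(z)$ is determined by its legs, and checking the legs is exactly what you propose; the paper simply asserts this identification.
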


\begin{proof}
    By \cref{remark:oplax-comparison-pi0}, it suffices to show that the square
    \begin{equation}\label{eq:oplax-cart-square}
    \begin{tikzcd}
    	{\pi_0|\cO^\elem(f) \times_{\cO^\elem_{y/}} \cO^\elem(g)|} & {\pi_0|\cO^\elem(g)|} \\
    	{\pi_0|\cO^\elem(f)|} & {\pi_0|\cO^\elem_{y/}|}
    	\arrow[from=1-1, to=1-2]
    	\arrow[from=1-1, to=2-1]
    	\arrow[from=1-2, to=2-2]
    	\arrow[from=2-1, to=2-2]
    \end{tikzcd}
    \end{equation}
    is cartesian for any $f \colon x \to y$ and $g \colon y \to z$ in $\cO$.
    In light of the inert-active factorization system on $\cO$, we may assume without loss of generality that $f$ is active or inert, and similarly for $g$.
    If $f$ is inert, then $\cO^\elem(f) \to \cO^\elem_{y/}$ admits a fully faithful right adjoint by \cite[Lemma 3.2.6]{AyalaFrancis2020FibrationsInftyCategories}, hence so does $\cO^\elem(f) \times_{\cO^\elem_{y/}} \cO^\elem(g) \to \cO^\elem(g)$ (see e.g.\ \cite[Proposition 2.6]{HaineRamziSteinPushouts}).
    This means that the horizontal maps in the square \cref{eq:oplax-cart-square} are bijections, hence it is cartesian.
    If instead $g$ is active, then $\cO^\elem(g) \to \cO^\elem_{y/}$ is initial.
    Since $\cO^\elem(f) \to \cO^\elem_{y/}$ is a cartesian fibration, the map $\cO^\elem(g) \times_{\cO^\elem_{y/}} \cO^\elem(f) \to \cO^\elem(f)$ is also initial.
    We therefore see that the vertical maps in the square \cref{eq:oplax-cart-square} are bijections, hence it is cartesian.
    We are therefore left with the case that $f$ is active and $g$ is inert.
    In this case, by factoring the composite $gf$ into an inert $x \intarrow w$ followed by an active $w \actarrow z$ and considering the previous cases, it follows that the square \cref{eq:oplax-cart-square} agrees with the square obtained by applying $\pi_0$ to \cref{eq:inertactivsquare}.
\end{proof}

\begin{remark}\label{rem:naturality of pi0}
    Let $f: \O \to \P$ be a map between algebraic patterns that induces a bijection $\pi_0(x) \to \pi_0f(x)$ for every $x \in \O$. Then we claim that we have a commutative diagram
    \[
        \begin{tikzcd}
            \O \arrow[r, "\pi_0^\O"]\arrow[d,"f"'] & \Span(\Set) \\
            \P \arrow[ur, "\pi_0^\P"'] & 
        \end{tikzcd}
    \]
    of oplax functors. Namely, we have a commutative diagram  
    \[
        \begin{tikzcd}
            \Ar^\inert(\O)^{\vee,\elem} \arrow[r]\arrow[d] & \Ar^\inert(\P)^{\vee,\elem} \arrow[d] \\
            \O^\op \arrow[r] & \P^\op.
        \end{tikzcd}
    \]
    The induced map $\Ar^\inert(\O)^{\vee,\elem} \to \Ar^\inert(\P)^{\vee,\elem} \times_{\P^\op} \O^\op$ is classified by a map $[1]_v \times \O_h \to \CCAT$; see \cite[Proposition 3.8.5]{RuitThesis} for a proof following Ayala--Francis \cite{AyalaFrancis2020FibrationsInftyCategories}, or \cite[Theorem 8.1]{Blom2024StraighteningEveryFunctor}. Thus we obtain an oplax functor $[1]_v \times \O_h \to \SSpan(\Set)$ from $\pi_0^\O$ to $\pi_0^\P \circ f$. By assumption, this factors through $[1]_v \times \O_h \to \O_h$. 
\end{remark}

\subsection{Robust patterns} We introduce the following terminology. 

\begin{definition}\label{def:new robust}
    An algebraic pattern $\O$ is called \textit{robust} if the following conditions hold:
    \begin{enumerate}[(1)]
        \item $\cO$ is sound in the sense of \cite[Definition 3.3.4]{BarkanHaugsengSteinebrunner}.
        \item $\cO$ is saturated in the sense of \cite[Definition 14.15]{ChuHaugseng2021HomotopycoherentAlgebraSegal},\ i.e.\ for every $x \in \O$, the presheaf $\Hom_{\cO}(x,-)$ is a Segal $\O$-space (see \cref{def:segal objects}).
        \item A map $x \to y$ in $\cO$ is inert if and only if for any inert $y \intarrow e$ with $e$ elementary, the composite $x \to y \intarrow e$ is inert.
        \item \label{item} The (unital) oplax functor  $\pi_0 : \O \to \Span(\Set)$ satisfies the following conditions:
            \begin{enumerate}
        \item For any $x$, $\pi_0(x)$ is a finite set, i.e.\ each elementary slice $\O^{\el}_{x/}$ has a finite number of connected components.
        \item The functor $\pi_0$ is strong, or equivalently, it sends any square of shape  \cref{eq:inertactivsquare} to a cartesian square of sets.
        \item If $x \in \O$ lies over $\underline{n}$, there exist $\pi_0$-cocartesian inert maps $x \intarrow x^i$ over the inerts $\underline{n} \hookleftarrow \{i\} \xrightarrow{=} \{i\}$ 
        that induce an equivalence
        $$
        \O^\act_{/x} \to \prod_{i=1}^n \O^\act_{/x^i}.
        $$
        \item
        For any commutative square 
         \[
                \begin{tikzcd}
                    a \arrow[d,tail]\arrow[r,squiggly]& x \arrow[d, tail] \\ b \arrow[r,squiggly] & x^i,
                \end{tikzcd}
        \]
        in $\O$ so that the right vertical arrow is a $\pi_0$-cocartesian lift of the inert $\underline{n} \hookleftarrow \{i\} \xrightarrow{=} \{i\}$, the left vertical arrow is $\pi_0$-cocartesian as well.
    \end{enumerate}
    \end{enumerate}
\end{definition}

\begin{remark}
    The idea of this definition is that it allows one to talk about \emph{partial composites} of active morphisms in the following sense: Suppose $y \actarrow x$ is active, let $j \in \pi_0(y)$ and let $y \intarrow y^j$ be as in condition (4c). Given an active $z \actarrow y^j$, let $\overline{z} \actarrow y$ be the map that, under the equivalence $\cO^\activ_{/y} \simeq \prod_{i=1}^n \cO^\activ_{/y^i}$, corresponds to $y^i \xrightarrow{=} y^i$ if $i \neq j$ and to $z \actarrow x^j$ when $i=j$.
    One can then think of the composite $\overline{z} \actarrow y \actarrow x$ as a partial composite of $z \actarrow y^i$ with $y \actarrow x$.
    Condition (4d) will guarantee that one can form multiple such partial composites after each other for a collection of elements of $\pi_0(y)$.
    These ideas will be made precise in \cref{subsec:grafting,subsec:multigrafting}.
\end{remark}

\begin{definition}\label{def:atomically robust}
    An algebraic pattern $\O$ is called 
    \textit{atomically robust} if it satisfies conditions (1)-(3) of \cref{def:new robust} and $\pi_0(x)\simeq\underline{1}$ for every $x\in \O$, i.e.\ if each elementary slice $\O^{\el}_{x/}$ is connected.
\end{definition}

\begin{remark}
    Every atomically robust pattern is robust, as the functor $\pi_0:\O\to \Span(\Set)$ factors through $\underline{1} : \ast \to \Span(\F)$ and then automatically meets condition \cref{item} of \cref{def:new robust}.
\end{remark}

A range of examples of robust patterns will be showcased in \cref{subsec:examples-robust}.
We now state some results about robust patterns that will be useful later.

\begin{notation}
    Given $t\in \O_n$, we will write $\pi_0(t)$ for the set of connected components of $(\O^{\el}_n)_{t/}$. Note that $\pi_0(t) \simeq \pi_0(t_n)$ by \cref{rem:slices-factorization-double-category}.
\end{notation}

We recall the algebraic pattern structure $\Span(\F)^\flat$ on $\Span(\F)$ from \cref{examples:examples of patterns} where the backward maps are the inert morphisms, the forward maps are the active morphisms and the elementaries are given by $\{\underline 1\}$. One readily reads off from the construction that the functor $\pi_0 : \O \to \Span(\F)$ preserves inerts, actives and elementaries. Consequently, it may be viewed as a map of patterns $\O \to \Span(\F)^\flat$.
In turn, this gives rise to a functor between factorization double categories via \cref{prop:factorization system and double factorysation caregory}. Thus we have functors $\O_n \to \Span(\F)_n$ between the degree $n$ parts of these double categories. 

\begin{proposition}\label{lem:pi0 levelwise cocart}
    Let $\O$ be a robust pattern.
    The functor $\O_n \to \Span(\F)_n$ induced by $\pi_0$ admits cocartesian lifts of maps with elementary codomains.
\end{proposition}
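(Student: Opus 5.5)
We have to show that for $t \in \O_n$ and a morphism $\pi_0(t) \to \sigma$ in $\Span(\F)_n$ whose codomain $\sigma$ is elementary (so $\sigma_n = \underline 1$), there is a lift in $\O_n$ out of $t$ that is cocartesian for $\O_n \to \Span(\F)_n$. Morphisms of $\O_n$ are inert, i.e.\ vertical, so this is a statement about inerts.

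\emph{Reduction to $n=0$.} By \cref{rem:slices-factorization-double-category}, the target projections identify $(\O_n)_{t/} \simeq (\O_0)_{t_n/}$ and $(\Span(\F)_n)_{\pi_0(t)/} \simeq (\Span(\F)_0)_{\pi_0(t_n)/}$. Since $\pi_0$ is a map of factorization double categories, these identifications are compatible with $\pi_0$. Cocartesianness of a morphism out of $t$ is a statement about slices under $t$ and under its target. It therefore transfers along these equivalences, and the case of general $n$ follows from $n=0$.

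\emph{The case $n=0$.} Now the map is an inert morphism $\pi_0(x) \to \underline 1$ in $\Span(\F)$. This is a backward injection $\underline 1 \hookrightarrow \pi_0(x)$, i.e.\ it picks out some $i \in \pi_0(x)$. Condition (4c) of \cref{def:new robust} provides an inert map $x \intarrow x^i$ that is $\pi_0$-cocartesian over exactly this inert. It remains to check that this lift is cocartesian for the restriction $\O^\inert \to \Span(\F)^\inert$, rather than only in the sense of (4c).

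Concretely, we must show that for every $y$ the square
\[
\begin{tikzcd}
\Map_{\O^\inert}(x^i,y) \arrow[r]\arrow[d] & \Map_{\O^\inert}(x,y) \arrow[d] \\
\Map(\underline 1,\pi_0(y)) \arrow[r] & \Map(\pi_0(x),\pi_0(y))
\end{tikzcd}
\]
is cartesian, where the bottom row consists of inert maps in $\Span(\F)$, i.e.\ injections in the reverse direction. Here I expect that ``$\pi_0$-cocartesian'' in (4c) is defined to mean precisely this, in which case the statement is immediate. If it means something weaker, I would verify the square by hand, as follows. First reduce to elementary $y$ using condition (3): a map into $y$ is inert if and only if its composites with all $y \intarrow e$ are inert. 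Next use saturation (2) to write $\Map(x,y) \simeq \lim_{e} \Map(x,e)$. Finally use the finiteness of $\pi_0$ and soundness to reduce to showing that an inert $x \intarrow e$ whose component lies in $i$ factors uniquely through $x \intarrow x^i$. This last factorization follows from the equivalence $\O^\elem_{x^i/} \simeq$ (the $i$-th component of $\O^\elem_{x/}$). That equivalence in turn is extracted from (4c) together with (4d), the latter applied to squares with identity active maps.

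\emph{Main obstacle.} The main difficulty is this last identification of the elementary slice of $x^i$ with the $i$-th connected component of $\O^\elem_{x/}$. If it fails to follow directly from (4c) and (4d), I would instead deduce it from the strongness condition (4b), applied to the square with identity active edges.
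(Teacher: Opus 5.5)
Your reduction from $\O_n$ to $\O_0$ via the slice equivalences of \cref{rem:slices-factorization-double-category} is what the paper does. The case $n=0$, however, has a gap.

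In (4c), ``$\pi_0$-cocartesian'' means cocartesian for the functor $\pi_0\colon\O\to\Span(\F)$ on \emph{all} of $\O$. It does not mean cocartesian for the restriction $\O^\inert\to\Span(\F)^\inert\simeq\F^\op$, so your ``immediate'' case does not apply. What is needed is a short transfer from the full functor to the inert one, and your proposal never supplies it. The paper compares your square of inert mapping spaces with the analogous square of all mapping spaces:
\begin{itemize}
\item The maps from the inert square to the full one are monomorphisms, since they are inclusions of subcategories.
\item The full comparison map is an equivalence by (4c), so the inert comparison map is a monomorphism.
\item It is surjective on components. Given an inert $h\colon x\intarrow y$ with compatible data on $\pi_0$, full cocartesianness produces $g\colon x^i\to y$ with $g\circ(x\intarrow x^i)\simeq h$. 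Then $g$ is inert by the cancellation property of the left class of a factorization system: if $gf$ and $f$ are inert, so is $g$.
\end{itemize}

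Your fallback does not close this gap. After you reduce to elementary $y$ using (2) and (3), the claim that an inert $x\intarrow e$ lying over $i$ factors uniquely through $x\intarrow x^i$ is just the $n=0$ statement for elementary targets. The identification of $\O^\el_{x^i/}$ with the $i$-th component of $\O^\el_{x/}$ that you want to invoke is \cref{prop:robust elementary slices decomp}, which the paper deduces \emph{from} the present proposition. The sources you name do not provide it:
\begin{itemize}
\item (4d) applied to a square whose active edges are identities forces both vertical edges to be $x\intarrow x^i$, so it only restates that this map is $\pi_0$-cocartesian.
\item (4b) is a statement about squares of finite sets and says nothing about mapping spaces in $\O$.
\end{itemize}
The missing ingredient is exactly the cancellation argument above.

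A minor point: inerts in $\Span(\F)$ are all backward maps, not only injections, since $\Span(\F)_0\simeq\F^\op$. The map on $\pi_0$ induced by an inert can be non-injective, for instance a fold map of orbits in $\Span(\F_G)$. So the bottom row of your square should be all of $\Map_{\F}(\pi_0(y),\pi_0(x))$.
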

\begin{proof}
    We first handle the case that $n = 0$. Let $x \in \O$, and suppose that we have a map $i:\pi_0(x) \to \underline{1}$ in $\Span(\F)_0 = \F^\op$. Let $x \intarrow x^i$ be the associated $\pi_0$-cocartesian lift. We may then consider the commutative square 
    \[
        \begin{tikzcd}
            \Hom_{\O_0}(x^i, y) \arrow[r]\arrow[d] & \Hom_{\O_0}(x, y) \times_{\Hom_{\F^\op}(\pi_0(x), \pi_0(y))}\Hom_{\F^\op}(\underline{1}, \pi_0(y)) \arrow[d]  \\
            \Hom_{\O}(x^i, y) \arrow[r] & \Hom_{\O}(x, y) \times_{\Hom_{\Span(\F)}(\pi_0(x), \pi_0(y))}\Hom_{\Span(\F)}(\underline{1}, \pi_0(y))
        \end{tikzcd}
    \]
    for each $y \in \O$.
    The bottom arrow is an equivalence by assumption, and the vertical arrows are monomorphisms. The top arrow is also an equivalence by cancellation of inert morphisms.
    
    For general $n$, suppose that $t \in \O_n$ and that we have a map $\pi_0(t) \to X$ in $\Span(\F)_n$ with $X_n = \underline{1}$. We have an equivalence $(\O_n)_{t/} \simeq (\O_0)_{t_n/}$ by \cref{rem:slices-factorization-double-category}. Let $t \to t^i$ be the map so that $t_n \to t_n^i$ is the $\pi_0$-cocartesian lift of $\pi_0(t_n) \to X_n = \underline{1}$. It then follows from the above 
    that the map $(\O_n)_{t^i/} \to (\O_n)_{t/} \times_{(\Span(\F)_n)_{\pi_0(t)/}} (\Span(\F)_n)_{X/}$ is an equivalence.
\end{proof}

\begin{proposition}\label{prop:robust elementary slices decomp}
    Let $\O$ be a robust pattern. If $x\in \O$ lies over $\underline{n}$, then the $\pi_0$-cocartesian inert maps $x\intarrow x^i$ over $\underline{n} \hookleftarrow \{i\}\xrightarrow{=}\{i\}$ induce an equivalence
    $$\coprod_{i=1}^n \O^\el_{x^i/} \to \O^\el_{x/}.$$
\end{proposition}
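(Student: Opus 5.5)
The plan is to compare both sides over the discrete set $\pi_0(x)=\underline{n}$. By definition $\pi_0(x)=\pi_0|\O^\el_{x/}|$, so there is a canonical functor $\O^\el_{x/}\to \pi_0(x)$ to a discrete category. Hence $\O^\el_{x/}$ decomposes as $\coprod_{i}(\O^\el_{x/})_i$, where $(\O^\el_{x/})_i$ is the fiber over $i$, i.e.\ the union of connected components lying over $i$. It therefore suffices to show two things: first, that the functor $x\intarrow x^i$ induces $\O^\el_{x^i/}\to \O^\el_{x/}$ landing in the component $i$; second, that it induces an equivalence onto $(\O^\el_{x/})_i$.

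For the first point, precomposition with the inert $x\intarrow x^i$ induces the map $\pi_0(x^i)\to\pi_0(x)$, which is the backward leg of the span $\underline{n}\hookleftarrow\{i\}\xrightarrow{=}\{i\}$. Hence $\pi_0(x^i)=\{i\}$ and every elementary under $x^i$ lands in component $i$. This uses that $\pi_0$ of an inert map is computed by precomposition, since its right leg is an equivalence.

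For the equivalence, I will use \cref{lem:pi0 levelwise cocart} with $n=0$. For an elementary $e$ we have $\pi_0(e)=\underline{1}$, because $\O^\el_{e/}$ has the initial object $\id_e$. So a map $x\intarrow e$ in $\O_0=\O^\inert$ lies over a map $\underline{1}\to\pi_0(x)$ in $\F$ (that is, $\pi_0(x)\to\underline1$ in $\F^\op$), which is exactly the element $j\in\pi_0(x)$ selecting its component. The cocartesian property from the proof of \cref{lem:pi0 levelwise cocart} says that
\[\Hom_{\O_0}(x^i,e)\to \Hom_{\O_0}(x,e)\times_{\Hom_{\F^\op}(\pi_0(x),\underline1)}\{i\}\]
is an equivalence. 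The right-hand side is precisely the space of inerts $x\intarrow e$ in component $i$. Passing to slices, the induced functor $\O^\el_{x^i/}\to(\O^\el_{x/})_i$ is fully faithful, since on morphisms both sides are computed by maps between the elementary targets. It is essentially surjective by the displayed equivalence. Summing over $i$ gives the claim.

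The main obstacle is making the identification of $\pi_0(x\intarrow e)$ with the component label rigorous. Concretely, we must check that the function $\pi_0(x)\to\pi_0(e)=\underline1$ in $\F^\op$ induced by an inert $x\intarrow e$ equals the map $\underline{1}\to \pi_0(x)$ picking out the component of the object $(x\intarrow e)\in\O^\el_{x/}$. This should follow by unwinding the construction of $\pi_0$ on inerts via the cocartesian fibration $\Ar^\inert(\O)^{\vee,\el}\to\O^\op$: the map sends the component of $\id_e$ in $\O^\el_{e/}$ to the component of the composite $x\intarrow e$. Once this is settled, the rest is formal.
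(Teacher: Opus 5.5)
Your proposal is correct and is essentially the paper's argument. The paper also works fiberwise over elementary objects $y$: it uses \cref{lem:pi0 levelwise cocart} to exhibit $\coprod_{i=1}^n \Hom_{\O_0}(x^i,y)\to \Hom_{\O_0}(x,y)$ as a pullback of the bijection $\coprod_{i=1}^n \Hom_{\F^\op}(\underline{1},\pi_0(y))\to \Hom_{\F^\op}(\underline{n},\pi_0(y))$, and so obtains an equivalence of left fibrations over $\O^\el$; that is also the precise way to phrase your full-faithfulness step.

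The step you flag as the main obstacle is avoidable. If you decompose $\O^\el_{x/}$ by the label map $\Hom_{\O_0}(x,e)\to \Hom_{\F^\op}(\pi_0(x),\underline{1})$ rather than by connected components, as the paper implicitly does, you never need to identify the label with the component. That identification does hold anyway, since $\pi_0$ acts on inerts by precomposition.
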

\begin{proof}
    We will proceed fiberwise. For $y \in \O^\el$, the canonical square
    \[
        \begin{tikzcd}
            \coprod_{i=1}^n\Hom_{\O_0}(x^i,y) \arrow[r]\arrow[d] & \Hom_{\O_0}(x,y) \arrow[d] \\
            \coprod_{i=1}^n \Hom_{\F^\op}(\underline{1}, \pi_0(y)) \arrow[r] & \Hom_{\F^\op}(\underline{n}, \pi_0(y))
        \end{tikzcd}
    \]
    is cartesian by \cref{lem:pi0 levelwise cocart}. The bottom arrow is an equivalence as $\pi_0(y) \simeq \underline 1$, hence the desired conclusion follows.
\end{proof}

\begin{remark}\label{rem:decomposition by atoms formula}
Let $\O$ be a robust pattern and  $\left<n;t\right>$ be a forest. Note that the canonical map
    $
    \textstyle \coprod_{i\in \pi_0(t)} (\O_n^{\el})_{t^i/} \to (\O_n^{\el})_{t/}
    $ is an equivalence by \cref{rem:slices-factorization-double-category} and \cref{prop:robust elementary slices decomp},
    where $t\to t^i$ is the cocartesian lift of the map $\pi_0(t_n) \hookleftarrow \{i\} \xrightarrow{=} \{i\}$.
   We may then use the decomposition formula of \cref{rem:forest decomposition sound} to conclude that the canonical map 
    $$
    \coprod_{i\in \pi_0(t)} X \boxtimes_{[n]} [n;t^i] \to X \boxtimes_{[n]} [n;t]
    $$
    is an equivalence in $\PSh(\Tree[\O])$ for every simplicial space $X$ over $[n]$.
\end{remark}

\begin{proposition}\label{prop:robust-On-is-saturated}
    Let $\O$ be a robust pattern.
    Then the tuple $(\cO_n,\cO_n^\elem)$ is saturated; that is, for any $t$ and $s$ in $\cO_n$, the map
        \[\Map_{\cO_n}(s,t) \to \operatorname{lim}_{u \in (\cO^\elem_n)_{t/}} \Map_{\cO_n}(s,u)\]
        is an equivalence.
\end{proposition}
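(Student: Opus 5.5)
We argue by induction on $n$, using the Segal decomposition of $\cO_n$ to reduce to $n=0$ and $n=1$.

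\emph{The case $n=0$.} Here $\cO_0=\cO^\inert$ and $(\cO_0^\elem)_{t/}=\cO^\elem_{t/}$. Saturation of $\cO$ (condition (2) of \cref{def:new robust}) says that $\Map_\cO(s,t)\to\lim_{e\in\cO^\elem_{t/}}\Map_\cO(s,e)$ is an equivalence. We restrict to inert morphisms using condition (3): a map $s\to t$ is inert if and only if all composites $s\to t\intarrow e$ are inert. Since $\Map_{\cO^\inert}(s,-)\subset\Map_\cO(s,-)$ is a union of components, the equivalence restricts to
\[\Map_{\cO^\inert}(s,t)\simeq\lim_{e}\Map_{\cO^\inert}(s,e).\]
There is one subtlety. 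A limit of subspaces taken inside a limit is the subspace of compatible families whose components each lie in the subspace. That subspace corresponds exactly to the maps with all composites inert, which by (3) are the inert maps.

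\emph{The case $n\geq 1$.} Write $\cO_n\simeq\cO_{n-1}\times_{\cO_0}\cO_1$, and recall $(\cO_n^\elem)_{t/}\simeq(\cO_0^\elem)_{t_n/}$ by \cref{remark:elementary-slices-On}. Let $s\to u$ range over maps with $u\in(\cO^\elem_n)_{t/}$. The target projection $\cO_n\to\cO_0$ is a left fibration (\cref{rem:slices-factorization-double-category}), so a map $s\to t$ in $\cO_n$ amounts to a map $s_n\to t_n$ in $\cO_0$ together with an identification of the induced pushforward $(s_n\to t_n)_!s$ with $t$ in the fiber $(\cO_n)_{t_n}$. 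This fiber is a space, namely the space of strings of actives ending at $t_n$. This gives
\[\Map_{\cO_n}(s,t)\simeq \Map_{\cO_0}(s_n,t_n)\times_{(\cO_n)_{t_n}^{\simeq}}\{t\}.\]
The analogous formula holds for each $u$. The limit over $(\cO_0^\elem)_{t_n/}$ of the first factor is $\Map_{\cO_0}(s_n,t_n)$ by the case $n=0$. It remains to show that the fibre-space term is compatible. Concretely, we need the map
\[(\cO_n)_{t_n}\to\lim_{e\in\cO^\elem_{t_n/}}(\cO_n)_{e},\]
given by transport along $t_n\intarrow e$, to be a monomorphism of spaces, so that fibers over $t$ are matched.

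\emph{The main obstacle.} The key step is this monomorphism claim. It asserts that a string of active maps into $t_n$ is determined by its restrictions to the elementary objects under $t_n$. Iterating over the Segal decomposition reduces it to the case $n=1$, i.e.\ to showing that
\[(\cO^\act_{/t_n})^\simeq\to\lim_e(\cO^\act_{/e})^\simeq\]
is a monomorphism. This is the point where robustness enters. By \cref{prop:robust elementary slices decomp}, the slice $\cO^\elem_{t_n/}$ splits as $\coprod_i\cO^\elem_{t_n^i/}$, so the limit splits as $\prod_i\lim_{e\in\cO^\elem_{t_n^i/}}$. By condition (4c), $\cO^\act_{/t_n}\simeq\prod_i\cO^\act_{/t_n^i}$. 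This reduces the claim to the atomic case, where $\pi_0(t_n^i)=\underline 1$.

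In the atomic case we will use saturation and condition (3) once more. Take an active map $a\actarrow x$, where $x=t_n^i$. By soundness, the slice $\cO^\elem_{a/}$ is initial in $\cO^\elem(a\actarrow x)$. Hence $a$, viewed through $\Map_\cO(-,\,\cdot)$ and saturation, is recovered from the compatible family of pushforwards $(\phi f)^\inert_!a$. This is the same mechanism that appears in \cref{lem:simplified-WSF}. Concretely, we will show that an equivalence between two such families lifts uniquely to an equivalence over $x$, by applying the $n=0$ saturation to $\Map(a,a')$.

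I expect this last identification to require the most care. It is the step where strongness of $\pi_0$ (condition (4b)) is used to ensure that the inert parts $(\phi f)^\inert$ glue.
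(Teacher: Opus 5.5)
Your reduction is valid as far as it goes. Since $\cO_n\to\cO_0$ is a left fibration, $\Map_{\cO_n}(s,t)\simeq\Map_{\cO_0}(s_n,t_n)\times_{(\cO_n)_{t_n}}\{t\}$. Given the case $n=0$, it therefore suffices that $\beta\colon(\cO_n)_{t_n}\to\lim_{e\in\cO^\el_{t_n/}}(\cO_n)_e$ is a monomorphism.

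The problem is that this claim is not a simplification, and you do not prove it. For $n=1$, the map $\beta$ induces on path spaces between $f\colon a\actarrow x$ and $f'\colon a'\actarrow x$ is exactly the map induced, on fibres over $\id_x$, by the saturation map $\Map_{\cO_1}(f,f')\to\lim_e\Map_{\cO_1}(f,f'_e)$. So the monomorphism claim is the proposition in disguise.

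What you give for it is a sketch with several problems:
\begin{itemize}
\item Soundness says the projection $\cO^\el(f)\to\cO^\el_{a/}$ is initial. It does not say that $\cO^\el_{a/}$ is initial in $\cO^\el(f)$.
\item Lifting a compatible family of equivalences $a_e\simeq a'_e$ to an equivalence $\alpha\colon a\simeq a'$ over $x$ also requires producing the identification $f'\alpha\simeq f$ coherently. That needs saturation of $\cO$ applied to $\Map_\cO(a,x)$, which you never address.
\item Condition (4b) is a red herring. The inert parts $a\intarrow a_e$ are functorial in $e$ for free, from the factorization system.
\end{itemize}
In fact no part of condition (4) is needed. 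The detour through \cref{prop:robust elementary slices decomp} and the reduction to atomic $x$ are superfluous; the statement holds for any pattern satisfying (1)--(3).

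The missing step is to decompose the mapping space in $\cO_1\subset\Ar(\cO)$ directly, rather than passing to fibres. Write $\Map_{\cO_1}(s,t)\simeq\Map_{\cO_0}(s_0,t_0)\times_{\Map_\cO(s_0,t_1)}\Map_{\cO_0}(s_1,t_1)$, do the same for each $u\in(\cO^\el_1)_{t/}\simeq\cO^\el_{t_1/}$, and check the three corners separately:
\begin{itemize}
\item The corner $\Map_{\cO_0}(s_1,t_1)$ is your case $n=0$.
\item The corner $\Map_\cO(s_0,t_1)$ is condition (2).
\item For $\Map_{\cO_0}(s_0,t_0)\to\lim_u\Map_{\cO_0}(s_0,u_0)$, rewrite each $\Map_{\cO_0}(s_0,u_0)$ via the case $n=0$ as a limit over $\cO^\el_{u_0/}$. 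The target becomes an iterated limit, i.e.\ a limit over $\cO^\el(t_0\actarrow t_1)$. Soundness collapses this to $\lim_{\cO^\el_{t_0/}}\Map_{\cO_0}(s_0,-)\simeq\Map_{\cO_0}(s_0,t_0)$.
\end{itemize}
The third corner is the genuine use of soundness, and it is the step your sketch gestures at without carrying out.

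The same soundness identification is also needed, and omitted, in your ``iterating over the Segal decomposition'' step from general $n$ to $n=1$. There the limits are indexed by $\cO^\el_{t_n/}$, while the induction hypothesis supplies limits indexed by $\cO^\el_{t_{n-1}/}$.
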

\begin{proof}
    For $n = 0$, this follows directly from assumptions (2) and (3). Suppose that $n=1$. Since $\cO_1$ is a subcategory of $\Ar(\cO)$, the map of the proposition is a map between pullbacks:
    \[\begin{tikzcd}[ampersand replacement=\&,column sep=tiny]
    	\& {\lim\limits_{(t \intarrow u) \in (\cO^\elem_{1})_{t/}}\Map_{\cO_1}(s,u)} \&\& {\lim\limits_{(t \intarrow u) \in (\cO^\elem_{1})_{t/}} \Map_{\cO_0}(s_0,u_0)} \\
    	{\Map_{\cO_1}(s,t)} \&\& {\Map_{\cO_0}(s_0,t_0)} \\
    	\& {\lim\limits_{(t \intarrow u) \in (\cO^\elem_{1})_{t/}} \Map_{\cO_0}(s_1,u_1)} \&\& {\lim\limits_{(t \intarrow u) \in (\cO^\elem_{1})_{t/}} \Map_{\cO}(s_0,u_1)}. \\
    	{\Map_{\cO_0}(s_1,t_1)} \&\& {\Map_{\cO}(s_0,t_1)}
    	\arrow[from=1-2, to=1-4]
    	\arrow[from=1-2, to=3-2]
    	\arrow["\lrcorner"{xshift=-20pt,yshift=8pt}, phantom, very near start, from=1-2, to=3-4]
    	\arrow[from=1-4, to=3-4]
    	\arrow[from=2-1, to=1-2]
    	\arrow[from=2-1, to=4-1]
    	\arrow["\lrcorner"{xshift=-15pt,yshift=3pt}, phantom, very near start, from=2-1, to=4-3]
    	\arrow[from=2-3, to=1-4,"g_3"]
    	\arrow[from=3-2, to=3-4]
    	\arrow[from=4-1, to=3-2,"g_1"]
    	\arrow[from=4-1, to=4-3]
    	\arrow[from=4-3, to=3-4,"g_2"]
    	\arrow[from=2-3, to=4-3, crossing over]
    	\arrow[from=2-1, to=2-3, crossing over]
    \end{tikzcd}\]
    We see that $g_1$ is an equivalence since $(\cO_0,\cO_0^\elem)$ is saturated and $(\cO^\elem_{1})_{t/} 
    \simeq \cO^\elem_{t_1/}$, while $g_2$ is an equivalence since $\cO$ is saturated.
    To see that $g_3$ is an equivalence, note that since $\cO_0$ is saturated, we can rewrite it as the map
    \[\Map_{\cO_0}(s_0,t_0) \to \lim_{u \in (\cO^\elem_{1})_{t/}} \operatorname{lim}_{e \in (\cO^\elem)_{u_0/}} \Map_{\cO_0}(s_0,e).\]
    By \cite[Observation 3.3.6]{BarkanHaugsengSteinebrunner}, the soundness of $\cO$ and the saturatedness of $\cO_0$, this map is an equivalence.
    
    To handle the general case, we can use that $\O_\bullet$ is a double category and reason as in \cref{prop:RKan-trees-to-forest-sound}.
\end{proof}

\subsection{Verifying the robustness condition in practice}\label{subsec:checking-robustness} We will now describe a few methods for checking whether a pattern $\cO$ is robust.

\begin{proposition}\label{prop:robust soundly extendable}
    Suppose that $\O$ is a soundly extendable pattern. Then conditions (2) and (3) of \cref{def:new robust} can be replaced by the single condition: 
    \begin{enumerate}
        \item[(2\&3')] $(\O^\inert, \O^\el)$ is saturated.
    \end{enumerate}
    Moreover, condition (4c) can be replaced by:
    \begin{enumerate}
    \item[(4c')] If $x \in \O$ lies over $\underline{n}$, then there exist cocartesian inert maps $x \intarrow x^i$ over the inerts $\underline{n} \hookleftarrow \{i\} \xrightarrow{=} \{i\}$ for $1\leq i \leq n$.
    \end{enumerate}
\end{proposition}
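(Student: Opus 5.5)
Throughout, I take ``soundly extendable'' in the sense of Barkan--Haugseng--Steinebrunner: $\cO$ is sound, and for every $x$ the functor
\[\cO^\activ_{/x} \to \lim_{(x \intarrow e) \in \cO^\elem_{x/}} \cO^\activ_{/e}\]
is an equivalence. The plan is to prove two things separately: that (2\&3') is equivalent to the conjunction of (2) and (3), and that (4c') implies (4c). In both parts the engine is the extendability equivalence, combined with the soundness reindexing of \cite[Observation 3.3.6]{BarkanHaugsengSteinebrunner}.

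\emph{(2) and (3) imply (2\&3').} Let $x,y$ be objects. The map $\Map_{\cO^\inert}(x,y) \to \lim_{e} \Map_{\cO^\inert}(x,e)$ is a restriction of the Segal map $\Map_\cO(x,y)\to\lim_e \Map_\cO(x,e)$ to subspaces of components. By (3), a map lies in the inert subspace on the left exactly when its image lies in the inert subspace on the right. So the square comparing the two maps is cartesian. Since the Segal map is an equivalence by (2), so is its restriction.

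\emph{(2\&3') implies (3).} The reverse implication of (3) holds because inerts are closed under composition. For the forward one, let $g\colon x\to y$ be such that every composite $x\to y\intarrow e$ with $e$ elementary is inert. Factor $g = a\circ j$ with $j\colon x\intarrow z$ inert and $a\colon z\actarrow y$ active. For each $\phi\colon y\intarrow e$, factor $\phi a = a_\phi\circ j_\phi$ with $j_\phi\colon z\intarrow z_\phi$ and $a_\phi\colon z_\phi\actarrow e$. Uniqueness of factorizations of the inert map $\phi g$ forces every $a_\phi$ to be an equivalence. Hence the image of $a$ under $\cO^\activ_{/y}\simeq\lim_\phi\cO^\activ_{/e}$ is equivalent, compatibly in $\phi$, to the image of $\id_y$. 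Extendability then makes $a$ an equivalence, so $g$ is inert.

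\emph{(2\&3') implies (2).} I will decompose mapping spaces along the factorization system as
\[\Map_\cO(x,y)\simeq\colim_{(z\actarrow y)\in(\cO^\activ_{/y})^\simeq}\Map_{\cO^\inert}(x,z),\]
and similarly for each $e$. Taking cores, extendability gives $(\cO^\activ_{/y})^\simeq\simeq\lim_e(\cO^\activ_{/e})^\simeq$. On fibers, I rewrite $\Map_{\cO^\inert}(x,z)$ as $\lim_{z\intarrow e'}\Map_{\cO^\inert}(x,e')$ using (2\&3'). Soundness, via Observation 3.3.6 applied to $z\actarrow y$, then identifies this limit with the corresponding limit indexed over the pieces $e$ of $y$. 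I expect this step to be the main obstacle. The issue is commuting the total space of the fibration over $(\cO^\activ_{/y})^\simeq$ with the limit over $\cO^\elem_{y/}$. My first approach is to phrase both sides as spaces of sections of left fibrations and to use that limits of left fibrations are computed fibrewise.

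\emph{(4c') implies (4c).} Let $x\intarrow x^i$ be the cocartesian inert lifts provided by (4c'). I will first show they are $\pi_0$-cocartesian in the sense of Proposition~\ref{lem:pi0 levelwise cocart}: restriction along $x\intarrow x^i$ should identify $\cO^\elem_{x^i/}$ with the $i$-th connected component of $\cO^\elem_{x/}$. Since $\cO^\elem_{x/}=\coprod_i C_i$ is a finite disjoint union of its components $C_i$, extendability gives
\[\cO^\activ_{/x}\simeq\lim_{\cO^\elem_{x/}}\cO^\activ_{/e}\simeq\prod_i\lim_{C_i}\cO^\activ_{/e}.\]
Applying extendability again at each $x^i$ identifies $\lim_{C_i}\cO^\activ_{/e}$ with $\cO^\activ_{/x^i}$, which yields (4c). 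The delicate point is the identification $C_i\simeq\cO^\elem_{x^i/}$ itself. I expect it to follow from unpacking the definition of the cocartesian lift, since $\pi_0(x^i)=\underline1$, together with initiality of $\cO^\elem(g)\to\cO^\elem_{x/}$ from soundness.
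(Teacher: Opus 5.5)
Your proposal is correct. For replacing (4c) by (4c') you argue as the paper does: extendability at $x$ and at each $x^i$, glued together by the decomposition $\cO^\elem_{x/}\simeq\coprod_i\cO^\elem_{x^i/}$.

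One simplification there: "cocartesian" in (4c') already means $\pi_0$-cocartesian, so nothing needs upgrading. The identification of $\cO^\elem_{x^i/}$ with the $i$-th component is just the level-$0$ argument of \cref{lem:pi0 levelwise cocart}, as used in \cref{prop:robust elementary slices decomp}. That argument needs only cocartesianness, cancellation for inerts, and $\Hom_{\F^\op}(\underline n,\underline 1)\simeq\underline n$. The initiality of $\cO^\elem(g)\to\cO^\elem_{x/}$ coming from soundness is not needed.

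For the first assertion your route genuinely differs from the paper's. The paper reduces everything to "$\cO$ is saturated iff $\cO^\inert$ is" and settles both directions by citation.
\begin{itemize}
\item By extendability, left Kan extension along $\cO^\inert\hookrightarrow\cO$ preserves Segal objects \cite[Proposition 8.8]{ChuHaugseng2021HomotopycoherentAlgebraSegal}. Applied to $\Map_{\cO^\inert}(x,-)$, this shows that $\Map_\cO(x,-)$ is Segal.
\item Conversely, a Segal corepresentable $\Map_\cO(x,-)$ makes $\cO^\inert_{x/}\to\cO$ a fibrous pattern, so its straightening $\Map_{\cO^\inert}(x,-)$ is Segal.
\end{itemize}
In particular the paper obtains (2)$\Rightarrow$(2\&3') without (3). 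You instead use (3) for this direction, via the cartesian square of component inclusions, which is fine for the statement as formulated.

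Your direct proof of (2\&3')$\Rightarrow$(2) amounts to unwinding that Chu--Haugseng proposition for corepresentables. The step you flag as the main obstacle is harmless. For a map of diagrams of spaces $E_\phi\to B_\phi$, the fibre of $\lim_\phi E_\phi\to\lim_\phi B_\phi$ over a point is the limit of the fibres. The resulting fibre comparison $\Map_{\cO^\inert}(x,z)\to\lim_\phi\Map_{\cO^\inert}(x,z_\phi)$ is then exactly what \cite[Observation 3.3.6]{BarkanHaugsengSteinebrunner} handles, after applying (2\&3') at $z$ and at each $z_\phi$.

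Your argument for (3) also supplies something the paper's proof leaves unsaid. The paper never explains why (3) may be dropped. Your argument (factor $g$, note that each $a_\phi$ is an equivalence by uniqueness of factorizations, then use that the extendability equivalence detects equivalences) uses only extendability, not (2\&3'). So (3) is automatic for extendable patterns, which is precisely what justifies the reformulation.

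In short, the paper's version is shorter because it outsources the work to the literature. Yours is self-contained and makes the status of condition (3) explicit.
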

\begin{proof}
    For the first assertion, we need to show that $\cO$ is saturated if and only if $\cO^\inert$ is.
    Since $\cO$ is extendable, left Kan extension along $\cO^\inert \hookrightarrow \cO$ preserves Segal objects by \cite[Proposition 8.8]{ChuHaugseng2021HomotopycoherentAlgebraSegal}.
    Applying this to representables, we see that $\cO$ is saturated if $\cO^\inert$ is.
    For the converse, note that by \cite[Remark 4.1.4]{BarkanHaugsengSteinebrunner}, $\cO_{x/}^\inert \to \cO$ is a fibrous pattern if the representable $\Map_\cO(x,-)$ satisfies the Segal condition.
    Being a fibrous pattern in particular implies the Segal condition (cf.\ \cite[Remark 4.1.8]{BarkanHaugsengSteinebrunner}), so since $\Map_{\cO^\inert}(x,-)$ is the straightening of $\cO_{x/}^\inert \to \cO$, we conclude that it satisfies the Segal condition.
    
    For the second assertion, suppose that $x \in \O$.
    We note that there is a commutative square 
    \[
        \begin{tikzcd}
            \O^{\act}_{/x} \arrow[r]\arrow[d] & \lim_{e \in \O^{\el}_{x/}} \O^{\act}_{/e}\arrow[d] \\
            \prod_{i \in \pi_0(x)} \O^{\act}_{/x^i} \arrow[r] & \prod_{i \in \pi_0(x)}\lim_{e' \in \O^{\el}_{x^i/}} \O^{\act}_{/e'},
        \end{tikzcd}
    \]
    so that the right vertical functor is an equivalence.
    As $\O$ is extendable, the top and bottom functors are equivalences. So the left vertical functor is an equivalence as well.
\end{proof}

\begin{corollary}\label{cor:atomically robust soundly extendable}
    Suppose that $\O$ is a soundly extendable pattern. Then $\O$ is atomically robust if and only if $(\cO^\inert,\cO^\elem)$ is saturated and its elementary slices are connected. \qed
\end{corollary}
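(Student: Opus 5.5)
The statement is \cref{cor:atomically robust soundly extendable}. I plan to deduce it directly from \cref{prop:robust soundly extendable} together with \cref{def:atomically robust}. By definition, $\cO$ is atomically robust if and only if it satisfies conditions (1)--(3) of \cref{def:new robust} and $\pi_0(x)\simeq\underline 1$ for every $x$. The last condition says exactly that every elementary slice $\cO^\el_{x/}$ is connected.

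Step one: condition (1) holds automatically, since a soundly extendable pattern is in particular sound. Step two: the first assertion of \cref{prop:robust soundly extendable} says that, for soundly extendable $\cO$, conditions (2) and (3) together are equivalent to (2\&3'), namely that $(\cO^\inert,\cO^\el)$ is saturated. Combining the two steps, $\cO$ is atomically robust if and only if $(\cO^\inert,\cO^\el)$ is saturated and all $\cO^\el_{x/}$ are connected, which is the claim.

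The only point needing care is whether the first assertion of \cref{prop:robust soundly extendable} really shows that (2)+(3) is equivalent to (2\&3'). Its proof shows that saturation of $\cO$ is equivalent to saturation of $\cO^\inert$; it does not spell out what happens to (3). I expect this to be the main, though mild, obstacle. Here is how I would handle it.

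Forward direction. Given (2) and (3), saturation of $\cO$ gives saturation of $\cO^\inert$ by that proof, so (2\&3') holds.

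Reverse direction. Assume (2\&3'). Saturation of $\cO^\inert$ gives saturation of $\cO$, hence (2). For (3), take a map $f\colon x\to y$ such that $\phi f$ is inert for every inert $\phi\colon y\intarrow e$ with $e$ elementary. The family $(\phi f)_\phi$ is then a compatible point of $\lim_{e\in\cO^\el_{y/}}\Map_{\cO^\inert}(x,e)$. Saturation of $\cO^\inert$ produces an inert map $g\colon x\intarrow y$ with $\phi g\simeq\phi f$ for all $\phi$. Saturation of $\cO$ identifies $\Map_\cO(x,y)$ with $\lim_{e\in\cO^\el_{y/}}\Map_\cO(x,e)$, so $f\simeq g$ and $f$ is inert. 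The converse implication in (3) is immediate, since composites of inert maps are inert.
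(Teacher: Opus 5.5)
Your proof is correct and follows the paper's route: the corollary is read off from \cref{prop:robust soundly extendable} together with \cref{def:atomically robust}, with soundness supplying condition (1) and connectedness of the elementary slices being exactly $\pi_0(x)\simeq\underline 1$. Your explicit derivation of condition (3) from saturation of both $\cO$ and $\cO^\inert$ is valid and fills in a step that the paper's proof of that proposition leaves implicit. The same argument also gives the forward direction, so you do not need the fibrous-pattern argument there. Given (2), the map $\Map_{\cO^\inert}(x,y)\to\lim_{e}\Map_{\cO^\inert}(x,e)$ is a monomorphism, and condition (3) is precisely surjectivity on $\pi_0$, so (3) is equivalent to saturation of $\cO^\inert$.
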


Under an extra assumption, atomically robust patterns are stable under products:

\begin{proposition}\label{prop:product-robust-patterns}
    Let $\cO$ and $\cP$ be robust algebraic patterns with weakly contractible elementary slices. Then $\cO \times \cP$ is atomically robust.
\end{proposition}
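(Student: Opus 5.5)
We verify that $\cO \times \cP$ satisfies conditions (1)--(3) of \cref{def:new robust}, together with connectedness of all elementary slices, which is \cref{def:atomically robust}. The product pattern has inerts, actives and elementaries defined componentwise. Its factorization system is the product factorization system, and so for $(x,y) \in \cO \times \cP$ we have
\[
(\cO\times\cP)^\elem_{(x,y)/} \simeq \cO^\elem_{x/} \times \cP^\elem_{y/}.
\]

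\emph{Step 1: connectedness.} Each factor is weakly contractible by assumption, so the product slice is weakly contractible, since $|A \times B| \simeq |A| \times |B|$. In particular $\pi_0(x,y) \simeq \underline{1}$.

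\emph{Step 2: soundness.} For an active map $(g,h) \colon (x,y) \actarrow (x',y')$ we get $(\cO\times\cP)^\elem((g,h)) \simeq \cO^\elem(g) \times \cP^\elem(h)$, because the defining pullback is computed componentwise. The projection to $\cO^\elem_{x/} \times \cP^\elem_{y/}$ is then a product of two initial functors. A product of initial functors is initial: by Quillen's Theorem A, the relevant comma categories are products of weakly contractible categories.

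\emph{Step 3: condition (3).} A map $(f,f')$ is inert if and only if both components are inert. Suppose every composite $(f,f') \circ (\phi,\psi)$ with an elementary inert $(\phi,\psi)$ is inert. Since the slices are nonempty, we may fix $\psi$ and let $\phi$ range over $\cO^\elem_{y/}$. Then condition (3) for $\cO$ shows that $f$ is inert, and symmetrically $f'$ is inert.

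\emph{Step 4: saturation.} This is the main step. We must show that
\[
\Map(x,y)\times\Map(x',y') \to \lim_{(e,e') \in \cO^\elem_{y/}\times\cP^\elem_{y'/}} \Map(x,e)\times\Map(x',e')
\]
is an equivalence. The limit over a product of categories of a product of diagrams, each depending on one variable, is the product of the limits of each factor restricted along the projection. Projections $A \times B \to A$ with $B$ weakly contractible are initial (the relevant comma categories have the form $A_{/a}\times B \simeq B$ up to weak equivalence). Hence the limit is $\lim_{\cO^\elem_{y/}}\Map(x,-)\times\lim_{\cP^\elem_{y'/}}\Map(x',-)$, and saturation of the two factors finishes the argument. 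The weak contractibility hypothesis is used exactly here and in Step 1. Without it, the limit over a product would not split: for instance, if one slice were empty, the limit would be a point rather than a product.

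Conditions (4a)--(4d) are automatic for atomically robust patterns, so the proposition follows.
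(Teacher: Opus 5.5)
Your proof is correct and takes essentially the same route as the paper, which likewise identifies $(\cO\times\cP)^\elem_{(x,y)/}\simeq\cO^\elem_{x/}\times\cP^\elem_{y/}$ and uses weak contractibility of the factors to split the limit and obtain saturation. The only differences are minor: the paper cites \cite[Lemma 3.3.13]{BarkanHaugsengSteinebrunner} for soundness where you argue via products of initial functors, and it gets condition (3) by running the same limit computation for $(\cO\times\cP)^\inert$ rather than checking (3) directly (in your Step 3 the composite should read $(\phi,\psi)\circ(f,f')$, and that step also uses weak contractibility, through non-emptiness of the slices).
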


\begin{proof}
    Observe that $(\cO \times \cP)^\elem_{(x,y)/} \simeq \cO^\elem_{x/} \times \cP^\elem_{y/}$ is weakly contractible for any $(x,y) \in \cO \times \cP$ and that $\cO \times \cP$ is sound by \cite[Lemma 3.3.13]{BarkanHaugsengSteinebrunner}.
    To see that $\cO \times \cP$ and $(\cO \times \cP)^\inert$ are saturated, observe that
    \[\lim_{(e,e') \in (\cO \times \cP)^\elem_{(x,y)/}} \Map((z,w),(e,e')) \simeq \lim_{e \in \cO^\elem_{x/}} \Map(z,e) \times \lim_{e' \in \cP^\elem_{y/}} \Map(w,e')\]
    since $\cO^\elem_{x/}$ and $\cP^\elem_{y/}$ are weakly contractible.
\end{proof}

Finally, when $\cO$ is robust, any $\cO$-algebrad is robust as well.

\begin{proposition}\label{prop:algebrads-are-robust}
    Let $\cO$ be a robust algebraic pattern. Then any algebrad $p \colon \cP \to \cO$ is also robust, when given the pattern structure from \cref{def:category-of-algebrads}.
\end{proposition}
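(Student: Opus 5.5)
The plan is to verify the conditions of \cref{def:new robust} for $\cP$ one at a time, each time reducing to the same condition for $\cO$. The two tools throughout are the following.
\begin{itemize}
\item Cocartesian transport along inerts gives equivalences $\cP^\elem_{\bar x/}\simeq \cO^\elem_{x/}$ for $\bar x$ over $x$. This holds because inerts in $\cP$ are exactly the cocartesian lifts, and elementaries are the objects over elementaries.
\item The algebrad condition, in the form of \cref{lem:simplified-WSF}, identifies mapping spaces in $\cP$ fibrewise over those of $\cO$.
\end{itemize}

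From the first tool, $\pi_0(\bar x)\simeq \pi_0(x)$. For soundness (1), the categories $\cP^\elem(g)$ for active $\bar g$ over $g$ are identified with $\cO^\elem(g)$, again by transporting along cocartesian inerts. So the initiality required by soundness is inherited from $\cO$.

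Saturation (2) comes next. For $\bar z\in\cP$ and $\bar x$ over $x$, the square in \cref{defi:of algebrad for factactization system}(3) is cartesian. Therefore
\[\Map_\cP(\bar z,\bar x)\simeq \Map_\cO(z,x)\times_{\lim_{e}\Map_\cO(z,e)}\lim_{\phi}\Map_\cP(\bar z,\phi_!\bar x).\]
Saturation of $\cO$ makes the bottom map an equivalence, hence the top map is one too, using $\cP^\elem_{\bar x/}\simeq\cO^\elem_{x/}$.

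For (3), suppose a map $\bar f\colon\bar z\to\bar x$ becomes inert after composing with every $\bar x\intarrow\bar e$. Its image $f$ then becomes inert after composing with every $x\intarrow e$, since inerts of $\cP$ map to inerts of $\cO$. By (3) for $\cO$, $f$ is inert. It remains to check that $\bar f$ is cocartesian. Factor $\bar f$ as the cocartesian lift $\bar z\to f_!\bar z$ followed by a map $u$ over an identity. Each composite $\phi\circ u$ is then cocartesian over an inert, so $\phi u$ is equivalent to $\phi$ inside the fibre $\cP_e$. Condition (2) of \cref{defi:of algebrad for factactization system} says $\cP_x\simeq\lim\cP_e$, so $u$ is an equivalence.

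Condition (4) is where I expect the real work. Via the map $\cP\to\cO$ the functor $\pi_0^\cP$ factors through $\pi_0^\cO$ by \cref{rem:naturality of pi0}, because $\pi_0(\bar x)\to\pi_0(x)$ is a bijection. Finiteness (4a) and strongness (4b) then follow immediately: an inert-active square in $\cP$ maps to one in $\cO$, and $\pi_0$ computes the same sets. For (4c), I would take the cocartesian lifts $\bar x\intarrow \bar x^i$ of the maps $x\intarrow x^i$. These are $\pi_0$-cocartesian because $\pi_0$ factors through $\cO$ and cocartesian lifts compose.

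The hard part of (4c) is showing $\cP^\act_{/\bar x}\simeq\prod_i\cP^\act_{/\bar x^i}$. I would compare fibrewise over $\cO^\act_{/x}\simeq\prod_i\cO^\act_{/x^i}$. For an active $y\actarrow x$ corresponding to a tuple $(y_i\actarrow x^i)$, the inert-active squares of (4c) give inerts $y\intarrow y_i$, and by (4d) these are $\pi_0$-cocartesian. Using \cref{prop:robust elementary slices decomp}, the limit over $\cO^\elem_{x/}$ splits as a product over $i$ of limits over $\cO^\elem_{x^i/}$. Condition (2) for the algebrad then gives
\[\cP_y\simeq\prod_i\cP_{y_i}.\]
Combined with \cref{lem:simplified-WSF}, this yields
\[\Map^{f}_\cP(\bar y,\bar x)\simeq\prod_i \Map^{f_i}_\cP(\bar y_i,\bar x^i).\]
Together these identify the fibres on both sides, which proves the equivalence. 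Finally, (4d) for $\cP$ follows from (4d) for $\cO$: $\pi_0$-cocartesianness of an inert in $\cP$ is detected on its image in $\cO$, since $\pi_0^\cP$ factors through $\pi_0^\cO$.
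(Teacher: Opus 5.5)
Your arguments for (1), (2), (4a) and (4b) agree with the paper's. Your direct proof of (3) is correct: factor $\bar f$ as a cocartesian lift followed by a map $u$ over an identity, then invert $u$ using $\cP_x\simeq\lim_e\cP_e$. The paper instead deduces (3) from saturatedness of $\cP^\inert$. Your proof of (4d) is also correct: an inert of $\cP$ is $\pi_0^{\cP}$-cocartesian iff its image is $\pi_0^{\cO}$-cocartesian. The paper leaves (4d) implicit.

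The gap is in (4c), at the sentence ``together these identify the fibres on both sides, which proves the equivalence''. The projection $\cP^\act_{/\bar x}\to\cO^\act_{/x}$ is in general neither a cartesian nor a cocartesian fibration. For such functors, an equivalence on fibres over objects of the base does not give an equivalence of total categories; for example, the core inclusion $B^\simeq\to B$ is a fibrewise equivalence over $B$. What you computed, namely $\cP_y\simeq\prod_i\cP_{y_i}$ and $\Map^f_\cP(\bar y,\bar x)\simeq\prod_i\Map^{f_i}_\cP(\bar y_i,\bar x^i)$, only controls the morphisms of $\cP^\act_{/\bar x}$ lying over identities of $\cO^\act_{/x}$.

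To close the gap you must also treat a non-identity morphism $g\colon y'\actarrow y$ of $\cO^\act_{/x}$. Write $j_i\colon y\intarrow y_i$, $j'_i\colon y'\intarrow y'_i$, and $j_ig\simeq g_ij'_i$ for the inert-active factorization. You need to show that $\Map^g_\cP(\bar y',\bar y)\to\prod_i\Map^{g_i}_\cP(j'_{i!}\bar y',j_{i!}\bar y)$ is an equivalence. This follows from \cref{lem:simplified-WSF} applied with target $y$ rather than $x$, once you know that $\cO^\el_{y/}\simeq\coprod_i\cO^\el_{y_i/}$. Together with your decomposition of $\Map^{fg}_\cP(\bar y',\bar x)$, this makes your functor fully faithful, and your objectwise computations give essential surjectivity.

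This decomposition of $\cO^\el_{y/}$ is not literally \cref{prop:robust elementary slices decomp}; your text writes $\cO^\el_{x/}$ where $\cO^\el_{y/}$ is needed. The inerts $y\intarrow y_i$ lie over the fibres of $\pi_0(y)\to\pi_0(x)$ rather than over singletons. So you must rerun the argument of \cref{lem:pi0 levelwise cocart} and \cref{prop:robust elementary slices decomp} for these maps, using (4b) to identify $\pi_0(y_i)$ with the fibre and (4d) for their $\pi_0$-cocartesianness.

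The paper sidesteps all of this by invoking \cite[Proposition 4.1.7]{BarkanHaugsengSteinebrunner}. That result gives a pullback square of whole categories, with $\cP\times_\cO\cO^\act_{/x}\to\prod_i\cP\times_\cO\cO^\act_{/x^i}$ lying over the equivalence $\cO^\act_{/x}\simeq\prod_i\cO^\act_{/x^i}$. The paper then passes to slices over $\bar x$ and the $\bar x^i$.
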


\begin{proof}
    We first note that $p$ induces an equivalence on elementary slices since $\cP^\inert \to \cO^\inert$ is a left fibration, and thus $\P^\inert_{x/} \simeq \O^{\inert}_{p(x)/}$.
    By \cite[Lemma 4.1.15]{BarkanHaugsengSteinebrunner}, $\cP$ is again sound.
    To see that $\cP$ is saturated, we note that the saturatedness of $\cO$ implies that the bottom map in the pullback square \cref{defi:algd fact:item3} of \cref{defi:of algebrad for factactization system} is an equivalence.
    Since $\cP^\elem_{y/} \simeq \cO^\elem_{p(y)/}$, this implies saturatedness of $\cP$.
    Finally, saturatedness of $\cP^\inert$ follows by the same argument since $\cP^\inert \to \cO^\inert$ is an algebrad.
    This shows that (1)-(3) hold for $\cP$ as well.
    
    To verify condition (4), we note that conditions (4a) and (4b) directly follow from \cref{rem:naturality of pi0} (note that the remark simplifies in this case, as the displayed square is already a pullback square).
    Suppose that $x \in \P$. For every $i \in \pi_0(x) = \pi_0p(x)$, we take a $p$-cocartesian lift $x\to x^i$ of $p(x) \intarrow p(x)^i$. By \cite[Proposition 4.1.7]{BarkanHaugsengSteinebrunner}, we have a pullback square 
    \[
        \begin{tikzcd}
            \P \times_\O \O^\act_{/p(x)} \arrow[r]\arrow[d] & \prod_{i \in \pi_0(x)} \P \times_\O \O^\act_{/p(x)^i} \arrow[d]  \\
            \O^\act_{/p(x)} \arrow[r] & \prod_{i \in \pi_0(x)} \O^\act_{/p(x)^i},
        \end{tikzcd}
    \]
    where the top functor is described in \cite[Observation 4.1.1]{BarkanHaugsengSteinebrunner}.
    The bottom functor is an equivalence by assumption, so that the top functor is an equivalence as well. Similarly as in the proof of \cite[Lemma 4.1.15]{BarkanHaugsengSteinebrunner}, the top functor recovers the functor of (4c) when passing to the slices over $(x, p(x) = p(x))$ and the $(x^i, p(x)^i = p(x)^i)$'s.
\end{proof}

\subsection{Examples of robust patterns}\label{subsec:examples-robust}

We now discuss some examples and non-examples of robust patterns.

\begin{example}\label{example:fin*-not-robust}
    Let $\F_*^\flat$ be the pattern from \cref{examples:examples of patterns} whose algebrads are operads.
    Note that $\F_*^{\flat,\inert} = (\F_\mathrm{inj})^\op$, the opposite of the category of finite sets and injections.
    The two-element set $\underline{2}$ is \textbf{not} the coproduct $\underline{1} \sqcup \underline{1}$ in $\F_\mathrm{inj}$, since the fold map $\underline{2} \to \underline{1}$ is not injective.
    This means that $\langle 2 \rangle$ is not the product of $\langle 1 \rangle$ with itself in $\F_*^{\flat,\inert}$, so $\F_*^{\flat,\inert}$ is not saturated and hence $\F_*^\flat$ is not robust.
    One similarly sees that the algebraic pattern $\Delta^{\op,\flat}$ whose algebrads are non-symmetric operads is not robust.
\end{example}

\begin{example}\label{example:Span-robust}
    Let $G$ be a finite group. Then the algebraic pattern $\Span(\F_G)^\flat$ is robust. It is soundly extendable by \cite[Example 3.3.26]{BarkanHaugsengSteinebrunner}, and we see that $\Span(\F_G)^\inert = \F_G^\op$ is saturated since any object in $\F_G$ is the coproduct of its orbits, i.e.\ its subsets on which $G$ acts transitively. Thus condition (2\&3') of \cref{prop:robust soundly extendable} is met. It remains to check condition (4a), (4b) and (4d) of \cref{def:new robust}, and condition (4c') of \cref{prop:robust soundly extendable}. 
    
    If $X$ is a finite $G$-set, then we can consider the (finite) set $O(X)$ of orbits of $X$. This defines a functor $O : \F_G \to \F$. One readily verifies that this commutes with pullbacks and that $O(G/H)\simeq \underline{1}$ for each subgroup $H \leq G$. Hence, we get a functor $\Span(\F_G)^\flat \to \Span(\F)^\flat$ of patterns. By \cref{rem:naturality of pi0}, this is precisely the functor $\pi_0$. Thus conditions (4a) and (4b) of \cref{def:new robust} are met. Suppose that $X$ is a finite $G$-set.
    Then we can consider its decomposition $X= \coprod_{i=1}^n X^i$ into orbits.
    Then $\pi_0(X) \simeq \underline{n}$ and one readily checks that the inert maps $X \hookleftarrow X^i \xrightarrow{=} X^i$ in $\Span(\F_G)$ are the desired $\pi_0$-cocartesian lifts of (4c').
    Finally, condition (4d) of \cref{def:new robust} follows since pullbacks of injections are again injections in $\F_G$. 
\end{example}

\begin{example}\label{example:doublecat-robust}
    Consider the algebraic pattern $\Delta^{\op,\natural}$ for virtual double categories from \cref{examples:examples of patterns}.
    In contrast to \cref{example:fin*-not-robust}, this algebraic pattern \emph{is} atomically robust: it is soundly extendable by \cite[Example 3.3.18]{BarkanHaugsengSteinebrunner}, and it is saturated since the representables $[n] \in \Fun(\Delta^\op,\Spc)$ are Segal spaces.
    The elementary slice $(\Delta^{\op,\natural})^\elem_{[n]/}$ is an iterated span
    \[\begin{tikzcd}[sep={2.5em, between origins}]
    	& \bullet && \cdots && \bullet & \\
    	0 && 1 & & {n-1} && n
    	\arrow[from=1-2, to=2-1]
    	\arrow[from=1-2, to=2-3]
    	\arrow[from=1-4, to=2-3]
    	\arrow[from=1-4, to=2-5]
    	\arrow[from=1-6, to=2-5]
    	\arrow[from=1-6, to=2-7]
    \end{tikzcd}\]
    hence weakly contractible.
\end{example}

\begin{warning}
    One can also endow $\F_*$ with the pattern structure $\F_*^\natural$ where the elementaries are $\langle 0 \rangle$ and $\langle 1 \rangle$.
    The algebrads for this pattern are \emph{generalized operads} in the sense of \cite[Definition 2.3.2.1]{HA}.
    In light of the previous example, one might be inclined to believe that this algebraic pattern is atomically robust.
    However, this is \textbf{not} the case, for the same reason as \cref{example:fin*-not-robust}.
\end{warning}

\begin{example}\label{exa:gen-operad-robust}
    While $\F_*^\natural$ is not robust, we can replace it with an (atomically) robust pattern as follows.
    Namely, we will see in \cref{exa:CC-necessary-genoperad} that the inclusion $\F_* \hookrightarrow \Span(\F)$ induces an equivalence
    \[\Algd(\F^\natural_*) \simeq \Algd(\Span(\F)^\natural).\]
    The pattern $\Span(\F)^\natural$ is atomically robust: its elementary slices are clearly contractible, it is soundly extendable by \cite[Proposition 3.3.23]{BarkanHaugsengSteinebrunner} and $\Span(\F)^\inert \simeq \F^\op$ is easily seen to be saturated.
\end{example}

\begin{example}\label{example:theta-robust}
    The category $\Theta_n$ has a pattern structure whose elementary objects are the free $k$-cells $C_k$ with $k \leq n$ (see e.g.\ \cite[Example 3.2.2]{BarkanHaugsengSteinebrunner}).
    The Segal spaces for this pattern are (flagged) $n$-categories and its algebrads can be thought of as virtual versions of $(1,n)$-double categories.
    This pattern is saturated by \cite[Examples 14.21]{ChuHaugseng2021HomotopycoherentAlgebraSegal}, while it is soundly extendable by \cite[Example 3.3.18]{BarkanHaugsengSteinebrunner}.
    Finally, the elementary slices of $\Theta_n$ are weakly contractible by \cite[Corollary 2.9]{HaugsengTheta}.
    By \cref{prop:robust soundly extendable}, this algebraic pattern structure on $\Theta_n$ is atomically robust.
\end{example}

\begin{example}\label{example:Deltan-robust}
    The $n$-fold product $\Delta^{\times n,\op,\natural}$ of $\Delta^{\op,\natural}$ with itself can be used to describe (virtual versions of) $n$-uple categories.
    By \cref{example:doublecat-robust} and \cref{prop:product-robust-patterns}, the algebraic pattern $\Delta^{\times n, \op,\natural}$ is atomically robust.
\end{example}

\begin{example}\label{exa:all-elementary-then-robust}
    Let $\cO$ be an algebraic pattern in which every object is elementary.
    Then the inclusion of $t$ in $\cO^\elem_{t/}$ is initial for every $t$, hence the conditions of \cref{def:new robust} become vacuous.
    In particular, $\cO$ is atomically robust.
\end{example}

\begin{example}
    Recall the algebraic pattern structure on the tree category $\Omega[\cO]^{\op,\natural}$ from \cref{ex:iterated trees}.
    Even if $\cO$ is robust, the pattern $\Omega[\cO]^{\op,\natural}$ need not be robust.
    For example, if $\cO = \Span(\F)^\flat$, then $\Omega[\cO]^{\op,\natural}$ is not saturated and hence not robust.
    However, if $\cO$ is atomically robust, then the next lemma shows that the same is true for $\Omega[\cO]^{\op,\natural}$.
    In particular, the iterated tree construction $\Omega^n[\cO]^{\op,\natural}$ is atomically robust whenever $\cO$ is.
\end{example}

\begin{lemma}
\label{lemma: when tree is sound and robust}
Let $\O$ be an atomically robust pattern.
Then the pattern $\Tree[\O]^{\op,\natural}$ from \cref{ex:iterated trees} is also atomically robust.
\end{lemma}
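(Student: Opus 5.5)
The plan is to check the three conditions (1)--(3) of \cref{def:new robust} for $\Tree[\O]^{\op,\natural}$, and then show that its elementary slices are connected. Throughout, everything is computed in $\Tree[\O]$ (with the variance reversed). The main tool is \cref{proposition:explicit description of mapping space}, which splits a slice of $\Tree[\O]$ into a simplicial part $\Delta_{/[n]}$ and an $\O$-part $(\O^{\el,\op}_0)_{/t_i}$. I handle the easy conditions first and saturation last.

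\emph{Elementary slices and condition (3).} The elementary slice of $[n;t]$ in $\Tree[\O]^{\op,\natural}$ is the category of inert maps $[m;s]\to[n;t]$ with $m\le 1$. Its underlying maps in $\Delta$ are inclusions of a vertex $\{i\}$ or an edge $\{i,i+1\}$. By \cref{proposition:explicit description of mapping space} and \cref{rem:slices-factorization-double-category}, the corollas over $\{i,i+1\}$ form a copy of $(\O^\el_{t_{i+1}/})^\op$, and the roots over $\{i\}$ form a copy of $(\O^\el_{t_i/})^\op$. The corollas are glued to the roots exactly as in the proof of \cref{prop:segal tree spaces are segal objects}. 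Since $\O$ is atomically robust, each of these pieces is connected, and the gluing follows the zigzag shape of the spine of $[n]$. Hence the slice is connected. For condition (3), let $f\colon[m;s]\to[n;t]$ be a map such that precomposing with every corolla inclusion $\{j,j+1\}$ and every root inclusion yields an inert map. Then $\phi$ sends consecutive vertices to consecutive vertices, so $\phi$ is a convex inclusion. Hence $f$ is inert by \cref{def:inerts and actives of the tree category}.

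\emph{Soundness.} I would first write down the categories $\Tree[\O]^{\el}(g)$ for an active map $g$, i.e.\ $g$ is active over $\Delta$ and $t_n\intarrow s_m$ is invertible. Using \cref{proposition:explicit description of mapping space}, each such category splits into a $\Delta$-factor and an $\O$-factor. The $\Delta$-factor is exactly the corresponding category for $\Delta^{\op,\natural}$, which is sound by \cref{example:doublecat-robust}. The $\O$-factor is an elementary slice category of $\O$, whose initiality statement reduces to the soundness of $\O$. Combining the two factors should give the required initiality, possibly via \cite[Lemma 3.3.13]{BarkanHaugsengSteinebrunner}-style product arguments.

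\emph{Saturation (condition (2)).} This is the step I expect to be the main obstacle. It amounts to showing that every representable $[n;t]$ is a Segal $\Tree[\O]$-space, by \cref{prop:segal tree spaces are segal objects}. Concretely, for each tree $[m;s]$, the map
\[\map_{\Tree[\O]}([m;s],[n;t])\to \map([m;s]\text{-spine},[n;t])\]
must be an equivalence. I would fiber this over the simplicial part, where $[n]$ is already a Segal space. It then remains to show that over a fixed $\phi$ the mapping space $\map_{\O_m}(\phi^*t,s)$ decomposes as an iterated fiber product over the edges. For this I would use three ingredients: the Segal condition for the double category $\O_\bullet$, the saturation of $(\O_m,\O_m^\el)$ from \cref{prop:robust-On-is-saturated}, and the connectedness of the elementary slices. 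The connectedness is what makes the limits over the elementary slices $(\O^\el_m)_{s/}$ compatible with the fiber products over the roots. The delicate point is the case where $\phi$ collapses or skips vertices, so that restrictions to roots do not see all of $s$. I would handle this using the cartesian factorization $[m;s]\to[n;\phi\text{-image}]$ as in \cref{cor:explicit description of mapping space}.
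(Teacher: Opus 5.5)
Your treatment of connectedness of the elementary slices and of condition (3) matches the paper's. The soundness step, however, has a genuine gap, and your saturation step takes a different route that does work.

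\textbf{The gap: soundness.} The category $\Tree[\O]^{\el}(g)$ does not split as a product of a $\Delta$-factor and an $\O$-factor, so the proposed argument would fail.
\begin{itemize}
\item \Cref{proposition:explicit description of mapping space} only gives a product $\Delta^{\circ i}_{/[n]}\times(\O^\inert_{t_i/})^\op$ on the part of the slice where the top vertex $i$ is fixed. The $\O$-factor changes with $i$.
\item Worse, the inert part of the composite of an elementary with $g$ carries $\O$-data that depends on that elementary. Take $\phi=\{0,2\}\colon[1]\to[2]$ and a corolla $[1;w]\intarrow[1;t_0\actarrow t_2]$. Its composite to $[2;t]$ factors through $[2;v]$, where $t_1\intarrow v_1\actarrow w_1$ is the factorization of $t_1\actarrow t_2\intarrow w_1$. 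The next layer of elementaries then lives over $\O^\el_{v_1/}$, and $v_1$ depends on $w_1$.
\end{itemize}
So neither soundness of $\Delta^{\op,\natural}$ nor a product argument in the style of \cite[Lemma 3.3.13]{BarkanHaugsengSteinebrunner} gives the required initiality.

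\textbf{What is needed instead} is a fibered, piece-by-piece analysis, as in the paper.
\begin{enumerate}
\item By \cite[Lemma 3.3.9]{BarkanHaugsengSteinebrunner}, it suffices that $\Tree[\O]^{\el}_{/[m;\phi^*t]}\times_{\Tree[\O]^\inert_{/[n;t]}}(\Tree[\O]^\inert_{/[n;t]})_{g/}$ is weakly contractible for every inert $g\colon[k;u]\intarrow[n;t]$ with $k\le 1$.
\item Write the first factor as an iterated pushout along the spine of $[m]$, as in \cref{prop:segal tree spaces are segal objects}. Pulling back along the left fibration preserves this decomposition.
\item The piece indexed by $\alpha\colon[j]\intarrow[m]$ is empty unless the underlying map of $g$ factors through the inert part $\beta$ of $\phi\alpha$. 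The non-empty pieces form a non-empty contiguous stretch of the spine.
\item Each non-empty piece is weakly contractible. Compare the two final functors $(\O^{\el,\op}_j)_{/w}\to\Tree[\O]_{/\langle j;w\rangle}$ and $\Tree[\O]^\el_{/\langle j;w\rangle}\to\Tree[\O]_{/\langle j;w\rangle}$ for the relevant length-$\le 1$ forest $\langle j;w\rangle$, and pull back along a left fibration. This reduces the piece to $((\O^\el_{t_{\beta(l)}/})_{/u_k})^\op$, which has an initial object.
\end{enumerate}
Soundness of $\O$ enters only in step 4: the first of the two functors is final because $\O$ is sound (\cref{rem:forest decomposition sound}).

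\textbf{Condition (3).} Say explicitly that a corolla inclusion exists over every edge $\{j,j+1\}$ of $[m]$. This holds because $\O^\el_{s_{j+1}/}$ is connected, hence non-empty; otherwise your hypothesis is vacuous on that edge.

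\textbf{Saturation.} The paper simply invokes \cref{cor:fibrant forest}, which rests on \cref{prop:relative fibrancy over tree}. That result is built for general robust patterns and even yields completeness of all forests. Your more direct approach works in the atomically robust case:
\begin{itemize}
\item By \cref{rem:forest decomposition sound}, the limit over $(\O^\el_k)_{w/}$ computes $\Map([k;w],[n;t])$.
\item Connectedness of $(\O^\el_k)_{w/}$ lets this limit pass inside the coproduct over $\psi\colon[k]\to[n]$.
\item \Cref{prop:robust-On-is-saturated} then gives $\Map([k;w],[n;t])\simeq\coprod_\psi\Map_{\O_k}(\psi^*t,w)$ for forests with $k\le 1$.
\item The Segal condition of $\O_\bullet$ on mapping spaces yields the spine condition.
\end{itemize}
The ``delicate'' case of degenerate $\phi$ needs no special treatment.
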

\begin{proof}
    We will check the conditions from \cref{def:atomically robust}. We start by showing that $\O$ is sound.
    Let $\phi : [m] \actarrow [n]$ be an active map in $\Delta$. Suppose that $[n;t]$ is a tree. We will consider the cartesian map $f : [m; \phi^*t] \actarrow [n;t]$ in $\Tree[\O]$. This is an active map, and  \cref{proposition:explicit description of mapping space} implies that every active map is of this form. Suppose that $g : [k; u] \intarrow [n;t]$ is an inert map with $[k;u]$ elementary, i.e.\ $k \leq 1$.
    By \cite[Lemma 3.3.9]{BarkanHaugsengSteinebrunner}, we have to show that the category 
    \begin{equation}\label{eq:forest sound}
    \Tree[\O]^{\el}_{/[m;\phi^*t]} \times_{\Tree[\O]_{/[n;t]}^\inert} (\Tree[\O]_{/[n;t]}^\inert)_{g/}
    \end{equation}
    is weakly contractible. Recall from \cref{prop:segal tree spaces are segal objects} that the canonical map 
    $$
    \colim_{\alpha : [j] \intarrow [m] \in \Delta^\el_{/[m]}}  \Tree[\O]^{\el}_{/\left<j;\alpha^*\phi^*t\right>}  \to \Tree[\O]^{\el}_{/[m;\phi^*t]} 
    $$
    is an equivalence. As pulling back along the left fibration $(\Tree[\O]^\inert_{/[n;t]})_{g/} \to \Tree[\O]^\inert_{/[n;t]}$ preserves colimits, \cref{eq:forest sound} can be written as an iterated pushout of categories of the form
    \begin{equation}\label{eq: forest sound 2}
    \Tree[\O]^{\el}_{/\left<j; \alpha^*\phi^*t\right>} \times_{\Tree[\O]^\inert_{/[n;t]}} (\Tree[\O]_{/[n;t]}^\inert)_{g/}.
    \end{equation}
    for $\alpha \colon [j] \intarrow [m]$ with $j \leq 1$.
    To study these categories, consider the factorization 
    \[
        \begin{tikzcd}
           {[j]} \arrow[r, squiggly,"\gamma"]\arrow[d,tail, "\alpha"'] & {[l]}\arrow[d,tail, "\beta"] \\
        {[m]} \arrow[r, squiggly, "\phi"] & {[n]}
        \end{tikzcd}
    \]
    and let us write $\psi \colon [k] \to [n]$ for the underlying map of $g$.
    The pullback \cref{eq: forest sound 2} is non-empty if and only if $\psi$ factors through $\beta$, which is the case if and only if $\phi(\alpha(0)) \leq \psi(0) \leq \psi(k) \leq \phi(\alpha(j))$.
    Observe that there exists at least one inert map $\alpha \colon [1] \intarrow [m]$ for which this happens.
    We will show that the pullback \cref{eq: forest sound 2} is weakly contractible in this case.
    This then implies that the classifying space of the category \cref{eq:forest sound} is an iterated pushout of the form
    \[(\varnothing \cup_{\varnothing} \cdots \cup_{\varnothing} \varnothing) \cup_\varnothing (* \cup_* \cdots \cup_* *) \cup_\varnothing ( \varnothing \cup_\varnothing \cdots \cup_\varnothing \varnothing),\]
    hence weakly contractible.
    So suppose that $\psi$ factors through $\beta$.
    In this case $g$ factors uniquely as $[k;u] \xrightarrow{g'} \langle l; \beta^*t \rangle \intarrow [n;t]$ and we obtain an equivalence
    \begin{equation}\label{eq:forest sound 3}
    \Tree[\O]^{\el}_{/\left<j; \alpha^*\phi^*t\right>} \times_{\Tree[\O]^\inert_{/[n;t]}} (\Tree[\O]_{/[n;t]}^\inert)_{g/} \simeq \Tree[\O]^{\el}_{/\left<j; \gamma^*\beta^*t\right>} \times_{\Tree[\O]^\inert_{/\left<l;\beta^*t\right>}} (\Tree[\O]_{/\left<l,\beta^*t\right>}^\inert)_{g'/}
    \end{equation}
    To see that this is weakly contractible,
    consider the factorization 
    \[
    \begin{tikzcd}[column sep = tiny, row sep = small]
        (\O_j^{\el, \op})_{/\gamma^*\beta^*t} \arrow[rr]\arrow[dr] && \Tree[\O]^\el_{/\left<j;\gamma^*\beta^*t\right>}. \arrow[dl] \\
        & \Tree[\O]_{/\left<j;\gamma^*\beta^*t\right>}
    \end{tikzcd}
    \]
    The left slanted arrow is final since $\O$ is sound (see \cref{rem:forest decomposition sound}) and the right slanted arrow is final on account of \cref{prop:segal tree spaces are segal objects}.  Pulling back along a left fibration preserves final functors by \cite[Remark 4.1.2.10 \& Proposition 4.1.2.15]{HTT}. Since final functors induce equivalences on classifying spaces, 2-out-of-3 for weak equivalences implies that the induced functor 
    $$
    (\O_j^{\el, \op})_{/\gamma^*\beta^*t} \times_{\Tree[\O]^\inert_{/\left<l;\beta^*t\right>}} (\Tree[\O]_{/\left<l,\beta^*t\right>}^\inert)_{g'/} \to \Tree[\O]^{\el}_{/\left<j; \gamma^*\beta^*t\right>} \times_{\Tree[\O]^\inert_{/\left<l;\beta^*t\right>}} (\Tree[\O]_{/\left<l,\beta^*t\right>}^\inert)_{g'/}
    $$
    is a weak equivalence on classifying spaces.
    Now observe that the left-hand side fits in a diagram of pullback squares
    \[
        \begin{tikzcd}
            (\O_j^{\el, \op})_{/\gamma^*\beta^*t} \times_{\Tree[\O]^\inert_{/\left<l;\beta^*t\right>}} (\Tree[\O]_{/\left<l,\beta^*t\right>}^\inert)_{g'/} \arrow[r]\arrow[d] & ((\O_l^{\el, \op})_{/\beta^*t})_{g'/}\arrow[d]\arrow[r] & (\Tree[\O]_{/\left<l,\beta^*t\right>}^\inert)_{g'/}\arrow[d] \\
            (\O_j^{\el, \op})_{/\gamma^*\beta^*t} \arrow[r, "\simeq"] & (\O_l^{\el, \op})_{/\beta^*t} \arrow[r] & \Tree[\O]^\inert_{/\left<l;\beta^*t\right>},
        \end{tikzcd}
    \]
    Thus we must show that $((\O_l^{\el, \op})_{/\beta^*t})_{g'/}$ is weakly contractible.
    The equivalence $(\O_l^\elem)_{\beta^* t/} \simeq \O^\elem_{t_{\beta(l)}/}$ from \cref{remark:elementary-slices-On} induces an equivalence $((\O_l^{\el, \op})_{/\beta^*t})_{g'/} \simeq ((\O^\elem_{t_{\beta(l)}/})_{/u_k})^\op$.
    This category has an initial object since $u_k$ is elementary, hence it is weakly contractible.
    We conclude that \cref{eq:forest sound} is weakly contractible and hence that $\Tree[\O]$ is sound.
    
    We will now show that the elementary slices of $\Omega[\cO]$ are connected.
    We can write any elementary slice as an iterated pushout as above, so it suffices to show that for every forest $\left<k;t\right>$ with $k \leq 1$, the elementary slice $\Tree[\O]^\el_{/\left<k;t\right>}$ is connected. 
    By the same reasoning as \cref{eq:forest sound 3}, the map 
    $$
    (\O^{\el, \op}_{k})_{/t} \to \Tree[\O]^\el_{/\left<k;t\right>}
    $$
    induces a homotopy equivalence on classifying spaces. Since $\O$ has connected elementary slices by assumption, the result follows.

    Finally, observe that \cref{cor:fibrant forest} exactly says that the pattern $\Tree[\O]^{\op,\natural}$ is saturated. We are therefore reduced to showing that a map $[m;s] \to [n;t]$ is inert if for any inert map from an elementary $[i;u] \intarrow [m;s]$, the composite $[i;u] \intarrow [m;s] \to [n;t]$ is inert.
    Since $\Delta$ satisfies this property and a map in $\Tree[\cO]$ is inert precisely if it lies over an inert in $\Delta$, it suffices to show that any inert $\phi \colon [i] \intarrow [m]$ admits a lift with target $[m;s]$.
    This follows from \cref{cor:explicit description of mapping space} since $(\cO_0^\op)_{/s_{\phi(i)}} \simeq \cO^\elem_{s_{\phi(i)}/}$ connected (hence non-empty) by assumption.
\end{proof}

\begin{remark}
Let $\O$ be a robust pattern. Instead of considering simplices decorated by elements of $\O$ as we do when working with the category $\Tree[\O]$, we could consider non-layered trees, i.e., elements of the dendroidal category $\Omega$, decorated appropriately by atomic objects of $\O$. This would lead to a category $\Tree_{nl}[\O]$ of non-layered trees that we conjecture is a robust pattern whenever $\O$ is.
\end{remark}

\section{Necessity of the Conduch\'e criterion}\label{section:necessity}
This section is dedicated to the proof of \cref{thmB}, the converse of \cref{thmA}, for so-called \emph{robust} algebraic patterns:

\begin{theorem}\label{thm:main-thm-expo-CSeg-converse}
    Let $\O$ be a robust algebraic pattern. Then a map $p : X \to Y$ in $\CSeg(\Tree[\O])$ satisfies \cref{conditionCC} of \cref{thm:main-thm-expo-CSeg} if $p$ is exponentiable.
\end{theorem}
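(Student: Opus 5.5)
We will imitate the argument sketched in \cref{rem:thmB easy for atomic}, replacing the (generally false) assumption that trees are complete Segal by explicit complete Segal objects built from the robustness of $\cO$. By \cref{lemma:exponentiable Segal fibrations}, it suffices to show that for every map $[2;t] \to Y$ the comparison
\[\colim_{[k]\in\Delta^\op}\map_{/Y}(T_k[1]\boxtimes_{[2]}[2;t],X)\to \map_{/Y}([1]\boxtimes_{[2]}[2;t],X)\]
is an equivalence. Equivalently, for $X' = X\times_Y[2;t]$ the map $X'\times_{[2;t]}\Gamma[2;t]\to X'$ should be a complete Segal extension. Exponentiability of $p$ means that $p^*\colon\CSeg(\Tree[\cO])_{/Y}\to\CSeg(\Tree[\cO])_{/X}$ preserves colimits. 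So the goal is to produce a diagram of \emph{complete Segal} objects over $Y$ whose colimit in $\CSeg(\Tree[\cO])_{/Y}$ is a replacement of $[2;t]$ and whose presheaf colimit is (a Segal extension of) $\Gamma[2;t]$. Applying $p^*$ and using universality of colimits in presheaves, the reflection $L$ then inverts $X\times_Y\Gamma[2;t]\to X\times_Y[2;t]$ (up to the replacement), and \cref{remark:saturated-vs-strongly-saturated} upgrades this to a complete Segal extension, since the target is complete Segal over a complete Segal base.

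The key construction is a \emph{fibrant} replacement of trees. By \cref{prop:robust-On-is-saturated}, each $(\cO_n,\cO^\elem_n)$ is saturated, so representables on $\cO_n$ are Segal. This suggests the following candidate for the complete Segal object generated by $[n;t]$: the presheaf $\Map_{/\cO}(-,\cP_t)$, where $\cP_t$ is the free $\cO$-algebrad on the string $\overline t$, computed via \cref{prop:equivalence-cseg-algad-is-nerve} as $L([t]^\inert)$. Robustness should let us describe $L([t]^\inert)$ explicitly. Its values on a tree $[m;s]$ would be ``multigraftings'': partial composites of the actives in $t$ along the components $\pi_0$, in the sense of the remark after \cref{def:new robust}. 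Conditions (4b)--(4d), together with \cref{prop:robust elementary slices decomp} and \cref{rem:decomposition by atoms formula}, would be used to show the following:
\begin{itemize}
\item these grafting presheaves are Segal, with completeness coming from the fact that the relevant fibers over roots are spaces;
\item they are computed by an explicit colimit over forests built from the spine.
\end{itemize}
In the atomically robust case this should reduce to the statement that trees are already complete Segal, which is the case handled in \cref{rem:thmB easy for atomic}.

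With this in hand, I would prove the colimit formula in $\CSeg(\Tree[\cO])_{/Y}$ for the grafting object $G[2;t]$: $G[2;t]\simeq\colim G(\text{pieces of }\Gamma[2;t])$. Here the pieces are the grafting objects on the corolla $[1;t_{0,1}]$, decomposed atom by atom via $\pi_0(t_1)$, and on $[1;t_{1,2}]$, glued along the $G[0;t_1^i]$. This is a pushout along a coproduct indexed by the finite set $\pi_0(t_1)$, so (4a) keeps it finite. I would then pull back along $p$ and compare fibers over a lift $\bar t_0 \to \bar t_2$ in $X$. The fiber of the colimit should be the realization $\colim_{\Delta^\op}$ appearing in (CC), while the fiber of the grafting object over a given active map is the mapping space on the right. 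One subtlety is that the grafting object over $[2;t]$ contains more than $[2;t]$. Restricting to the component lying over the single composite $t_0\actarrow t_2$ should be possible because a colimit in the slice over $Y$ decomposes over components, which are detected by $\pi_0$ using (4b).

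The main obstacle is the explicit description and complete-Segal-ness of the grafting objects (the analogue of ``\cref{cor:fibrant forest}'' for non-atomic robust patterns). In particular, one has to verify that iterated partial composites along different atoms commute, which is exactly what (4d) is designed for. I would attack this first, by proving it for $n\le 2$ only, which is all the argument above needs.
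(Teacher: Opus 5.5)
Your outline works when $\cO$ is atomically robust, where it reduces to \cref{rem:thmB easy for atomic}. In the general robust case, however, there is a gap at the step where exponentiability is turned into the comparison map of \cref{conditionCC}.

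What cocontinuity of $p^*$ on $\CSeg(\Tree[\cO])_{/Y}$ and universality of colimits give you is that $X\times_Y\Gamma[2;t]\to X\times_Y G[2;t]$ is a complete Segal extension, with $G[2;t]=L[2;t]$. This is not condition (1) of \cref{lemma:exponentiable Segal fibrations} for $[2;t]$. The object $X\times_Y[2;t]$ is not complete Segal when $[2;t]$ is not, as happens for $\Span(\F)^\flat$. So \cref{remark:saturated-vs-strongly-saturated} does not apply to it. Moreover, the replacement $Q$ of \cref{cons:Q}, which is what converts ``complete Segal extension'' into the $\Delta^\op$-realization, only exists over a tree.

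To finish along your lines you would therefore have to compute the reflection of $X\times_Y\Gamma[2;t]$, relative to $G[2;t]$, at the corolla over the full composite $t_0\actarrow t_2$. When $\pi_0(t_1)$ has several elements, $G[2;t]$ also contains partial composites, obtained by grafting along only some of the atoms of $t_1$. The reflection creates operations over these. Its value over the full composite is then a priori a colimit over iterated partial factorizations in all orders, and it is not visibly the single $\Delta^\op$-colimit of \cref{conditionCC}.

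Your sentence that ``the fiber of the colimit should be the realization $\colim_{\Delta^\op}$ appearing in (CC)'' is the crux, and it is left unproved. The suggested fix of restricting to ``the component over the single composite'' does not work. The partial composites do not lie in a different connected component of $G[2;t]$, and $\pi_0$ in (4b) concerns atoms of objects of $\cO$, not components of presheaves.

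The paper avoids this by never reflecting a tree with several grafted atoms. Exponentiability is used only for the single grafting $[u\circ^i t]$ of \cref{cons:grafting}, i.e.\ $[2;u\star^i t]$ with the identity corollas over the atoms $j\neq i$ collapsed to roots. This object is complete Segal and $\Gamma[u\circ^i t]\to[u\circ^i t]$ is a Segal extension, by \cref{prop:fibrancy of the grafting construction}. That result is proved via the relative Segality of \cref{prop:relative fibrancy over tree} over $\Tree[\Span(\F)^\flat]$ together with the operadic case. Hence \cref{remark:saturated-vs-strongly-saturated} applies. Since $[u\circ^i t]$ has no partial composites other than the full one, the explicit replacement $RX=X_2\cup_{X_1}QX_0$ together with \cref{lemma:an initiality result} computes its corolla over $\underline t\actarrow u_1$ as a single $\Delta^\op$-realization (\cref{lemma:computing the hom of RX}).

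The passage from one grafted atom to all of them is where your ``partial composites along different atoms commute'' genuinely has to be used. The paper does this on the comparison maps themselves, by induction on the number of grafted atoms, using the bisimplicial objects $\Upsilon_{k,l}$ and the siftedness of $\Delta^\op$ (\cref{lemma:the last technical lemma}). Some argument of this kind must replace your fiber comparison. An explicit complete Segal model for $L[2;t]$, even if you obtain one, does not supply it.
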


The converse of \cref{thm:main-thm-expo-factorization-Algd} then automatically follows from the dictionary of \cref{lemma:dictionary cc conditions}.

Throughout this section, we will fix a robust algebraic pattern $\O$.

\subsection{Relative Segality} 
By \cref{subsection:functoriality of algebrad cats}, $\pi_0$ induces a functor
$$
    q := \Tree[\pi_0] : \Tree[\O] \to \Tree[\Span(\F)^\flat]
$$
between tree categories. If $\left<n;t\right>$ is a forest, then we have a projection 
$$
    [n;t] \to q^*[n; \pi_0(t)]
$$
obtained by applying $i^* : \PSh(\Forest[\O]) \to \PSh(\Tree[\O])$ to the canonical map $\left<n;t\right> \to \Forest[\pi_0]^*\left<n;\pi_0(t)\right>$. The goal of this subsection is to show the following:

\begin{proposition}\label{prop:relative fibrancy over tree}
    The projection $[n;t] \to q^*[n;\pi_0(t)]$ is a complete Segal fibration for every forest $\left<n;t\right>$.
\end{proposition}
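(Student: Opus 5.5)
We have to show that $[n;t] \to q^*[n;\pi_0(t)]$ has the unique right lifting property against the spine inclusions $\Sp[m;s] \to [m;s]$ and the completeness extensions $[J;e] \to [0;e]$. Equivalently, for every map $[m;s] \to q^*[n;\pi_0(t)]$, i.e.\ a map $[m;\pi_0(s)] \to [n;\pi_0(t)]$ in $\Tree[\Span(\F)^\flat]$, the square
\[\Map([m;s],[n;t]) \to \Map(\Sp[m;s],[n;t])\]
taken over the corresponding square for $q^*[n;\pi_0(t)]$ should be cartesian, and similarly for $J$.

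\textbf{Step 1: reduce to trees.} By \cref{rem:decomposition by atoms formula}, $[n;t] \simeq \coprod_{i\in\pi_0(t)}[n;t^i]$. Compatibly, $[n;\pi_0(t)]$ decomposes over the elements of $\pi_0(t_n)$ in $\Span(\F)$, so the map is a coproduct of the maps $[n;t^i] \to q^*[n;\pi_0(t)]$. Since coproducts in a presheaf category are disjoint and universal, a map out of a connected source such as $[m;s]$ or $\Sp[m;s]$ lands in a single summand. So we may assume $t_n$ is connected, i.e.\ $\pi_0(t_n)=\underline 1$. A map $[m;s]\to[n;t]$ over a map $[m;\pi_0 s]\to[n;\pi_0 t]$ is then described by \cref{proposition:explicit description of mapping space}: a map $\phi\colon[m]\to[n]$ together with a map $t_{\phi(m)} \intarrow s_m$ in $\cO_0$ and hence, via the equivalence $(\cO_i)_{t_{\le i}/}\simeq(\cO_0)_{t_i/}$, the whole diagram.

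\textbf{Step 2: the spine condition.} We write $[m;s]$ as the pushout of $[1;s_{0,1}],\dots,[1;s_{m-1,m}]$ over the roots and check that the space of maps to $[n;t]$ lying over a fixed map to $q^*[n;\pi_0(t)]$ is the corresponding fiber product. The data of a map out of the last corolla $[1;s_{m-1,m}]$ determines $\phi(m-1),\phi(m)$ and the inert $t_{\phi(m)}\intarrow s_m$. Maps out of the earlier corollas must then be determined by cocartesian transport, using the Segal decomposition of $\cO_n$.

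The key input is that the inert maps $t_{\phi(i)} \intarrow s_i$ for $i<m$ are forced. Each is obtained by factoring $t_{\phi(i)} \actarrow t_{\phi(m)} \intarrow s_m$; compatibility on the spine prescribes them only up to which component of $\pi_0(s_i)$ they hit, and this component datum is exactly what is fixed by the map to $q^*[n;\pi_0(t)]$. To make this precise I will use \cref{lem:pi0 levelwise cocart}, which says $\cO_n\to\Span(\F)_n$ has cocartesian lifts of maps with elementary target, together with condition (4d) of robustness. Condition (4d) ensures that pulling back the $\pi_0$-cocartesian inert at $s_{i+1}$ along the active $s_i\actarrow s_{i+1}$ remains $\pi_0$-cocartesian. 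An induction on $m$ then shows that the map $(\cO_m)_{\phi^*t/}\to$ (spine data) is a pullback over the $\Span(\F)$-level data. Saturation of $\cO_n$ (\cref{prop:robust-On-is-saturated}) is used to identify maps into $t$ with compatible maps into elementary targets.

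\textbf{Step 3: completeness.} Here we consider maps from $[J;e]$ and $[0;e]$ lying over the constant data. A map from $[k;e=\dots=e]$ to $[n;t]$ is given by $\phi$ and a point of $(\cO_0)_{t_{\phi(k)}/}$, which must factor through equivalences. The space of such maps is therefore the nerve of a constant-in-$k$ diagram, apart from the $\Delta$-component. Since $\Delta$-representables are complete, and $[J]\to[0]$ is a complete Segal extension by \cref{prop:simplicial segal extension}, this reduces to completeness of $\Delta_{/[n]}$-type Segal spaces, which is standard.

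I expect Step 2 to be the main obstacle: identifying precisely that the failure of $[n;t]$ to be Segal, seen in examples like $\F_*^\flat$, is exactly measured by the $\pi_0$-data, and verifying the inductive pullback via (4c)/(4d).
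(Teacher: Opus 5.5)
Your Step~1 reduction (up to a slip: the summands are $[n;t^i]\to q^*[n;\pi_0(t)^i]$, not maps to $q^*[n;\pi_0(t)]$) and your Step~3 are fine in outline. Comparing mapping spaces fiberwise over the $\Span(\F)$-level data is also what the paper does. The gap is Step~2. It announces an induction without carrying it out, and the mechanism it names is not the one at work.

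Once the point $\bar f=(\phi,\alpha)$ of $q^*[n;\pi_0(t)]$ is fixed, matching on the roots $[0;s_i]\simeq\coprod_k[0;s_i^k]$ already determines each inert $t_{\phi(i)}\intarrow s_i^k$, together with the component of $\pi_0(t_{\phi(i)})$ it selects. So there is no residual ``which component'' ambiguity for the base to resolve. What the base pins down, and the spine does not, is the simplicial map on each summand of the forest corollas $[1;s_{i,i+1}]\simeq\coprod_{j\in\pi_0(s_{i+1})}[1;s^j_{i,i+1}]$. In the absolute setting, different summands may lie over different maps $[1]\to[n]$ while still agreeing on roots. This is exactly why, for instance, $[2;\underline 2=\underline 2\actarrow\underline 1]$ is not Segal for $\Span(\F)^\flat$. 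With $\phi$ frozen, everything hinges on identifying the spaces of factorization data, and the paper does not use (4d) for this.

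That identification is what is missing. The ingredients are as follows.
\begin{enumerate}
\item \cref{lemma:fiber of pi} computes the fiber of $\Map([m;s],[n;t])$ over $\bar f$ as $\prod_{i\in\pi_0(s)}\Map_{\cO_m}((\phi^*t)^{\alpha_m(i)},s^i)$. It uses \cref{lem:pi0 levelwise cocart} and saturation of $\cO_m$ (\cref{prop:robust-On-is-saturated}).
\item The paper reduces to $\phi=\id$ by pullback pasting along $[m;\phi^*t]\to[n;t]$; note that connectedness of $t_n$ alone says nothing about $t_{\phi(m)}$. It then reduces to $\pi_0(t_m)\simeq\underline 1$.
\item It tests only against $\Gamma[m;s]\to[m;s]$. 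These generate the Segal extensions, so no induction over spines is needed.
\item The map to be inverted is the gap map of the top square of a rectangle. The outer rectangle is the Segal square of the double category $\cO_\bullet$, after rewriting $\prod_i\Map_{\cO_{m-1}}(t_{\le m-1},s^i_{\le m-1})\simeq\Map_{\cO_{m-1}}(t_{\le m-1},s_{\le m-1})$ by saturation.
\item The bottom square is cartesian because $(\cO_{m-1})_{x/}\simeq(\cO_0)_{x_{m-1}/}$ holds both for $x=t_{\le m-1}$ and for its cocartesian piece $x=t^{\alpha_{m-1}(i)}_{\le m-1}$.
\end{enumerate}
Your spine-and-induction plan could be completed along these lines. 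As written, however, it contains neither of these two cartesian squares, and they are the proof.
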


Recall from \cref{subsection:functoriality of algebrad cats} that we have a natural map $q_!(X\boxtimes_{[n]} [n;t]) \to X \boxtimes_{[n]} [n;\pi_0(t)]$ which is an equivalence if $q$ is strong Segal. In general, $q$ fails to be strong Segal. Still, the following holds:

\begin{lemma}\label{lem:q comparison for discrete}
    If $Y$ is a discrete $\Tree[\Span(\F)]$-space, then the induced map 
    $$\Hom_{\PSh(\Tree[\Span(\F)])}(X \boxtimes_{[n]} [n;\pi_0(t)], Y) \to \Hom_{\PSh(\Tree[\O])}(X \boxtimes_{[n]} [n; t], q^*Y)$$
    is an equivalence for every forest $\left<n;t\right>$ and every simplicial space $X \to [n]$.
\end{lemma}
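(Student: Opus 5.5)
The plan is to reduce to representables and then compare the two sides through the forest decomposition formulas. Both sides of the comparison map send colimits in $X \in \PSh(\Delta)_{/[n]}$ to limits. On the left this holds because $(-)\boxtimes_{[n]}[n;\pi_0(t)]$ preserves colimits; on the right it holds for $(-)\boxtimes_{[n]}[n;t]$. By \cref{lemma:box with forest is forest} it therefore suffices to treat $X = [m] \xrightarrow{\phi} [n]$. This case is $[m;\phi^*t]$ on the $\cO$-side and $[m;\phi^*\pi_0(t)] = [m;\pi_0(\phi^*t)]$ on the $\Span(\F)$-side. Since $\pi_0(t)\simeq\pi_0(t_n)$ only depends on the last entry and $\pi_0$ is a levelwise functor of double categories, we may rename and assume $X=[n]$. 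The claim then reads: for every forest $\langle n;t\rangle$, the map
\[(i_*Y)(\langle n;\pi_0(t)\rangle) \to \Hom([n;t], q^*Y)\]
is an equivalence, where $i_*$ is right Kan extension from trees to forests.

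Next I would induct on $n$, using \cref{prop:forest decomposition} on both sides. For robust $\cO$, soundness together with \cref{rem:decomposition by atoms formula} gives $[n;t] \simeq \coprod_{i\in\pi_0(t)} [n;t^i]$, where each $t^i$ has $\pi_0(t^i)=\underline 1$. On the $\Span(\F)^\flat$-side the analogous decomposition of $\langle n;\pi_0(t)\rangle$ is into the trees $[n;\pi_0(t)^i]$, and these are exactly $\pi_0(t^i)$-shaped, by \cref{lem:pi0 levelwise cocart}. Both sides thus turn into products over $\pi_0(t)$, and it remains to treat the case $\pi_0(t)=\underline 1$, where $\pi_0(t)$ is a tree $[n;\pi_0(t)]$ of $\Span(\F)^\flat$. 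In that case the right-hand side is
\[\Hom([n;t],q^*Y) = \lim_{s\in (\Tree[\cO]_{/[n;t]})^\op} Y(q(s)),\]
which we compare with $Y(q[n;t])=Y([n;\pi_0(t)])$. Note that $[n;t]$ need not itself be a tree of $\cO$, so we are computing a limit over its category of tree-elements.

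The main obstacle is this last comparison, since $q$ is not strong Segal and no finality statement holds in general. The key is discreteness of $Y$: we only need $\pi_0$-level (in fact $\pi_0|-|$-level) information about the indexing category. I would first try to show that the functor $\Tree[\cO]_{/[n;t]}\to \Tree[\Span(\F)]_{/[n;\pi_0(t)]}$ induced by $q$ is a fibration whose fibres are weakly contractible, or at least connected and nonempty, in the elementwise sense needed. A limit of a discrete (set-valued) diagram pulled back along a functor with connected nonempty fibres over each object of the slice is the same as the original limit, because a set-valued limit is computed by $\pi_0$ of the comma categories. To get connectedness of these fibres, I would use \cref{proposition:explicit description of mapping space}: over a fixed $\phi\in\Delta_{/[n]}$, the fibres are slices $\cO^\elem_{t_i/}$. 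The map to the $\Span(\F)$-side sends such a slice to its $\pi_0$, which is $\pi_0|\cO^\elem_{t_i/}|$ by construction of $\pi_0$. The fibre over a point of $\pi_0(t_i)$ is therefore a single connected component, hence connected. Strongness of $\pi_0$ (condition (4b)) is what makes this fibrewise description compatible with the inert transition maps between different $\phi$.
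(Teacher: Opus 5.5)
Your reductions, to $X=[n]$ via \cref{lemma:box with forest is forest} and to $\pi_0(t)=\underline 1$ via \cref{rem:decomposition by atoms formula}, match the paper. The final step, however, has a genuine gap.

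The principle you invoke is false. It says that a set-valued limit is unchanged when the diagram is pulled back along a functor with connected nonempty fibres. Counterexample: $\{0\}\sqcup\{1\}\to[1]$ has point fibres, yet for $G\colon[1]^\op\to\Set$ the comparison $G(1)\to G(0)\times G(1)$ is not a bijection in general.

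What controls set-valued limits is connectedness of the comma categories $C\times_D D_{d/}$, where $C=\Tree[\cO]_{/[n;t]}$ and $D=\Tree[\Span(\F)]_{/[n;\pi_0(t)]}$. Fibres compute these only under an extra hypothesis, such as $C\to D$ being a cartesian fibration, and that fails here. The functor $\Tree[\cO]\to\Delta$ has no cartesian lifts in general: the cartesian transport $\psi^*s$ of a tree along a $\psi$ missing the top vertex is only a forest. A cartesian lift would therefore have to be an initial object of (a component of) $\cO^\elem_{x/}$.

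For instance, take $\cO=\Delta^{\op,\natural}$, where $D\simeq\Delta_{/[n]}$, and the corolla $[1;[2]\actarrow[1]]$ lying over $\langle 1;[3]\actarrow[2]\rangle$. A cartesian lift along $\{0\}\colon[0]\to[1]$ would be an initial object of the zigzag $\cO^\elem_{[2]/}$, which does not exist. So the fibrewise description from \cref{proposition:explicit description of mapping space} does not assemble, and appealing to condition (4b) does not change this.

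The missing input is a second use of soundness, beyond the atom decomposition.

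\begin{itemize}
\item By \cref{rem:forest decomposition sound}, $[n;t]\simeq\colim_{s\in(\cO^{\elem,\op}_n)_{/t}}[n;s]$. Equivalently, $(\cO^{\elem,\op}_n)_{/t}\to C$ is final, so in your limit formula you may restrict to the trees $[n;s]$ with $t\intarrow s$ in $(\cO^\elem_n)_{t/}$.
\item On these trees $q$ takes the value $[n;\pi_0(s)]$. Each map $[n;\pi_0(s)]\to[n;\pi_0(t)]$ is an equivalence, since it lies over $\id_{[n]}$ and both $\pi_0(s)$ and $\pi_0(t)$ end in $\underline 1$.
\item Hence $q_![n;t]\simeq|(\cO^\elem_n)_{t/}|\times[n;\pi_0(t)]$. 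The comparison map becomes $Y([n;\pi_0(t)])\to\Map(|(\cO^\elem_n)_{t/}|,Y([n;\pi_0(t)]))$.
\item This is an equivalence because the target is a set and $(\cO^\elem_n)_{t/}\simeq\cO^\elem_{t_n/}$ is connected.
\end{itemize}

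This is the paper's argument. A posteriori it shows that your comma categories are connected, but the fibre argument alone does not establish this.
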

\begin{proof}
    By \cref{lemma:box with forest is forest}, it suffices to handle the case that $X = [n]$. By the decomposition formula of \cref{rem:decomposition by atoms formula}, we may moreover assume that $(\O^{\el}_n)_{t/}$ is connected. The map $q_![n;t] \to [n;\pi_0(t)]$ is computed by $\colim_{s\in (\O^{\el}_n)_{t/}} [n;\pi_0(s)] \to [n;\pi_0(t)]$, as $\O$ is sound.
    Now, each map $[n;\pi_0(s)] \to [n;\pi_0(t)]$ is an equivalence as  it lies over $\id_{[n]}$ and $\pi_0(s)$ and $\pi_0(t)$ are equivalent to $\underline{1}$.
    Thus $q_![n;t] \to [n;\pi_0(t)]$ is equivalent to the projection $|(\O^{\el}_n)_{t/}| \times [n;\pi_0(t)] \to [n;\pi_0(t)]$. The map induced on hom-spaces from the statement of the lemma is thus given by 
    $$
    \Hom_{\PSh(\Tree[\Span(\F)])}([n;\pi_0(t)], Y) \to \Hom_\Spc(|\O^{\el}_{t/}|, \Hom_{\PSh(\Tree[\Span(\F)])}([n;\pi_0(t)], Y)).
    $$
    This is an equivalence since $\Hom_{\PSh(\Tree[\Span(\F)])}([n;\pi_0(t)], Y)$ is a set and $|(\O^{\el}_n)_{t/}|$ is connected.
\end{proof}

\begin{lemma}
\label{lemma:fiber of pi}
Let $\langle n;t\rangle$ and $\langle m;s\rangle$ be forests. Suppose that we have a map $f : \left<m;\pi_0(s)\right>\to \left<n;\pi_0(t)\right>$ of forests over $\phi :[m]\to [n]$ determined by a commutative diagram of pullback squares
\[
    \alpha = \begin{tikzcd}
        \pi_0(t_{\phi(0)}) \arrow[r] & \dotsb \arrow[r] & \pi_0(t_{\phi(m)}) \\
        \pi_0(s_0) \arrow[r]\arrow[u, "\alpha_0"] & \dotsb \arrow[r] & \pi_0(s_m),\arrow[u, "\alpha_m"']
    \end{tikzcd}
\] cf.\ \cref{exa:span-forests}. 
Then there is a pullback square
\[\begin{tikzcd}
	{\prod_{i\in\pi_0(s)}\Hom_{\O_m}((\phi^*t)^{\alpha_m(i)},s^i)} \arrow[r]\arrow[d] & {\Hom_{\PSh(\Tree[\O])}([m;s],[n;t])} \arrow[d]\\
	{\{\bar{f}\}} \arrow[r]&  {\Hom_{\PSh(\Tree[\Span(\F)])}([m;\pi_0(s)],[n;\pi_0(t)])},
\end{tikzcd}\]
where the right vertical arrow is induced by $[n;t] \to q^*[n;\pi_0(t)]$ (see \cref{lem:q comparison for discrete}), and $\bar{f}$ is the map $[m;\pi_0(s)] \to [n;\pi_0(t)]$ induced by $f$.
\end{lemma}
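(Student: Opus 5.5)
The plan is to compute both mapping spaces in the square directly. The first step reduces them to mapping spaces in $\Forest[\O]$ (resp.\ $\Forest[\Span(\F)]$); the second takes the fiber over $\bar f$ one elementary component of $s$ at a time.

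\emph{Step 1: reduction to forests.} Since $\O$ is sound, \cref{rem:forest decomposition sound} with $X=[m]$ gives $[m;s]\simeq\colim_{u\in(\O_m^{\el,\op})_{/s}}[m;u]$. Each $[m;u]$ is a tree, so Yoneda gives
\[\Hom_{\PSh(\Tree[\O])}([m;s],[n;t])\simeq\lim_{u\in(\O^\el_m)_{s/}}\Hom_{\Forest[\O]}(\langle m;u\rangle,\langle n;t\rangle).\]
Because $\Forest[\O]\to\Delta$ is a cartesian fibration, the right-hand side fibers over the set $\Hom_\Delta([m],[n])$, with fiber $\lim_u\Hom_{\O_m}(\psi^*t,u)$ over each $\psi$. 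To pull this decomposition out of the limit, I first split $s$ into components via \cref{rem:decomposition by atoms formula}: $[m;s]\simeq\coprod_{i\in\pi_0(s)}[m;s^i]$, where $s\to s^i$ is the $\pi_0$-cocartesian lift of the inclusion of $i$. Over each $s^i$ the indexing category $(\O^\el_m)_{s^i/}$ is connected. A limit of spaces over a connected diagram, all fibered compatibly over a constant discrete set, splits as the coproduct over that set of the limits of the fibers. By saturation of $(\O_m,\O_m^\el)$ (\cref{prop:robust-On-is-saturated}), it follows that
\[\Hom([m;s],[n;t])\simeq\prod_{i\in\pi_0(s)}\ \coprod_{\psi\colon[m]\to[n]}\Hom_{\O_m}(\psi^*t,s^i).\]
The same argument applies to the robust pattern $\Span(\F)^\flat$, whose mapping spaces are discrete, and yields the corresponding formula for $\Hom([m;\pi_0(s)],[n;\pi_0(t)])$. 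The vertical map in the square is then induced by the levelwise functors $\O_m\to\Span(\F)_m$ coming from $\pi_0$.

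\emph{Step 2: taking fibers.} Fixing $\bar f$ fixes $\phi$ and, for each $i\in\pi_0(s)$, a map $\pi_0(\phi^*t)\to\pi_0(s^i)\simeq\underline 1$ in $\Span(\F)_m$. By \cref{rem:slices-factorization-double-category}, maps out of $\pi_0(\phi^*t)$ in $\Span(\F)_m$ are determined by their last component. Since all squares in $\alpha$ are cartesian, this map is the inert map selecting $\alpha_m(i)\in\pi_0(t_{\phi(m)})$. By \cref{lem:pi0 levelwise cocart}, this map has a cocartesian lift $\phi^*t\to(\phi^*t)^{\alpha_m(i)}$ with elementary-component codomain. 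The fiber of $\Hom_{\O_m}(\phi^*t,s^i)\to\Hom_{\Span(\F)_m}(\pi_0(\phi^*t),\underline1)$ over it is therefore $\Hom_{\O_m}((\phi^*t)^{\alpha_m(i)},s^i)$. Taking the product over $i$ gives the claimed pullback square.

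\emph{Expected obstacle.} The main difficulty is the interchange in Step~1: splitting the limit over $(\O_m^\el)_{s/}$ into components, and checking that the identifications of the $\O$-side and the $\Span(\F)$-side formulas are compatible with the comparison map induced by $[n;t]\to q^*[n;\pi_0(t)]$. I would first try to establish this compatibility by naturality of the decomposition of \cref{rem:decomposition by atoms formula} under $\pi_0$, using \cref{lem:q comparison for discrete} to identify the right vertical map.
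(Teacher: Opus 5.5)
Your proposal is correct and uses the same ingredients as the paper's proof: the sound forest decomposition together with the decomposition by atoms, saturation of $\O_m$ from \cref{prop:robust-On-is-saturated}, and the cocartesian lifts of \cref{lem:pi0 levelwise cocart}. The only difference is the order, since the paper first treats a tree $[m;s]$ by right cancellation over $\Hom_\Delta([m],[n])$ and then reduces forests to trees. The compatibility issue you flag goes away if you do as the paper does: decompose only $[m;s]$ and compute the target as $\Hom([m;s],q^*[n;\pi_0(t)])$. Fibers commute with the limit over $(\O^\el_m)_{s^i/}$, so your connected-limit splitting step and the separate $\Span(\F)$-side decomposition are then unnecessary.
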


\begin{proof}
Suppose first that $[m;s]$ is a tree. The right vertical map of the square above may be identified with the top right vertical map induced by $\Forest[\pi_0]$ in the commutative diagram
\[\begin{tikzcd}
	{\Hom_{\O_m}(\phi^*t,s)} & {\Hom_{\Forest[\O]}(\left<m;s\right>,\left<n;t\right>)} \\
	{\Hom_{\Span(\F)_m}(\phi^*(\pi_0(t)),\pi_0(s))} & {\Hom_{\Forest[\Span(\F)]}(\left<m;\pi_0(s)\right>,\left<n;\pi_0(t)\right>)} \\
	{\{\phi\}} & {\Hom_{\Delta}([m],[n])}.
	\arrow[from=1-1, to=1-2]
	\arrow[from=1-1, to=2-1]
	\arrow[from=1-2, to=2-2]
	\arrow[from=2-1, to=2-2]
	\arrow[from=2-1, to=3-1]
	\arrow[from=2-2, to=3-2]
	\arrow[from=3-1, to=3-2]
\end{tikzcd}\]
The total and lower squares are pullback squares, so the top one is a pullback square as well by right cancellation. Pulling back this upper square along the morphism $\alpha_m: \pi_0(t_{\phi(m)}) \intarrow \pi_0(s_m) \simeq \underline{1}$ leads to the result by \cref{lem:pi0 levelwise cocart}.

Suppose now that $\langle m;s\rangle$ is a forest. Then the canonical horizontal arrows in the square
\[\begin{tikzcd}
     {\Hom_{\PSh(\Tree[\O])}([m;s],[n;t])} \arrow[r]\arrow[d] &  \prod_{i \in \pi_0(s)} \lim_{e \in (\O^{\el}_m)_{s^i/}} {\Hom_{\PSh(\Tree[\O])}([m;e],[n;t])} \arrow[d] \\
	{\Hom_{\PSh(\Tree[\O])}([m;s],q^*[n;\pi_0(t)])} \arrow[r] & \prod_{i \in \pi_0(s)} \lim_{e \in (\O^{\el}_m)_{s^i/}} {\Hom_{\PSh(\Tree[\O])}([m;e],q^*[n;\pi_0(t)])} 
\end{tikzcd}\]
are equivalences by \cref{rem:forest decomposition sound} and \cref{rem:decomposition by atoms formula} applied to $X=[n]$. It follows from the above case that the fiber of the left vertical arrow above $f$ is computed by the product
$
\prod_{i \in \pi_0(s)} \lim_{e \in (\O^{\el}_m)_{s^i/}} \Hom_{\O_m}((\phi^*t)^{\alpha_m(i)},e).
$
As $\O_m$ is saturated by \cref{prop:robust-On-is-saturated}, the canonical map 
$$
 \textstyle \prod_{i \in \pi_0(s)} \Hom((\phi^*t)^{\alpha_m(i)},s^i) \to \prod_{i \in \pi_0(s)} \lim_{e \in (\O^{\el}_m)_{s^i/}} \Hom_{\O_m}((\phi^*t)^{\alpha_m(i)},e)
$$
is an equivalence.
\end{proof}

Moreover, we will need the following observation to finish the proof of the main result \cref{prop:relative fibrancy over tree} of this subsection.

\begin{lemma}
\label{lemma:topos}
    Let $f_i:A_i\to B_i$ be a cartesian family of complete Segal fibrations between $\Tree[\O]$-spaces indexed by a category $I$.
    Then the induced morphism $\colim_I f_i:\colim_I A_i\to \colim_I B_i$ is a complete Segal fibration.
\end{lemma}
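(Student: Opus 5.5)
The plan is to use descent in the presheaf topos $\PSh(\Tree[\O])$ to turn the lifting problem for $\colim_I f_i$ into lifting problems for the individual $f_i$. A map is a complete Segal fibration iff it has the unique right lifting property against the spine inclusions $\Sp[n;t] \to [n;t]$ and the generating completeness extensions $[J;e]\to[0;e]$. Every such generator $K \to [n;t]$ (resp.\ $[J;e] \to [0;e]$) has representable target. Put $A \coloneqq \colim_I A_i$ and $B \coloneqq \colim_I B_i$. Giving a lifting problem against $f \coloneqq \colim_I f_i$ is the same as giving a point $b \in B([n;t])$ together with a lift of $b|_K$ to $A$. The aim is to show that the space of such lifting data is equivalent to $\Map_{/B}([n;t],A)$ via restriction.

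The first step is to use that colimits in a presheaf category are computed pointwise, so $B([n;t]) \simeq \colim_I B_i([n;t])$. Hence a point $b$ lifts, up to the contractible choice in the colimit, to some $b_i \in B_i([n;t])$. More invariantly, the question is whether the square
\[\begin{tikzcd} A \times_B [n;t] \arrow[r] \arrow[d] & A \arrow[d] \\ [n;t] \arrow[r] & B \end{tikzcd}\]
has the property that $A\times_B[n;t] \to [n;t]$ is local with respect to $K \to [n;t]$. Because the family is cartesian, descent in the $\infty$-topos $\PSh(\Tree[\O])$ (\cite[Theorem 6.1.3.9]{HTT}) gives that each square $A_i \to A$ over $B_i \to B$ is a pullback. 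By universality of colimits, $A\times_B [n;t] \simeq \colim_I (A_i \times_{B_i} ([n;t]\times_B B_i))$. The base $[n;t]\times_B B_i$ is a presheaf over $[n;t]$ whose colimit over $I$ is $[n;t]$.

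It then suffices to prove that $A\times_B [n;t] \to [n;t]$ is a complete Segal fibration for each map $[n;t]\to B$; this reduces the statement to the case where $B$ is representable. In that case the map $[n;t] \to B$ factors through some $B_i$ (the $[n;t]$-points of a colimit are the colimit of the points, so up to the space of choices, which is glued coherently by descent). Fixing such a factorization $[n;t] \to B_i \to B$, the cartesian hypothesis gives $A \times_B [n;t] \simeq A\times_B B_i \times_{B_i}[n;t] \simeq A_i \times_{B_i} [n;t]$. This is a base change of the complete Segal fibration $f_i$, and hence itself a complete Segal fibration, since right classes of factorization systems are stable under pullback. Uniqueness of lifts for $A \to B$ against a generator $K \to [n;t]$ over $b$ is then exactly uniqueness of sections of $A\times_B[n;t] \to [n;t]$ extending a section over $K$, which we just established.

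The main obstacle I expect is making the factorization step coherent: the lift of $b$ to some $B_i$ is not unique, but only parametrized by a contractible-over-$b$ space coming from the colimit. I would handle this by phrasing everything via descent. The statement that, for each map $[n;t] \to B$, the pullback $A\times_B[n;t]\to[n;t]$ is a complete Segal fibration does not depend on choices. Moreover, the lifting spaces are computed fiberwise over $B([n;t])$, so checking the condition on points suffices.
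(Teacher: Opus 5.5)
Your proposal is correct and follows essentially the same route as the paper: since every generator has representable codomain, it suffices to show that the pullback along each map $[n;t] \to \colim_I B_i$ is a complete Segal fibration; such a map factors through some $B_j$ because every point of a colimit of spaces comes from some stage; and descent in the topos $\PSh(\Tree[\O])$ then identifies that pullback with a base change of $f_j$. The coherence issue you flag is harmless, as you yourself note, because being a complete Segal fibration is a property, so any choice of factorization suffices; the intermediate formula obtained from universality of colimits is not needed.
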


\begin{proof}
    As all generating complete Segal extensions have a representable codomain, it is sufficient to check that for every morphism $g:[n;t]\to \colim_IB_i$, the pullback $g^*\colim_If_i\to [n;t]$ is a complete Segal fibration. Let $g$ be such a morphism. Remark that there necessarily exists a (non-unique) $j$ in $I$ such that $g$ factors through $B_j\to \colim_IB_i$. As $\PSh(\Tree[\O])$ is a topos, and as the family of diagrams is cartesian, by \cite[Theorem 6.1.0.6]{HTT} we have a diagram of cartesian squares:
\[\begin{tikzcd}
	{g^*\colim_If_i}& {A_j} & {\colim_{I}A_i} \\
	{[n;t]} & {B_j} & {\colim_{I}B_i}
	\arrow[from=1-1, to=1-2]
	\arrow[from=1-1, to=2-1]
	\arrow[from=1-2, to=1-3]
	\arrow[from=1-2, to=2-2]
	\arrow["{\colim_If_i}", from=1-3, to=2-3]
	\arrow[from=2-1, to=2-2]
	\arrow[from=2-2, to=2-3]
\end{tikzcd}\]
As the assumption implies that the middle map is a complete Segal fibration, so is the left one, which concludes the proof.
\end{proof}

\begin{proof}[Proof of \cref{prop:relative fibrancy over tree}]
We need to show that the map 
\begin{equation}\label{eq:relative fibrancy over tree}
    \Hom_{/q^*[n;\pi_0(t)]}([m;s], [n;t]) \to \Hom_{/q^*[n;\pi_0(t)]}(\Gamma[m;s], [n;t]) 
\end{equation}
is an equivalence 
for every map $f : [m;s] \to q^*[n;\pi_0(t)]$ so that $[m;s]$ is a tree. As $[m;s]$ is a tree, this comes from a map $\left<m;\pi_0(s)\right> \to \left<n;\pi_0(t)\right>$ between forests by adjunction, that we will denote by $f$ as well. Let $\phi : [m] \to [n]$ be its underlying map in $\Delta$ and $\alpha \colon \pi_0(\phi^*(t)) \to \pi_0(s)$ the underlying map in $\Span(\F)_m$.
Note that the commutative square 
\[
    \begin{tikzcd}
        {[m;\phi^*t]} \arrow[r]\arrow[d] & {[n;t]} \arrow[d] \\
        q^*{[m;\pi_0(\phi^*t)]} \arrow[r] & q^*{[n; \pi_0(t)]}
    \end{tikzcd}
\]
is cartesian by pullback pasting. Therefore, we may reduce to the case that $[n] = [m]$ and $\phi = \id_{[m]}$ in what follows. Now, by the decomposition formula of \cref{rem:decomposition by atoms formula} (applied to both patterns $\O$ and $\Span(\F)$), we can identify the projection $[m;t] \to q^*[m;\pi_0(t)]$ with the coproduct of projections $\coprod_{i \in \pi_0(t_m)}[m;t^i] \to \coprod_{i \in \pi_0(t_m)}q^*[m; \pi_0(t)^i]$. Thus by \cref{lemma:topos}, we may further reduce to the case that $[n]=[m]$, $\phi=\id_{[m]}$ and $\pi_0(t_m) \simeq \underline{1}$. In particular, the component $\alpha_m$ is an equivalence. 

By \cref{lemma:fiber of pi}, the map \cref{eq:relative fibrancy over tree} is then computed as the gap map in the top square of the commutative diagram
\begin{equation}\label{eq:rectangle-relative-fibrancy}
    \begin{tikzcd}
        \Hom_{\O_m}(t,s) \arrow[r]\arrow[d] & \Hom_{\O_1}(t_{\leq m-1}, s_{\leq m-1}) \arrow[d] \\
        {\prod_{i\in \pi_0(s_{m-1})}\Hom_{\O_{m-1}}(t_{\leq m-1}^{\alpha_{m-1}(i)},s_{\leq m-1}^i)}\arrow[d]\arrow[r] & {\prod_{i\in \pi_0(s_{m-1})}\Hom_{\O_0}(t_{m-1}^{\alpha_{m-1}(i)},s_{ m-1}^i)}\arrow[d] \\
	{\prod_{i\in \pi_0(s_{m-1})}\Hom_{\O_{m-1}}(t_{\leq m-1},s_{\leq m-1}^i)} \arrow[r]& {\prod_{i\in \pi_0(s_{m-1})}\Hom_{\O_0}(t_{m-1},s_{ m-1}^i).}
    \end{tikzcd}
\end{equation}
As $\O_{m-1}$ is saturated by \cref{prop:robust-On-is-saturated}, the bottom map is equivalent to 
$$\Hom_{\O_{m-1}}(t_{\leq m-1},s_{\leq m-1})\to \Hom_{\O_{0}}(t_{m-1},s_{ m-1})$$ and so the outer rectangle is cartesian as $\O_\bullet$ is a double category.
By pullback pasting, it therefore suffices to show that the bottom square of \cref{eq:rectangle-relative-fibrancy} is cartesian.
We will fix $i \in \pi_0(s_{m-1})$ and show this for the $i$-th factor.
Note that we have a cube
\[\begin{tikzcd}
	& {(\cO_{m-1})_{t^{\alpha_{m-1}(i)}_{\leq m-1}/}} && {(\cO_{0})_{t^{\alpha_{m-1}(i)}_{m-1}/}} \\
	{(\cO_{m-1})_{t_{\leq m-1}/}} && {(\cO_{0})_{t_{m-1}/}} \\
	& {\cO_{m-1}} && {\cO_0} \\
	{\cO_{m-1}} && {\cO_0}
	\arrow["\sim", from=1-2, to=1-4]
	\arrow[from=1-2, to=3-2]
	\arrow[from=1-4, to=3-4]
	\arrow[from=2-1, to=1-2]
	\arrow[from=2-1, to=4-1]
	\arrow[from=2-3, to=1-4]
	\arrow[from=3-2, to=3-4]
	\arrow["\sim", from=4-1, to=3-2]
	\arrow[from=4-1, to=4-3]
	\arrow["\sim", from=4-3, to=3-4]
	\arrow[from=2-3, to=4-3, crossing over]
	\arrow["\sim"{pos=0.7}, from=2-1, to=2-3, crossing over]
\end{tikzcd}\]
Both the bottom and top square are cartesian. Taking fibers of the vertical maps over $s^i_{\leq m-1}$, it follows that the $i$-th factor of the bottom square of \cref{eq:rectangle-relative-fibrancy} is cartesian.
We conclude that the top square of \cref{eq:rectangle-relative-fibrancy} is cartesian.

The proof of the unique left lifting property against generating completeness extension is shown in a similar way using the fact that the double category $\O_\bullet$ is complete.
\end{proof}

\begin{corollary}
\label{cor:fibrant forest}
Let $\O$ be an atomically robust pattern. Then the $\Tree[\O]$-Segal space $[n;t]$ is complete Segal for every forest $\langle n;t\rangle$.

\end{corollary}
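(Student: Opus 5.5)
The plan is to deduce this from \cref{prop:relative fibrancy over tree}: show that when $\O$ is atomically robust, the target $q^*[n;\pi_0(t)]$ of the projection $[n;t] \to q^*[n;\pi_0(t)]$ is a complete Segal $\Tree[\O]$-space. The class of complete Segal fibrations is closed under composition, and complete Segal objects pull back along complete Segal fibrations (as remarked after \cref{remark:unpacking-complete-segal-condition}). So $[n;t]$ is complete Segal as soon as $q^*[n;\pi_0(t)]$ is.

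In the atomically robust case $\pi_0(x) \simeq \underline{1}$ for every $x$, so $\pi_0 \colon \O \to \Span(\F)^\flat$ factors through the terminal pattern $\ast$, sending everything to $\underline 1$. Then $[n;\pi_0(t)]$ is the tree $[n;\underline1 = \cdots = \underline1]$. Via the factorization $\Tree[\O] \to \Tree[\ast] = \Delta \to \Tree[\Span(\F)^\flat]$, the presheaf $q^*[n;\pi_0(t)]$ is the pullback along $p \colon \Tree[\O] \to \Delta$ of the restriction of this tree to $\Delta$. That restriction is the representable $[n]$: it sends $[m]$ to maps $[m;\underline1=\cdots=\underline1] \to [n;\underline 1 = \cdots=\underline1]$, and these are just maps $[m]\to[n]$ because the inert maps $\underline 1 \to \underline 1$ in $\Span(\F)$ are identities. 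Hence $q^*[n;\pi_0(t)] \simeq p^*[n]$.

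It remains to check that $p^*[n]$ is complete Segal, i.e.\ local with respect to $\Sp[m;s] \to [m;s]$ and $[J;e] \to [0;e]$. Since $\Sp[m;s] = \Sp[m]\boxtimes_{[m]}[m;s]$, maps $\Sp[m;s] \to p^*[n]$ should correspond to maps $\Sp[m] \to [n]$ of simplicial spaces, and similarly for $J$. The key step is to justify this using $p_!$. I would compute $p_!(X\boxtimes_{[m]}[m;s])$ through the forest decomposition formula of \cref{prop:forest decomposition}, or more directly through \cref{lemma:box with forest is forest}, which writes it as a colimit of the $p_![k;\phi^*s] = [k]$ over $(\Delta_{/[m]})_{/X}$, i.e.\ $X$ itself. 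Granting this, $\Map(X\boxtimes_{[m]}[m;s], p^*[n]) \simeq \Map(X,[n])$. Since $[n]$ is a complete Segal space, both localities hold.

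The main obstacle is that identification of $p_!$ on box products; if it fails, I would instead argue directly with \cref{lem:q comparison for discrete}, applied with $Y = [n;\pi_0(t)]$, which is discrete.
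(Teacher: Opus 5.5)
Your reduction is the paper's: by \cref{prop:relative fibrancy over tree} it suffices to show that $q^*[n;\pi_0(t)]$ is complete Segal. For atomically robust $\O$, the forest $[n;\pi_0(t)]$ is the unary tree $L_n=[n;\underline1=\cdots=\underline1]$, and your identification $q^*L_n\simeq p^*[n]$ is also correct.

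The problem is your primary argument that $p^*[n]$ is complete Segal. The equivalence $p_![k;\phi^*s]\simeq[k]$ only holds when $\langle k;\phi^*s\rangle$ is a tree. Already $\Sp[m]\boxtimes_{[m]}[m;s]$ is glued from the pieces $[1;s_{i-1,i}]$ and $[0;s_i]$ with $i<m$. These are not representable as soon as $s_i$ is not elementary. For such a forest, the sound decomposition of \cref{rem:forest decomposition sound} gives
\[
p_![k;u]\simeq\colim_{(\O^{\el,\op}_k)_{/u}}[k]\simeq|\O^{\el}_{u_k/}|\times[k].
\]
Atomic robustness only makes $\O^\el_{u_k/}$ connected, not weakly contractible, so $p_!(X\boxtimes_{[m]}[m;s])\simeq X$ fails in general. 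The completeness half is unaffected, since $J\boxtimes_{[0]}[0;e]$ involves only trees.

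What remains true is the mapping-space statement you actually need. $\Map([k],[n])$ is a set and $|\O^\el_{u_k/}|$ is connected, so mapping into $[n]$ cannot see the extra factor. That is exactly \cref{lem:q comparison for discrete}. So your fallback is not optional: it is the argument, and it is the one the paper uses.

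To run the fallback you still need $L_n$, as a $\Tree[\Span(\F)^\flat]$-space, to be local with respect to $\Sp[m]\boxtimes_{[m]}L_m\to L_m$ and $J\boxtimes_{[0]}L_0\to L_0$. In the atomically robust case $\pi_0(s)$ is always a constant string, so these are the only maps that \cref{lem:q comparison for discrete} transports. Your fallback sentence does not address this step.

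The paper gets it by recognizing $L_n$ as the image of $[n]$ under the functor $\iota$ of \cref{sec:Underlying-graph}, which lands in complete Segal objects. Your own computation also works here, and now legitimately: every $\phi^*(\underline1=\cdots=\underline1)$ is a tree. Hence $X\boxtimes_{[m]}L_m\simeq j_!X$ for $j\colon\Delta\to\Tree[\Span(\F)^\flat]$, $[k]\mapsto L_k$. Then $\Map(X\boxtimes_{[m]}L_m,L_n)\simeq\Map(X,j^*L_n)\simeq\Map(X,[n])$, using your observation that $j^*L_n\simeq[n]$.
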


\begin{proof}
    As $\O$ is atomically robust, $[n;\pi_0(t)]$ corresponds to the tree $\coprod_{\pi_0(t_n)}L_n$ where $L_n$ is the tree $[n;\underline{1}=\underline{1}=\ldots=\underline{1}]$. The unary tree $L_n$ is precisely the image of $[n]$ under the functor $\iota : \CSeg(\Delta) \to \CSeg(\Tree[\Span(\F)])$ of \cref{sec:Underlying-graph}. Thus $[n;\pi_0(t)]$ is complete Segal. As $[n;\pi_0(t)]$ is also discrete, $q^*[n;\pi_0(t)]$ is a complete Segal $\Tree[\O]$-space by \cref{lem:q comparison for discrete}. The desired conclusion now follows from \cref{prop:relative fibrancy over tree}.
\end{proof}

\subsection{The grafting construction}\label{subsec:grafting}
Recall that we are working with a robust pattern $\O$. The key ingredient for our proof of \cref{thmB} is the so-called \textit{grafting construction}:

\begin{construction}\label{cons:grafting}
Let $u:u_0\actarrow u_1$ and $t:t_0\actarrow u_0^i$ be two active morphisms with $i\in \pi_0(u_0)$. Then there exists a unique active morphism $\underline{t}\actarrow u_0$ whose projection by the functor $\O^{\act}_{/u_0}\to \O^{\act}_{/u_0^j}$ is given by 
\[
    \begin{cases}
        t : t_0 \actarrow u_0^i & \text{if $i=j$}, \\
        \id : u_0^j \to u_0^j & \text{otherwise}.
    \end{cases}
\]
We then obtain a forest 
$$
\textstyle \left<2; u \star^i t\right> := \left<2; \underline{t}\actarrow u_0 \actarrow u_1\right>.
$$
For $[k] \in \Delta$, we will consider the $\Tree[\O]$-space defined by the pushout square
\[\begin{tikzcd}
	{\coprod_{j\neq i\in \pi_0(u_0)}  (T_k[1]\times_{[2]}[1])\boxtimes_{[1]}[1;u_0^j=u_0^j]} & {T_k[1] \boxtimes_{[2]}[2;u\star^i t]} \\
	{\coprod_{j\neq i\in \pi_0(u_0)} [0;u_0^j]} & {T_k[u\circ^i t]}.
	\arrow[from=1-1, to=1-2]
	\arrow[from=1-1, to=2-1]
	\arrow[from=1-2, to=2-2]
	\arrow[from=2-1, to=2-2]
\end{tikzcd}\]
We then set 
$$[u\circ^i t]:=T_0[u\circ^i t], \quad \quad \Gamma[u\circ^i t]:=[1;t^i] \cup_{[1;u_0^i]} [1;u].$$
\end{construction}

The goal of this subsection is to show the following:

\begin{proposition}
    \label{prop:fibrancy of the grafting construction}
    Let $u, t$ and $\gamma$ be as in \cref{cons:grafting}.
    Then the map $\Gamma[u \circ^i t] \to [u \circ^i t]$ is a Segal extension and $[u\circ^i t]$ is a complete Segal space.
    Moreover, for any forest $\langle n;s \rangle$ with $n \leq 1$, the $\Tree[\cO]$-space $[n;s]$ is complete Segal.
\end{proposition}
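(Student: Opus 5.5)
The plan is to deduce everything from the relative fibrancy statement \cref{prop:relative fibrancy over tree}, combined with explicit control over the discrete base in $\Tree[\Span(\F)^\flat]$. I would prove the last assertion first, since the grafting claims rely on it.

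\emph{Forests of length $\leq 1$.} Let $\langle n;s\rangle$ be a forest with $n\le 1$. By \cref{prop:relative fibrancy over tree}, the projection $[n;s]\to q^*[n;\pi_0(s)]$ is a complete Segal fibration. It therefore suffices to show that $q^*[n;\pi_0(s)]$ is complete Segal. The $\Tree[\Span(\F)]$-space $[n;\pi_0(s)]$ is discrete, so \cref{lem:q comparison for discrete} reduces this to showing that $[n;\pi_0(s)]$ itself lies in $\CSeg(\Tree[\Span(\F)^\flat])$. By \cref{rem:decomposition by atoms formula}, $[n;\pi_0(s)]$ is a coproduct of roots $[0;\underline 1]$ (for $n=0$) or of corollas $[1;\underline a\to\underline 1]$ (for $n=1$). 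Since coproducts of complete Segal fibrations over representables are again complete Segal fibrations by \cref{lemma:topos}, it remains to treat a single root or corolla. A root is $\iota[0]$, which is complete Segal by \cref{prop:unary algebrads}. For a corolla, I would check the Segal and completeness conditions by hand, using \cref{lemma:fiber of pi} for the pattern $\Span(\F)^\flat$. A map $[m;v]\to[1;\underline a\to \underline 1]$ lies over some $\phi\colon[m]\to[1]$ and is a set of cartesian squares. Such a map exists only if $v$ is a string of bijections on the layers sent to $0$ and on those sent to $1$. Given this, the restriction to the spine $\Sp[m;v]$ is readily seen to be a bijection of sets, and completeness holds because the relevant simplicial set $e\mapsto$ (unary trees over a fixed elementary) is the nerve of a discrete set.

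\emph{Segal extension.} Here $T_0[1]=[2]$ over $[2]$, so $T_0[1]\boxtimes_{[2]}[2;u\star^i t]=[2;u\star^i t]$. Thus $[u\circ^i t]$ is the quotient of $[2;\underline t\actarrow u_0\actarrow u_1]$ obtained by collapsing the unary strands $[1;u_0^j=u_0^j]$ for $j\neq i$ to the roots $[0;u_0^j]$. Applying \cref{rem:decomposition by atoms formula} to the first layer gives
\[[1;\underline t\actarrow u_0]\simeq [1;t]\sqcup \textstyle\coprod_{j\ne i}[1;u_0^j=u_0^j],\qquad [0;u_0]\simeq \textstyle\coprod_j[0;u_0^j].\]
It follows that the collapse pushout applied to $\Sp[2;u\star^i t]=[1;\underline t\actarrow u_0]\cup_{[0;u_0]}[1;u]$ yields exactly $[1;t]\cup_{[1;u_0^i]}[1;u]$, i.e.\ $\Gamma[u\circ^i t]$. (Here $[1;u_0^i]$ should be read as the root $[0;u_0^i]$.) Hence $\Gamma[u\circ^i t]\to[u\circ^i t]$ is a cobase change of the spine inclusion $\Sp[2;u\star^i t]\to[2;u\star^i t]$ along the collapse map, and is therefore a Segal extension by \cref{prop:simplicial segal extension}.

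\emph{Fibrancy of $[u\circ^i t]$.} This is the main obstacle. Let $\sigma$ denote the layered forest of finite sets whose first layer is $\pi_0(t_0)\to \pi_0(u_0^i)=\underline 1$, joined with the identities on the $j\neq i$ components, followed by $\pi_0(u_0)\to\pi_0(u_1)$. I would first build a map $[u\circ^i t]\to q^*[\sigma']$, where $[\sigma']$ is the analogous collapsed discrete object in $\Tree[\Span(\F)]$. I would then show that this map is a complete Segal fibration. The idea is to note that the collapse square for $[u\circ^i t]$ is the pullback along $q^*[\sigma']\to q^*[2;\pi_0(u\star^i t)]$ of the corresponding square one level down, and that it is a cartesian family over the analogous collapse square downstairs. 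Then \cref{lemma:topos} together with \cref{prop:relative fibrancy over tree} for $\langle 2;u\star^i t\rangle$ and $\langle 1;u_0^j=u_0^j\rangle$ gives the fibration property. The step I am least sure of is verifying that this square really is cartesian, that is, that collapsing the strands for $j\ne i$ commutes with pulling back along $q$; I would check it pointwise on trees via \cref{lemma:fiber of pi}. Finally, $[\sigma']$ is discrete and in $\Tree[\Span(\F)]$ it is just a grafted tree of height two, so its complete Segal property is checked by hand as for corollas. Then \cref{lem:q comparison for discrete} transfers this to $q^*[\sigma']$, which completes the argument.
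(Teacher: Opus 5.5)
Your proposal is correct and is essentially the paper's proof. The Segal extension is the cobase change of $\Gamma[2;u\star^i t]=\Sp[2;u\star^i t]\to[2;u\star^i t]$ along the collapse map, and complete Segality of $[u\circ^i t]$ (and of $[n;s]$ for $n\le 1$) comes from \cref{prop:relative fibrancy over tree} and \cref{lemma:topos} applied to the cartesian map of collapse spans over $q^*$, followed by \cref{lem:q comparison for discrete} to reduce to $\Span(\F)^\flat$. The cartesianness you were unsure about follows directly from $[1;u_0^j=u_0^j]\simeq p^*[1]\times[0;u_0^j]$, base change along $d_2$, and the decomposition of \cref{rem:decomposition by atoms formula}.

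The one real difference is the $\Span(\F)^\flat$ base case. The paper identifies $[\pi_0(u)\circ^i\pi_0(t)]$ with the dendroidal nerve of the free operad on the grafted tree and invokes Chu--Haugseng--Heuts, whereas you propose a direct check. That works, but it is not ``as for corollas'': it genuinely uses the collapse. The layered tree $[2;\pi_0(u\star^i t)]$ itself is in general not Segal, because on the spine the unary strands $j\neq i$ can be lifted at different levels.
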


The name of the grafting construction comes from the fact that it recovers the grafting construction for the prototypical case of operads. This is explained during the demonstration of the following lemma:

\begin{lemma}\label{lem:fibrancy of the grafting construction operad case}
    If $\O$ is the pattern $\Span(\F)^{\flat}$, then the $\Tree[\Span(\F)^\flat]$-space $[u\circ^i t]$ is complete Segal, or equivalently, is an operad.
    Moreover, for any forest $\langle n;t \rangle$ with $n \leq 1$, the $\Tree[\Span(\F)^\flat]$-space $[n;t]$ is complete Segal.
\end{lemma}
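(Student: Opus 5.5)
The plan is to compare with the nerve of a genuine (strict) operad and argue by hand, since $\Span(\F)^\flat$ has trees equivalent to those of $\F_*^\flat$ (see \cref{exa:span-forests}) and $\CSeg(\Tree[\Span(\F)^\flat])\simeq \Algd(\Span(\F)^\flat)\simeq\mathrm{Op}$. For $\O=\Span(\F)^\flat$, the forest $\langle 2;u\star^i t\rangle$ is a two-layer forest of finite sets $\underline t\to u_0\to u_1$ with $u_1=\underline 1$ (a tree after restriction). In this forest, the corolla $t$ is stacked on the $i$-th leaf of the corolla $u$, and every other leaf $j\neq i$ carries a trivial unary edge $u_0^j=u_0^j$. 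The pushout defining $[u\circ^i t]$ collapses these trivial unary edges. So the aim is to identify $[u\circ^i t]$ with the presheaf of maps into the classical grafted tree $u\circ_i t$, i.e.\ the nerve of the free discrete operad generated by two corollas glued along one colour.

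Concretely, I would proceed as follows. First, I define the free symmetric (coloured, set-valued) operad $F$ on the grafted tree. Its colours are the edges of $u\circ_i t$, and it has one generating operation for each vertex, with the generated composite. Its operadic nerve $N F$ is a discrete algebrad over $\F_*$, and hence a complete Segal $\Tree$-space. Completeness is automatic because $F$ has no nontrivial invertible unary operations, so the associated Segal spaces on roots are discrete and complete. Second, I compute $NF([m;s])$ for a tree $[m;s]$: it is the set of ways to decorate the layered tree $s$ with colours and operations of $F$. Third, I compare this with $[u\circ^i t]([m;s])$ using \cref{lemma:box with forest is forest}. Maps of trees into $\langle 2;u\star^i t\rangle$ lying over $T_0[1]=[1+0+1]$, glued via the pushout, enumerate exactly such decorations. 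Here the collapse along $[0;u_0^j]$ exactly matches the identity unary edges at the leaves $j\ne i$ with the colour $u_0^j$. This yields $[u\circ^i t]\simeq NF$, so $[u\circ^i t]$ is complete Segal.

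For the forests $\langle n;t\rangle$ with $n\le 1$, the same reasoning applies. A root forest $\langle 0;x\rangle$ restricts to a disjoint union of $|x|$ copies of $[0;\underline 1]$, which is the nerve of a discrete set of colours. A corolla forest $\langle 1;x\to y\rangle$ restricts to a disjoint union over $y$ of corollas, together with the colours of $x$ that lie over empty fibres. Each of these is the nerve of a free operad on a single operation, plus some extra colours. Here I use \cref{rem:decomposition by atoms formula} to reduce to a single tree.

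The main obstacle is step three: checking that the pushout with the $T_k[1]\times_{[2]}[1]$ terms produces exactly the nerve values and not a homotopy-incorrect gluing. I would handle this by observing that all presheaves involved are discrete sets, the maps in the pushout are monomorphisms, and so the pushout is computed levelwise as a set-theoretic union. It then suffices to match elements combinatorially, tree by tree, using the explicit description of maps of $\Span(\F)$-forests as diagrams with cartesian squares.
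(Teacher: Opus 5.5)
Your proposal is correct and follows essentially the paper's proof. Like the paper, you identify $[u\circ^i t]$ with the nerve of the free operad on the grafted tree by checking lifts tree by tree, and you handle the length $\leq 1$ case by decomposing into trees and using that the nerve preserves coproducts. The only difference is the source of complete Segality of the nerve: the paper cites Chu--Haugseng--Heuts, while you use the equivalence $\Algd(\O)\simeq\CSeg(\Tree[\O])$, and both work.

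Two small slips. First, the pushout is not a union, since the collapse leg $\coprod_{j\neq i}[1;u_0^j=u_0^j]\to\coprod_{j\neq i}[0;u_0^j]$ is not injective. It is nevertheless a levelwise pushout of sets, hence discrete, because the other leg into $[2;u\star^i t]$ is a monomorphism. Second, a corolla forest $[1;x\to y]$ is simply the coproduct of the corollas over the points of $y$ (some possibly nullary), with no extra colours.
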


\begin{proof}
    Note that we have a composite $\Psi \colon \Tree[\Span(\F)^\flat] \to \Omega \hookrightarrow \mathrm{Op}$ sending a tree to the free operad generated by this tree; here $\Omega$ is the dendroidal category of Moerdijk--Weiss \cite{MoerdijkWeiss}.
    Given an operad $P$, its \emph{nerve} $N(P) \coloneqq \Hom(\Psi(-),P)$ defines a complete Segal $\Tree[\Span(\F)^\flat]$-space by \cite[Theorem 1.1]{ChuHaugsengea2018TwoModelsHomotopy}.
    We will prove the first part of the lemma by showing that $[u\circ^i t]$ is of the form $N(P)$ for some operad $P$.
    Suppose that $|u_0| = m$ and $|t_0| = n$; then we will show that $[u\circ^i t] = N(P)$ where $P$ is the operad freely generated by the tree
    \[\scalebox{0.8}{\begin{tikzpicture}
        \draw[fill] (0,1) circle [radius=0.08];
        \draw[fill] (-1.5,2) circle [radius=0.08];
        \node[right,yshift=-3pt] at (0,1) {$p$};
        \node[left,yshift=-3pt] at (-1.5,2) {$q$};
        \node at (-1.5,2.6) {$\cdots$};
        \node at (0.35,1.6) {$\cdots$};
        
        \draw[thick] (0,0) -- (0,1);
        \draw[thick] (0,1) -- (-1.5,2);
        \draw[thick] (0,1) -- (-0.3,2);
        \draw[thick] (0,1) -- (1.5,2);
        \draw[thick] (-1.5,2) -- (-2.2,3);
        \draw[thick] (-1.5,2) -- (-0.8,3);
        \draw[decorate,decoration={brace,amplitude=5pt}]
            (-2.3,3.2) -- (-0.7,3.2)
            node[midway,above=6pt] {$n\text{-times}$};
        \draw[decorate,decoration={brace,amplitude=5pt}]
            (-0.4,2.2) -- (1.6,2.2)
            node[midway,above=6pt] {$(m-1)\text{-times}$};
    \end{tikzpicture}}\]
    where $p$ has $m$ ingoing edges and $q$ has $n$ ingoing edges. To be precise, the edges of this tree are the colors of $P$, while the (non-identity) operations of $P$ are the vertices $p$ and $q$ and their composite $p \circ^i q$.
    Note that we have a canonical comparison map $[2;u \star^i t] \to N(P)$ given by the operad map
    \[\scalebox{0.8}{\begin{tikzpicture}[baseline={(current bounding box.center)}]
        \draw[fill] (0,1) circle [radius=0.08];
        \draw[fill] (-1.5,2) circle [radius=0.08];
        \draw[fill] (-0.3,2) circle [radius=0.08];
        \draw[fill] (1.5,2) circle [radius=0.08];
        \node[right,yshift=-3pt] at (0,1) {$p$};
        \node[left,yshift=-3pt] at (-1.5,2) {$q$};
        \node at (-1.5,2.6) {$\cdots$};
        \node at (0.6,2.2) {$\cdots$};
        
        \draw[thick] (0,0) -- (0,1);
        \draw[thick] (0,1) -- (-1.5,2);
        \draw[thick] (0,1) -- (-0.3,2);
        \draw[thick] (0,1) -- (1.5,2);
        \draw[thick] (-0.3,2) -- (-0.3,3);
        \draw[thick] (1.5,2) -- (1.5,3);
        \draw[thick] (-1.5,2) -- (-2.2,3);
        \draw[thick] (-1.5,2) -- (-0.8,3);
        \draw[decorate,decoration={brace,amplitude=5pt}]
            (-2.3,3.2) -- (-0.7,3.2)
            node[midway,above=6pt] {$n\text{-times}$};
        \draw[decorate,decoration={brace,amplitude=5pt}]
            (-0.4,3.2) -- (1.6,3.2)
            node[midway,above=6pt] {$(m-1)\text{-times}$};
    \end{tikzpicture}}
    \quad \to \quad 
    \scalebox{0.8}{\begin{tikzpicture}[baseline={(current bounding box.center)}]
        \draw[fill] (0,1) circle [radius=0.08];
        \draw[fill] (-1.5,2) circle [radius=0.08];
        \node[right,yshift=-3pt] at (0,1) {$p$};
        \node[left,yshift=-3pt] at (-1.5,2) {$q$};
        \node at (-1.5,2.6) {$\cdots$};
        \node at (0.35,1.6) {$\cdots$};
        
        \draw[thick] (0,0) -- (0,1);
        \draw[thick] (0,1) -- (-1.5,2);
        \draw[thick] (0,1) -- (-0.3,2);
        \draw[thick] (0,1) -- (1.5,2);
        \draw[thick] (-1.5,2) -- (-2.2,3);
        \draw[thick] (-1.5,2) -- (-0.8,3);
        \draw[decorate,decoration={brace,amplitude=5pt}]
            (-2.3,3.2) -- (-0.7,3.2)
            node[midway,above=6pt] {$n\text{-times}$};
        \draw[decorate,decoration={brace,amplitude=5pt}]
            (-0.4,2.2) -- (1.6,2.2)
            node[midway,above=6pt] {$(m-1)\text{-times}$};
    \end{tikzpicture}}\]
    that sends $p$ and $q$ to themselves, and all other vertices to the identity operation of the corresponding color.
    This map factors through the pushout $[u\circ^i t]$ by construction.
    We will show that this map $\chi \colon [u\circ^i t] \to N(P)$ is an equivalence.
    This comes down to showing that for any layered tree $[n;s] \in \Omega[\cO]$ and any operad map $f \colon \Psi([n;t]) \to P$, there is a unique map $\tilde{f} \colon [n;s] \to [u\circ^i t]$ such that the composite $\chi \circ \tilde{f} \colon [n;s] \to N(P)$ is the adjunct of $f$.
    
    First consider the case $m = 1$.
    In this case $[u\circ^i t] = [2; u_0 \actarrow \underline{1} \actarrow \underline{1}]$, so the claim reduces to showing that any map from (the operad associated with) a layered tree to $P$ automatically respects the layering. This is clear.
    Now assume $n=1$ and $m \neq 1$.
    If a map $f \colon [n;s] \to N(P)$ exists, then either $[n;s]$ is a linear tree, meaning that $s_0 = s_1 = \cdots = s_n = \underline{1}$, or the tree $[n;s]$ contains precisely one vertex with $m$ ingoing edges and all other vertices have exactly one ingoing edge.
    In the first case, there can potentially be multiple lifts of $f$ to a map $[n;s] \to [2; u \star^i t]$, but the pushout defining $[u\circ^i t]$ will always identify these with each other.
    In the second case, the vertex with $m$ ingoing edges needs to be sent to $p$. Such a map always lifts uniquely to a map of layered trees $[n;s] \to [2; u \star^i t]$.
    Now suppose $m,n \neq 1$ and let $f \colon [n;s] \to N(P)$ be given.
    If $[n;s]$ is a linear tree, then as in the previous case a lift of $f$ to $[2; u \star^i t]$ exists, but need not be unique if $[n;s]$ hits one of the leaves attached to $p$.
    However, the pushout defining $[2; u \star^i t]$ identifies all these lifts.
    On the other hand, if $[n;s]$ is not a linear tree, then any lift of $f$ to $[2;u \star^i t]$ and hence $[u\circ^i t]$ is necessarily unique.
    We conclude that $[u\circ^i t] \to N(P)$ is an equivalence.
    
    For the second part of the lemma, a similar (but simpler) argument shows that $[n;t]$ is complete Segal for any tree of length $n \leq 1$.
    If $\langle n;t \rangle$ is a forest instead, then the claim follows since $[n;t]$ is a coproduct of trees of length $n$ in $\PSh(\Tree[\cO])$, and the operadic nerve $\mathrm{Op} \to \PSh(\Omega) \to \PSh(\Tree[\cO])$ preserves coproducts.
\end{proof}

\begin{proof}[Proof of \cref{prop:fibrancy of the grafting construction}]
By construction, we have a pushout square 
\[\begin{tikzcd}
	{\Gamma[2;u\star^i t]} & {[2;u\star^i t]} \\
	{\Gamma[u\circ^i t]} & {[u\circ^i t]}
	\arrow[from=1-1, to=1-2]
	\arrow[from=1-1, to=2-1]
	\arrow[from=1-2, to=2-2]
	\arrow[from=2-1, to=2-2]
\end{tikzcd}\]
so that the bottom arrow is a complete Segal extension as well.

To show that $[u\circ^it]$ is complete Segal, we first note that $[u\circ^it]\to q^*[\pi_0(u)\circ^i \pi_0(t)]$ is a complete Segal fibration.
Namely, we have pullback squares
\[\begin{tikzcd}[column sep=small]
	{\coprod_{j\neq i\in \pi_0(u_0)} [0;u_0^j]} & {\coprod_{j\neq i\in \pi_0(u_0)} [1;u_0^j=u_0^j]} & {[2;u\star^i t]} \\
	{\coprod_{j\neq i\in \pi_0(u_0)}q^*[0;\pi_0(u)_0^j]} & {\coprod_{j\neq i\in \pi_0(u_0)}q^*[1;\pi_0(u)_0^j=\pi_0(u)_0^j]} & {q^*[2;\pi_0(u)\star^i \pi_0(t)]}.
	\arrow[from=1-1, to=2-1]
	\arrow[from=1-2, to=1-1]
	\arrow[from=1-2, to=1-3]
	\arrow[from=1-2, to=2-2]
	\arrow[from=1-3, to=2-3]
	\arrow[from=2-2, to=2-1]
	\arrow[from=2-2, to=2-3]
\end{tikzcd}\]
By construction, the map $[u\circ^i t]\to q^*[\pi_0(u)\circ^i \pi_0(t)]$ is obtained by taking pushouts horizontally in this diagram. Thus it is a complete Segal fibration by
\cref{prop:relative fibrancy over tree} and \cref{lemma:topos}. 
As $q^*$ preserves discrete complete Segal objects by \cref{lem:q comparison for discrete}, the prototypical case of \cref{lem:fibrancy of the grafting construction operad case} implies that $q^*[\pi_0(u)\circ^i \pi_0(t)]$ is complete Segal. We conclude that $[u\circ^i t]$ is complete Segal.

The second part of the proposition follows immediately by combining \cref{prop:relative fibrancy over tree,lem:fibrancy of the grafting construction operad case}.
\end{proof}

\subsection{Another explicit replacement}
Suppose that $X \to [u\circ^i t]$ is a complete Segal fibration. Then we will prove \cref{thm:main-thm-expo-CSeg-converse} by constructing an explicit replacement for  $X \times_{[u\circ^i t]} \Gamma[u\circ^i t] \to [u\circ^i t]$ using the $Q$ functor of \cref{cons:Q}.

\begin{construction}
Let $X \in \PSh(\Tree[\O])_{/[u\circ^it]}$. We will write $X_0 := X \times_{[u\circ^i t]} [2;u\star^i t]$. Using \cref{cons:Q}, we can construct the following commutative diagram:
\[\begin{tikzcd}[column sep=tiny,row sep=small]
	X_3 && X_1 && {QX_0} & \\
	& X_4 && X_2 && RX \\
	{\coprod_{j\neq i\in \pi_0(u_0)}[1;u_0^j=u_0^j]} && {\Gamma [2;u\star^i t]} && {[2;u\star^i t]} \\
	& {\coprod_{j\neq i\in \pi_0(u_0)}[0;u_0^j]} && {\Gamma[u\circ^i t]} && {[u\circ^i t]}
	\arrow[from=1-1, to=1-3]
	\arrow[from=1-1, to=2-2]
	\arrow[from=1-1, to=3-1]
	\arrow[from=1-3, to=1-5]
	\arrow[from=1-3, to=2-4]
	\arrow[from=1-3, to=3-3]
	\arrow[from=1-5, to=2-6]
	\arrow[from=1-5, to=3-5]
	\arrow[from=2-2, to=2-4]
	\arrow[from=2-2, to=4-2]
	\arrow[from=2-4, to=2-6]
	\arrow[from=2-4, to=4-4]
	\arrow[from=2-6, to=4-6]
	\arrow[from=3-1, to=3-3]
	\arrow[from=3-1, to=4-2]
	\arrow[from=3-3, to=3-5]
	\arrow[from=3-3, to=4-4]
	\arrow[from=3-5, to=4-6]
	\arrow[from=4-2, to=4-4]
	\arrow[from=4-4, to=4-6]
 \end{tikzcd}\]
where the $X_i$'s are pullbacks of $X$, and $RX \coloneqq X_2 \cup_{X_1} QX_0$.
The back square involving the map $X_1 \to QX_0$ is cartesian as remarked in \cref{cons:Q}. 
We note that there is a canonical factorization 
$$
X_2 = X \times_{[u\circ^it]} \Gamma[u\circ^it] \to RX \to X.
$$
\end{construction}

\begin{proposition}
\label{lemma:fibrancy of RX}
The map $X \times_{[u\circ^it]} \Gamma[u\circ^it] \to RX$ is a Segal extension for all maps $p : X \to [u\circ^i t]$, and $RX \to [u\circ^it]$ is a complete Segal fibration if $p$ is a complete Segal fibration.
\end{proposition}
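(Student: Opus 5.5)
The statement is modelled on \cref{prop:Q is a replacement}, and the plan is to reduce each half to that proposition. The reduction goes through the pushout square defining $RX = X_2 \cup_{X_1} QX_0$, together with the pushout square relating $\Gamma[2;u\star^i t] \to [2;u\star^i t]$ and $\Gamma[u\circ^i t] \to [u\circ^i t]$ used in the proof of \cref{prop:fibrancy of the grafting construction}.

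\emph{First assertion.} Since $\PSh(\Tree[\O])$ is a topos, pulling back along $X \to [u\circ^i t]$ preserves the pushout $[u\circ^i t] \simeq \Gamma[u\circ^i t] \cup_{\Gamma[2;u\star^i t]} [2;u\star^i t]$. Hence $X \simeq X_2 \cup_{X_1} X_0$. Inside this, the map $X_2 \to RX$ is the cobase change of $X_1 \to QX_0$ along $X_1 \to X_2$. By \cref{cons:Q}, $X_1 \simeq X_0 \times_{[2;u\star^i t]} \Gamma[2;u\star^i t]$. By \cref{prop:Q is a replacement}, applied to $X_0 \to [2;u\star^i t]$ with $n=2$, the map $X_1 \to QX_0$ is a Segal extension. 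One point to check is that $[2;u\star^i t]$ is a forest rather than a tree. Either \cref{prop:Q is a replacement} goes through verbatim for forests, or we first decompose $[2;u\star^i t]$ as a coproduct over $\pi_0$ via \cref{rem:decomposition by atoms formula}. Segal extensions are stable under cobase change, which gives the first claim.

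\emph{Second assertion.} We have to show that $RX \to [u\circ^i t]$ has the unique right lifting property against $\Sp[m;s] \to [m;s]$ and $[J;e] \to [0;e]$, for every map $[m;s] \to [u\circ^i t]$. The strategy is to recognise $RX \to [u\circ^i t]$ as a colimit of a cartesian family of complete Segal fibrations and then apply \cref{lemma:topos}. The family is indexed by the span underlying the pushout $[u\circ^i t] = \coprod_{j\neq i}[0;u_0^j] \cup \cdots \cup [2;u\star^i t]$ from \cref{cons:grafting}, with $T_0$:
\[
\coprod_{j\neq i}[0;u_0^j] \longleftarrow \coprod_{j\neq i}[1;u_0^j=u_0^j] \longrightarrow [2;u\star^i t].
\]
Over these three pieces we take, respectively, $X_4$, $X_3$ and $QX_0$. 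Three facts then need to be verified.
\begin{enumerate}
\item $QX_0 \to [2;u\star^i t]$ is a complete Segal fibration; this is \cref{prop:Q is a replacement}.
\item $X_3$ and $X_4$ are pullbacks of the complete Segal fibration $p$, so they are complete Segal fibrations over their bases.
\item The two squares are cartesian. For the left square this is immediate. For the right square it holds because the inclusion $\coprod_j [1;u_0^j=u_0^j] \to [2;u\star^i t]$ factors through $\Gamma[2;u\star^i t]$, and by \cref{cons:Q} the functor $Q$ does not change anything over $\Gamma[2;u\star^i t]$.
\end{enumerate}
The colimit of the top row is $X_4 \cup_{X_3} QX_0$. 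This agrees with $RX = X_2 \cup_{X_1} QX_0$ by the same descent argument, since $X_2 = X_4 \cup_{X_3} X_1$.

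\emph{Main obstacle.} The step I expect to be delicate is the precise reading of the squares in \cref{cons:grafting}. The left-hand term there is $(T_k[1]\times_{[2]}[1])\boxtimes_{[1]}[1;u_0^j=u_0^j]$, and we must confirm that for $k=0$ this is really $[1;u_0^j=u_0^j]$, mapping into $\Gamma[2;u\star^i t]$ and not meeting the concave part. If this identification fails, I would instead verify the lifting properties directly. That route would use \cref{lemma:topos} to pull back along each $[m;s] \to [u\circ^i t]$, factor the map through one of the three pieces, and then use the cartesianness established above.
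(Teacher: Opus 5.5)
Your proposal is correct and follows the paper's proof. The first assertion is the cobase change of the Segal extension $X_1 \to QX_0$ supplied by \cref{prop:Q is a replacement}. The second is \cref{lemma:topos} applied to a cartesian diagram of complete Segal fibrations that includes $QX_0 \to [2;u\star^i t]$.

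The only difference is the choice of span. The paper can simply use the defining span $X_2 \leftarrow X_1 \to QX_0$ over $\Gamma[u\circ^i t] \leftarrow \Gamma[2;u\star^i t] \to [2;u\star^i t]$, which makes your extra descent step unnecessary. Your version also works: the identification you flagged as the main obstacle does hold, since $T_0[1]=[2]$ and $T_0[1]\times_{[2]}[1]=[1]$ lies over $d_2$, which factors through $\Gamma[2]$.
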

\begin{proof}
    The first assertion follows directly from \cref{prop:Q is a replacement}. If $p$ is a complete Segal fibration, then \cref{prop:Q is a replacement} implies that the morphism $QX_0\to [2;u\star^i t]$ is a complete Segal fibration. The result then follows from \cref{lemma:topos} as $RX\to [u\circ^i t]$ is the colimit of a cartesian diagram of complete Segal fibrations.
\end{proof}

The next goal is to provide a computation of $R$ that is similar to the computation of $Q$ established in \cref{prop:computation Q}.

\begin{lemma}\label{lemma:a cartesian fib}
Let $X \to [2;u\star^i t]$ be a complete Segal fibration. Then the induced map
$$
\Hom_{/[2;u\star^i t]}(T_\bullet[1]\boxtimes_{[2]}[2;u\star^i t],X) \to \prod_{i\neq j}\Hom_{/[2;u\star^i t]}([1+\bullet]\boxtimes_{[1]}[1;u_0^j =u_0^j],X)
$$
is map of complete Segal spaces, and can thus be viewed as a functor of categories. As such, it is a cartesian fibration.
\end{lemma}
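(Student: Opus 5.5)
We will pass to the algebrad side via \cref{rem:boxproduct-algebrad-side} and reduce the cartesian-fibration claim to a statement about functor categories. First, both sides are complete Segal spaces. Indeed, for a complete Segal fibration $X \to [2;u\star^i t]$, the simplicial space $[k] \mapsto \Hom_{/[2;u\star^i t]}(T_k[1]\boxtimes_{[2]}[2;u\star^i t],X)$ is obtained by mapping the cosimplicial object $T_\bullet[1] = [1+\bullet+1]$, viewed over $[2]$, into $X$. By \cref{prop:simplicial segal extension}, spine inclusions and $J \to [0]$ in the middle variable induce (complete) Segal extensions after applying $(-)\boxtimes_{[2]}[2;u\star^i t]$. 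The Segal and completeness conditions of this simplicial space therefore follow from $X$ being local over $[2;u\star^i t]$. The same argument applies to $[1+\bullet]\boxtimes_{[1]}[1;u_0^j=u_0^j]$. The map itself is induced by the natural inclusions $[1+k] \hookrightarrow T_k[1]$, which omit the bottom vertex $0$ (mapping to $t_0$), lie over $d_0\colon[1]\to[2]$ restricted to the $j$-th component, and are compatible with $\Delta$. So it is a map of simplicial spaces, hence a functor of categories.

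Second, I will identify these categories concretely. Let $\cP \to [2;u\star^i t]$ be the algebrad corresponding to $X$; this is legitimate since $\CSeg(\Tree[\O])_{/Y}$ is equivalent to complete Segal fibrations over $Y$. The source category then has as objects the maps $[1]\to \cP$ lying over the composite active map $\underline t \actarrow u_1$, and as morphisms the factorizations through constant strings over $u_0$. In other words, it is the category of factorizations of lifts of $\underline t \actarrow u_0 \actarrow u_1$ through the middle layer. Restricting along the inclusions and taking the $j$-th inert components of the middle object gives the target. For $j \ne i$, the active $\underline t \actarrow u_0$ is the identity on $u_0^j$ (\cref{cons:grafting}), so a factorization contributes over $u_0^j$ a chain of arrows in the fiber $\cP_{u_0^j}$. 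Under the robustness equivalence $\O^\act_{/u_0}\simeq\prod_j\O^\act_{/u_0^j}$ of (4c), which transfers to $\cP$ by \cref{prop:algebrads-are-robust}, the source category is the pullback of the $i$-th component data along the product over $j\ne i$ of the $j$-th components.

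Third, I will show the cartesian property by the standard criterion: a morphism in the target (a map in $\cP_{u_0^j}$ from $a'$ to $a$, for each $j$) together with an object over $a$ admits a cartesian lift. The lift is produced by precomposing the $j$-th component of the middle object with the given map; this precomposition is the partial composite made possible by robustness (4c)/(4d). Cartesianness reduces, via the Segal condition for $X$ (the pushout decomposition of $T_k[1]\boxtimes[2;u\star^i t]$ along the $j$-components from \cref{rem:decomposition by atoms formula}), to the statement that mapping spaces in the source decompose as a pullback of the $i$-component mapping space with the product of $j$-component mapping spaces. This holds by \cref{lemma:fiber of pi} and the saturatedness in \cref{prop:robust-On-is-saturated}.

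The main obstacle is the last step. It requires verifying that the Segal decomposition actually splits the source mapping spaces as this fiber product, that is, that the $j \ne i$ components act freely and independently of the $i$-th composite. I would try first to establish the relevant square of representables as a pushout in $\PSh(\Tree[\O])$ using \cref{rem:decomposition by atoms formula} and condition (4d), and then apply $\Hom(-,X)$.
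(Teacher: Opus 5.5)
Your proposal has a genuine gap, and it is exactly the step you flag as the ``main obstacle'': you never establish the cartesian property, and the reduction you aim for is not the right one. Several ingredients feeding into that step are also wrong.

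\emph{The map is misdescribed.} It is induced by $[1+k]\simeq T_k[1]\times_{[2]}[1]\hookrightarrow T_k[1]=[1+k+1]$, the pullback along $d_2=\{0,1\}\colon[1]\to[2]$. So it omits the \emph{top} vertex (lying over $u_1$), not vertex $0$. The $j$-components come from splitting the lower layer $\underline t\actarrow u_0$; the arrow $u_0\actarrow u_1$ ends in an elementary and has no such components.

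\emph{You cannot pass to an algebrad $\cP$ corresponding to $X$.} Only $X\to[2;u\star^i t]$ is a complete Segal fibration. The tree $[2;u\star^i t]$ is in general not even Segal, which is why $[u\circ^i t]$ is introduced (cf.\ \cref{rem:thmB easy for atomic}). Hence $X$ need not be complete Segal.

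\emph{The source is not a fiber product of $i$- and $j$-component data.} Its objects are full strings $\bar x\to\bar y\to\bar e$, and the upper arrow $\bar y\to\bar e$ depends on all components of $\bar y$. So the mapping-space splitting you want is not what is needed. Also, \cref{lemma:fiber of pi} and \cref{prop:robust-On-is-saturated} concern maps between representables and saturation of $\cO_n$, so they do not bear on it.

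The missing idea is to factor the map through the intermediate complete Segal space $\Hom_{/[2;u\star^i t]}([1+\bullet]\boxtimes_{[2]}[2;u\star^i t],X)$ of lower-layer strings.

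\emph{First factor.} The restriction $\alpha$ to this space is a right fibration. Indeed, the square induced by $\{1\}\to[1]$ is a pullback by \cref{prop:simplicial segal extension}, because $\{0,1,2\}\cup_{\{0,2\}}\{0,2,3\}\to[3]$ is a Segal extension over $[2]$. This is precisely your ``precomposition'' lift, and it uses only the Segal condition, not (4c)/(4d).

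\emph{Second factor.} By \cref{rem:decomposition by atoms formula}, $[1+k]\boxtimes_{[2]}[2;u\star^i t]\simeq[1+k]\boxtimes_{[1]}[1;\underline t\actarrow u_0]$ splits as the coproduct of $[1+k]\boxtimes_{[1]}[1;t]$ and the $[1+k]\boxtimes_{[1]}[1;u_0^j=u_0^j]$ for $j\neq i$. So the remaining map $\beta$ is a product projection, hence a cartesian fibration.

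The map in the statement is $\beta\alpha$, a right fibration followed by a cartesian fibration, hence cartesian.
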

\begin{proof}
 It follows from \cref{prop:simplicial segal extension} that this is indeed a map between complete Segal spaces.
Note that $[1+\bullet] \simeq T_\bullet[1] \times_{[2]} [1]$, where the pullback is taken along $d_2 \colon [1] \to [2]$.
We get an induced map 
$$\Hom_{/[2;u\star^i t]}(T_\bullet[1]\boxtimes_{[2]}[2;u\star^i t],X)\xrightarrow{\alpha} \Hom_{/[2;u\star^i t]}((T_\bullet[1] \times_{[2]} [1])\boxtimes_{[2]}[2;u\star^i t],X).$$
We claim that this is a right fibration.
To this end, we have to show that the square 
\[\begin{tikzcd}
	\Hom_{/[2;u\star^i t]}({T_1[1]\boxtimes_{[2]}[2;u\star^it]}, X)& \Hom_{/[2;u\star^i t]}({T_0[1]\boxtimes_{[2]}[2;u\star^it]},X)\\
	\Hom_{/[2;u\star^i t]}({(T_1[1]\times_{[2]}[1])\boxtimes_{[2]}[2;u\star^it]},X) & \Hom_{/[2;u\star^i t]}({(T_0[1]\times_{[2]}[1])\boxtimes_{[2]}[2;u\star^it]} , X)
	\arrow[from=1-1, to=1-2]
	\arrow[from=1-1, to=2-1]
	\arrow[from=1-2, to=2-2]
	\arrow[from=2-1, to=2-2]
\end{tikzcd}\]
induced by $\{1\} \to [1]$ is a pullback square.
But this follows from \cref{prop:simplicial segal extension} and the observation that the following square is a pushout in the category of complete Segal spaces over $[2]$:
\[\begin{tikzcd}
	 {T_0[1]\times_{[2]}[1]} & {T_0[1]} \\
	{T_1[1]\times_{[2]}[1]} & {T_1[1]}
	\arrow[from=1-1, to=1-2]
	\arrow[from=1-1, to=2-1]
	\arrow[from=1-2, to=2-2]
	\arrow[from=2-1, to=2-2]
\end{tikzcd}
\quad \simeq \quad
\begin{tikzcd}
	{[1]} & {[2]} \\
	{[2]} & {[3]}.
	\arrow[from=1-1, to=1-2, "d_2"]
	\arrow[from=1-1, to=2-1, "d_1"']
	\arrow[from=1-2, to=2-2, "d_1"]
	\arrow[from=2-1, to=2-2, "d_3"]
\end{tikzcd}
\]
Now, using the decomposition formula of \cref{rem:decomposition by atoms formula}, the following square is a pullback
\[\begin{tikzcd}
	{\Hom_{/[2;u\star^i t]}([1+\bullet]\boxtimes_{[2]}[2;u\star^i t],X)} & {\Hom_{/[2;u\star^i t]}([1+\bullet]\boxtimes_{[1]}[1;t],X)} \\
	{\prod_{i\neq j}\Hom_{/[2;u\star^i t]}([1+\bullet]\boxtimes_{[1]}[1;u_0^j =u_0^j],X)} & \ast.
	\arrow[from=1-1, to=1-2]
	\arrow["\beta"', from=1-1, to=2-1]
	\arrow[from=1-2, to=2-2]
	\arrow[from=2-1, to=2-2]
\end{tikzcd}\]
As the right vertical morphism is a  cartesian fibration, so is the left vertical one. This implies that $\beta\alpha$ is a cartesian fibration, as desired.
\end{proof}

\begin{lemma}
\label{lemma:an initiality result}
Suppose that $X \to [u\circ^i t]$ is a complete Segal fibration. 
Then the morphism 
$$\colim_{[k]\in \Delta^{\op}}\map_{/[u\circ^i t]}(T_k[u\circ^i t],X)\to \colim_{[k]\in \Delta^{\op}}\map_{/[u\circ^i t]}(T_k[1]\boxtimes_{[2]}[2;u\star^i t],X)$$
is an equivalence.
\end{lemma}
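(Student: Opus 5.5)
By the pushout square defining $T_k[u\circ^i t]$ in \cref{cons:grafting}, for each $[k]$ the space $\map_{/[u\circ^i t]}(T_k[u\circ^i t],X)$ is the pullback of $\map_{/[u\circ^i t]}(T_k[1]\boxtimes_{[2]}[2;u\star^i t],X)$ along the map
\[\textstyle\prod_{j\neq i}\map_{/[u\circ^i t]}([0;u_0^j],X)\to \prod_{j\neq i}\map_{/[u\circ^i t]}((T_k[1]\times_{[2]}[1])\boxtimes_{[1]}[1;u_0^j=u_0^j],X).\]
Since $T_k[1]\times_{[2]}[1]\simeq[1+k]$, the right-hand side is the space of $(1+k)$-simplices of the complete Segal spaces $\map_{/[u\circ^i t]}([\bullet]\boxtimes_{[0]}[0;u_0^j],X)$. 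The map from the left-hand side is the inclusion of degenerate (constant) simplices. Its form depends on $k$, so the first step is to reorganize the $\Delta^\op$-diagram. I view $k\mapsto \map(T_k[1]\boxtimes_{[2]}[2;u\star^i t],X)$ as a simplicial space over the simplicial space $k\mapsto \prod_{j\neq i}\map([1+k]\boxtimes_{[1]}[1;u_0^j=u_0^j],X)$. \cref{lemma:a cartesian fib} says that, after passing to underlying categories, this is a cartesian fibration $p\colon \mathcal{E}\to \mathcal{B}$. Here $\mathcal{B}$ is the category $\prod_{j\neq i}\mathcal{C}_j$, where $\mathcal{C}_j$ is the fiber category of $X$ over the root $u_0^j$ (complete Segal by \cref{prop:fibrancy of the grafting construction} and the fibration hypothesis). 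The shift $[1+\bullet]$ is the décalage, so the base simplicial space is the décalage of the nerve of $\mathcal B$, and levelwise the pullback above is the fiber over the corresponding degenerate simplex.

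Next I identify both colimits via the standard fact that the geometric realization of a simplicial space of the form $k\mapsto \map(\Delta^{1+k}\text{-type},\ldots)$, i.e.\ a décalage-type diagram, computes a colimit of fibers. Concretely, $\colim_{\Delta^\op}$ of the nerve of $\mathcal E$ shifted by décalage is equivalent to the space of objects of $\mathcal{E}$ with the extra $k$ coordinates realized away, exactly as in \cref{prop:computation Q} and \cref{lemma:dictionary cc conditions} (the Ayala--Francis argument). For the pulled-back diagram, the extra coordinate lives over a constant simplex of $\mathcal{B}$, so its realization computes the realization of the fibers $p^{-1}(b)$ restricted to the identity on the $[u_0^j]$ parts. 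I would write both sides as colimits over $b\in \mathcal{B}^\simeq$ (more precisely over the objects of $\prod_j\mathcal C_j$) by universality of colimits in $\Spc$, since $\Delta^\op$ is sifted and pullback along a map to a space commutes with geometric realization. This reduces the claim to the following: for each fixed $b$, the inclusion of the fiber diagram over the constant simplex at $b$ into the diagram of all simplices whose last vertex is $b$ induces an equivalence on $\colim_{\Delta^\op}$.

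The last step is a finality statement. For a cartesian fibration $p$, the map from the fiber $\mathcal{E}_b$ to $\mathcal{E}\times_{\mathcal B}\mathcal{B}_{/b}$ admits a right adjoint given by cartesian transport to $b$. Hence it is initial/final and induces an equivalence on classifying spaces. In décalage language, the colimit over $\Delta^\op$ of $k$-simplices ending at $b$ is exactly $|\mathcal{E}\times_{\mathcal B}\mathcal B_{/b}|$, while the constant version gives $|\mathcal{E}_b|$. I expect the main obstacle to be this bookkeeping: matching the $T_k$-indexing, where the inserted block of length $k+1$ sits at level $n-1=1$ and the $u_0^j$ components sit at level $0$, with décalage and slices so that the cartesian-transport adjoint is applied in the correct direction. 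I would first check this on the case $\pi_0(u_0)=\{i\}$, where the pushout is trivial and the map is the identity, and then use \cref{lemma:topos}-style universality to handle the coproduct over $j\neq i$ factorwise.
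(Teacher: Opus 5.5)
Your overall strategy is the paper's: the same pullback square, the cartesian fibration from \cref{lemma:a cartesian fib}, and the observation that the bottom map is the inclusion of degenerate simplices. However, the step that carries the argument is stated with the wrong variance, and as written it is false.

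\textbf{Where the variance goes wrong.} The only vertex of $[1+k]$ fixed by all simplicial operators in the variable $k$ is $0$, which is where the root $[0;u_0^j]$ sits. So the map to the constant space $S=\prod_{j\neq i}\mathcal{C}_j^{\simeq}$ is restriction along $\{0\}\to[1+k]$. The collection of ``simplices whose last vertex is $b$'' is not even a simplicial subobject.

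For the same reason, the base of the cartesian fibration is not $\mathcal{B}=\prod_{j\neq i}\mathcal{C}_j$. It is the category $\mathcal{D}$ underlying the d\'ecalage, which is fibred over $S$ with fibre $\mathcal{D}_b\simeq\mathcal{B}_{b/}$ over $b$. This fibre has an \emph{initial} object, namely $\id_b$.

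\textbf{What you actually need.} For a cartesian fibration over a category with an initial object, the inclusion of the fibre over that object is left adjoint to cartesian transport. Hence it is initial, and so induces an equivalence on classifying spaces.

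\textbf{Why your version fails.} You take the fibre $\mathcal{E}_b$ inside $\mathcal{E}\times_{\mathcal{B}}\mathcal{B}_{/b}$ with a right adjoint ``given by cartesian transport to $b$''. No such adjoint exists, because cartesian transport along $b'\to b$ goes from $\mathcal{E}_b$ to $\mathcal{E}_{b'}$. The conclusion also fails in general. For the cartesian fibration $\{0\}\hookrightarrow[1]$ and $b=1$, the fibre is empty while $\mathcal{E}\times_{[1]}[1]_{/1}$ is a point.

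\textbf{After the fix.} With the variance corrected, your argument goes through and is a fibrewise version of the paper's proof. The paper skips the decomposition over $S$. Restriction along $\{0\}\to[1+k]$ is a retraction exhibiting $S\to\mathcal{D}$ as a left adjoint, hence initial. Initial functors are stable under pullback along cartesian fibrations. Since the geometric realization of a complete Segal space computes the classifying space of its category, this finishes the proof.

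Two parts of your plan are unnecessary:
\begin{itemize}
\item the appeal to d\'ecalage-type realizations computing colimits of fibres (the Ayala--Francis argument);
\item the planned factorwise reduction over $j\neq i$; the base is a product, and $\prod_j(\mathcal{C}_j)_{c_j/}$ already has an initial object.
\end{itemize}
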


\begin{proof}
By construction, we have a pullback square
\[
\begin{tikzcd}[column sep=tiny,row sep=small]
	{\Hom_{/[u\circ^i t]}(T_\bullet[u\circ^i t],X)}\arrow[r]\arrow[d] & {\Hom_{/[u\circ^i t]}(T_\bullet[1]\boxtimes_{[2]}[2;u\star^i t],X)}\arrow[d] \\
	{\prod_{i\neq j}\Hom_{/[u\circ^i t]}([0;u_0^j],X)}\arrow[r] & {\prod_{i\neq j}\Hom_{/[u\circ^i t]}([1+\bullet]\boxtimes_{[1]}[1;u_0^j =u_0^j],X)}
\end{tikzcd}
\]
in $\PSh(\Delta)$. All vertices of this square are complete Segal spaces, so we can view this as a pullback square in $\Cat$. We have to show that the top functor induces an equivalence on classifying spaces. To this end, we will show that it is initial. Note that the right vertical map is a cartesian fibration by \cref{lemma:a cartesian fib}. So it suffices to check that the bottom horizontal map is initial.
The inclusions $[0;u_0^j]\to [1+k]\boxtimes_{[1]}[1;u_0^j=u_0^j]$ induced by $\{0\}\to [1+k]$ provide a left deformation retract for the bottom horizontal morphism, and that implies that it is initial.
\end{proof}

\begin{proposition}
\label{lemma:computing the hom of RX}
Let $X \to [u\circ^i t]$ be a complete Segal fibration.
Then the maps in the canonical span
 \[
    \begin{tikzcd}[column sep = {10em, between origins}, row sep = small]
            & \colim_{[k] \in \Delta^\op}\map_{/[u\circ^i t]}(T_k[u\circ^i t ],RX) \arrow[dl] \arrow[dr] & \\ 
            \map_{/[u\circ^i t]}([1; \underline t \actarrow u_1], RX) && \colim_{[k] \in \Delta^\op}\map_{/[u\circ^i t]}(T_k[u\circ^i t],X)
    \end{tikzcd}
\]
are equivalences. Here $\underline{t}\actarrow u_1$ is the active arrow defined in \cref{cons:grafting}.
\end{proposition}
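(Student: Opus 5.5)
The plan is to reduce both legs to the analogous statement for $Q$, namely \cref{prop:computation Q}, applied to $X_0 = X \times_{[u\circ^i t]} [2;u\star^i t]$ over the tree $[2;u\star^i t]$, and then use \cref{lemma:an initiality result} to pass between $T_k[u\circ^i t]$ and $T_k[1]\boxtimes_{[2]}[2;u\star^i t]$.

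\textbf{Right leg.} Since $RX = X_2 \cup_{X_1} QX_0$, a map $T_k[u\circ^i t] \to RX$ over $[u\circ^i t]$ amounts to compatible data over the pieces of the pushout defining $T_k[u\circ^i t]$. The piece $T_k[1]\boxtimes_{[2]}[2;u\star^i t]$ is $n$-convex (with $n=2$), so by the right-leg part of \cref{prop:computation Q} (convex objects see no difference between $QX_0$ and $X_0$) maps from it into $QX_0$ agree with maps into $X_0$. The pieces $[0;u_0^j]$ and $(T_k[1]\times_{[2]}[1])\boxtimes_{[1]}[1;u_0^j=u_0^j]$ map into $\Gamma[u\circ^i t]$, where $RX$ and $X$ agree because $X_2 \to RX$ restricts to an equivalence over $\Gamma[u\circ^i t]$. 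Using that pullback along $RX\to[u\circ^i t]$ commutes with the pushout (descent in the topos $\PSh(\Tree[\O])$, as in \cref{lemma:topos}), I conclude that $\map_{/[u\circ^i t]}(T_k[u\circ^i t],RX)\to\map_{/[u\circ^i t]}(T_k[u\circ^i t],X)$ is an equivalence levelwise in $k$, hence so is the colimit.

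\textbf{Left leg.} First I identify $[1;\underline t\actarrow u_1]$ as an object over $[u\circ^i t]$ with $[1]\boxtimes_{[2]}[2;u\star^i t]$ (via $d_1$) and note that it is $2$-concave. By \cref{lemma:an initiality result} applied to the complete Segal fibration $RX\to[u\circ^i t]$ (\cref{lemma:fibrancy of RX}), it then suffices to show that
\[\colim_{[k]\in\Delta^\op}\map(T_k[1]\boxtimes_{[2]}[2;u\star^i t],RX)\to \map([1]\boxtimes_{[2]}[2;u\star^i t],RX)\]
is an equivalence. Maps from objects over $[2;u\star^i t]$ into $RX$ are the same as maps into $RX\times_{[u\circ^i t]}[2;u\star^i t]$. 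The main obstacle is to compare this pullback with $QX_0$. By descent, it equals $QX_0 \cup_{X_1} (X_2\times_{[u\circ^i t]}[2;u\star^i t])$, and the second term lives over the pieces $[1;u_0^j=u_0^j]$ and $\Gamma$. Maps from the concave map $[1]\boxtimes_{[2]}[2;u\star^i t]$ and from the convex $T_k$'s into these extra terms should be controlled. I would argue that the relevant mapping spaces into the pullback reduce to those into $QX_0$, because the concave/convex trees in question either factor through $[2;u\star^i t]$ compatibly or are the degenerate pieces already identified. Then the left-leg statement of \cref{prop:computation Q} for $QX_0$ (with $m=1$, $\phi=d_1$, $s=u\star^i t$) gives the equivalence. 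If this direct comparison is too delicate, I would instead check the left leg through the initiality argument of \cref{prop:computation Q} directly, computing $RX$ on $[1;\underline t\actarrow u_1]$ as a colimit over convex trees under it and exhibiting $T_\bullet$ as initial.
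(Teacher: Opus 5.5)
Your reduction is the paper's. The paper's proof is exactly \cref{lemma:an initiality result}, used to replace $T_k[u\circ^i t]$ by $T_k[1]\boxtimes_{[2]}[2;u\star^i t]$, combined with \cref{prop:computation Q} for $X_0$ over the tree $[2;u\star^i t]$ with $\phi=d_1$. Your levelwise treatment of the right leg, via the pushout decomposition of $T_k[u\circ^i t]$, is a valid variant.

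The gap is the step you yourself call the main obstacle. You never establish that maps over $[u\circ^i t]$ into $RX$ from objects over $[2;u\star^i t]$ are the same as maps over $[2;u\star^i t]$ into $QX_0$. The reason you give, that the relevant trees ``either factor through $[2;u\star^i t]$ compatibly or are the degenerate pieces already identified'', is not an argument. The fallback needs the same fact, since you only know $RX$ as a pushout. Your right leg also already uses this identification when it passes from maps $T_k[1]\boxtimes_{[2]}[2;u\star^i t]\to RX$ to maps into $QX_0$. So as written, both legs depend on it.

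The missing input is descent in $\PSh(\Tree[\O])$, exactly as in \cref{lemma:topos}. The span $X_2\leftarrow X_1\to QX_0$ maps to $\Gamma[u\circ^i t]\leftarrow\Gamma[2;u\star^i t]\to[2;u\star^i t]$ by a cartesian transformation. The left square is cartesian by definition of $X_1,X_2$, and the right one by \cref{cons:Q}. The base span has colimit $[u\circ^i t]$, so \cite[Theorem 6.1.0.6]{HTT} makes every square to the colimit cartesian. In particular $QX_0\simeq RX\times_{[u\circ^i t]}[2;u\star^i t]$ and $X_2\simeq RX\times_{[u\circ^i t]}\Gamma[u\circ^i t]$.

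So there are no extra terms to control. In your formula, $X_2\times_{[u\circ^i t]}[2;u\star^i t]\simeq X_1$. This holds because the square exhibiting $[u\circ^i t]$ as a pushout along the monomorphism $\Gamma[2;u\star^i t]\hookrightarrow[2;u\star^i t]$ is also cartesian. Hence $\map_{/[u\circ^i t]}(A,RX)\simeq\map_{/[2;u\star^i t]}(A,QX_0)$, naturally in $A$ over $[2;u\star^i t]$. After \cref{lemma:an initiality result}, applied to $RX$ via \cref{lemma:fibrancy of RX}, the left leg is literally the left leg of \cref{prop:computation Q} with $n=2$, $s=t=u\star^i t$ and $\phi=d_1\in\Xi_{/[2]}$.
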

\begin{proof}
This now readily follows from \cref{prop:computation Q} and \cref{lemma:an initiality result}.
\end{proof}

\subsection{The proof of \cref{thmB}}\label{subsec:multigrafting}
The proof of \cref{thm:main-thm-expo-CSeg-converse} proceeds by inductively grafting. To formulate this procedure, we need a more elaborate version of \cref{cons:grafting}:

\begin{construction}[The multigrafting construction]
\label{cons:multigraphting}
Let $u:u_0\actarrow u_1$ be an active arrow. Suppose that $E$ is a subset of $\pi_0(u_0)$, and that we have an active arrow $t^i:t_0^i\actarrow u_0^i$ for every $i \in E$.
We consider the unique active morphism $\underline{t}\actarrow u_0$ whose projection by the functor $\O^{\act}_{/u_0}\to \O^{\act}_{/u_0^i}$ is given by
\[
    \begin{cases}
        t^i : t_0^i \actarrow u_0^i & \text{if $i \in E$}, \\
        \id : u_0^i \to u_0^i & \text{otherwise}.
    \end{cases}
\]
We then obtain a forest 
$$
\textstyle \left<2; u \star^E (t^i)^{i\in E}\right> := \left<2; \underline{t}\actarrow u_0 \actarrow u_1\right>.
$$
For $[k] \in \Delta$, we will consider the $\Tree[\O]$-space defined by the pushout square
\[\begin{tikzcd}
	{\coprod_{i\notin E}(T_k[1]\times_{[2]}[1])\boxtimes_{[1]}[1;u_0^i = u_0^i]} & {T_k[1]\boxtimes_{[2]}[2;u\star^{E} (t^i)^{i\in E}]} \\
	{\coprod_{i\notin E}[0; u_0^i]} & {T_k[u\circ^{E} (t^i)^{i\in E}],}
	\arrow[from=1-1, to=1-2]
	\arrow[from=1-1, to=2-1]
	\arrow[from=1-2, to=2-2]
	\arrow[from=2-1, to=2-2]
\end{tikzcd}\]
and set $[u\circ^{E} (t^i)^{i\in E}] := T_0[u\circ^{E} (t^i)^{i\in E}]$.
\end{construction}

\begin{example}\label{ex:multigrafting everything}
    If $|E| = 1$, then \cref{cons:multigraphting} specializes to the previous grafting construction of \cref{cons:grafting}. We now consider the maximal case $E =\pi_0(u_0)$.
    Suppose that $u : u_0 \actarrow u_1$ is an active arrow. If $t : t_0 \actarrow u_0$ is an active arrow, then we can consider the collection of actives $(t^i : t_0^i \actarrow u_0^i)^{i \in E}$. It readily follows from the formula that
    $[u\circ^E (t^i)^{i \in E})] \simeq [2; t_0\actarrow u_0 \actarrow u_1]$.
\end{example}

We construct an auxiliary object that encodes coherences concerning the associativity of multigrafting: 

\begin{construction}
\label{cons:life saving}

    Let $E$, $u$, $(t^{i})^{i\in E}$ as in \cref{cons:multigraphting}.  Suppose then that $|E|>1$, and let $x\in E$. We will write $F := E \setminus \{x\}$.
    We consider the unique commutative diagram of active arrows
    \[
        \begin{tikzcd}
            v_{00} \arrow[r,squiggly]\arrow[d,squiggly] & v_{10} \arrow[d, squiggly] \\ 
            v_{01} \arrow[r,squiggly] & v_{11} \simeq u_0
        \end{tikzcd}
    \]
    over $u_1$ so that for $i \in \pi_0(u_0)$ its projection by the functor $\O^{\act}_{/u_0} \to \O^{\act}_{/u_{0}^i}$ is given by the square
    \[
         \begin{tikzcd}[column sep = small, row sep = small]
           u_0^i \arrow[r,equal]\arrow[d,equal]  & u_0^i \arrow[d, equal] \\
           u_0^i \arrow[r,equal] & u_0^i 
        \end{tikzcd}
        ~ \text{if $i \notin E$}, \quad
         \begin{tikzcd}[column sep = small, row sep = small]
           t_0^i \arrow[r,squiggly]\arrow[d,equal]  & u_0^i \arrow[d, equal] \\
           t_0^i \arrow[r,squiggly] & u_0^i 
        \end{tikzcd}
        ~ \text{if $i \in F$,} \quad \text{and}~
         \begin{tikzcd}[column sep = small, row sep = small]
           t_0^x \arrow[r,equal]\arrow[d,squiggly]  & t_0^x \arrow[d, squiggly] \\
           u_0^x \arrow[r,equal] & u_0^x
        \end{tikzcd}
        ~ \text{if $i =x$}.
    \]
    
    We define an auxiliary $\Tree[\O]$-space $A$ by the pushout square 
    \[
        \begin{tikzcd}
            {[2;v_{00} \actarrow v_{11} \actarrow u_1]} \arrow[r]\arrow[d] & {[3;v_{00} \actarrow v_{10} \actarrow v_{11} \actarrow u_1]} \arrow[d]  \\
           {[3;v_{00} \actarrow v_{01} \actarrow v_{11} \actarrow u_1]} \arrow[r] & A.
        \end{tikzcd}
    \]
    As $([1] \times [1])*[0] \simeq [3] \cup_{[2]} [3]$,
    the pushout $A$ is canonically fibered over $p^*(([1] \times [1])*[0])$. We then define $$
    A_{k,l} := (([1+k] \times [1+l]) * [0]) \boxtimes_{([1] \times [1])*[0]} A
    $$
    for $[k], [l] \in \Delta$. Here $[1+k], [1+l] \to [1]$ are the maps that carry $0$ to $0$ and all other elements to $1$. Moreover, for $i \in \pi_0(u_0)$, we consider the projection
     \[
        (B^i_{k,l} \to C^i_{k,l}) := \begin{cases}
            ~{([1+k] \times [1+l]) \boxtimes_{[0]} [0; u^i_0]} \rightarrow {[0;u^i_0]} & \text{if $i\notin E$}, \\
            ~{([1+k]\times[1+l])\boxtimes_{[1]}[1;t^i]} \rightarrow
        {[1+k]\boxtimes_{[1]}[1;t^i]} & \text{if $i\in F$, and} \\
            ~{([1+k]\times[1+l])\boxtimes_{[1]}[1;t^x]} \rightarrow
        {[1+l]\boxtimes_{[1]}[1;t^x]} & \text{if $i=x$}.
        \end{cases}
     \]
    We now define $\Upsilon_{k,l}$ by the pushout square
    \[
        \begin{tikzcd}
            \coprod_{i\in\pi_0(u_0)}B^i_{k,l} \arrow[r]\arrow[d] & A_{k,l} \arrow[d]\\ 
            \coprod_{i\in\pi_0(u_0)}C^i_{k,l} \arrow[r] & \Upsilon_{k,l},
        \end{tikzcd}
    \]
    where the top map is the canonical map $$\coprod_{i\in \pi_0(u_0)} B^i_{k,l} \simeq ([1+k] \times [1+l]) \boxtimes_{([1+k] \times [1+l]) \ast [0]} A_{k,l} \to A_{k,l}.$$
\end{construction}

\begin{lemma}\label{lem:easy expression upsilon}
    Let $k,l \geq 0$.
    The canonical inclusion 
    $$
     \left(\coprod_{i \in F} [1+k] \boxtimes_{[1]} [1;t^i] \sqcup [1+l] \boxtimes_{[1]} [1;t^x]\right) \bigcup_{\coprod_{i\in E}[0;u_0^i]} [1;u] \to \Upsilon_{k,l}
    $$ 
    is a Segal extension.
\end{lemma}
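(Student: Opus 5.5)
The plan is to compute $\Upsilon_{k,l}$ piece by piece from its defining pushouts and reduce everything to simplicial Segal extensions. Then \cref{prop:simplicial segal extension} transports these along $(-)\boxtimes$, and pushouts of Segal extensions are again Segal extensions.

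First I will describe $A$ and $A_{k,l}$. Since $A$ is a pushout of $[3;\dots]$'s along $[2;\dots]$, and $(-)\boxtimes(-)$ preserves colimits, $A_{k,l}$ is the corresponding pushout of
\[
([1+k]\times[1+l])*[0]\times_{([1]\times[1])*[0]}[3]\boxtimes_{[3]}[3;\dots]
\]
along the analogous $[2;\dots]$-term. Next I will apply the decomposition formula of \cref{rem:decomposition by atoms formula} to each forest $[3;v_{00}\actarrow \dots\actarrow u_1]$ and $[2;v_{00}\actarrow v_{11}\actarrow u_1]$. Since the last object $u_1$ may have several components, this splits things into trees, and then the forest decomposition of \cref{prop:forest decomposition} splits off the top layer. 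Concretely, I expect an identification
\[
A_{k,l}\simeq \big(([1+k]\times[1+l])\boxtimes_{[1]}[1;v_{00}\actarrow u_0]\big)\cup_{[0;u_0]\text{-part}}[1;u]\ \text{(suitably joined)},
\]
where the square $v_{00}\actarrow v_{10},v_{01}\actarrow u_0$ decomposes over $\pi_0(u_0)$ by condition (4c) of \cref{def:new robust} into the components $B^i_{k,l}$. More precisely, I aim to show that $A_{k,l}$ is obtained by gluing $\coprod_i B^i_{k,l}$ to $[1;u]$ along $\coprod_i [0;u_0^i]$ (via the vertex $(1+k,1+l)$), up to a Segal extension. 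The pushout defining $\Upsilon_{k,l}$ then replaces each $B^i_{k,l}$ by $C^i_{k,l}$.

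Granting this, $\Upsilon_{k,l}$ is, up to Segal extensions, $\coprod_i C^i_{k,l}\cup_{\coprod_i[0;u_0^i]}[1;u]$. For $i\notin E$ we have $C^i=[0;u_0^i]$, which disappears in the gluing. For $i\in F$ we have $C^i=[1+k]\boxtimes_{[1]}[1;t^i]$, and for $i=x$ we have $C^x=[1+l]\boxtimes_{[1]}[1;t^x]$. This is exactly the source of the map in the statement. So I will show the statement's inclusion factors as a pushout of
\[
\Sp\text{-type inclusions } \big(([1+k]\times[1+l])\ \text{spine/join pieces}\big)\to \big(([1+k]\times[1+l])*[0]\big)\times_{\dots}[3]
\]
boxed with forests. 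Here the key simplicial input is that the inclusion
\[
([1+k]\times[1+l])\cup_{\{(1+k,1+l)\}}[1]\to([1+k]\times[1+l])*[0]
\]
is a Segal extension in simplicial spaces, and likewise for the union of two copies of $[3]$ glued along $[2]$. Both are standard: $[a]*[0]=[a+1]$, and the joins are Segal-generated by spines.

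The main obstacle is the bookkeeping in the middle step. I need to identify $A_{k,l}$ with the glued object, which requires that the forest decomposition of \cref{rem:decomposition by atoms formula} is compatible with the pullbacks $\boxtimes_{([1]\times[1])*[0]}$. I also need to check, using condition (4d), that the inert projections $v\to v^i$ are $\pi_0$-cocartesian at every level, so that the level-$0$ and level-$1$ objects of the square restrict componentwise as prescribed. I would first do the case $|\pi_0(u_1)|=1$, and then pass to general $u_1$ by the coproduct decomposition together with the fact that coproducts of Segal extensions are Segal extensions.
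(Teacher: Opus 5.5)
Your argument works, but it takes a different route from the paper. The paper first proves the statement at level $(0,0)$. It shows that the inclusion $A'\to A$ is a Segal extension, where $A'$ is the pushout of the $\pi_0(u_0)$-indexed squares with $[1;u]$ along $[0;u_0]$. This uses \cref{rem:decomposition by atoms formula} after splitting off $[1;u]$. It then transports to level $(k,l)$ via \cref{prop:boxtimes with exponentiable map}, using that $([1+k]\times[1+l])*[0]\to([1]\times[1])*[0]$ is exponentiable, and finally pushes out along $\coprod_i B^i_{k,l}\to\coprod_i C^i_{k,l}$.

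You work directly at level $(k,l)$. Write $K=([1]\times[1])*[0]$ and $S_{k,l}=([1+k]\times[1+l])\cup_{\{(1+k,1+l)\}}[1]$. Then $S_{k,l}\boxtimes_K A\simeq(\coprod_i B^i_{k,l})\cup_{\coprod_i[0;u_0^i]}[1;u]$. The lemma's map is exactly the cobase change of $S_{k,l}\boxtimes_K A\to A_{k,l}$ along the $B\to C$ collapse.

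What each route buys:
\begin{itemize}
\item Yours needs no exponentiability. It lands directly on the lemma's source.
\item By contrast, pushing out $A'_{k,l}\to A_{k,l}$ as in the paper leaves $(([k]\times[l])*[0])\boxtimes_{[1]}[1;u]$ in place of $[1;u]$. The paper therefore tacitly uses one more join-type extension, which your route absorbs.
\item The paper's route keeps the tree-side bookkeeping at level $(0,0)$, where $A$ is visibly built from forests, and makes the passage to $(k,l)$ formal.
\end{itemize}

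Two points need repair.

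First, \cref{prop:simplicial segal extension} handles only a single forest over a representable. But $A$ is a pushout of two forests over the two $3$-simplices of $K$. So you must justify that $(-)\boxtimes_K A$ sends Segal extensions over $K$, such as $S_{k,l}\to([1+k]\times[1+l])*[0]$, to Segal extensions. This is true:
\begin{itemize}
\item By cocontinuity it suffices to check generators $\Sp[m]\to[m]\to K$.
\item A chain in the poset $([1]\times[1])*[0]$ cannot contain both $(1,0)$ and $(0,1)$, so every $[m]\to K$ factors through one of the two $3$-simplices $\sigma$.
\item $\sigma\boxtimes_K A$ is just the forest $[3;\dots]$, because the two simplices meet in the face carrying $[2;v_{00}\actarrow v_{11}\actarrow u_1]$.
\item Hence \cref{prop:simplicial segal extension} applies.
\end{itemize}

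Second, your middle paragraph puts the decomposition in the wrong place.
\begin{itemize}
\item Applying \cref{rem:decomposition by atoms formula} to $[3;\dots\actarrow u_1]$ splits over $\pi_0(u_1)$. That is irrelevant, and trivial when $u_1$ is elementary.
\item \cref{prop:forest decomposition} is an equivalence that is trivial on trees. It does not split off $[1;u]$.
\item $A_{k,l}$ is not equivalent to the glued object. The top layer is split off only up to the join-type Segal extension above.
\item The decomposition you actually need is that of the two-layer forests $[2;v_{00}\actarrow v_{10}\actarrow v_{11}]$ and $[2;v_{00}\actarrow v_{01}\actarrow v_{11}]$ over $\pi_0(v_{11})=\pi_0(u_0)$. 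This is what identifies $([1+k]\times[1+l])\boxtimes_K A$ with $\coprod_i B^i_{k,l}$.
\item Neither condition (4d) nor a reduction to $|\pi_0(u_1)|=1$ is needed here.
\end{itemize}
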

\begin{proof}
If we define $A'$ by the pushout square
\[
    \begin{tikzcd}
        (\coprod_{i \in \pi_0(u_0)}{[1; v_{00}^i \actarrow v_{11}^i]}) \cup_{[0; u_0]} [1;u] \arrow[r]\arrow[d] & (\coprod_{i \in \pi_0(u_0)}{[2; v_{00}^i \actarrow v_{10}^i \actarrow v_{11}^i]}) \cup_{[0; u_0]} [1;u] \arrow[d] \\ 
        (\coprod_{i \in \pi_0(u_0)}{[2; v_{00}^i \actarrow v_{01}^i \actarrow v_{11}^i]}) \cup_{[0; u_0]} [1;u] \arrow[r] & A',
    \end{tikzcd}
\]
then the canonical inclusion $A' \to A$ is a Segal extension by the decomposition formula of \cref{rem:decomposition by atoms formula}. Note that the surjective map $([1 +k] \times [1+l]) * [0] \to ([1] \times [1]) * [0]$ is exponentiable in $\Cat$. Thus it follows from \cref{prop:boxtimes with exponentiable map} that
$$
A'_{k,l} := (([1 +k] \times [1+l]) * [0]) \boxtimes_{([1] \times [1]) * [0]} A' \to A_{k,l}
$$
is a Segal extension as well. The map from the lemma is obtained after pushing out along $\coprod_{i \in \pi_0(u_0)} B^{i}_{k,l} \to \coprod_{i \in \pi_0(u_0)} C^{i}_{k,l}$.
\end{proof}

\begin{lemma}
\label{lemma:simplification of upsilon}
    There is a Segal extension 
    $$
    T_k[u\circ^E (t^i)^{i \in E}] \to \Upsilon_{k,k}
    $$
    that is natural in $[k] \in \Delta$.
\end{lemma}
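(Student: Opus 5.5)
The map will come from the diagonal $\delta\colon[1+k]\to[1+k]\times[1+k]$, joined with $[0]$. This gives a map $[1+k]*[0]=[k+2]\to([1+k]\times[1+k])*[0]$ of simplicial sets over $([1]\times[1])*[0]$. Over $[2]$ (via the square to $[1]\ast[0]$, collapsing the square diagonally) this is exactly $T_k[1]$, with the vertices $1,\dots,k+1$ sent to $(1,1)$. Since the diagonal of $[1]\times[1]$ goes from $v_{00}$ to $v_{11}$, pulling $A$ back along the diagonal $[2]=[1]*[0]\to([1]\times[1])*[0]$ yields the tree $[2;v_{00}\actarrow v_{11}\actarrow u_1]$. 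The pushout defining $A$ only glues along this diagonal face, so no additional cells appear. We have $v_{00}=\underline t$ and $v_{11}=u_0$, hence $[2;v_{00}\actarrow v_{11}\actarrow u_1]=[2;u\star^E(t^i)^{i\in E}]$. Applying $\delta\ast[0]$ therefore gives a map
\[T_k[1]\boxtimes_{[2]}[2;u\star^E (t^i)^{i\in E}]\to A_{k,k},\]
natural in $[k]$ because the diagonal is.

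Next we must check that this map descends to the pushout defining $T_k[u\circ^E(t^i)^{i\in E}]$, and that it lands compatibly with the pushout defining $\Upsilon_{k,k}$. For $i\notin E$, the piece $(T_k[1]\times_{[2]}[1])\boxtimes_{[1]}[1;u_0^i=u_0^i]$ maps into $B^i_{k,k}=([1+k]\times[1+k])\boxtimes_{[0]}[0;u_0^i]$, which is collapsed to $[0;u_0^i]=C^i_{k,k}$ in $\Upsilon_{k,k}$. So the collapse in $T_k[u\circ^E\cdots]$ is sent to the collapse in $\Upsilon_{k,k}$. This produces the natural map $T_k[u\circ^E(t^i)^{i\in E}]\to\Upsilon_{k,k}$.

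To see that it is a Segal extension, we use 2-out-of-3 for Segal extensions against a common spine, via Lemma~\ref{lem:easy expression upsilon}. First we decompose $T_k[u\circ^E\cdots]$ using \cref{rem:decomposition by atoms formula} and the decomposition along $\Gamma$-type inclusions, as in \cref{prop:Q is a replacement}. This shows that
\[\Bigl(\coprod_{i\in E}[1+k]\boxtimes_{[1]}[1;t^i]\Bigr)\cup_{\coprod_{i\in E}[0;u_0^i]}[1;u]\to T_k[u\circ^E(t^i)^{i\in E}]\]
is a Segal extension. The underlying simplicial map is obtained from the spine-type inclusion $[1+k]\cup_{[0]}[1]\to T_k[1]$ restricted on the components, so \cref{prop:simplicial segal extension} applies, and the pushout collapsing the $i\notin E$ parts preserves this. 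Composing with the diagonal map sends this subobject isomorphically onto the source of Lemma~\ref{lem:easy expression upsilon} for $k=l$. Indeed, for $i\in F$ the component $[1+k]\boxtimes[1;t^i]$ is included via the first coordinate projection, and for $i=x$ via the second; both restrict to the identity along the diagonal. Since that source maps to $\Upsilon_{k,k}$ by a Segal extension, right cancellation for left classes of factorization systems shows that our map is a Segal extension.

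The main obstacle is the identification in the first step. We must verify that pulling back $A$ along the diagonal genuinely recovers the single tree $[2;u\star^E\cdots]$, without extra contributions from the two $[3]$-pieces glued along $[2]$, and that the compatibility with the $B^i\to C^i$ collapses holds strictly and naturally in $k$. I would check this on representables using \cref{lemma:box with forest is forest}: the pullback $\boxtimes$ commutes with the pushout defining $A$, and the diagonal $[k+2]\to([1+k]\times[1+k])*[0]$ factors through each $[3]$-piece only via its $\{0,2,3\}$-face.
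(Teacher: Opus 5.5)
Your proposal is correct and matches the paper's proof. The map is induced by the diagonal square $[1+k]*[0]\to([1+k]\times[1+k])*[0]$ lying over $[1]*[0]\to([1]\times[1])*[0]$, it descends through the collapse of the $i\notin E$ summands, and it is a Segal extension by right cancellation, since the Segal extension of \cref{lem:easy expression upsilon} (with $k=l$) factors through it via the Segal extension $\bigl(\coprod_{i\in E}[1+k]\boxtimes_{[1]}[1;t^i]\bigr)\cup_{\coprod_{i\in E}[0;u_0^i]}[1;u]\to T_k[u\circ^E(t^i)^{i\in E}]$; this last map is a pushout of $\Gamma[k+2]\boxtimes_{[2]}[2;u\star^E(t^i)^{i\in E}]\to T_k[1]\boxtimes_{[2]}[2;u\star^E(t^i)^{i\in E}]$, and your extra check that pulling $A$ back along the diagonal recovers the tree $[2;v_{00}\actarrow v_{11}\actarrow u_1]$ is just a more explicit way of producing the same map.
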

\begin{proof}
    There is an obvious commutative square
    \[
        \begin{tikzcd}
            T_k[1] = {[1+k] * [0]} \arrow[r]\arrow[d] & {[1]} * [0] \arrow[d] \\
            {([1+k] \times [1+k]) * [0]} \arrow[r] & ([1]\times[1])*[0],
        \end{tikzcd}
    \]
    where the vertical maps are induced by the diagonals.
    This gives rise to a natural map $T_k[1] \boxtimes_{[2]} [2; u \star^E (t^i)^{i \in E}] \to (([1+k] \times [1+k]) * [0]) \boxtimes_{([1] \times [1]) * [0]} [2;v_{00} \actarrow v_{11} \actarrow u_1]$, where we use the notation from \cref{cons:life saving}. This map factors through the defining pushout of $T_k[u \circ^E (t^i)^{i \in E}]$ to give the natural map $T_k[u\circ^E (t^i)^{i \in E}] \to \Upsilon_{k,k}$ from the statement.
    The Segal extension of \cref{lem:easy expression upsilon} factors through this natural map via the inclusion $(\coprod_{i \in E} [1+k] \boxtimes_{[1]}[1;t^i]) \cup_{\coprod_{i \in E} [0;u_0^i]} [1;u] \to T_k[u\circ^E (t^i)^{i \in E}]$, which is also a Segal extension for similar reasons.
\end{proof}

\begin{lemma}
\label{lemma:simplification of upsilon 2}
Let $w$ denote the composite active arrow
$$
v_{01} = (u\star^F(t^i)^{i\in F})_0 \actarrow u_0 \actarrow u_1.
$$
There is a commutative square 
\[\begin{tikzcd}
	{[1;w]} & {T_k[u\circ^F(t^i)^{i\in F}]} \\
	{T_l[w\circ^xt^x]} & {\Upsilon_{k,l}}
	\arrow[from=1-1, to=1-2]
	\arrow[from=1-1, to=2-1]
	\arrow[from=1-2, to=2-2]
	\arrow[from=2-1, to=2-2]
\end{tikzcd}\]
that is natural in $[k], [l] \in \Delta$. The associated cogap map of this square is a Segal extension.
\end{lemma}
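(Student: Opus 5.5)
The plan is to build both maps out of the structure of $\Upsilon_{k,l}$ from \cref{cons:life saving} and then verify the Segal extension claim by comparing with the spine-type decomposition of \cref{lem:easy expression upsilon}. First I record the forests involved. By definition $w = (v_{01}\actarrow u_0\actarrow u_1)$, and $v_{01}$ is exactly the source $\underline{t}$ of the grafting for $F$, i.e.\ $(u\star^F(t^i)^{i\in F})_0$. Moreover, $\pi_0(v_{01})$ contains a component indexed by $x$ with $v_{01}^x \simeq u_0^x$ (by (4c) and (4d) of \cref{def:new robust}), so $w\circ^x t^x$ makes sense via \cref{cons:grafting} and the resulting source is $v_{00}$.

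The right vertical map $T_k[u\circ^F(t^i)^{i\in F}]\to\Upsilon_{k,l}$ comes from the face $[3;v_{00}\actarrow v_{10}\actarrow v_{11}\actarrow u_1]$ direction of $A$. Concretely, I would map $T_k[1]\boxtimes_{[2]}[2;v_{01}\actarrow v_{11}\actarrow u_1]$ into $A_{k,l}$ along the simplicial map $[1+k]*[0]\to([1+k]\times[1+l])*[0]$ given by $j\mapsto (j,1+l)$ on $[1+k]$ and $*\mapsto *$. The diagram over $u_1$ involves $v_{01}\actarrow v_{11}$, which is a face of the $v_{00}\actarrow v_{01}\actarrow v_{11}\actarrow u_1$ summand. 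I then check that the summands $(T_k[1]\times_{[2]}[1])\boxtimes[1;u_0^i=u_0^i]$ for $i\notin F$ are sent into $B^i_{k,l}$, and that they collapse in $C^i_{k,l}$ to the right thing. For $i\notin E$ they collapse to $[0;u_0^i]$. For $i=x$ they are sent to $[1+l]\boxtimes[1;t^x]$ restricted to its endpoint $\{1+l\}$, which is $[0;u_0^x]$. So the map factors through the pushout defining $T_k[u\circ^F\cdots]$.

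Symmetrically, $T_l[w\circ^x t^x]$ maps in via $j\mapsto(1+k,j)$ wait—rather via $j\mapsto (0,j)$ composed appropriately, using the other summand. Then $[1;w]$ maps to both via the vertex pair $\{(1+k,1+l)$-ish$\ \le *\}$, so the square commutes by construction. Naturality in $[k],[l]$ is clear, since all maps are induced by simplicial maps natural in $[1+k],[1+l]$.

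For the cogap map, I would use 2-out-of-3 for left classes. By \cref{lem:easy expression upsilon}, the map from $S:=\bigl(\coprod_{i\in F}[1+k]\boxtimes_{[1]}[1;t^i]\sqcup[1+l]\boxtimes_{[1]}[1;t^x]\bigr)\cup_{\coprod_{E}[0;u_0^i]}[1;u]$ into $\Upsilon_{k,l}$ is a Segal extension. Analogously to the proof of \cref{lemma:simplification of upsilon}, $S_F := \bigl(\coprod_{F}[1+k]\boxtimes[1;t^i]\bigr)\cup[1;u]\to T_k[u\circ^F\cdots]$ and $[1+l]\boxtimes[1;t^x]\cup_{[0;u_0^x]}[1;w]\to T_l[w\circ^x t^x]$ are Segal extensions. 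Next, $[1;w]$ receives a Segal extension from $[1;v_{01}\actarrow u_0]\cup_{[0;u_0]}[1;u]$, and $[1;v_{01}\actarrow u_0]$ decomposes over $\pi_0$ into the pieces $[1;t^i]$ for $i\in F$ and trivial pieces for $i\notin F$ by \cref{rem:decomposition by atoms formula}. Gluing these, the pushout $T_k[u\circ^F\cdots]\cup_{[1;w]}T_l[w\circ^x t^x]$ receives a Segal extension from a presheaf that Segal-extends $S$ compatibly with $S\to\Upsilon_{k,l}$. Cancellation then gives that the cogap map is a Segal extension.

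The main obstacle I expect is this last bookkeeping. Identifying $[1;w]$ correctly inside both pushouts requires that the $F$-components appear twice: once as $[1+k]\boxtimes[1;t^i]$ and once as a bare $[1;t^i]$ inside $[1;w]$. The copies must be glued via $\{0,1+k\}\subset[1+k]$. I would handle this by checking the cancellation on each $\pi_0(u_0)$-summand separately using \cref{rem:decomposition by atoms formula}, reducing to the simplicial statement that suitable unions of $[1+k]$, $[1+l]$ and spines inside $([1+k]\times[1+l])*[0]$ are Segal extensions, transported via \cref{prop:simplicial segal extension}.
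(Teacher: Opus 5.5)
Your strategy is the paper's. You realise the square inside $A$ along the path $00\to 01\to 11\to *$ (the paper packages this as $A''\to A$), factor the Segal extension $S\to\Upsilon_{k,l}$ of \cref{lem:easy expression upsilon} through the cogap map, and cancel. You also name the right two ingredients, $S_F\to T^F_k:=T_k[u\circ^F(t^i)^{i\in F}]$ and $S_x:=\bigl([1+l]\boxtimes_{[1]}[1;t^x]\bigr)\cup_{[0;u_0^x]}[1;w]\to T^x_l:=T_l[w\circ^x t^x]$.

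The gap is in how you assemble them. $[1;w]$ does not receive a Segal extension from $[1;v_{01}\actarrow u_0]\cup_{[0;u_0]}[1;u]$; there is not even a map. That spine Segal-extends to $[2;v_{01}\actarrow u_0\actarrow u_1]$, of which the corolla $[1;w]$ is only the $d_1$-face. Nor does $[1;w]$ contain copies of $[1;t^i]$, since the corolla does not see the factorization of $w$ through $u_0$. So the $F$-pieces occur only once in the domain of the cogap map, inside $T^F_k$, and there is nothing to glue along $\{0,1+k\}$. Your fallback of checking summand-wise over $\pi_0(u_0)$ is also unavailable: the pieces containing the root $u_1$ do not split, and $\Upsilon_{k,l}$ is not of the form $Z\boxtimes_{[n]}[n;t]$.

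What works is two pushouts:
\[
S=S_F\cup_{[0;u_0^x]}\bigl([1+l]\boxtimes_{[1]}[1;t^x]\bigr)\longrightarrow T^F_k\cup_{[0;u_0^x]}\bigl([1+l]\boxtimes_{[1]}[1;t^x]\bigr)=T^F_k\cup_{[1;w]}S_x\longrightarrow T^F_k\cup_{[1;w]}T^x_l .
\]
The first map is a pushout of $S_F\to T^F_k$, and the second a pushout of $S_x\to T^x_l$. The middle identification holds because the $x$-component of the lower part of $T^F_k$ is collapsed to $[0;u_0^x]$. Hence the $x$-components of the sources of $[1;w]$ (vertex $0$) and of $[1;u]$ (vertex $1+k$) agree. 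This is what the paper's ``applying \cref{lem:easy expression upsilon} twice'' amounts to, and cancellation then finishes.

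On the square itself, two corrections. First, $[1;w]$ lies over the edge $(0,1+l)\to *$, not $(1+k,1+l)\to *$; the latter carries $[1;u]$. Second, your choice $j\mapsto(j,1+l)$ is the right one, since attaching $[1;w]$ at the last vertex is what makes $S_x\to T^x_l$ a Segal extension, via $\Gamma[l+2]\to[l+2]$. But it is not natural in $[l]$: maps in $\Delta$ need not preserve last vertices. Indeed, no vertex of the $[l]$-block is natural, and $[1;w]$ must sit over some $(0,b)$ with $b\geq 1$.

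So naturality is not ``clear''. What you actually get is naturality in $k$ for each fixed $l$. The map $T^x_l\to\Upsilon_{k,l}$, $j\mapsto(0,j)$, is natural in both variables. This is all that the proof of \cref{lemma:the last technical lemma} needs, provided the colimit over the index of $T^F_\bullet$ is taken first.
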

\begin{proof}
    It follows from construction that we have inert arrows $v_{01} \intarrow t_0^i$, $i \in F$, and inert arrow $v_{01} \intarrow u_0^i$, $i \in E$, which are $\pi_0$-cocartesian again by assumption (4d) of \cref{def:new robust}. The induce an equivalence $\pi_0(v_{01}) \simeq E \sqcup \coprod_{i \in F}\pi_0(t_0^i)$. From this, one deduces that $w\star^x t^x$ is given by the string of actives $v_{00}\actarrow v_{01} \actarrow u_1$ (and in particular, one deduces that $x \in E \subset \pi_0(v_{01})$ so that the statement of the lemma makes sense). 
    Let $A''$ be the $\Tree[\O]$-space defined by the pushout square 
     \[
        \begin{tikzcd}[column sep = tiny]
            {[1;w]} \arrow[r,phantom, "="]& {[1;v_{01} \actarrow u_1]} \arrow[r]\arrow[d] & {[2;v_{01} \actarrow v_{11} \actarrow u_1]} \arrow[d] \arrow[r,phantom,"="] & {[2;u\star^F (t^i)^{i\in F}]}  \\
           {[2;w \star^x t^x]} \arrow[r,phantom,"="] &{[2;v_{00} \actarrow v_{01} \actarrow u_1]} \arrow[r] & A''.
        \end{tikzcd}
    \]
    There is a canonical inclusion $A'' \to [3;v_{00} \actarrow v_{01} \actarrow v_{11} \actarrow u_1] \to A$. This lies over the map $([1] \cup_{[0]} [1]) * [0] \to ([1] \times [1]) * [0]$ coming from the morphism $[1] \cup_{[0]} [1] \to [1]\times [1]$ that selects the arrows $00 \to 01$ and $01 \to 11$. There is an obvious commutative square 
    \[
        \begin{tikzcd}
            T_k[1] \cup_{[1]} T_l[1] = ([1+k] \cup_{[0]} [1+l])*[0]\arrow[d] \arrow[r] & ([1] \cup_{[0]} [1]) * [0] \arrow[d] \\
            ([1+k] \times [1+l]) * [0] \arrow[r] & ([1] \times [1])*[0]
        \end{tikzcd}
    \] that, combined with the map $A''\to A$, eventually gives rise to the desired square of the lemma. One readily sees that the Segal extension of \cref{lem:easy expression upsilon} factors through the associated cogap map via the dashed inclusion 
    \[ 
    \begin{tikzcd}
    \textstyle \left(\coprod_{i \in F} [1+k] \boxtimes_{[1]} [1;t^i] \sqcup [1+l] \boxtimes_{[1]} [1;t^x]\right) \bigcup_{\coprod_{i\in E}[0;u_0^i]} [1;u] \arrow[d,dashed]\arrow[r]& \Upsilon_{k,l}. \\
     T_k[u\circ^F(t^i)^{i\in F}] \cup_{[1;w]} T_l[w \circ^x t^x] \arrow[ur]
    \end{tikzcd}
    \]
    By applying \cref{lem:easy expression upsilon} twice, one readily verifies that the dashed inclusion is a Segal extension as well. Thus the diagonal map must be a Segal extension as well by cancellation.
\end{proof}

\begin{lemma}
\label{lemma:the last technical lemma}
Let $p:X\to Y$ be a map between complete Segal $\Tree[\O]$-spaces. Suppose that for every active arrow $u : u_0 \actarrow u_1$ ending in an elementary, and every active arrow $t : t_0 \actarrow t_1 = u_0^i$, with $i \in \pi_0(u_0)$, the morphism
$$ \colim_{[k]\in\Delta^{\op}}\Hom_{/Y}(T_k[u\circ^i t],X)\to \Hom_{/Y}([1;\underline{t}\actarrow u_1],X)$$
is an equivalence. Then $p$ satisfies $\cref{CondCrit}$.
\end{lemma}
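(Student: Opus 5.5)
We argue by induction on the size of $E \subseteq \pi_0(u_0)$, with the multigrafting objects $[u\circ^E (t^i)^{i\in E}]$ of \cref{cons:multigraphting}. Fix a tree $[2;t_0 \actarrow u_0 \actarrow u_1] \to Y$ as in \cref{CondCrit}, so $u_1$ is elementary. By \cref{ex:multigrafting everything}, with $E = \pi_0(u_0)$ and $t^i$ the components of $t_0\actarrow u_0$, we have $[u\circ^E (t^i)] \simeq [2;t_0\actarrow u_0\actarrow u_1]$. Hence $T_k[u\circ^E(t^i)]$ is $T_k[1]\boxtimes_{[2]}[2;t]$, because the pushout over the empty coproduct is trivial. \cref{CondCrit} therefore says exactly that
\[\colim_{[k]\in\Delta^\op}\Hom_{/Y}(T_k[u\circ^E(t^i)],X)\to \Hom_{/Y}([1;t_0\actarrow u_1],X)\]
is an equivalence. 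The claim to prove, by induction on $|E|$, is that this map is an equivalence for every $u$ ending in an elementary and every subset $E$. The case $|E|=1$ is the hypothesis, since $[u\circ^{\{i\}} t]=[u\circ^i t]$. The case $E=\emptyset$ is trivial, because then $T_k[u\circ^\emptyset]$ is constant with value $[1;u]$ up to Segal extension and $\Delta^\op$ is weakly contractible.

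For the inductive step, take $|E|>1$, choose $x\in E$, set $F=E\setminus\{x\}$, and use the bisimplicial object $\Upsilon_{k,l}$ of \cref{cons:life saving}. Since $X$ is complete Segal and all the maps involved are Segal extensions, \cref{lemma:simplification of upsilon} identifies $\Hom_{/Y}(T_k[u\circ^E],X)$ with $\Hom_{/Y}(\Upsilon_{k,k},X)$. Because $\Delta^\op$ is sifted, the colimit over the diagonal agrees with the double colimit over $(k,l)$. By \cref{lemma:simplification of upsilon 2}, $\Hom_{/Y}(\Upsilon_{k,l},X)$ is the pullback
\[\Hom_{/Y}(T_k[u\circ^F],X)\times_{\Hom_{/Y}([1;w],X)}\Hom_{/Y}(T_l[w\circ^x t^x],X).\]
We take the colimit over $l$ first. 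Colimits in spaces are universal, so this colimit commutes with the pullback over the fixed $k$-terms. By the hypothesis applied to $w$ (which ends in the elementary $u_1$) and the single graft $t^x$, the $l$-colimit replaces $\colim_l\Hom(T_l[w\circ^x t^x],X)$ by $\Hom([1;v_{00}\actarrow u_1],X)$. We then take the colimit over $k$; the induction hypothesis for $F$, applied to the resulting diagram, gives $\Hom_{/Y}([1;\underline t\actarrow u_1],X)$. Finally, we check that the composite identification is the canonical map of \cref{CondCrit}.

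The main obstacle is the colimit interchange. After the $l$-colimit, the $k$-indexed diagram is a pullback of $\Hom(T_k[u\circ^F],X)\to\Hom([1;w],X)$ along the map $\Hom([1;v_{00}\actarrow u_1],X)\to\Hom([1;w],X)$, which does not depend on $k$ but is not an equivalence. I would handle this by working fiberwise over points of $\Hom_{/Y}([1;w],X)$ and using universality of colimits once more. It also has to be checked that the base-change of the composite map $v_{00}\actarrow u_1$ matches the $F$-graft onto the new active arrow $v_{00}\actarrow u_1$, i.e.\ that the fiber of $\colim_k \Hom(T_k[u\circ^F],X)$ over a point of $\Hom([1;w],X)$ computes the right fiber. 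This is the step where robustness, in particular condition (4d) through the decomposition $\pi_0(v_{01})\simeq E\sqcup\coprod_{i\in F}\pi_0(t^i_0)$, has to be used carefully.
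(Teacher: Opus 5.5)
You set the argument up the same way as the paper: induction on $|E|$ through the multigrafts, passage to $\Upsilon_{k,k}$, siftedness of $\Delta^\op$, and the pullback of \cref{lemma:simplification of upsilon 2}. The inductive step breaks because you collapse the two simplicial directions in the wrong order for that pullback.

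To move $\colim_l$ inside
\[\Hom_{/Y}(T_k[u\circ^F(t^i)^{i\in F}],X)\times_{\Hom_{/Y}([1;w],X)}\Hom_{/Y}(T_l[w\circ^x t^x],X)\]
by universality, the leg $\Hom_{/Y}(T_l[w\circ^x t^x],X)\to\Hom_{/Y}([1;w],X)$ must be a map of simplicial spaces in $l$. It is not. The tree $[1;w]=[1;v_{01}\actarrow u_1]$ lies over $\{1,2\}\subseteq[2]$, so its inclusion into $T_l[w\circ^x t^x]$ has to pick one of the $l+1$ vertices of $T_l[1]=[1+l+1]$ lying over $1$. The $x$-strand $u_0^x$ of that level is not collapsed in the pushout, and no such vertex choice is compatible with all maps $[l]\to[l']$.

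This is why the map $\Hom_{/Y}([1;v_{00}\actarrow u_1],X)\to\Hom_{/Y}([1;w],X)$ in your last paragraph has no meaning: a lift of the composite does not single out a lift of its second factor $w$. The obstacle you do name is not a real one. Put $A_k=\Hom_{/Y}(T_k[u\circ^F(t^i)^{i\in F}],X)$ and $B=\Hom_{/Y}([1;w],X)$, and let $C\to B$ be any map constant in $k$. Universality gives $\colim_k(A_k\times_B C)\simeq(\colim_k A_k)\times_B C$, and the inductive hypothesis says $\colim_k A_k\to B$ is an equivalence. So non-invertibility of $C\to B$ is harmless, and you need neither a fiberwise argument nor any grafting onto $v_{00}\actarrow u_1$.

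The paper collapses in the opposite order, which fixes this.
\begin{enumerate}
\item The leg $[1;w]\to T_k[u\circ^F(t^i)^{i\in F}]$ is the inclusion of the two outer vertices of $T_k[1]$ (the map $\eta$), so it is natural in $k$.
\item The inductive hypothesis for $F$, applied to the same $u$, says exactly that $\colim_k A_k\to B$ is an equivalence, because the $F$-multigraft of $u$ has source $v_{01}$.
\item Hence, for each fixed $l$, the pullback collapses to $\Hom_{/Y}(T_l[w\circ^x t^x],X)$. This identification is restriction along $T_l[w\circ^x t^x]\to\Upsilon_{k,l}$, which is natural in both variables.
\item The hypothesis for the single graft $w\circ^x t^x$, whose source is $v_{00}=\underline t$, then yields $\Hom_{/Y}([1;v_{00}\actarrow u_1],X)$.
\end{enumerate}

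If you want to collapse the $x$-direction first, you would have to decompose $\Upsilon_{k,l}$ along the other path $v_{00}\actarrow v_{10}\actarrow v_{11}$. You would then need the mirror image of \cref{lemma:simplification of upsilon 2}, and you would apply the inductive hypothesis to $v_{10}\actarrow u_1$. That is the place where (4d) genuinely enters, to locate $F$ inside $\pi_0(v_{10})$.

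Your base cases are fine. The $|E|=1$ case is the hypothesis, and your $E=\emptyset$ case plays the role of the paper's splitting argument when $\pi_0(u_0)=\emptyset$.
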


\begin{proof}
Let $u : u_0 \actarrow u_1$ be an active arrow ending with an elementary. Suppose that $t : t_0 \actarrow u_0$ is an active arrow. Then we must show that the map 
\begin{equation}\label{eq:the last technical lemma}
\colim_{[k]\in\Delta^{\op}}\Hom_{/Y}(T_k[1]\boxtimes_{[2]}[2;t_0\actarrow u_0 \actarrow u_1],X)\to \Hom_{/Y}([1;t_0 \actarrow u_1],X)
\end{equation}
is an equivalence.

Suppose first that $\pi_0(u_0)$ is empty. On account of condition (4c) of \cref{def:new robust}, the map $t_0 \actarrow u_0$ is an equivalence. Thus the above map is then induced by applying $(-)\boxtimes_{[1]} [1;u]$ to the natural sections $[1] \to [1+k+1]$ in $\Delta_{/[1]}$, where $[1+k+1] \to [1]$ carries only the maximal element to $1$. This (augmented) cosimplicial object admits a splitting, so \cref{eq:the last technical lemma} is an equivalence. 

Henceforth, we may assume that $\pi_0(u_0)$ is non-empty. Suppose that $E$ is a non-empty subset of $\pi_0(u_0)$ and consider the collection of actives $(t^{i} : t_0^i \actarrow u_0^i)^{i\in E}$. We will show that for every $f : [u\circ^E (t^i)^{i\in E}] \to Y$, the map 
$$\colim_{[k]\in\Delta^{\op}}\Hom_{/Y}(T_k[u\circ^E (t^i)^{i\in E}],X)\to \Hom_{/Y}([1;\underline{t}\actarrow u_1],X)$$ is an equivalence,
where $\underline{t}$ is defined as in \cref{cons:multigraphting}. The maximal case that $E =\pi_0(u_0)$ recovers precisely \cref{eq:the last technical lemma} (see \cref{ex:multigrafting everything}).
Note that the case $|E| = 1$ corresponds to the hypothesis. We now proceed by induction on the cardinality of $E$.

Suppose now that $|E|>1$, and let $x\in E$. We define $F := E \setminus \{x\}$ and we suppose that the claim holds for $F$. Consider the bisimplicial $\Tree[\O]$-space $\Upsilon_{k,l}$ of \cref{cons:life saving}. Note that we can view $f$ as a map $\Upsilon_{0,0} \to Y$ by \cref{lemma:simplification of upsilon}. We have a natural map $[1;v_{00} \actarrow u_1] = [1;\underline{t} \actarrow u_1] \to \Upsilon_{k,l}$. By \cref{lemma:simplification of upsilon}, we must show that the induced map 
$$
\colim_{[k] \in \Delta^\op} \Hom_{/Y}(\Upsilon_{k,k}, Y) \to \Hom_{/Y}([1;v_{00} \actarrow u_1], Y)
$$
is an equivalence.

We will now use the presentation of \cref{lemma:simplification of upsilon 2} to obtain a cartesian square
\[\begin{tikzcd}
	{\Hom_{/Y}(\Upsilon_{k,l},X)} & {\Hom_{/Y}( T_k[w\circ^xt^x],X)} \\
	{\Hom_{/Y}(  T_l[u\circ^F(t^i)^{i\in F}],X)} & {\Hom_{/Y}(  [1;w],X)}
	\arrow[from=1-1, to=1-2]
	\arrow[from=1-1, to=2-1]
	\arrow[from=1-2, to=2-2]
	\arrow[from=2-1, to=2-2]
\end{tikzcd}\]
where $w : v_{01} \actarrow u_1$ is the composite of the sequence $u\star^F(t^i)^{i\in F}$.
Using the assumption and the induction hypothesis, we then deduce that the natural maps $[1; v_{00} \actarrow u_1] \to \Upsilon_{k,l}$ 
induce an equivalence
\begin{align*}
    \colim_{([k],[l]) \in \Delta^\op \times \Delta^\op}\Hom_{/Y}(\Upsilon_{k,l},X)&\xrightarrow{\simeq}\colim_{[k]\in \Delta^\op }\Hom_{/Y}( T_k[w\circ^xt^x],X) \\&\xrightarrow{\simeq} \Hom_{/Y}([1;v_{00}\actarrow u_1],Y).
\end{align*}
As $\Delta^\op$ is sifted, the desired conclusion follows.
\end{proof}

\begin{proof}[Proof of \cref{thm:main-thm-expo-CSeg-converse}]
Suppose that $p:X\to Y$ is an exponentiable map between complete $\Tree[\O]$-spaces. Let $u_0 \actarrow u_1$ be an active arrow ending in an elementary. Suppose that $t : t_0 \actarrow u_0^i$ is an active map with $i \in \pi_0(u_0)$. Then we consider the following diagram of pullback squares:
\[\begin{tikzcd}
	{X''} & {X'} & X \\
	{\Gamma[u\circ^i t]} & {[u\circ^i t]} & Y.
	\arrow["v", from=1-1, to=1-2]
	\arrow[from=1-1, to=2-1]
	\arrow[from=1-2, to=1-3]
	\arrow[from=1-2, to=2-2]
	\arrow[from=1-3, to=2-3]
	\arrow[from=2-1, to=2-2]
	\arrow[from=2-2, to=2-3]
\end{tikzcd}\]
We claim that $v$ is a Segal extension. To this end, let $L : \PSh(\Tree[\O]) \to \CSeg(\Tree[\O])$ denote the reflection.
The map $[1;t_0^i \actarrow u_0^i] \cup_{[0;u_0^i]} [1;u] \simeq \Gamma[u\circ^i t]\to [u\circ^it]$ is a Segal extension and $[u\circ^it]$ is complete Segal; see \cref{prop:fibrancy of the grafting construction}.
Applying $L$, we obtain the colimit expression $[1;t_0^i] \cup_{[0;u_0^i]} [1;u] \simeq [u\circ^i t]$ in $\CSeg(\Tree[\O])$. Note that all objects in this colimit expression lie in $\CSeg(\Tree[\cO])$ by \cref{prop:fibrancy of the grafting construction}. As $p^* : \CSeg(\Tree[\O])_{/Y} \to \CSeg(\Tree[\O])_{/X}$ is cocontinuous, we thus deduce that $v$ is carried to an equivalence by $L$. Thus \cref{remark:saturated-vs-strongly-saturated} implies that $v$ is a complete Segal extension. 

Now, using \cref{lemma:fibrancy of RX} and reasoning similarly as in \cref{lemma:exponentiable Segal fibrations}, we have a factorization $X'' \to RX' \to X'$ of $v$ into a complete Segal extension followed by a complete Segal fibration. Thus $RX' \to X'$ must be an equivalence.  By \cref{lemma:computing the hom of RX}, the map $p$ fulfills the hypothesis of \cref{lemma:the last technical lemma} and thus verifies \cref{conditionCC}. 
\end{proof}

\subsection{Examples}\label{subsection:examples of CC converse}
We now highlight some sample applications of \cref{thmB}. 

\begin{example}\label{exa:CC-necessary-VDC}
    By \cref{thm:main-thm-expo-CSeg-converse} (or \cref{rem:thmB easy for atomic}) the conditions of \cref{example:CC-criterion-VDC} for a map in $\Algd(\Delta^{\op,\natural})$ to be exponentiable are also necessary.
    That is, a map $\cP \to \cQ$ of virtual double categories is exponentiable if and only if the following conditions hold:
    \begin{enumerate}[(1)]
        \item the functor $\cP_0 \to \cQ_0$ is exponentiable in $\Cat$, and
        \item for any $t \colon [2] \to \cQ^\activ$ such that $t(2)$ lies over $[1]$, the base-change $\cP^\activ \times_{\cQ^\activ} [2] \to [2]$ is exponentiable in $\Cat$.
    \end{enumerate}
\end{example}

\begin{example}\label{exa:CC-necessary-operad}
    Let $\cO = \F_*^\flat$ be the pattern describing operads. Suppose that $f \colon \cP \to \cQ$ is a map between operads.
    Then we claim that $f$ is exponentiable if and only if the equivalent conditions (1) and (2) of \cref{ex:CC-criterion-operads} hold. As explained in \cref{example:fin*-not-robust}, $\F_*^\flat$ is not robust, so we cannot directly apply \cref{thm:main-thm-expo-CSeg-converse}.
    
    However, note that there is an inclusion $\phi \colon \F_* \to \Span(\F)$ as the wide subcategory of $\Span(\F)$ containing the spans whose backward arrows are injective; concretely, a morphism $f \colon X \sqcup \{*\} \to Y \sqcup \{*\}$ is sent by $f$ to the span
    \[\begin{tikzcd}
    	X & {f^{-1}(Y)} & Y 
    	\arrow[hook', from=1-2, to=1-1]
    	\arrow["f", from=1-2, to=1-3]
    \end{tikzcd}\]
    By \cite[Corollary B]{BarkanHaugsengSteinebrunner}, pullback along $\phi$ induces an equivalence
    \[\phi^* \colon \Algd(\Span(\F)^\flat) \xrightarrow{\simeq} \Algd(\F_*^\flat) = \mathrm{Op},\]
    where $\Span(\F)^\flat$ has the pattern structure from \cref{examples:examples of patterns}.
    Alternatively, this equivalence follows from the fact that the tree categories of $\Span(\F)^\flat$ and $\F_*^\flat$ agree, see \cref{exa:span-forests}.
    Since $\Span(\F)^\flat$ is robust by \cref{example:Span-robust}, it follows that items (1) and (2) are necessary when $\F_*$ is replaced by $\Span(\F)$.
    Because the functor $\phi \colon \F_* \to \Span(\F)$ is an equivalence on active morphisms, it follows that (1) and (2) are also necessary for $\F_*^\flat$.
\end{example}

\begin{example}\label{exa:CC-necessary-genoperad}
    The inclusion $\phi \colon \F_* \hookrightarrow \Span(\F)$ from the previous example also induces an equivalence
    \[ \Algd(\Span(\F)^\natural) \simeq \Algd(\F^\natural_*).\]
    This follows by checking the conditions of \cite[Theorem A]{BarkanHaugsengSteinebrunner}, but can also be deduced as follows:
    By \cref{exa:span-forests}, the inclusion $\phi$ yields an equivalence $\Tree[\F_*^\natural] \simeq \Tree[\Span(\F)^\natural]$.
    This is an equivalence of algebraic patterns for the pattern structure of \cref{ex:iterated trees}, hence we obtain an equivalence
    \[\CSeg(\Omega[\Span(\F)^\natural]) \simeq \CSeg(\Omega[\F^\natural_*])\]
    by \cref{prop:segal tree spaces are segal objects}.
    \Cref{prop:comparison algebrads vs segal spaces} then shows that pulling back along $\phi$ gives an equivalence between categories of algebrads.
    Because the functor $\phi \colon \F_* \to \Span(\F)$ is an equivalence on active morphisms, it follows that conditions (1) and (2) of \cref{example:CC-criterion-gen-operads} are both necessary and sufficient for a map in $\Algd(\F_*^\natural)$ to be exponentiable.
    In particular, combining this example with the previous one, it follows that a map of operads is exponentiable in the category of operads if and only if it is exponentiable in the category of generalized operads.
\end{example}

\begin{example}\label{exa:CC-necessary-G-operad}
    Let $G$ be a finite group and consider the pattern $\Span(\F_G)^\flat$ from \cref{examples:examples of patterns} describing $G$-operads. This is robust by \cref{example:Span-robust}. Thus conditions (1) and (2) of \cref{example:CC-criterion-G-operads} are also necessary for a map of $G$-operads to be exponentiable. 
    
    As explained in \cref{example:CC-criterion-G-operads}, Nardin--Shah \cite{NardinShah} modelled $G$-operads as algebrad for a different pattern $\underline{\F}_{G,*}$ and showed that a map $\cP \to \cQ$ between such algebrads is exponentiable if $\cP^\activ \to \cQ^\activ$ is exponentiable in $\Cat$.
    While $\underline{\F}_{G,*}$ is not robust, we can show that their condition is necessary by comparing the pattern to $\underline{\F}_{G,*}$ to $\Span(\F_G)^\flat$.
    We first briefly recall the equivalence of $\Algd(\underline{\mathbb{F}}_{G,*})$ with $\Algd(\Span(\F_G)^\flat)$ proved in \cite[Corollary B]{BarkanHaugsengSteinebrunner}.
    
    The category $\underline{\F}_{G,*}$ is defined as the cocartesian unstraightening $p \colon \underline{\F}_{G,*} \to \mathrm{Orb}_G^\op$ of the functor 
    \[\mathrm{Orb}_G^\op \to \Cat; \quad G/H \mapsto \F_{H,*},\]
    where $\mathrm{Orb}_G^\op$ is the category of transitive $G$-sets and $\F_{H,*}$ the category of pointed finite $H$-sets.
    A map in $\underline{\F}_{G,*}$ is inert if it is $p$-cocartesian and active if it lies over an equivalence in $\mathrm{\Orb}_G$.\footnote{In \cite{NardinShah}, a slightly larger class of actives is used, but their inerts and actives don't form a factorization system. Moreover, their exponentiability criterion \cite[Definition 3.1.1]{NardinShah} uses the actives we define here.}
    There is a functor $\phi \colon \underline{\F}_{G,*} \to \Span(\F_G)$ that sends a pointed $H$-set $X \sqcup \{*\}$ to the $G$-set $\mathrm{ind}_H^G X$. By \cite[Proposition 5.2.14]{BarkanHaugsengSteinebrunner}, pullback along $\phi$ induces an equivalence
    \[\phi^* \colon \Algd(\Span(\F_G)) \to \Algd(\underline{\F}_{G,*}).\]
    The inclusion $\{G/G\} \hookrightarrow \mathrm{Orb}^\op_G$ induces an inclusion $\F_G \hookrightarrow \underline{\F}_{G,*}^\activ$ whose composition with $\phi^\activ \colon \underline{\F}_{G,*}^\activ \to \Span(\F_G)^\activ = \F_G$ is the identity.
    We therefore see that given a map $\cQ \to \cP$ in $\Algd(\Span(\F_G))$, the functor $\cQ^\activ \to \cP^\activ$ is exponentiable in $\Cat$ if and only if $(\phi^* \cP)^\activ \to (\phi^* \cQ)^\activ$ is exponentiable in $\Cat$.
    This is precisely the exponentiability condition of \cite[Corollary 3.1.5]{NardinShah}.
\end{example}

\begin{example}
    The algebraic patterns $\Theta_n$ and $\Delta^{\times n, \op, \natural}$ from \cref{example:theta-robust} and \cref{example:Deltan-robust} are (atomically) robust, so \cref{thm:main-thm-expo-factorization-Algd} gives a complete characterization of the exponentiable morphisms in $\Algd(\Theta_n)$ and $\Algd(\Delta^{\times n, \op, \natural})$.
\end{example}

\section{Examples of exponentiable maps}\label{sec:examples}
We will now give concrete examples of exponentiable maps between algebrads using \cref{thmA}.
We start with observing that any Segal $\cO$-category $\cO \to \Cat$ is exponentiable when viewed as an $\cO$-algebrad.
Secondly, we consider the case where every object of $\cO$ is elementary.
In this case $\Algd(\cO) \simeq \Cocart^\inert(\cO)$, and we obtain a complete characterization of the exponentiable morphisms in $\Cocart^\inert(\cO)$.
We then give a very small example of an exponentiable morphism between virtual double categories that does not fall in the previous classes.
This is also the counterexample mentioned in \cref{intro:other results}.
Finally, we will consider the example of the (virtual) cospan double category $\DCospan(\cC)$ of a category $\cC$.
If $\cC$ admits pushouts, then this is a double category and hence it is exponentiable within the category of virtual double categories.
We will show that if $\cC$ does not admit pushouts, then $\DCospan(\cC)$ still exists as a virtual double category, and in that it is always exponentiable.

\subsection{Segal \texorpdfstring{$\cO$}{O}-categories} Recall that Segal $\O$-categories (\cref{def:segal objects}) can be viewed as particular examples of $\O$-algebrads via unstraightening. It turns out that these are always exponentiable in $\Algd(\O)$:

\begin{proposition}\label{prop:segal-o-cat-expo}
    Let $\cO$ be an algebraic pattern and $X \colon \cO \to \Cat$ a Segal $\cO$-category.
    Then its unstraightening $\smallint X \to \cO$ is an exponentiable object in $\Algd(\cO)$.
\end{proposition}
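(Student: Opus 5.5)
We verify the sufficient criterion of \cref{thm:main-thm-expo-factorization-Algd} (equivalently \cref{thmA}) for $\cP \coloneqq \smallint X \to \cO$, with $\cQ = \cO$. The point is that $p\colon \cP \to \cO$ is a cocartesian fibration. The key intermediate claim is that every cocartesian fibration is exponentiable in $\Cat_{/\cO}$, so by the Conduché criterion all the categories $\mathrm{Fact}(f \mid g\circ h)$ are weakly contractible. In particular they are contractible for the special factorizations $x \actarrow y \actarrow e$ with $e$ elementary required in (CC).

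Concretely, fix a composable pair $h\colon x\actarrow y$, $g\colon y \actarrow e$ in $\cO$ and a lift $f\colon \bar x \to \bar e$ of $gh$. We exhibit an initial object of $\mathrm{Fact}(f\mid g\circ h)$. Let $\bar x \to h_!\bar x$ be a $p$-cocartesian lift of $h$. By the cocartesian property, $f$ factors essentially uniquely as $\bar x \to h_!\bar x \xrightarrow{f'} \bar e$ with $p(f') = g$. Given any other factorization $\bar x \xrightarrow{a} \bar y \xrightarrow{b} \bar e$ over $(h,g)$, the universal property of the cocartesian edge gives an essentially unique map $h_!\bar x \to \bar y$ over $\id_y$ compatible with $a$, and hence also with $b$ after composing. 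Therefore the factorization through $h_!\bar x$ is initial in $\mathrm{Fact}(f\mid g\circ h)$, which is then weakly contractible. To make this homotopy-coherent, identify $\mathrm{Fact}(f\mid g\circ h)$ with the category of pairs $(\bar y\in\cP_y,\ \text{factorization})$. Via the cocartesian property, $\Map^h_\cP(\bar x,\bar y)\simeq \Map_{\cP_y}(h_!\bar x,\bar y)$ naturally in $\bar y$. This identifies $\mathrm{Fact}(f\mid g\circ h)$ with a slice-type category $(\cP_y)_{h_!\bar x/}\times_{\dots}\{f\}$ that has an initial object.

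Alternatively, one can invoke \cref{remark:expo-over-all-actives} directly: since $p$ is cocartesian, so is $\cP^\act = \cP\times_\cO \cO^\act \to \cO^\act$ as a base change. Cocartesian fibrations are exponentiable in $\Cat$ by the cited Conduché criterion \cite[Proposition B.3.2]{HA}, using the initial-object argument above. Hence $\cP^\act\to\cO^\act$ is exponentiable, and the remark gives exponentiability in $\Algd(\cO)$.

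The only mildly delicate step is making the initial-object argument precise in the $\infty$-categorical setting, namely the identification of $\mathrm{Fact}(f\mid g\circ h)$ as a category with an initial object. I would first try to handle this by writing $\mathrm{Fact}$ as a fiber of $\Fun_{/\cO}([2],\cP)\to\Fun_{/\cO}([1],\cP)$ and using that restriction along $\{0\to1\}$ is a cocartesian fibration whose fibers have initial objects given by cocartesian edges. Everything else is formal. One should also note that $\smallint X\to\cO$ is indeed an algebrad by the Segal condition, via the remark following \cref{def:segal objects}.
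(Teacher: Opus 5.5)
Your proposal is correct and follows the paper's proof, which just cites \cite[Lemma 3.2.1]{AyalaFrancis2020FibrationsInftyCategories}: the cocartesian fibration $\smallint X \to \cO$ is exponentiable in $\Cat$, so by the Conduch\'e criterion every $\mathrm{Fact}(f \mid g\circ h)$ is weakly contractible, and in particular (CC) holds. Your initial-object argument, factoring $f$ through the cocartesian lift $\bar x \to h_!\bar x$, is exactly the content of that cited lemma, so you have simply unpacked the reference.
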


\begin{proof}
    By \cite[Lemma 3.2.1]{AyalaFrancis2020FibrationsInftyCategories}, $\smallint X \to \cO$ is exponentiable in $\Cat$.
    In particular, it satisfies the criterion \cref{CondCrit} from \cref{thmA}.
\end{proof}

\begin{example}\label{exa:double-cat-is-expo}
    The category of double categories is equivalent to $\DblCat = \Seg(\Delta^{\op,\natural},\Cat)$.
    In particular, any double category, when viewed as an object in the category $\VirtDblCat = \Algd(\Delta^{\op,\natural})$ of virtual double categories, is exponentiable.
    This is an $\infty$-categorical version of \cite[Theorem 3.9]{Arkor}.
\end{example}

\begin{warning}\label{warning:segal-o-cat-expo}
    It follows from \cref{prop:segal-o-cat-expo} that any two Segal $\cO$-categories $X$ and $Y$ admit an exponential object $[X,Y]$ in $\Algd(\cO)$.
    However, $[X,Y] \to \cO$ is generally not a cocartesian fibration and hence not a Segal $\cO$-category.
    Even if it is a Segal $\cO$-category, it is usually not an exponential object in $\Seg(\cO,\Cat)$ since the morphisms in $\Algd(\cO)$ and $\Seg(\cO,\Cat)$ don't agree.
    For example, any double category $X$ is exponentiable in both $\DblCat$ and $\VirtDblCat$, but the exponential objects generally don't agree:
    Given a double category $Y$, the objects of the exponential object $[[1]_h,Y]_\DblCat$ in $\DblCat$ are horizontal arrows in $Y$, while the objects of the exponential object $[[1]_h,Y]_\VirtDblCat$ are pairs of objects in $Y$ (cf.\ \cref{exa:underlying-graph-vdc}). Here $[1]_h$ denotes the free-living double category containing a horizontal arrow.
\end{warning}

\begin{example}
    Let $\O$ be an algebraic pattern. On account of \cref{prop:segal tree spaces are segal objects} there is a composite inclusion of (non-full) subcategories
    \begin{align*}
        \Algd(\O) &\simeq \CSeg(\Tree[\O]) \hookrightarrow \Seg(\Tree[\O]) = \Seg(\Tree[\O]^{\op, \natural}, \Spc) \\
        &\hookrightarrow \Seg(\Tree[\O]^{\op,\natural}, \Cat) \hookrightarrow \Algd(\Tree[\O]^{\op,\natural}).
    \end{align*}
    On account of \cref{prop:segal-o-cat-expo}, the inclusion carries each $\O$-algebrad to an exponentiable $\Tree[\O]^{\op, \natural}$-algebrad (but beware of \cref{warning:segal-o-cat-expo}).

    By iterating the tree construction (see \cref{ex:iterated trees}), we now produce a sequence of inclusions 
    $$
    \Algd(\O) \to \Algd(\Tree[\O]^{\op, \natural}) \to \Algd(\Tree^2[\O]^{\op, \natural})\to \dotsb,
    $$
    whose images consist of exponentiable objects.
\end{example}


\subsection{Exponentiable morphisms in \texorpdfstring{$\Cocart^\inert(\cO)$}{Cocart\textasciicircum int(O)}}

Let $\cO$ be a category with a factorization system $(\cO^\inert,\cO^\activ)$.
Then we can make $\cO$ into an algebraic pattern by declaring that every object is elementary.
By \cref{remark:algebrads-all-elementary}, it follows that $\Cocart^\inert(\cO) \simeq \Algd(\cO)$, while $\cO$ is robust by \cref{exa:all-elementary-then-robust}.
In particular, we obtain the following complete characterization of the exponentiable morphisms in $\Cocart^\inert(\cO)$.

\begin{proposition}\label{prop:expo-inert-fibrations}
    Let $\cO$ be a category with a factorization system $(\cO^\inert,\cO^\activ)$.
    Then a morphism $\cP \to \cQ$ is exponentiable in $\Cocart^\inert(\cO)$ precisely if the functor
    \[\cP^\activ \coloneqq \cP \times_\cO \cO^\activ \to \cQ \times_\cO \cO^\activ \eqqcolon \cQ^\activ\]
    is exponentiable in $\Cat$.
\end{proposition}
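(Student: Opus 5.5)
The plan is to deduce the proposition from \cref{thm:main-thm-expo-factorization-Algd} and its converse \cref{thm:main-thm-expo-CSeg-converse}. This is legitimate because, with every object elementary, $\Algd(\cO)\simeq\Cocart^\inert(\cO)$ by \cref{remark:algebrads-all-elementary}, and $\cO$ is robust by \cref{exa:all-elementary-then-robust}. Combined with the dictionary of \cref{lemma:dictionary cc conditions}, these results say that a morphism $\cP\to\cQ$ is exponentiable in $\Cocart^\inert(\cO)$ if and only if the following holds: for every composable pair of actives $x\overset{h}{\actarrow} y\overset{g}{\actarrow} e$ in $\cQ$ with $e$ elementary, and every lift $f$ of $gh$, the category $\mathrm{Fact}(f\mid g\circ h)$ is weakly contractible. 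The condition ``$e$ elementary'' is now vacuous, since every object of $\cQ$ lies over an elementary object of $\cO$.

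What remains is to compare this condition with exponentiability of $\cP^\activ\to\cQ^\activ$ in $\Cat$, using the Conduché criterion \cite[Proposition B.3.2]{HA}. That criterion asks for weak contractibility of the factorization categories computed inside $\cP^\activ$ over $\cQ^\activ$, for all composable pairs in $\cQ^\activ$. These pairs are exactly the composable pairs of active morphisms in $\cQ$, and the lifts $f$ range over active morphisms of $\cP$.

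The key step is to show that $\mathrm{Fact}(f\mid g\circ h)$ agrees whether it is formed in $\cP$ over $\cQ$ or in $\cP^\activ$ over $\cQ^\activ$. A factorization of $f$ lying over $g\circ h$ consists of $\bar x\to\bar y\to\bar e$ whose images are the actives $h$ and $g$. Since actives in $\cP$ are by definition the morphisms lying over actives of $\cO$ (\cref{def:category-of-algebrads}), and $\cQ\to\cO$ sends actives of $\cQ$ to actives of $\cO$, both factors are automatically active. So the two factorization categories coincide. Note that the morphisms of $\mathrm{Fact}$ lie over identities of $y$, so they are also in $\cP^\activ$. Hence the two criteria are literally the same.

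I expect the only delicate point to be checking that the definition of $\mathrm{Fact}(f\mid g\circ h)$ in \cref{thm:main-thm-expo-factorization-Algd}, written as a fiber product of slice categories, matches the Conduché factorization category in \cite[Proposition B.3.2]{HA}. I would verify this by identifying both with $\Fun_{/\cQ}([2],\cP)\times_{\Fun_{/\cQ}([1],\cP)}\{f\}$ restricted to the given $[2]\to\cQ$, which is the same identification already used in the proof of \cref{lemma:dictionary cc conditions}.
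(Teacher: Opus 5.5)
Your proposal is correct and is the paper's proof. The paper likewise combines \cref{remark:algebrads-all-elementary} and \cref{exa:all-elementary-then-robust} with \cref{thm:main-thm-expo-factorization-Algd} and its converse \cref{thm:main-thm-expo-CSeg-converse}, and then observes that, since every object is elementary, the resulting condition is exactly the Conduché criterion for $\cP^\activ\to\cQ^\activ$. You also spell out the detail the paper leaves implicit: factorizations over active $g,h$, and the morphisms between them, automatically lie in $\cP^\activ$, or equivalently $\cP^\activ\simeq\cP\times_\cQ\cQ^\activ$.
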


\begin{proof}
    Since every object in $\cQ$ lies over an elementary object in $\cO$, the condition from \cref{thm:main-thm-expo-factorization-Algd} is equivalent to the Conduché criterion for the functor $\cP^\activ \to \cQ^\activ$ from \cite[Lemma 2.2.8]{AyalaFrancis2020FibrationsInftyCategories}.
\end{proof}

\begin{example}
    Suppose that $\C$ is a category. We may then apply \cref{prop:expo-inert-fibrations} to the factorization system whose left class is given by all maps in $\C$. Then it follows from \cref{prop:expo-inert-fibrations} that a map $F \to G$ in $\Fun(\C,\Cat)$ is exponentiable if and only if each component $Fc\to Gc$ is exponentiable.
\end{example}

\subsection{A non-trivial exponentiable map between virtual double categories}\label{subsection:non-triv exp map between vdc} 
We will discuss a toy example of an exponentiable map in $\VirtDblCat \simeq \Algd(\Delta^{\op,\natural})$, where $\Delta^{\op,\natural}$ is the algebraic pattern defined in \cref{examples:examples of patterns}. This is also the counterexample mentioned in \cref{intro:other results}.

\begin{construction}
    We will write $(s,t \colon \{01,12\} \rightrightarrows \{0,1,2\}) \in \Fun(\mathbb{G},\Cat)$ for the underlying graph of $[0;[2]]$ (see \cref{exa:underlying-graph-vdc}). We define $$f \colon A \to B \coloneqq [2; [2] = [2]=[2]]$$ to be the map of unary virtual double categories that is classified by the map of graphs
    \[\begin{tikzcd}
	{[2] \times \{01,12\}} & {[2] \times\{01, 12\}} \\
	{[3] \times\{0,1,2\}} & {[2] \times\{0,1,2\}}.
	\arrow["\id", from=1-1, to=1-2]
	\arrow["{d_1 \times s}"', from=1-1, to=2-1]
	\arrow["{d_2 \times t}", shift left=3, from=1-1, to=2-1]
	\arrow["{\id \times s}"', from=1-2, to=2-2]
	\arrow["{\id \times t}", shift left=3, from=1-2, to=2-2]
	\arrow["{s_1 \times \id}", from=2-1, to=2-2]
    \end{tikzcd}\]
    Pictorially, it corresponds to a map
    \[
    f \colon A = \begin{tikzcd}
	\bullet & \bullet & \bullet \\
	\bullet & \bullet & \bullet \\
	\bullet & \bullet & \bullet \\
	\bullet & \bullet & \bullet
	\arrow[""{name=0, anchor=center, inner sep=0}, "\shortmid"{marking}, from=1-1, to=1-2]
	\arrow[from=1-1, to=2-1]
	\arrow[""{name=1, anchor=center, inner sep=0}, "\shortmid"{marking}, from=1-2, to=1-3]
	\arrow[from=1-2, to=2-2]
	\arrow[from=1-3, to=2-3]
	\arrow[from=2-1, to=3-1]
	\arrow[""{name=2, anchor=center, inner sep=0}, "\shortmid"{marking}, from=2-2, to=2-3]
	\arrow[from=2-2, to=3-2]
	\arrow[from=2-3, to=3-3]
	\arrow[""{name=3, anchor=center, inner sep=0}, "\shortmid"{marking}, from=3-1, to=3-2]
	\arrow[from=3-1, to=4-1]
	\arrow[from=3-2, to=4-2]
	\arrow[from=3-3, to=4-3]
	\arrow[""{name=4, anchor=center, inner sep=0}, "\shortmid"{marking}, from=4-1, to=4-2]
	\arrow[""{name=5, anchor=center, inner sep=0}, "\shortmid"{marking}, from=4-2, to=4-3]
	\arrow[between={0.2}{0.8}, Rightarrow, from=0, to=3]
	\arrow[between={0.2}{0.8}, Rightarrow, from=1, to=2]
	\arrow[between={0.2}{0.8}, Rightarrow, from=2, to=5]
	\arrow[between={0.2}{0.8}, Rightarrow, from=3, to=4]
\end{tikzcd}
    \quad \longrightarrow \quad  
    \begin{tikzcd}
	\bullet & \bullet & \bullet \\
	\bullet & \bullet & \bullet \\
	\bullet & \bullet & \bullet
	\arrow[""{name=0, anchor=center, inner sep=0}, "\shortmid"{marking}, from=1-1, to=1-2]
	\arrow[from=1-1, to=2-1]
	\arrow[""{name=1, anchor=center, inner sep=0}, "\shortmid"{marking}, from=1-2, to=1-3]
	\arrow[from=1-2, to=2-2]
	\arrow[from=1-3, to=2-3]
	\arrow[""{name=2, anchor=center, inner sep=0}, "\shortmid"{marking}, from=2-1, to=2-2]
	\arrow[from=2-1, to=3-1]
	\arrow[""{name=3, anchor=center, inner sep=0}, "\shortmid"{marking}, from=2-2, to=2-3]
	\arrow[from=2-2, to=3-2]
	\arrow[from=2-3, to=3-3]
	\arrow[""{name=4, anchor=center, inner sep=0}, "\shortmid"{marking}, from=3-1, to=3-2]
	\arrow[""{name=5, anchor=center, inner sep=0}, "\shortmid"{marking}, from=3-2, to=3-3]
	\arrow[between={0.2}{0.8}, Rightarrow, from=0, to=2]
	\arrow[between={0.2}{0.8}, Rightarrow, from=1, to=3]
	\arrow[between={0.2}{0.8}, Rightarrow, from=2, to=4]
	\arrow[between={0.2}{0.8}, Rightarrow, from=3, to=5]
\end{tikzcd}
= B.
    \]
    The map $f$ is unique in the sense that the \textit{total vertical pasting} of $A$, given by the unique inclusion $[1; [2] = [2]] \to A$, is carried by $f$ to the \textit{total vertical pasting} $[1;[2] = [2]] \to B$ of $B$, which is classified by the tuple of maps $(d_1 \colon [1] \to [2], \id_{[2]} \colon [2] \to [2])$ in $\Delta$.
\end{construction}

The following shows that it is necessary in \cref{thm:main-thm-expo-CSeg} to only restrict to trees $[2;t]$ and that moving to \textit{forests} $\left<2;t\right>$ would be too broad.

\begin{proposition}\label{prop:conter-example-forest-conduche}
    The map $f \colon A\to B$ is exponentiable in $\VirtDblCat$. However, for the identity map $[2;[2] = [2] = [2]] \to B$, the similarly defined comparison map of \cref{thm:main-thm-expo-CSeg} is not an equivalence.
\end{proposition}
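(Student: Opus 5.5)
The plan is to treat the two assertions separately. The first follows from the criterion of \cref{example:CC-criterion-VDC}. The second is a direct computation of both sides of the comparison map, using the forest decomposition formula to turn the left-hand side into a colimit of a limit.

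\emph{Exponentiability.} Since $A$ and $B$ are unary, \cref{prop:unary algebrads} shows that they have no cells over non-unary trees. So every active morphism of $B$ with target over $[1]$ or $[0]$ lies over an identity of $\Delta^{\op}$: in $\Delta$ there is no endpoint-preserving map $[0]\to[1]$, and non-identity actives $[n]\to[1]$ with $n\neq 1$ would give non-unary cells. Consequently:
\begin{itemize}
\item $B^\activ$ restricted to objects over $[1]$ is just the fibre $\Gamma B([1]) = [2]\times\{01,12\}$.
\item Condition (2) of \cref{example:CC-criterion-VDC} therefore reduces to exponentiability of $\Gamma f$ on the edge component. This component is the identity of $[2]\times\{01,12\}$, so the condition holds.
\item Condition (1) asks that $s_1\times\id\colon [3]\times\{0,1,2\}\to[2]\times\{0,1,2\}$ be exponentiable in $\Cat$. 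I would check this with the Conduché criterion for maps of posets. The only non-trivial factorizations are of the form $a\to 1\to b$ in $[2]$. Over such a factorization the category of lifts of the middle point is either the full subposet $\{1<2\}\subset[3]$ or a single point, and in both cases it is contractible.
\end{itemize}
Hence $f$ is exponentiable by \cref{thm:main-thm-expo-factorization-Algd}, equivalently by \cref{thm:main-thm-expo-CSeg}.

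\emph{Failure for the forest.} The pattern $\Delta^{\op,\natural}$ is sound, so by \cref{rem:forest decomposition sound} both sides of the comparison map for $\langle 2;[2]=[2]=[2]\rangle$ are limits over the elementary slice $0\leftarrow 01\rightarrow 1\leftarrow 12\rightarrow 2$. This gives the following picture.
\begin{itemize}
\item The right-hand side is the space of lifts to $A$ of the total vertical pasting of $B$. A lift consists of a lift of the vertical arrow $0\to2$ of $[2]$ in each vertex and edge coordinate, compatible with $s$ and $t$.
\item The left-hand side is the geometric realization of the simplicial space whose $k$-simplices are lifts of $[1+k+1]\to[2]$ in each coordinate, compatible along the span.
\item Because the colimit over $\Delta^{\op}$ sits outside the fibre product over the vertex coordinates, the two sides need not agree. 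This is exactly the interchange that \cref{prop:main-thm-trees-vs-forests} needs, and it is unavailable here.
\end{itemize}
Concretely, the maps $d_1\times s$ and $d_2\times t$ force different constraints on the middle vertex $1$: the edge $01$ pins its lift in $[3]$ through $d_2$, while the edge $12$ pins it through $d_1$.

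The key step, and the one I expect to be the main obstacle, is pinning down the fibre of the comparison map over the lift of the total vertical pasting. On the left it should be the realization of the category of compatible factorizations in $[3]$ over $0\to1\to2$. The two edge coordinates want the middle point to be $1$ and $2$ respectively. I expect the compatible factorizations to be empty, or at least to have non-contractible realization. The right-hand fibre, by contrast, is a point, since the identity component on edges admits a unique lift. I would first write out the simplicial sets explicitly for small $k$, and then identify the resulting realization.
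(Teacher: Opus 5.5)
Your proposal is correct. For the second assertion it is the paper's argument made more explicit. For the first assertion you take a different route.

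\textbf{First assertion.} The paper never invokes the Conduch\'e-type criterion here. It uses the equivalence $\Fun(\mathbb{G},\Cat)_{/\Gamma B}\simeq\VirtDblCat_{/B}$ for the unary target $B$ (\cref{prop:unary algebrads}, \cref{cor:unary only accepts unary}, \cref{rem:trivial algebrad}). So it suffices that $\Gamma f$ is exponentiable in $\Fun(\mathbb{G},\Cat)$, and this is checked pointwise on $s_1\colon[3]\to[2]$ and $\id_{[2]}$.

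You instead observe that in a unary algebrad every active morphism with elementary target lies in a single fibre. Then \cref{example:CC-criterion-VDC}, i.e.\ \cref{thm:main-thm-expo-factorization-Algd}, reduces to the same two checks in $\Cat$, and your poset verification for $s_1$ is fine.

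The paper's route is independent of the main theorem and gives an ``if and only if'' for maps into a unary target. Yours shows directly that the example satisfies the criterion of \cref{thmA}, which is the comparison the example is meant to illustrate (cf.\ \cref{rem:VDC-expo-vs-actives-conduche}).

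\textbf{Second assertion.} The step you flag as the main obstacle is immediate, and you need not hedge with ``non-contractible''. Since $\Gamma f$ is the identity on edges, the lift on each edge coordinate is uniquely $T_k[1]\colon[k+2]\to[2]$. This forces the lift at the middle vertex to be both $d_2\circ T_k[1]$ (from the edge $01$) and $d_1\circ T_k[1]$ (from the edge $12$). These differ on every element sent to $1$. So the simplicial space on the left is empty in every degree; already for $k=0$ there is no section $[2;[2]=[2]=[2]]\to A$ over $B$.

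What you still need, and only assert with an incomplete reason, is that the right-hand side is non-empty. Uniqueness of the edge lifts $d_1\colon[1]\to[2]$ is not enough. You also need the induced vertex lifts at vertex $1$ to agree, and they do because $d_2d_1=d_1d_1\colon[1]\to[3]$ (both are $\{0,3\}$).

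With these two observations your argument is exactly the paper's: the total vertical pasting of $A$ lifts that of $B$, but it does not extend along $[1;[2]=[2]]\to[2;[2]=[2]=[2]]$.
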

\begin{proof}
    For the first assertion, we note that there is an equivalence 
    $$\iota \colon \Fun(\mathbb{G},\Cat)_{/\Gamma B} \xrightarrow{\simeq} \VirtDblCat_{/B}$$
    on account of \cref{prop:unary algebrads} and \cref{cor:unary only accepts unary} (cf.\ \cref{rem:trivial algebrad}). Hence, it suffices to show that $\Gamma f \colon \Gamma A \to \Gamma B$ is an exponentiable map in $\Fun(\mathbb{G},\Cat)$. This can be checked pointwise, and follows from the fact that $s_1 \colon [3] \to [2]$ and $\id_{[2]} \colon [2]\to[2]$ are exponentiable in $\Cat$.
    
    For the second assertion, we note that the total vertical pasting of $B$ extends to the identity $[2;[2] = [2] = [2]] \to B$. By construction of $f$, the total vertical pasting of $A$ lifts the total vertical pasting of $B$, yet the total vertical pasting of $A$ cannot be extended along $[1;[2]=[2]] \to [2;[2]=[2]=[2]]$ in a compatible way.
\end{proof}

\begin{remark}\label{rem:VDC-expo-vs-actives-conduche}
    When viewing $A$ and $B$ as $\Delta^{\op,\natural}$-algebrads in the sense of \cref{defi:of algebrad for factactization system}, \cref{prop:conter-example-forest-conduche} says that while $f \colon A \to B$ satisfies the criterion of \cref{thm:main-thm-expo-factorization-Algd}, the functor on actives $A^\activ \to B^\activ$ is not exponentiable in $\Cat$.
    Namely, it shows that there exists a map $\alpha$ in $A^\activ$ together with a factorization $f(\alpha) = \beta \circ \gamma$ in $B^\activ$ such that $\beta$ and $\gamma$ do not lift to $A^\activ$.
\end{remark}

\subsection{The virtual cospan double category}\label{subsection:cospans}

We will now construct, for any category $\cC$, its \emph{virtual double category of cospans} $\DCospan^\mathrm{virt}(\cC)$ and show that it is exponentiable in $\VirtDblCat$.

Recall that for a category $\cC$ that admits pushouts, its cospan double category $\DCospan(\cC)$ is constructed by first constructing a bigger simplicial category
\[\overline{\DCospan}(\cC) \colon \Delta^\op \to \Cat; \quad \overline{\DCospan}(\cC)_n = \Fun(\Delta^{\inert}_{/[n]},\cC)\]
and then restricting, for every $n$, to the full subcategory $\DCospan(\cC)_n \subset \overline{\DCospan}(\cC)_n$ spanned by those functors $\Delta^\inert_{/[n]} \to \cC$ that are left Kan extended from $\Delta^{\elem}_{/[n]}$. For details, including the proof that this defines a double category, we refer the reader to \cite[\S 5]{HaugsengIterated} where the dual case of spans is discussed.
Since $\Delta^\inert_{/[1]} = \Delta^\elem_{/[1]} \simeq (\bullet \rightarrow \bullet \leftarrow \bullet)$, we see that the horizontal morphisms of $\DCospan(\cC)$ are indeed cospans.

If $\cC$ does not admit pushouts, then we embed $\cC$ into $\PSh(\cC^\op)^\op$ via the Yoneda embedding.\footnote{We may always assume $\cC$ is small by considering presheaves in a larger universe.}
Then $\PSh(\cC^\op)^\op$ admits pushouts and the inclusion $\cC \hookrightarrow \PSh(\cC^\op)^\op$ preserves all pushouts that exist in $\cC$.
We will define the virtual double category $\DCospan^\mathrm{virt}(\cC)$ as a subobject of the double category $\DCospan(\PSh(\cC^\op)^\op)$.
In what follows, we write $\widehat{\cC}$ for $\PSh(\cC^\op)^\op$

\begin{construction}
    Let $\cC$ be a category and write $\mathscr{E} \coloneqq \smallint \DCospan(\widehat{\cC}) \to \Delta^\op$ for the unstraightening of the double category $\DCospan(\widehat{\cC})$.
    Then we may identify the fiber of $\mathscr{E}$ over $[0]$ with $\widehat{\cC}$ and over $[1]$ with the category of cospans $\Fun(\bullet \rightarrow \bullet \leftarrow \bullet, \widehat{\cC})$.
    Using the Segal condition, we may identify the objects in the fiber $\DCospan(\widehat{\cC})_n$ over $[n]$ with iterated cospans
    \[\begin{tikzcd}[sep={2.3em, between origins}]
    	& {c'_1} && \cdots && {c'_n} & \\
    	{c_0} && {c_1} && {c_{n-1}} && {c_n}
    	\arrow[to=1-2, from=2-1]
    	\arrow[to=1-2, from=2-3]
    	\arrow[to=1-4, from=2-3]
    	\arrow[to=1-4, from=2-5]
    	\arrow[to=1-6, from=2-5]
    	\arrow[to=1-6, from=2-7]
    \end{tikzcd}\]
    in $\widehat{\cC}$.
    We define $\mathscr{E}'$ as the full subcategory of $\mathscr{E}$ spanned by those iterated cospans for which the objects $c_{0},\ldots,c_n,c'_1, \ldots, c'_n$ all lie in the image of $\cC \hookrightarrow \widehat{\cC}$.
    We leave it to the reader to verify that $\mathscr{E}' \to \Delta^\op$ defines a $\Delta^{\op,\natural}$-algebrad, i.e.\ a virtual double category.
    We will denote this virtual double category by $\DCospan^\mathrm{virt}(\cC)$.
\end{construction}

\begin{remark}
    Suppose that $\cC$ admits pushouts. Since the inclusion $\cC \hookrightarrow \widehat{\cC}$ preserves all pushouts that exist in $\cC$, it follows that $\DCospan^\mathrm{virt}(\cC) \simeq \smallint \DCospan(\cC)$ in this case.
\end{remark}

\begin{remark}
    One can in fact show that the construction of $\DCospan^\mathrm{virt}(\cC)$ is independent of the choice of embedding of $\cC$ into a category that admits pushouts.
\end{remark}

We will now show that $\DCospan^\mathrm{virt}(\cC)$ is always exponentiable in the category of virtual double categories, even if $\cC$ does not admit pushouts.
The main ingredient is the following lemma.

\begin{lemma}\label{lem:cartesian-recognition-expo}
    Let $\cO$ be an algebraic pattern and $p \colon \cP \to \cO$ an algebrad. 
    Suppose that $\cP$ admits $p$-cartesian lifts for every active morphism $x \actarrow e$ in $\cO$ whose target is elementary.
    Then $\cP$ is exponentiable in $\Algd(\cO)$.
\end{lemma}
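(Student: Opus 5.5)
We will verify condition \cref{CondCrit} of \cref{thmA}, i.e.\ the condition of \cref{thm:main-thm-expo-factorization-Algd} for the map $\cP \to \cO$. Fix a composable pair of actives $h \colon x \actarrow y$ and $g \colon y \actarrow e$ in $\cO$ with $e$ elementary, and a lift $f \colon \bar x \to \bar e$ of $g \circ h$ in $\cP$. We must show that the category $\mathrm{Fact}(f \mid g \circ h)$ of factorizations $\bar x \to \bar y \to \bar e$ of $f$ lying over $x \actarrow y \actarrow e$ is weakly contractible.

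The hypothesis supplies this directly. Since $g$ is an active morphism whose target $e$ is elementary, it has a $p$-cartesian lift $\tilde g \colon g^*\bar e \to \bar e$. We claim that the factorization
\[ \bar x \xrightarrow{\;\tilde f\;} g^*\bar e \xrightarrow{\;\tilde g\;} \bar e \]
is a terminal object of $\mathrm{Fact}(f \mid g\circ h)$. Here $\tilde f$ is the essentially unique lift of $h$ with $\tilde g \circ \tilde f \simeq f$, which exists by cartesianness of $\tilde g$.

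To check terminality, take another factorization $\bar x \xrightarrow{a} \bar y \xrightarrow{b} \bar e$ over $(h,g)$. A morphism in $\mathrm{Fact}$ into our candidate is a map $c \colon \bar y \to g^*\bar e$ in the fiber over $y$, together with coherences $\tilde g c \simeq b$ and $c a \simeq \tilde f$. Cartesianness of $\tilde g$ says that the space of $c$ over $\id_y$ with $\tilde g c \simeq b$ is contractible. For any such $c$, the composite $ca$ lies over $h$ and satisfies $\tilde g(ca) \simeq ba \simeq f$. Applying cartesianness of $\tilde g$ once more, the space of identifications $ca \simeq \tilde f$ compatible with these data is contractible.

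The main technical point is to make this precise at the level of mapping spaces in $\mathrm{Fact}(f\mid g\circ h) = \{x\actarrow y\actarrow e\} \times_{(\cO_{/e})_{gh/}} (\cP_{/\bar e})_{f/}$. We would do this by identifying the relevant mapping space as an iterated fiber of
\[ \Map_{\cP_{/\bar e}}\bigl(b, \tilde g\bigr) \to \Map_{\cO_{/e}}(g,g), \]
which is an equivalence because $\tilde g$ is $p$-cartesian. In other words, $\tilde g$ is terminal in the fiber of $\cP_{/\bar e} \to \cO_{/e}$ over $g$, and passing to undercategories at $f$ preserves this terminality.

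A category with a terminal object is weakly contractible, so \cref{thm:main-thm-expo-factorization-Algd} applied to $\cP \to \cO$ gives exponentiability. Equivalently, \cref{thmA} applies.
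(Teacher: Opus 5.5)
Your proposal is correct and matches the paper's proof: take a $p$-cartesian lift of $g$ with target $\bar e$, factor $f$ through it, and note that this factorization is a terminal object of $\mathrm{Fact}(f \mid g \circ h)$, so the category is weakly contractible and condition (CC) of Theorem~A holds. The paper justifies terminality by citing an argument dual to Ayala--Francis, Lemma~3.2.1; your mapping-space argument is a correct, more explicit version of that step, since relative terminality of the cartesian lift passes to the undercategory at $f$ and then to the fiber.
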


\begin{proof}
We will check the criterion \cref{CondCrit} from \cref{thmA}, so let $h \colon x \actarrow y$ and $g \colon y \actarrow e$ be active morphisms in $\cO$ with $e$ elementary, and let $f \colon \overline{x} \to \overline{e}$ be a lift of $g \circ h$ in $\cP$.
Choose a $p$-cartesian lift $\overline{g} \colon \overline{y} \to \overline{e}$ of $g$; then there exists a unique lift $\overline{h} \colon \overline{x} \to \overline{y}$ such that $\overline{g} \circ \overline{h} = f$.
An argument dual to \cite[Lemma 3.2.1]{AyalaFrancis2020FibrationsInftyCategories} shows that $\overline{x} \overset{\overline{h}}{\actarrow} \overline{y} \overset{\overline{g}}{\actarrow} \overline{e}$
is a terminal object of $\Fact(f \mid g \circ h)$, hence this category is weakly contractible.
\end{proof}

\begin{proposition}
    Let $\cC$ be any category.
    Then $\DCospan^\mathrm{virt}(\cC)$ is an exponentiable virtual double category.
\end{proposition}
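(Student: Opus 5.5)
The plan is to apply \cref{lem:cartesian-recognition-expo} with $\cO = \Delta^{\op,\natural}$ and $\cP = \DCospan^\mathrm{virt}(\cC)$, i.e.\ $\mathscr{E}' \to \Delta^\op$. It therefore suffices to show that every active morphism $x \actarrow e$ in $\Delta^{\op}$ with elementary target admits $p$-cartesian lifts in $\mathscr{E}'$. Elementary objects are $[0]$ and $[1]$. In $\Delta$, an active map into $[0]$ comes from $[0]\to[0]$, an identity, so its cartesian lifts are the identities. The only substantial case is therefore an active morphism $[1] \to [n]$ in $\Delta$, namely the inclusion $\{0,n\}$, viewed as a morphism $[n] \actarrow [1]$ in $\Delta^\op$.

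First I will show that $\mathscr{E} = \smallint \DCospan(\widehat{\cC}) \to \Delta^\op$ has cartesian lifts of these morphisms, because $\widehat{\cC}$ admits pushouts. The unstraightening of a functor $\Delta^\op \to \Cat$ is a cocartesian fibration. Its morphisms over $[n]\actarrow[1]$ from an $n$-fold iterated cospan $\sigma$ to a cospan $\tau$ are maps in $\DCospan(\widehat{\cC})_1$ from the composite $\{0,n\}^*\sigma$ to $\tau$. The composite is computed by iterated pushout, with apex $c'_1 \cup_{c_1} \cdots \cup_{c_{n-1}} c'_n$. Cocartesian lifts therefore exist, but we need cartesian lifts. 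For this I will use the adjoint form: a $p$-cartesian lift of $\alpha\colon[n]\actarrow[1]$ at $\tau$ exists if the functor $\alpha_!\colon \mathscr{E}_{[n]} \to \mathscr{E}_{[1]}$, precomposed with the various cocartesian transports, admits a right adjoint in an appropriate sense.

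A more hands-on route is to construct the lift directly, and this is the route I will follow. Given a cospan $a \to d \leftarrow b$ in $\cC$, take the iterated cospan
\[a \to d \leftarrow d \to d \leftarrow \cdots \to d \leftarrow b,\]
with all intermediate objects and apexes equal to $d$ and identity legs. The map over $\alpha$ into $(a\to d\leftarrow b)$ is the fold map from the composite $d\cup_d \cdots \cup_d d \simeq d$. All objects involved lie in $\cC$, so this lift lands in $\mathscr{E}'$. I will then verify the cartesian property directly.

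A morphism from an object $\sigma' \in \mathscr{E}'_{[m]}$ lying over $\beta\colon [m]\to[1]$ in $\Delta^\op$ (that is, $[1]\to[m]$ in $\Delta$) factoring through $\alpha$ amounts to a map of iterated cospans from a restriction of $\sigma'$ to the constant-$d$ chain. Because the target chain is constant with identity legs, such a map is determined by a single compatible system of maps from the objects of $\sigma'$ into $d$. This is exactly the data of a map from the pushout-composite into $d$, which is the data of a morphism of $\mathscr{E}$ over $\beta$ into $\tau$. Because the composite is formed in $\widehat{\cC}$ and $d \in \cC$, the universal property of the pushout in $\widehat{\cC}$ makes this identification hold in $\mathscr{E}'$ even though the pushout itself need not lie in $\cC$.

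The main obstacle is making this mapping-space identification rigorous $\infty$-categorically and naturally. Concretely, the fiber of $\Hom_{\mathscr{E}}(\sigma',\text{lift})$ over a map $\gamma$ must match $\Hom_{\mathscr{E}}(\sigma',\tau)$ over $\alpha\gamma$. I will handle this by working in $\DCospan(\widehat{\cC})$. There, mapping into an iterated cospan that is left Kan extended from $\Delta^\elem_{/[n]}$ and is constant with value $d$ reduces, by the Kan extension universal property, to mapping into $d$ once, restricted along the relevant inclusion. The result should then follow from \cref{lem:cartesian-recognition-expo}.
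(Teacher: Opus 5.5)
Your overall route is the paper's. You apply \cref{lem:cartesian-recognition-expo} and reduce to actives $[n]\actarrow[1]$. Your cartesian lift, the constant chain $a\to d=\cdots=d\leftarrow b$, is exactly the image under the paper's $\mathrm{res}_{\phi^*}$. Your hands-on cartesianness check amounts to the adjunction between cocartesian transport (iterated pushout, i.e.\ $\mathrm{LKan}_{\phi^*}$) and restriction, fed into the dual of \cite[Corollary 5.2.2.5]{HTT}. The paper invokes that adjunction directly, which settles the coherence issue you flag as the main obstacle. One small correction: a morphism from $\sigma'$ to the lift over $\gamma$ is a map out of the transported object $\gamma_!\sigma'$ (composites and degeneracies of $\sigma'$), not out of a restriction of $\sigma'$.

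There is a case your reduction misses. The degeneracy $s^0\colon[1]\to[0]$ preserves minimum and maximum, so $[0]\actarrow[1]$ is an active morphism of $\Delta^{\op,\natural}$ with elementary target. It is not of the form $\{0,n\}$, and your constant-chain construction does not apply to it. In fact a cartesian lift generally does not exist. For $\tau=(a\to d\leftarrow b)$, a morphism from $z\in\cC=\mathscr{E}'_{[0]}$ to $\tau$ over $[0]\actarrow[1]$ is a map of cospans $(z=z=z)\to\tau$. A cartesian lift $\bar y\in\cC$ would therefore satisfy $\Hom_\cC(z,\bar y)\simeq\Hom_\cC(z,a)\times_{\Hom_\cC(z,d)}\Hom_\cC(z,b)$ for all $z$, i.e.\ it would be a pullback $a\times_d b$ in $\cC$. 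Such a pullback need not exist, for instance when $\cC$ is the poset $a\to d\leftarrow b$. So the hypothesis of \cref{lem:cartesian-recognition-expo} fails as stated.

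The repair is easy. The proof of that lemma only uses a cartesian lift of $g$ in a factorization $x\overset{h}{\actarrow}y\overset{g}{\actarrow}e$. If $y=[0]$, then $h$ is forced to be $\id_{[0]}$, and $\Fact(f\mid g\circ h)$ has the initial object $(\id_{\bar x},f)$, so it is weakly contractible. If $y=[m]$ with $m\geq 1$, your lifts apply. The paper's proof also implicitly takes $n\geq 1$ and skips this degenerate case.
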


\begin{proof}
    We need to show that $\DCospan^\mathrm{virt}(\cC) \to \Delta^\op$ is exponentiable in $\Algd(\Delta^{\op,\natural})$.
    By construction, $\DCospan^\mathrm{virt}(\cC)$ is a subobject of $\smallint \DCospan(\widehat{\cC})$.
    Let $\phi \colon [1] \actarrow [n]$ be an active morphism in $\Delta$.
    Then the cocartesian transport functor $\DCospan(\widehat{\cC})_n \to \DCospan(\widehat{\cC})_1$ may be identified with the left Kan extension functor
    \[\mathrm{LKan}_{\phi^*} \colon \Fun(\Delta^\elem_{/[n]},\widehat{\cC}) \to \Fun(\Delta^\elem_{/[1]},\widehat{\cC})\]
    along $\phi^* \colon \Delta^\elem_{/[n]} \to \Delta^\elem_{/[1]}$.
    In particular, it admits a right adjoint which is given by the restriction functor
    \[\mathrm{res}_{\phi^*} \colon \Fun(\Delta^\elem_{/[1]},\widehat{\cC}) \to \Fun(\Delta^\elem_{/[n]},\widehat{\cC}) \]
    It follows as in the proof of (the dual of) \cite[Corollary 5.2.2.5]{HTT} that $\DCospan(\widehat{\cC}) \to \Delta^\op$ admits cartesian lifts of $\phi$.
    Since $\mathrm{res}_{\phi^*}$ takes the full subcategory $\DCospan^\mathrm{virt}(\cC)_1$ to $\DCospan^\mathrm{virt}(\cC)_n$, it follows that $\DCospan^\mathrm{virt}(\cC) \to \Delta^\op$ admits cartesian lifts of $\phi$ as well.
    Since the only active morphism in $\Delta^\op$ with target $[0]$ is the identity, it follows that $\DCospan^\mathrm{virt}(\cC) \to \Delta^\op$ admits cartesian lifts of every active morphism whose target is elementary.
    The result now follows from \cref{lem:cartesian-recognition-expo}.
\end{proof}

\begin{remark}
    The cartesian transport functor $\DCospan^\mathrm{virt}(\cC)_1 \to \DCospan^\mathrm{virt}(\cC)_n$ can explicitly be described as follows: it takes a cospan $c_0 \rightarrow c_1 \leftarrow c_2$ to the iterated cospan
    \[\begin{tikzcd}[sep={2.3em, between origins}]
    	& {c_1} && \cdots && {c_1} & \\
    	{c_0} && {c_1} && {c_1} && {c_2.}
    	\arrow[to=1-2, from=2-1]
    	\arrow[to=1-2, from=2-3, equals]
    	\arrow[to=1-4, from=2-3, equals]
    	\arrow[to=1-4, from=2-5, equals]
    	\arrow[to=1-6, from=2-5, equals]
    	\arrow[to=1-6, from=2-7]
    \end{tikzcd}\]
\end{remark}

\bibliographystyle{amsalpha}
\bibliography{main}

@article{MoerdijkWeiss,
 author = {Moerdijk, Ieke and Weiss, Ittay},
 title = {Dendroidal sets},
 journal = {Algebr. Geom. Topol.},
 volume = {7},
 pages = {1441--1470},
 year = {2007}
}

@misc{Kern,
 author = {Kern, David},
 title = {Monoidal envelopes and {Grothendieck} construction for dendroidal {Segal} objects},
 year = {2023},
 howpublished = {{arXiv}:2301.10751}
}

@article{Barwick,
 author = {Barwick, Clark},
 title = {From operator categories to higher operads},
 journal = {Geometry \& Topology},
 volume = {22},
 number = {4},
 pages = {1893--1959},
 year = {2018}
}

@article{Rezk,
  author = {Rezk, Charles},
  title = {A model for the homotopy theory of homotopy theory},
  journal = {Transactions of the American Mathematical Society},
  volume = {353},
  year = {2001},
  number = {3},
  pages = {973-1007}
}

@misc{NardinShah,
 author = {Nardin, Denis and Shah, Jay},
 title = {Parametrized and equivariant higher algebra},
 year = {2022},
 howpublished = {{arXiv}:2203.00072}
}

@misc{HA,
    author = {Lurie, Jacob},
    title = {Higher algebra},
    howpublished = {\url{https://www.math.ias.edu/~lurie/papers/HA.pdf}},
    year = {2017}
}

@article{BarkanHaugsengSteinebrunner,
 author = {Barkan, Shaul and Haugseng, Rune and Steinebrunner, Jan},
 title = {Envelopes for algebraic patterns},
 journal = {Algebraic \& Geometric Topology},
 volume = {25},
 number = {9},
 pages = {5319--5388},
 year = {2025}
}

@misc{Arkor,
 author = {Arkor, Nathanael},
 title = {Exponentiable virtual double categories and presheaves for double categories},
 year = {2025},
 howpublished = {{arXiv}:2508.11611}
}

@misc{EaEThompson,
    title = {Pro-representable virtual double categories},
    author = {Thompson, Ea E},
    year = {2025},
    howpublished = {talk at the Topos Institute Berkeley Seminar, slides, \url{https://categorytheory.zulipchat.com/user_uploads/21317/kFFsTvEkBvYe_xZOhIkeem8f/Pro-representableVDCs.pdf}},
    note = {accessed on 9 Sep.\ 2025}
}

@misc{Arkor_slide,
    author = {Arkor, Nathanael},
    title = {The three-dimensional structures formed by monoidal categories, bicategories, double categories, etc.},
    howpublished = {talk at the International Category Theory Conference, slides, \url{https://arkor.co/files/Three-dimensional%20structures%20(CT%202025).pdf}},
    year = {2025},
    note = {accessed on 12 Feb.\ 2026}
}

@article{ChuHaugseng2021HomotopycoherentAlgebraSegal,
  title = {Homotopy-coherent algebra via {S}egal conditions},
  author = {Chu, Hongyi and Haugseng, Rune},
  year = {2021},
  journal = {Advances in Mathematics},
  volume = {385},
  pages = {107733},
  doi = {10.1016/j.aim.2021.107733}
}

@article{GepnerHaugseng2015EnrichedCategoriesNonsymmetric,
  title = {Enriched $\infty$-categories via non-symmetric $\infty$-operads},
  author = {Gepner, David and Haugseng, Rune},
  year = {2015},
  journal = {Advances in Mathematics},
  volume = {279},
  pages = {575--716},
  doi = {10.1016/j.aim.2015.02.007}
}

@article{ChuHaugsengea2018TwoModelsHomotopy,
  title = {Two models for the homotopy theory of $\infty$-operads},
  author = {Chu, Hongyi and Haugseng, Rune and Heuts, Gijs},
  year = {2018},
  journal = {Journal of Topology},
  volume = {11},
  number = {4},
  pages = {857--873},
  doi = {10.1112/topo.12071}
}

@misc{BrankoJuran,
  title={On orthogonal factorization systems and double categories},
  author={Juran, Branko},
  howpublished={arXiv:2501.01363},
  year={2025}
}

@article{MoerdijkWeiss2007DendroidalSets,
  title = {Dendroidal sets},
  author = {Moerdijk, Ieke and Weiss, Ittay},
  year = 2007,
  journal = {Algebraic \& Geometric Topology},
  volume = {7},
  number = {3},
  pages = {1441--1470},
  doi = {10.2140/agt.2007.7.1441}
}

@book{HTT,
  title = {Higher topos theory},
  author = {Lurie, Jacob},
  year = 2009,
  series = {Annals of Mathematics Studies},
  number = {170},
  publisher = {Princeton University Press},
  address = {Princeton, N.J},
}

@article{BonventrePereira2020EquivariantDendroidalSegal,
  title = {Equivariant dendroidal Segal spaces and {G}–{$\infty$}–operads},
  author = {Bonventre, Peter and Pereira, Luís A.},
  year = 2020,
  month = dec,
  journal = {Algebraic \& Geometric Topology},
  volume = {20},
  number = {6},
  pages = {2687--2778},
  publisher = {Mathematical Sciences Publishers},
  doi = {10.2140/agt.2020.20.2687}
}

@article{BonventrePereira2022EquivariantDendroidalSets,
  title = {Equivariant dendroidal sets and simplicial operads},
  author = {Bonventre, Peter and Pereira, Luís A.},
  year = 2022,
  journal = {Journal of Topology},
  volume = {15},
  number = {2},
  pages = {745--805},
  doi = {10.1112/topo.12223}
}

@article{Pereira2018EquivariantDendroidalSetsa,
  title = {Equivariant dendroidal sets},
  author = {Pereira, Luís A.},
  year = 2018,
  month = apr,
  journal = {Algebraic \& Geometric Topology},
  volume = {18},
  number = {4},
  pages = {2179--2244},
  publisher = {Mathematical Sciences Publishers},
  doi = {10.2140/agt.2018.18.2179}
}

@article{AyalaFrancis2020FibrationsInftyCategories,
  author = {Ayala, David and Francis, John},
  year = 2020,
  month = feb,
  journal = {Higher Structures},
  volume = {4},
  number = {1},
  pages = {168--265},
  doi = {10.21136/HS.2020.05},
  title = {Fibrations of {$\infty$}-categories}
}

@article{AyalaFrancisRozenblyum,
  author = {Ayala, David and Francis, John and Rozenblyum, Nick},
  title = {Factorization homology {I}: {H}igher categories},
  journal = {Advances in Mathematics},
  volume = {333},
  year = {2018},
  pages = {1042-1177}
}

@incollection{Day1970ClosedCategoriesFunctors,
  title = {On closed categories of functors},
  booktitle = {Reports of the Midwest Category Seminar IV},
  author = {Day, Brian},
  editor = {MacLane, S. and Applegate, H. and Barr, M. and Day, B. and Dubuc, E. and {Phreilambud} and Pultr, A. and Street, R. and Tierney, M. and Swierczkowski, S.},
  year = 1970,
  volume = {137},
  pages = {1--38},
  publisher = {Springer Berlin Heidelberg},
}

@article{Glasman2016DayConvolution,
  author = {Glasman, Saul},
  year = 2016,
  journal = {Mathematical Research Letters},
  volume = {23},
  number = {5},
  pages = {1369--1385},
  doi = {10.4310/MRL.2016.v23.n5.a6},
  title = {Day convolution for {$\infty$}-categories}
}

@article{Hinich2020YonedaLemmaEnriched,
  title = {Yoneda lemma for enriched $\infty$-categories},
  author = {Hinich, Vladimir},
  year = 2020,
  journal = {Advances in Mathematics},
  volume = {367},
  pages = {107129},
  doi = {10.1016/j.aim.2020.107129}
}

@article{HaugsengIterated,
  title = {Iterated spans and classical topological field theories},
  author = {Haugseng, Rune},
  year = 2018,
  journal = {Mathematische Zeitschrift},
  volume = {289},
  number = {3},
  pages = {1427--1488},
  doi = {10.1007/s00209-017-2005-x}
}

@article{ChuHaugseng2023FreeAlgebrasDay,
  title = {Free algebras through {D}ay convolution},
  author = {Chu, Hongyi and Haugseng, Rune},
  year = 2023,
  journal = {Algebraic \& Geometric Topology},
  volume = {22},
  number = {7},
  pages = {3401--3458},
  publisher = {Mathematical Sciences Publishers},
  doi = {10.2140/agt.2022.22.3401}
}

@article{DawsonParePronk,
  author = {Dawson, Robert and Par\'e, Robert and Pronk, Dorette},
  title = {The span construction},
  journal = {Theory and Applications of Categories},
  volume = {24},
  year = {2010},
  number = {13},
  pages = {302-377}
}

@misc{FactSystems,
    author = {Barkan, Shaul and Steinebrunner, Jan},
    title = {On the definition of factorization systems},
    howpublished = {note, \url{https://www.jan-steinebrunner.com/wp-content/uploads/2025/12/On_the_definition_of_factorization_systems.pdf}},
    note = {accessed on 18 Feb.\ 2026},
    year = {2025}
}

@article{HaugsengTheta,
  author = {Haugseng, Rune},
  year = 2018,
  journal = {Proceedings of the American Mathematical Society},
  volume = {146},
  number = {4},
  pages = {1401--1415},
  doi = {10.1090/proc/13695},
  title = {On the equivalence between {$\Theta _{n}$}-spaces and iterated Segal spaces}
}

@misc{Blom2024StraighteningEveryFunctor,
  title = {On the straightening of every functor},
  author = {Blom, Thomas},
  year = 2024,
  howpublished = {arXiv:2408.16539}
}

@misc{Ruit2025Companions,
  title = {Homotopy coherent companionships and conjunctions},
  author = {Ruit, Jaco},
  year = 2025,
  howpublished = {arXiv:2408.14335}
}

@phdthesis{RuitThesis,
title = "On multiple $\infty$-categories and formal category theory via $\infty$-equipments",
author = "Ruit, Jaco",
year = "2025",
publisher = "Utrecht University",
type = "Ph.{D}.\ thesis",
school = "Universiteit Utrecht",
}

@article{Pisani2014SeqMulticategories,
 author = {Pisani, Claudio},
 title = {Sequential multicategories},
 journal = {Theory and Applications of Categories},
 volume = {29},
 pages = {496--541},
 year = {2014}
}

@misc{HaineRamziSteinPushouts,
  author = {Haine, Peter J. and Ramzi, Maxime and Steinebrunner, Jan},
  year = 2025,
  eprint = {2503.03916},
  archiveprefix = {arXiv},
  title = {Fully faithful functors and pushouts of {$\infty$}-categories}
}

\end{document}